\documentclass[11pt]{article}

\usepackage[T1]{fontenc}
\usepackage[utf8]{inputenc}
\usepackage[margin=1in]{geometry}
\usepackage{lmodern}
\usepackage{microtype}
\usepackage{amsmath,amssymb,amsfonts,mathtools}
\usepackage{amsthm}
\usepackage{booktabs}
\usepackage{graphicx}
\usepackage{subcaption}
\usepackage{float}


\usepackage{array}
\usepackage{longtable}
\usepackage{mathrsfs}
\usepackage{enumitem}
\usepackage{xcolor}
\usepackage{hyperref}
\usepackage[capitalize,nameinlink]{cleveref}

\hypersetup{
  colorlinks=true,
  linkcolor=blue!55!black,
  citecolor=blue!55!black,
  urlcolor=blue!55!black
}

\graphicspath{{paper/figures/}}
\setlist[enumerate]{leftmargin=.5in}
\setlist[itemize]{leftmargin=.5in}

\newtheorem{theorem}{Theorem}
\newtheorem{proposition}{Proposition}
\newtheorem{lemma}{Lemma}

\newtheorem{assumption}{Assumption}
\newtheorem*{theorem*}{Theorem}
\newtheorem*{proposition*}{Proposition}
\theoremstyle{remark}
\newtheorem{remark}{Remark}

\newenvironment{keywords}{\par\vspace{0.5em}\noindent\textbf{Keywords.}\ }{\par}
\newenvironment{MSCcodes}{\par\vspace{0.25em}\noindent\textbf{MSC codes.}\ }{\par\vspace{1em}}

\title{Controlled McKean--Vlasov Contagion with State-Dependent Killing\thanks{This version replaces the original combined preprint \emph{Default Contagion, Matrix Approximation, and Control in Sparse Financial Networks}, \href{https://arxiv.org/abs/2605.24833}{arXiv:2605.24833v1}, and contains the controlled McKean--Vlasov contagion, killed-HJB, and steep-killing part of that work.}}

\author{Aoxin Zhang\thanks{School of Mathematical Sciences, Beijing Normal University, Beijing, China (\url{202211130012@mail.bnu.edu.cn}). ORCID: \url{https://orcid.org/0009-0006-3647-9545}.}
\and Yingzhe Wang\thanks{School of Mathematical Sciences, Beijing Normal University, Beijing, China (\url{wangyz@bnu.edu.cn}). Corresponding author. ORCID: \url{https://orcid.org/0000-0003-2113-3307}.}}
\date{}

\hypersetup{
  pdftitle={Controlled McKean--Vlasov Contagion with State-Dependent Killing},
  pdfauthor={Aoxin Zhang and Yingzhe Wang}
}

\begin{document}

\maketitle

\begin{abstract}
We study controlled McKean--Vlasov contagion with state-dependent killing, common noise, loss feedback, and interacting populations. The main result is a comparison principle for the two-population killed-particle HJB on a decomposed state space of alive sub-probability measures and cemetery masses. The proof combines a Wasserstein smooth-gauge comparison argument with a killing-jump absorption estimate for mass transfer into the cemetery state. We also establish a multi-population mean-field limit, an explicit first-order particle convergence rate, conditional propagation of chaos, controlled well-posedness, and a steep-killing bridge to absorbing-boundary default. Finite-particle convergence tests and a two-population HJB feedback experiment illustrate the theory.
\end{abstract}

\begin{keywords}
McKean--Vlasov equations, default contagion, state-dependent killing, common noise, optimal control, HJB equations, propagation of chaos
\end{keywords}

\begin{MSCcodes}
60H30, 60K35, 93E20, 49L20, 91G40, 91G80
\end{MSCcodes}

\section{Introduction}
\label{sec:introduction}

Default contagion models connect a bank's distance to distress with the loss state of the surrounding financial system. In large systems this interaction is naturally expressed through McKean--Vlasov dynamics: individual institutions carry idiosyncratic noise and common shocks, while the empirical loss process feeds back into the drift of surviving institutions. A regularized default mechanism based on state-dependent killing is especially useful in this setting. It retains the economic interpretation that weaker institutions default more intensely, it leaves the alive-state dynamics continuous between killing events, and it creates a tractable interface with stochastic control and dynamic programming. The present paper develops this interface for multi-population contagion systems with common noise and regulatory intervention.

The central analytical issue is the value function. A controlled contagion model with killing is naturally formulated on a state variable consisting of alive distributions and cemetery masses. The alive distributions evolve under diffusion, drift, common noise, and loss feedback, while killing transfers mass from the alive state into the cemetery coordinate. This transfer is a first-order jump term in the loss coordinate and must be handled together with Wasserstein derivatives in the alive-measure variables. Proving comparison for the corresponding HJB equation is therefore the step that turns the controlled McKean--Vlasov formulation into a well-posed stochastic control problem. Our main technical contribution is a killed-particle comparison principle in the two-population case. The proof works on the decomposed state space of alive sub-probability measures and cemetery masses, applies a Wasserstein smooth-gauge/Ishii argument to the alive coordinates, and absorbs the killing jump through a bounded Lipschitz estimate for the mass-transfer term.

The paper makes three contributions. First, we prove a comparison principle and an HJB characterization for the two-population controlled contagion problem with common noise and bounded state-dependent killing. The cemetery masses are treated as genuine state variables rather than as hidden components of probability measures on an enlarged state space. This formulation isolates the killing jump and yields the finite-dimensional marginal-value formula used by the numerical feedback computation. Second, we establish the mean-field foundation for the finite-particle model. The result covers multiple populations, common noise, bounded state-dependent killing, loss feedback, compact controls, and a quantitative first-order particle error under explicit empirical-process inputs. It also gives conditional same-type propagation of chaos after conditioning on the common shock. Third, we prove a steep-killing bridge. Under a localized boundary-regularity package, the bounded killing formulation selects the absorbing-boundary contagion model in a steep-intensity limit. This result clarifies how the regularized model used for control relates to the hard-default convention used in the systemic-risk literature.

The closest points of contact are the McKean--Vlasov contagion models of Hambly, Ledger, and S\o jmark~\cite{HamblyLedgerSojmark2019}, Hambly and S\o jmark~\cite{HamblySojmark2019}, Feinstein and S\o jmark~\cite{FeinsteinSojmark2023}, and the controlled killed-contagion framework of Hambly and Jettkant~\cite{HamblyJettkant2023}. Those works establish the stochastic-analysis foundation for default contagion with absorbing boundaries, heterogeneous impact, and control. The present paper contributes a multi-population killed-particle HJB comparison principle with common noise and a steep-killing bridge from bounded killing to absorbing-boundary default. The comparison proof uses the Wasserstein-space viscosity framework of Cosso, Gozzi, Kharroubi, Pham, and Rosestolato~\cite{CossoGozziKharroubiPhamRosestolato2024}, but the cemetery mass coordinate and the killing-jump absorption estimate are specific to the default-contagion setting.

Finite-network matrix approximation, sparse-network diagnostics, and node-level graph-pressure information are outside the scope of the present paper. The present paper keeps the focus on the stochastic-analysis and control theory: the finite-type model, the mean-field limit, the killed HJB comparison theorem, the steep-killing limit, and the numerical HJB feedback loop.

The paper is organized as follows. Section~\ref{sec:model} defines the finite-type killed contagion model and the central control objective. Section~\ref{sec:mean-field-theory} states the mean-field limit, quantitative convergence rate, and propagation-of-chaos result. Section~\ref{sec:control-theory} gives controlled well-posedness, the killed HJB characterization, and the steep-killing bridge. Sections~\ref{sec:convergence-numerics} and~\ref{sec:hjb-numerics} give the particle convergence check and the finite-dimensional HJB feedback diagnostic. Section~\ref{sec:main-analytical-proofs} contains the main analytical proofs, with the long primitive verification of the steep-killing bridge and the HJB discretization details placed in Appendices~\ref{app:steep-killing-primitive} and~\ref{app:numerical-calibration}. Section~\ref{sec:discussion-conclusion} concludes.

\paragraph{Related literature.}
The paper belongs to the interacting-particle literature on systemic default contagion. Hambly, Ledger, and S\o jmark~\cite{HamblyLedgerSojmark2019} and Hambly and S\o jmark~\cite{HamblySojmark2019} develop distance-to-default systems in which default losses feed back into the dynamics of surviving institutions. Feinstein and S\o jmark~\cite{FeinsteinSojmark2023} study heterogeneous impact and exposure in contagious McKean--Vlasov systems, while related mean-field and systemic-risk formulations appear in Fouque and Ichiba~\cite{FouqueIchiba2013}, Garnier, Papanicolaou, and Yang~\cite{GarnierPapanicolaouYang2013}, and Carmona, Fouque, and Sun~\cite{CarmonaFouqueSun2015}. Our mean-field theorem follows this line, and the propagation-of-chaos component uses standard compactness and empirical-process ideas from the McKean--Vlasov particle literature~\cite{Sznitman1991,Meleard1996}. It is stated for a regularized killed system with multiple populations, common noise, loss feedback, and compact regulatory controls. The steep-killing bridge also relates the bounded-intensity regularization to the absorbing and hitting-time formulations studied in networked integrate-and-fire and systemic-risk models~\cite{DelarueInglisRubenthalerTanre2015,NadtochiyShkolnikov2019}.

The control part is related to McKean--Vlasov control and mean-field games under common noise. The general probabilistic and dynamic-programming background is developed in Carmona and Delarue~\cite{CarmonaDelarue2018}, Fabbri, Gozzi, and \v{S}wi\k{e}ch~\cite{FabbriGozziSwiech2017}, and the weak-formulation and common-noise mean-field-game literature~\cite{CarmonaLacker2015,Lacker2015,CarmonaDelarueLacker2016,CardaliaguetDelarueLasryLions2019}. Hambly and Jettkant~\cite{HamblyJettkant2023} give a controlled McKean--Vlasov contagion model with killing and an HJB characterization. The present paper focuses on the comparison-principle side of the killed problem, where the state variable is decomposed into alive measures and cemetery masses. This decomposition makes the default jump explicit and requires a separate absorption estimate in the doubled-variable comparison argument.

The viscosity comparison argument draws on recent HJB theory on Wasserstein spaces, especially Cosso, Gozzi, Kharroubi, Pham, and Rosestolato~\cite{CossoGozziKharroubiPhamRosestolato2024}. Their smooth-gauge method supplies the alive-measure component of the proof. The new ingredient here is the killing term: alive mass is removed at a state-dependent rate and inserted into a finite-dimensional loss coordinate. The proof therefore combines Wasserstein estimates for alive distributions with finite-dimensional penalties for cemetery masses. This structure is specific to killed default contagion and is also the reason that the theorem is currently stated for two populations.

\section{Model}
\label{sec:model}

\subsection{Finite-type state-dependent killing model}

Consider a system of $N$ financial institutions. Institution $i$ has type $g(i)\in\{1,\ldots,K\}$, and the state variable $X_i(t)$ represents its safety margin from the default region. The baseline kill intensity of type $k$ is
\begin{equation}
    \lambda_{0,k}(x)=\lambda_{\mathrm{base},k}\exp\{\gamma_k(x_{b,k}-x)\},
\end{equation}
where $x_{b,k}$ is the risk boundary and $\gamma_k$ is the state-sensitivity coefficient. Regulatory control $a$ reduces the kill intensity in exponential form:
\begin{equation}
    \lambda_k(x,a)=\lambda_{0,k}(x)\exp\{-\eta_k a\}.
\end{equation}
For rigorous results we work with the corresponding bounded Lipschitz regularization of this intensity on the relevant compact state-control domain; the convention is stated before Theorem~\ref{thm:multiclass-mkv-limit} below. The discrete-time kill probability used in numerical experiments is
\begin{equation}
    p_k(x,a;\Delta t)=1-\exp\{-\lambda_k(x,a)\Delta t\}.
\end{equation}
This regularized killing mechanism preserves the state dependence of default risk and avoids the numerical instability of hard absorbing boundaries in finite-sample simulations.

The cumulative loss of type $k$ and the aggregate loss are defined by
\begin{equation}
    L_t^{k,N}=\frac{1}{N_k}\sum_{i:g(i)=k}\mathbf 1_{\{\tau_i\leq t\}},
    \qquad
    L_t^N=\sum_{k=1}^K \pi_k L_t^{k,N},
\end{equation}
where $\pi_k=N_k/N$. The state dynamics of the finite-type matrix model are
\begin{equation}
    dX_i^k(t)
    =
    \left[\beta_k+\kappa_k\bigl(m_t^{k,N}-X_i^k(t)\bigr)\right]dt
    +\sigma_k dW_i^k(t)+\sigma_0 dW^0(t)
    -\sum_{\ell=1}^K \Gamma_{k\ell}\,dL_t^{\ell,N}.
    \label{eq:finite_type_model}
\end{equation}
The matrix element $\Gamma_{k\ell}\geq0$ denotes the average shock from newly realized losses in type $\ell$ to type $k$. The row sum $\sum_\ell\Gamma_{k\ell}$ measures the external exposure of the target type, the column sum $\sum_k\Gamma_{k\ell}$ measures the spillover impact of the source type, and the spectral radius $\rho(\Gamma)$ provides a scalar summary of aggregate feedback strength.

\subsection{Control objective}

The continuous-time central control problem is
\begin{equation}
    \inf_{a\in\mathcal A}
    \mathbb E\left[\Phi(L_T^1,\ldots,L_T^K)
    +\int_0^T\sum_{k=1}^K \pi_k c_k(a_t^k)\,dt
    \right],
    \label{eq:central-control-objective}
\end{equation}
where $\Phi$ is the terminal system-loss penalty, $c_k$ is the type-$k$ running intervention cost, and $\mathcal A$ denotes compact-valued type-level controls adapted to the common-noise filtration. The theory below treats common-noise adapted open-loop controls and relaxed controls. The numerical section additionally solves a finite-dimensional projection HJB for $K=2$ and validates the stored feedback in a forward particle system.

\section{Mean-field limit and quantitative convergence}
\label{sec:mean-field-theory}

This section gives the mean-field foundation of the regularized default contagion model in the main text. We keep only theorem statements, proof sketches, and the explanations needed for the numerical design. Detailed proof is given in Section~\ref{sec:mkv-proofs}.

\paragraph{Convention for the theoretical results.}
The economic specification in \eqref{eq:finite_type_model} uses an exponential state-dependent kill intensity. The rigorous results below use its bounded Lipschitz regularization: for some $\Lambda<\infty$ and $L_\lambda<\infty$, $0\le \lambda_k(x,a)\le\Lambda$ and $|\lambda_k(x,a)-\lambda_k(y,a)|\le L_\lambda |x-y|$ on the relevant compact state-control domain. On any grid $[x_{\min},x_{\max}]\times[0,\bar a]$, the exponential specification satisfies this after capping above the maximum attained grid intensity. Proposition~\ref{prop:killing-to-absorbing-boundary} gives the corresponding steep-killing bridge to absorbing-boundary contagion under the localized boundary-regularity package verified in Appendix~\ref{app:steep-killing-primitive}.

\begin{theorem}[Multi-population McKean--Vlasov limit]\label{thm:multiclass-mkv-limit}
Fix $T>0$ and let $N_*:=\min_k N_k\to\infty$. Assume independent within-type initial states with uniformly finite first moments, Lipschitz drift, bounded Lipschitz kill intensity, bounded contagion matrix, and compact common-noise adapted type-level controls. Then the type-level surviving empirical measures and loss processes of the finite-particle system converge on $[0,T]$ to the unique multi-population McKean--Vlasov limit $(\mu_t^1,\ldots,\mu_t^K,L_t^1,\ldots,L_t^K)$ associated with the same controlled killed-contagion dynamics. More precisely, $\mu_t^{k,N}\Rightarrow\mu_t^k$ and $L_t^{k,N}\to L_t^k$ uniformly in probability for each type $k$.
\end{theorem}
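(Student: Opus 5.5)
The plan is a synchronous coupling of the particle system with i.i.d. conditional McKean--Vlasov copies, conditional on the common noise $W^0$ and the common-noise adapted control $a$. The key point is that bounded killing makes the loss processes absolutely continuous. Because $L^{k,N}$ only jumps by $1/N_k$ per default, in the limit
\[
dL^k_t=\int\lambda_k(x,a^k_t)\,\mu^k_t(dx)\,dt,
\]
where $\mu^k_t$ is the alive sub-probability measure of type $k$. This removes the singular feedback that makes absorbing-boundary models delicate.

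\textbf{Step 1: well-posedness of the limit.} First I will construct the limit system and show it is unique. Given a candidate $\mathcal F^{W^0}$-adapted loss path $\ell=(\ell^1,\dots,\ell^K)$ that is continuous, nondecreasing and $[0,1]$-valued, I solve the linear SDE for a representative type-$k$ particle. This particle has drift $\beta_k+\kappa_k(m^k_t-x)$, idiosyncratic noise $\sigma_k W^k$, common noise $\sigma_0W^0$, and shift $-\sum_\ell\Gamma_{k\ell}\,d\ell^\ell_t$. I kill it at rate $\lambda_k(X,a^k)$ via an independent $\mathrm{Exp}(1)$ clock $E$.

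\emph{Convention on $m^k_t$.} I take $m^k_t$ to be the conditional mean of the surviving population. Because $\mu^k_t$ may have small mass, I cap or regularize it using mass bounded below on $[0,T]$. With bounded $\lambda$ one has $\mu^k_t(\mathbb R)\ge e^{-\Lambda T}$, so the normalized mean is Lipschitz in $\mu$.

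I then define the map $\Psi(\ell)^k_t=1-\mathbb E[e^{-\int_0^t\lambda_k(X_s,a_s)ds}\mid W^0]$. Using Lipschitz $\lambda$, Lipschitz drift, and Gronwall, I show
\[
\sup_{s\le t}|\Psi(\ell)-\Psi(\tilde\ell)|_s\le C\int_0^t\sup_{r\le s}|\ell-\tilde\ell|_r\,ds .
\]
Picard iteration then gives existence and uniqueness pathwise in $W^0$.

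\textbf{Step 2: coupling.} Next I couple the finite system to $N$ conditionally independent limit copies $(\bar X_i,\bar\tau_i)$. The copies use the same Brownian motions $W_i^k$, the same $W^0$, the same control, and the same exponential clocks $E_i$, so that $\tau_i=\inf\{t:\int_0^t\lambda\,ds\ge E_i\}$ in both systems. I then set
\[
D_t=\frac1{N}\sum_i\Bigl(\sup_{s\le t}|X_i-\bar X_i|\,\mathbf 1_{\text{both alive}}+\mathbf 1_{\{\tau_i\ne\bar\tau_i,\ \min(\tau_i,\bar\tau_i)\le t\}}\Bigr).
\]
Disagreement of the killing clocks has probability at most $L_\lambda\int_0^t\mathbb E|X_i-\bar X_i|\,ds$, since the integrated intensities differ by at most that.

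The state difference is controlled by three contributions:
\begin{itemize}
\item Lipschitz drift, giving a $D$ term;
\item the mean-field discrepancy, $|m^{k,N}_t-m^k_t|\le C D_t+|\bar m^{k,N}_t-m^k_t|$;
\item the loss discrepancy, $|L^{k,N}_t-L^k_t|\le D_t+|\bar L^{k,N}_t-L^k_t|$. Here the barred quantities are built from the i.i.d.\ copies, and $\Gamma$ is bounded.
\end{itemize}
Gronwall then yields
\[
\mathbb E\sup_{t\le T}D_t\le C\,\mathbb E\sup_t\bigl(|\bar L^N-L|+|\bar m^N-m|\bigr).
\]

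\textbf{Step 3: the empirical error, the main obstacle.} The remaining task is to show that the i.i.d.\ empirical error tends to zero uniformly in $t$. Conditionally on $W^0$ the copies are i.i.d., so for fixed $t$ a conditional LLN applies (using only the first-moment assumption) and yields convergence. Uniformity follows because $\bar L^N$ is monotone with a continuous limit, by the Dini/Glivenko--Cantelli argument. For $\bar m^N$ I will use tightness from moduli of continuity of the SDE paths together with the first-moment bound; a truncation argument handles the absence of second moments.

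Finally, $\mu^{k,N}_t\Rightarrow\mu^k_t$ follows in bounded-Lipschitz distance from the same coupling. That distance is bounded by $D_t$ plus the i.i.d.\ empirical bounded-Lipschitz error, which vanishes by Varadarajan's theorem applied conditionally. This gives the convergence claimed in Theorem~\ref{thm:multiclass-mkv-limit}, uniformly in probability on $[0,T]$.
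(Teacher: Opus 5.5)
Your architecture matches the paper's proof. You use synchronous coupling with shared initial states, Brownian motions and killing randomness; your exponential clocks are equivalent to the paper's Poisson thinning. You introduce auxiliary i.i.d.\ empirical objects built from the limit copies, bound the alive/dead mismatch through the Lipschitz intensity, apply a conditional LLN to the i.i.d.\ part, and close with Gronwall.

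The step that fails is the one you introduced by routing the interaction through the survivor mean $m^k_t$. The inequality $|m^{k,N}_t-m^k_t|\le CD_t+|\bar m^{k,N}_t-m^k_t|$ is false. Write both survivor means over the (regularized) surviving mass. Their difference then contains the term $\frac1{N_k}\sum_i\bigl(\xi_i^{k,N}(t)-\bar\xi_i^k(t)\bigr)\bar X_i^k(t)$. So a particle that is alive in one system and dead in the other moves the mean by $|\bar X_i^k(t)|/N_k$, whereas your $D_t$ counts it only with weight $1/N_k$. Since positions are unbounded, this term is not controlled by $D_t$.

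Truncating positions at level $M$ works for the i.i.d.\ LLN in Step 3 but does not rescue Step 2. It turns the mismatch coefficient into $CM$, so Gronwall gives a bound of the form $e^{CMT}\bigl(\varepsilon(M)+\text{empirical error}\bigr)$, where $\varepsilon(M)=\mathbb E[\sup_s|\bar X_i^k(s)|;\sup_s|\bar X_i^k(s)|>M]$. Under the first-moment hypothesis alone, $e^{CMT}\varepsilon(M)$ need not tend to zero. Closing the estimate in your setting would need an additional localization argument (for instance, compact state localization), and none is in the proposal.

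The paper avoids the issue by taking the drift $b_k(x,\mu^1,\dots,\mu^K,a)$ to be Lipschitz with respect to $d_{\mathrm{BL}}$ of the surviving measures. Then, for $\|f\|_\infty\le1$ and $\mathrm{Lip}(f)\le1$, the mismatch enters only through $\|f\|_\infty$. This gives $d_{\mathrm{BL}}(\mu_t^{k,N},\widetilde\mu_t^{k,N})\le\frac1{N_k}\sum_i|X_i^{k,N}(t)-\bar X_i^k(t)|+\frac1{N_k}\sum_i|\xi_i^{k,N}(t)-\bar\xi_i^k(t)|$, and the linear Gronwall closes. If you adopt that hypothesis, which is what ``Lipschitz drift'' means in the detailed statement, your Step 2 goes through essentially verbatim with $d_{\mathrm{BL}}(\mu^{k,N}_t,\mu^k_t)$ in place of $|m^{k,N}_t-m^k_t|$.

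There are two smaller points. First, in Step 1 your Picard map acts only on the loss paths $\ell$, but the particle drift also depends on $m^k$ (or $\mu^k$), which is itself produced by the solution. The fixed point therefore has to be taken jointly in $(\mu,\ell)$, as in the paper's well-posedness proposition; the paper's proof of this theorem simply invokes that proposition for uniqueness of the limit. Second, Varadarajan's theorem applied at each fixed $t$ gives only pointwise-in-time convergence of $d_{\mathrm{BL}}(\widetilde\mu^{k,N}_t,\mu^k_t)$. To obtain the $\sup_{t\le T}$ you need an extra uniformity argument. The paper gets it from Glivenko--Cantelli on a countable determining bounded-Lipschitz class on the killed-path space, a dense time grid, and the absence of fixed-time atoms of the killing times.
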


The proof uses synchronous coupling: the finite particles and the limiting particles share initial states, idiosyncratic noises, common noise, and Poisson thinning random measures. The common-noise adapted control convention preserves conditional independence inside each type once the common shock history is fixed. The drift error, killing error, and loss-feedback error are controlled by Lipschitz conditions, and the estimate closes by Gronwall's inequality. This result gives the finite-type matrix experiments a rigorous large-system limit foundation. The uniqueness of the limiting controlled system is established independently in Proposition~\ref{prop:controlled-multiclass-wellposedness}, which proves well-posedness under the same bounded-killing and Lipschitz regularity.

\begin{theorem}[Quantitative convergence rate under explicit empirical-process inputs]\label{thm:quantitative-rate}
Under the conditions of Theorem~\ref{thm:multiclass-mkv-limit}, assume also that the system has a uniform second-moment bound, that the empirical measures of the conditionally independent limiting particles satisfy the standard bounded-Lipschitz quadratic fluctuation estimate, and that the synchronous Poisson-thinning coupling satisfies the corresponding metric-level quadratic bounded-Lipschitz stability estimate. This last estimate means that the bounded-Lipschitz distance between the finite surviving empirical measure and the paired limiting auxiliary empirical measure is controlled in mean square by the state, measure, and loss coupling errors, up to the usual $N_k^{-1}$ martingale fluctuation term. Then there exists a constant $C_T<\infty$ such that, for each $k$,
\begin{equation}
    \sup_{t\in[0,T]}\mathbb E d_{\mathrm{BL}}^2(\mu_t^{k,N},\mu_t^k)
    +\sup_{t\in[0,T]}\mathbb E|L_t^{k,N}-L_t^k|^2
    \le \frac{C_T}{N_*}.
    \label{eq:quantitative-rate-main}
\end{equation}
Therefore, the bounded-Lipschitz empirical-measure mean-square error and the loss-process mean-square error are $O(N_*^{-1})$, and the corresponding first-order errors are $O(N_*^{-1/2})$. The estimate is obtained by first working conditionally on the common-noise path and then integrating over the common noise; the displayed expectation is the resulting unconditional bound.
\end{theorem}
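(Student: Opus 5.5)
The plan is a synchronous-coupling Gronwall argument carried out conditionally on the common-noise path. On one probability space we place, for each $i$ with $g(i)=k$, a limiting particle $\bar X_i^k$ that has the same initial state, the same Brownian motion $W_i^k$, the same common noise $W^0$, and the same Poisson random measure $\mathcal N_i$ on $[0,T]\times[0,\Lambda]$. The finite particle is killed when an atom $(s,u)$ of $\mathcal N_i$ satisfies $u\le\lambda_k(X_i^k(s-),a_s^k)$. The limiting particle is killed when $u\le\lambda_k(\bar X_i^k(s-),a_s^k)$, and it evolves with the deterministic-given-$W^0$ inputs $m_t^k$ and $L_t^\ell$ in place of $m_t^{k,N}$ and $L_t^{\ell,N}$. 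Write $\bar\mu_t^{k,N}$ for the auxiliary empirical measure of the surviving limiting particles and $\bar L_t^{k,N}$ for their empirical loss. Let $\mathbb E^0$ denote expectation conditional on $\mathcal F^{W^0}_T$. Because the controls are adapted to the common noise, the limiting particles of a given type are conditionally i.i.d. under $\mathbb E^0$, with conditional law $\mu^k$ and loss $L^k$. This is the structure used in Theorem~\ref{thm:multiclass-mkv-limit}, and the uniqueness input comes from Proposition~\ref{prop:controlled-multiclass-wellposedness}.

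The argument splits each error by the triangle inequality:
\[
d_{\mathrm{BL}}(\mu_t^{k,N},\mu_t^k)\le d_{\mathrm{BL}}(\mu_t^{k,N},\bar\mu_t^{k,N})+d_{\mathrm{BL}}(\bar\mu_t^{k,N},\mu_t^k),
\qquad
|L_t^{k,N}-L_t^k|\le |L_t^{k,N}-\bar L_t^{k,N}|+|\bar L_t^{k,N}-L_t^k|.
\]
The second term of each split is a pure fluctuation term. The assumed quadratic bounded-Lipschitz fluctuation estimate gives $\mathbb E^0 d_{\mathrm{BL}}^2(\bar\mu_t^{k,N},\mu_t^k)\le C/N_k$. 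The bound $\mathbb E^0|\bar L_t^{k,N}-L_t^k|^2\le 1/N_k$ holds because $\bar L_t^{k,N}$ is an average of conditionally i.i.d. Bernoulli variables with mean $L_t^k$.

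The first terms are the coupling errors, which I will control through the quantity
\[
D(t):=\max_k\mathbb E^0\Bigl[\sup_{s\le t}\frac1{N_k}\sum_{g(i)=k}\bigl(|X_i^k-\bar X_i^k|^2\mathbf 1_{\{\text{both alive}\}}+\mathbf 1_{\{\text{exactly one killed by } s\}}\bigr)\Bigr].
\]
I will bound $D$ in three pieces.

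\begin{enumerate}
\item \emph{State difference on the joint survival event.} Itô's formula gives contributions from the mean-reversion term $\kappa_k|X-\bar X|^2$ and from the mean-field term $\kappa_k|m^{k,N}-m^k|^2$. The feedback contributes $|\Gamma|^2|L^{N}-L|^2$, and I treat it as a finite-variation term, using the boundedness of $\Gamma$. The mean-field discrepancy is controlled using the uniform second-moment bound together with $D$ and the fluctuation terms.
\item \emph{Mismatched killing.} Its rate is at most $\mathbb E^0\int_0^t|\lambda_k(X)-\lambda_k(\bar X)|\,ds\le L_\lambda\int_0^t\mathbb E^0|X-\bar X|\,ds$. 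Since this is first order and not quadratic, I will track the indicator in $D$ linearly rather than squared. The sum of the state and indicator parts then still closes after Cauchy--Schwarz, because the indicator is bounded by $1$.
\item \emph{Loss error.} The bound $|L^{k,N}-\bar L^{k,N}|\le\frac1{N_k}\#\{\text{mismatches}\}$ is dominated by $D$.
\end{enumerate}

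The assumed metric-level stability estimate gives
\[
\mathbb E^0 d_{\mathrm{BL}}^2(\mu_t^{k,N},\bar\mu_t^{k,N})\le C\bigl(D(t)+N_k^{-1}\bigr).
\]
Collecting the three pieces yields
\[
D(t)\le C\int_0^t D(s)\,ds+C/N_*,
\]
and Gronwall's inequality gives $D(t)\le C_T/N_*$ with $C_T$ independent of the common-noise path. Integrating over $W^0$ then yields \eqref{eq:quantitative-rate-main}. The first-order rates follow from Jensen's inequality.

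The main obstacle is step 2 combined with the killing jump in the loss feedback. The mismatch rate is linear in $|X-\bar X|$, while the state estimate is quadratic, so a naive combination produces a square-root loss that would degrade the rate to $N_*^{-1/2}$ in mean square. I will first close the estimate in the mixed functional above, which is first order in the state and zeroth order in the indicators. If that only yields a first-order Gronwall bound, I will fall back on the assumed quadratic stability estimate, which is explicitly in the hypotheses. Its role is to convert the mismatch count into a quadratic bound up to the $N_k^{-1}$ martingale term, so the quadratic rate follows from it directly rather than from the mismatch bound.
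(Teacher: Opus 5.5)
Your main route has a genuine gap at the closing step. The functional $D(t)$ contains the mismatch fraction $\frac1{N_k}\sum_{g(i)=k}\mathbf 1_{\{\text{exactly one killed by } s\}}$ to the first power. Under synchronous thinning, its expectation is of order $L_\lambda\int_0^t\mathbb E^0|X_i^k(s)-\bar X_i^k(s)|\,ds$. The state coupling error is itself of order $N_*^{-1/2}$, since it is driven by the measure and loss fluctuations. So for nonconstant $\lambda_k$ this term is generically of exact order $N_*^{-1/2}$. Hence $D(t)\le C_T/N_*$ is false, and the inequality $D(t)\le C\int_0^tD(s)\,ds+C/N_*$ cannot be derived. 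Cauchy--Schwarz only gives $L_\lambda\int_0^tD(s)^{1/2}\,ds$ for the indicator part, and bounding the indicator by $1$ does not convert this into $\int_0^tD(s)\,ds$.

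Step 3 has the same defect. Bounding $|L_t^{k,N}-\bar L_t^{k,N}|^2$ by the \emph{unsigned} mismatch fraction throws away the cancellation between particles killed in one system but not in the other. It gives only $O(N_*^{-1/2})$ in mean square. The first-order variant does close, but it yields first moments of order $N_*^{-1/2}$, hence mean-square bounds of order $N_*^{-1/2}$, not $N_*^{-1}$. Your fallback points in the right direction but does not close as written, for two reasons. You state the stability hypothesis as $C(D(t)+N_k^{-1})$ with your mixed $D$, which reintroduces the $N_*^{-1/2}$ term. And the loss is still routed through the mismatch count.

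The missing idea is to keep the mismatch count out of the Gronwall functional altogether. Close only in the quadratic quantities $E_k(t)=\mathbb E|X_i^{k,N}(t)-\bar X_i^k(t)|^2$, $D_k(t)=\mathbb E d_{\mathrm{BL}}^2(\mu_t^{k,N},\mu_t^k)$ and $V_k(t)=\mathbb E|L_t^{k,N}-L_t^k|^2$. The paper absorbs all alive/dead mismatch into the metric-level stability hypothesis, which is precisely $\mathbb E d_{\mathrm{BL}}^2(\mu_t^{k,N},\bar\mu_t^{k,N})\le C\int_0^t\bigl(E_k+\sum_\ell D_\ell+\sum_\ell V_\ell\bigr)\,ds+C/N_k$.

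For the loss, the paper uses the Doob--Meyer decomposition $L_t^{k,N}=M_t^{k,N}+\int_0^t\langle\lambda_k(\cdot,a_s^k),\mu_s^{k,N}\rangle\,ds$ with $\mathbb E|M_t^{k,N}|^2\le C\Lambda T/N_k$. It compares this with $L_t^k=\int_0^t\langle\lambda_k(\cdot,a_s^k),\mu_s^k\rangle\,ds$, and since $\lambda_k$ is bounded Lipschitz this gives $V_k(t)\le C/N_k+C\int_0^tD_k(s)\,ds$. The signed compensator comparison captures exactly the cancellation your mismatch count loses. More directly still, testing $d_{\mathrm{BL}}$ with $f\equiv1$ gives $|L_t^{k,N}-L_t^k|\le d_{\mathrm{BL}}(\mu_t^{k,N},\mu_t^k)$.

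Combine these with the state estimate $E_k(t)\le C\int_0^t\bigl(E_k+\sum_\ell D_\ell\bigr)\,ds+C\sum_\ell V_\ell(t)$ and the fluctuation input for $\bar\mu^{k,N}$. Gronwall on $E+D+V$ then gives $C_T/N_*$.

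Finally, work with the $d_{\mathrm{BL}}$-Lipschitz drift $b_k$ from the hypotheses of Theorem~\ref{thm:multiclass-mkv-limit}, not the concrete term $\kappa_k(m_t^{k,N}-X)$. The empirical mean is an unbounded functional, and with alive/dead mismatches present it is not controlled by $d_{\mathrm{BL}}$ plus second moments without truncation. That truncation loses a power of $N_*$.
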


This rate follows by tracking constants in the synchronous coupling argument for Theorem~\ref{thm:multiclass-mkv-limit} and by estimating the loss process through its Doob--Meyer martingale decomposition. Common noise changes only the conditional distribution and the constants; it does not change the empirical fluctuation order driven by the minimum type sample size. We state the quadratic bounded-Lipschitz empirical fluctuation input and the synchronous-thinning metric stability input explicitly because the killed-particle setting requires metric-level control of the bounded-Lipschitz supremum. Under the localized finite-entropy/Donsker condition recorded in Lemma~\ref{lem:sufficient-quadratic-inputs} and Remark~\ref{rem:localized-entropy-verification}, these inputs hold with constants independent of the type sample size, and the displayed estimate is an unconditional consequence after integrating over the common noise. The empirical one-dimensional $W_2^2$ diagnostics reported below are numerical stability diagnostics consistent with the same first-order $N_*^{-1/2}$ particle scale; the theorem itself is stated in the bounded-Lipschitz metric, where the alive/dead mismatch created by killing is controlled at the mean-square level.

\begin{remark}[Primitive sufficient condition for Theorem~\ref{thm:quantitative-rate}]
The abstract inputs in Theorem~\ref{thm:quantitative-rate} are verified under the localized finite-entropy condition for the killed-path bounded-Lipschitz class and the compact-state localization recorded in Lemma~\ref{lem:sufficient-quadratic-inputs} and Remark~\ref{rem:localized-entropy-verification}. In that primitive regime, the estimate in \eqref{eq:quantitative-rate-main} follows from the bounded-killing, Lipschitz, and finite-entropy assumptions rather than from an additional rate postulate.
\end{remark}

\begin{proposition}[Conditional same-type propagation of chaos]\label{prop:propagation-of-chaos-same-type}
Under the conditions of Theorem~\ref{thm:multiclass-mkv-limit}, fix a type $k$, a time $t\in[0,T]$, and any fixed finite index set $\{i_1,\ldots,i_m\}\subset\{i:g(i)=k\}$. Let $\widehat X_i^{k,N}(t)$ denote the lifted state that equals $X_i^{k,N}(t)$ while the particle is alive and equals the cemetery state $\partial$ after default, and define $\widehat\mu_t^k=\mu_t^k+L_t^k\delta_\partial$. Then, conditionally on the common-noise history,
\[
    \mathcal L\!\left(\widehat X_{i_1}^{k,N}(t),\ldots,\widehat X_{i_m}^{k,N}(t)\,\middle|\,\mathcal F_t^{W^0}\right)
    \Rightarrow (\widehat\mu_t^k)^{\otimes m}
\]
in probability. Equivalently, the finite collection becomes asymptotically independent after conditioning on the common noise. If $\sigma_0=0$, the random measure $\widehat\mu_t^k$ is deterministic, and this conditional statement reduces to the standard unconditional propagation of chaos.
\end{proposition}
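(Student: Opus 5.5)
The plan is to reuse the synchronous coupling behind Theorem~\ref{thm:multiclass-mkv-limit}. For each particle $i$ of type $k$ we introduce a limiting auxiliary particle $\bar X_i^k$. It shares with $X_i^{k,N}$ the initial state, the idiosyncratic Brownian motion $W_i^k$, the common noise $W^0$, and the Poisson thinning random measure $\mathcal N_i$. The drift of $\bar X_i^k$ is driven by the limit flow $(\mu^k,L^k)$ rather than the empirical one, and $\bar X_i^k$ is killed by thinning $\mathcal N_i$ at rate $\lambda_k(\bar X_i^k(t),a_t^k)$. We write $\widehat{\bar X}_i^k(t)$ for its lift to the cemetery state $\partial$.

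The first step is structural. The controls are adapted to the common-noise filtration, and $(\mu^k,L^k)$ is $\mathcal F^{W^0}$-adapted by the well-posedness in Proposition~\ref{prop:controlled-multiclass-wellposedness}. Hence, conditionally on $\mathcal F_T^{W^0}$, the auxiliary particles $(\widehat{\bar X}_i^k)_{i:g(i)=k}$ are i.i.d.; they are measurable functions of i.i.d.\ idiosyncratic inputs and a frozen common path. By construction of the limit, their common conditional time-$t$ law is $\mu_t^k+L_t^k\delta_\partial=\widehat\mu_t^k$. Because $\widehat\mu_t^k$ is $\mathcal F_t^{W^0}$-measurable and the future common increments are independent of the idiosyncratic inputs, conditioning on $\mathcal F_t^{W^0}$ instead of $\mathcal F_T^{W^0}$ gives the same product law. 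So $\mathcal L(\widehat{\bar X}_{i_1}^k(t),\ldots,\widehat{\bar X}_{i_m}^k(t)\mid\mathcal F_t^{W^0})=(\widehat\mu_t^k)^{\otimes m}$ exactly. If $\sigma_0=0$, $\widehat\mu_t^k$ is deterministic and this is the unconditional statement.

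The second step is a pairwise closeness estimate: for each fixed $i$,
\[
\mathbb E\bigl[d_\partial(\widehat X_i^{k,N}(t),\widehat{\bar X}_i^k(t))\wedge 1\bigr]\to0.
\]
Here $d_\partial$ is the metric on $\mathbb R\cup\{\partial\}$ that places $\partial$ at distance $1$ from every real point. The distance splits into two parts. The first is the state discrepancy on the event that both particles are alive. It is controlled by Gronwall through the Lipschitz drift, the difference $m_t^{k,N}-m_t^k$, and the loss-feedback difference $\sum_\ell\Gamma_{k\ell}(L^{\ell,N}-L^\ell)$, all of which vanish by Theorem~\ref{thm:multiclass-mkv-limit}. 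The second is the probability of a killing mismatch. Under the shared Poisson measure, two particles that are both alive separate only at the rate $|\lambda_k(X_i^{k,N},a)-\lambda_k(\bar X_i^k,a)|\le L_\lambda|X_i^{k,N}-\bar X_i^k|$. This rate is integrated in time and fed back into the same Gronwall loop.

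Exchangeability within type $k$ makes the bound independent of $i$. It is in fact the per-particle estimate already implicit in the proof of Theorem~\ref{thm:multiclass-mkv-limit}. The main obstacle I expect is that the empirical mean $m_t^{k,N}$ and the losses only converge in probability, not in $L^1$ with a rate. I would handle this by truncating on the event where $\sup_t(|m^{k,N}_t-m^k_t|+|L^{\cdot,N}_t-L^\cdot_t|)\le\delta$ and using the boundedness of $d_\partial\wedge1$ off that event. The uniform first moments supply the integrability needed to bound the mean term.

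The final step transfers closeness to conditional laws. For bounded Lipschitz test functions $\phi_1,\ldots,\phi_m$ on $(\mathbb R\cup\{\partial\})$, we have
\[
\mathbb E\Bigl|\,\mathbb E\Bigl[\prod_j\phi_j(\widehat X_{i_j}^{k,N}(t))\Bigm|\mathcal F_t^{W^0}\Bigr]-\prod_j\langle\widehat\mu_t^k,\phi_j\rangle\Bigr|
\le C\sum_j \mathbb E\bigl[d_\partial(\widehat X_{i_j}^{k,N}(t),\widehat{\bar X}_{i_j}^k(t))\wedge1\bigr]\to0,
\]
using the exact product identity from the first step. Products of such test functions are convergence determining for weak convergence of probability measures on the Polish product space. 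Taking a countable dense family of them therefore gives $d_{\mathrm{BL}}$-convergence of the conditional laws in probability. Equivalently, the finite collection becomes asymptotically independent given the common noise.
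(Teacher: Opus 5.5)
Your proposal is correct and follows essentially the same route as the paper. Both arguments use three ingredients: the exact conditional product law $(\widehat\mu_t^k)^{\otimes m}$ for the synchronously coupled limiting particles; per-particle closeness via $d_\partial(\widehat X_i^{k,N}(t),\widehat{\bar X}_i^k(t))\le|X_i^{k,N}(t)-\bar X_i^k(t)|\wedge1+|\xi_i^{k,N}(t)-\bar\xi_i^k(t)|$; and a Lipschitz transfer to conditional laws, closed with a countable convergence-determining class.

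The one point to tidy is the drift. Under the hypotheses of Theorem~\ref{thm:multiclass-mkv-limit}, it depends on the measures through $d_{\mathrm{BL}}$, not through an empirical mean $m_t^{k,N}$. Convergence of such a mean would not follow from $d_{\mathrm{BL}}$-convergence plus bounded first moments. With the $d_{\mathrm{BL}}$ dependence, the error terms are bounded, and the paper simply cites $\mathbb E\sup_{s\le T}|X_i^{k,N}(s)-\bar X_i^k(s)|\to0$ and $\mathbb E\sup_{s\le T}|\xi_i^{k,N}(s)-\bar\xi_i^k(s)|\to0$ from that proof, so your truncation step is unnecessary.
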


This proposition follows directly from Theorem~\ref{thm:multiclass-mkv-limit}, but it has an interpretive role in our setting: the finite-type matrix model averages individual heterogeneity within the same type, and its validity relies on this conditional exchangeability.

\section{Controlled contagion systems and optimal control characterization}
\label{sec:control-theory}

This section gives the well-posedness of the controlled model, the optimal control characterization for two populations, and the relation between the regularized killing mechanism and the hard absorbing boundary. Proposition~\ref{prop:killing-to-absorbing-boundary} is a singular-limit statement and is separate from the bounded-killing framework used in the main limit and control theorems. The controlled well-posedness and killed-HJB comparison proofs are given in Section~\ref{sec:control-proofs}; the longer primitive verification for the steep-killing bridge is given in Appendix~\ref{app:steep-killing-primitive}.

\begin{proposition}[Well-posedness of the controlled multi-population contagion system]\label{prop:controlled-multiclass-wellposedness}
Assume that the drift is Lipschitz in the state and measure variables with linear growth, the kill intensity is bounded and Lipschitz in the state variable, the control sets are compact, the contagion matrix is bounded, the diffusion coefficients are bounded, and the initial states have the required first or second moments. Then, for any common-noise adapted admissible type-level control, the controlled multi-population McKean--Vlasov contagion system has a unique solution. The loss process is a càdlàg nondecreasing process, and the solution is stable with respect to the initial distribution and the control flow.
\end{proposition}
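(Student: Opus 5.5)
The plan is a fixed-point argument on the type-level loss paths, carried out pathwise in the common noise. Fix a common-noise adapted admissible control $a=(a^1,\ldots,a^K)$. Let $\mathcal C$ be the space of $\mathbb F^{W^0}$-adapted, càdlàg, nondecreasing $[0,1]^K$-valued processes $\ell=(\ell^1,\ldots,\ell^K)$ with $\ell_0=0$. Equip $\mathcal C$ with the norm $\|\ell\|_t:=\mathbb E\sup_{s\le t}|\ell_s|$.

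Given $\ell\in\mathcal C$, and given a candidate conditional mean flow $m$, I will solve the linear killed SDE for a representative type-$k$ particle:
\[
dX^k_t=\bigl[\beta_k+\kappa_k(m^k_t-X^k_t)\bigr]dt+\sigma_k\,dW^k_t+\sigma_0\,dW^0_t-\sum_{\ell'}\Gamma_{k\ell'}\,d\ell^{\ell'}_t .
\]
The particle is killed at rate $\lambda_k(X^k_t,a^k_t)$, realised through a Poisson random measure with thinning. For the drift, I treat the measure argument as a general Lipschitz (in $W_1$) functional of the conditional alive law; the mean-reversion term is a special case. Since the jumps of $\ell$ enter additively, the SDE has a unique strong solution for fixed inputs.

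I then define the conditional alive measure
\[
\mu^k_t:=\mathcal L\bigl(X^k_t\mathbf 1_{\{\tau_k>t\}}\,\big|\,\mathcal F^{W^0}_t\bigr),
\]
taken as a sub-probability measure. The output map is $\Psi(\ell)^k_t:=1-\mu^k_t(\mathbb R)=\mathbb P(\tau_k\le t\mid\mathcal F^{W^0}_t)$. This equals
\[
1-\mathbb E\Bigl[\exp\Bigl(-\int_0^t\lambda_k(X^k_s,a^k_s)\,ds\Bigr)\,\Big|\,\mathcal F^{W^0}\Bigr],
\]
which is automatically continuous, nondecreasing, adapted and $[0,1]$-valued, so $\Psi$ maps $\mathcal C$ into itself. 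The measure/mean argument is handled by an inner Picard iteration (a standard Lipschitz McKean--Vlasov step), or jointly with $\ell$ in a product space.

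The key contraction estimate uses synchronous coupling. For two inputs $\ell,\tilde\ell$, I use the same Brownian motions, common noise and initial state. Because the killing is written as a survival exponential, no Poisson coupling mismatch appears at the level of conditional expectations. This gives
\[
|\Psi(\ell)_t-\Psi(\tilde\ell)_t|\le L_\lambda\int_0^t \mathbb E\bigl[|X_s-\tilde X_s|\,\big|\,\mathcal F^{W^0}\bigr]\,ds,
\]
using $|e^{-x}-e^{-y}|\le|x-y|$ and the Lipschitz bound on $\lambda$. The state difference satisfies
\[
\sup_{s\le t}|X_s-\tilde X_s|\le C\int_0^t\bigl(|X_s-\tilde X_s|+W_1\text{-error}_s\bigr)\,ds+\|\Gamma\|\sup_{s\le t}|\ell_s-\tilde\ell_s| .
\]
Gronwall then yields
\[
\|\Psi(\ell)-\Psi(\tilde\ell)\|_t\le C_T\int_0^t\|\ell-\tilde\ell\|_s\,ds .
\]
Iterating this bound, $\Psi^n$ is a contraction, as in a Picard scheme with factorial constants. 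This produces existence and uniqueness on $[0,T]$ with no small-time or small-$\Gamma$ restriction.

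The main obstacle I expect is the alive-measure part of the coupling. The bounded Lipschitz, resp.\ $W_1$, distance between sub-probability measures mixes location error with mass error. For this reason I will measure the measure argument via the lifted law $\widehat\mu^k$ of Proposition~\ref{prop:propagation-of-chaos-same-type}, with the cemetery placed at a fixed distance, and bound it by $\mathbb E|X-\tilde X|$ plus the survival-probability difference already controlled above. Uniqueness among all solutions, not only fixed points of $\Psi$, follows because any solution's loss process is necessarily $\Psi$ of itself. For stability, I rerun the same Gronwall estimate with different initial laws (coupled optimally in $W_1$, using the first-moment assumption, or second moments if the drift needs it) and with two controls $a,\tilde a$. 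The Lipschitz dependence of $\lambda_k$ on $a$ on the compact control set then contributes a term $\int_0^t\mathbb E|a_s-\tilde a_s|\,ds$.
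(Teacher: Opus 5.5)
\paragraph{Assessment.} Your argument is correct, and it takes a genuinely different route from the paper.

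\paragraph{The paper's route.} The paper runs one Banach fixed point for the joint map $(\nu,\ell)\mapsto(\mu,L)$ in the seminorm $\rho_t$, and couples the killing by synchronous Poisson thinning. It obtains a contraction only on a short interval $[0,\delta]$ and then concatenates, restarting from the conditional law at $\delta$. It also proves finite-particle well-posedness. Stability in the initial law and the control is derived only later, for $K=2$, inside the dynamic-programming proof.

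\paragraph{Your route and what it buys.} You freeze the loss path, solve the measure argument by a standard Lipschitz McKean--Vlasov step, and iterate only on $\ell$. The survival exponential makes the output loss a time integral of the state error. This gives the Volterra bound $\|\Psi(\ell)-\Psi(\tilde\ell)\|_t\le C_T\int_0^t\|\ell-\tilde\ell\|_s\,ds$. Consequently:
\begin{itemize}
\item the iterates contract on all of $[0,T]$, with no concatenation and no restart from a random $\mathcal F^{W^0}_\delta$-measurable initial law;
\item the same estimate, rerun with two initial laws or two controls, gives stability directly.
\end{itemize}

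The nesting also handles the additive feedback $-\Gamma\,d\ell$ better than the paper's estimate does. In the joint map, the state difference contains $C\sup_{r\le t}|\ell_r-\tilde\ell_r|$ with no factor $t$. So the output \emph{measure} error is bounded only by a multiple of $\|\Gamma\|$ times the input loss error, not by $O(t)$ times it. The displayed bound $R(t)\le CtD(t)+C\int_0^tR(s)\,ds$ therefore does not follow as written; it needs small $\|\Gamma\|$, a weighted metric, or a two-step iteration. In your scheme that term is absorbed by Gronwall inside the state estimate, and only the integrated kill rate is fed back. This is exactly why you need no smallness.

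\paragraph{Two small corrections.} First, your remark that the mean-reversion drift $\kappa_k(m^k_t-x)$ is a special case does not hold in the metric you then adopt. With the cemetery at a fixed distance the lifted metric is bounded. Moving mass $\varepsilon$ from a far alive point $x$ to $\partial$ then costs $O(\varepsilon)$ but shifts the mean by about $\varepsilon|x|$. As the paper does, simply assume Lipschitz dependence in $d_{\mathrm{BL}}$. Covering the mean term would require a cemetery at distance of order $1+|x|$ and a separate moment argument for the alive--dead mismatch weighted by $|X_s|$.

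Second, for stability in the control, Lipschitz dependence of $\lambda_k$ on $a$ is an extra hypothesis. A bound through $\int_0^t\mathbb E\|\lambda_k(\cdot,a^k_s)-\lambda_k(\cdot,\tilde a^k_s)\|_\infty\,ds$, plus the analogous drift term if $b_k$ depends on $a$, needs no regularity in $a$.
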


This proposition also supplies the uniqueness input invoked in Theorem~\ref{thm:multiclass-mkv-limit}. The proof constructs a map from measure flows and loss flows to themselves. The map is closed by a contraction argument on short time intervals, and a concatenation argument then gives a global solution on $[0,T]$. This proposition ensures that the optimization object in \eqref{eq:central-control-objective} is well defined.

\begin{theorem}[Two-population optimal control and conditional common-noise HJB characterization]\label{thm:two-pop-optimal-control-hjb-common-noise}\label{thm:killed-two-pop-comparison-principle}
Consider the controlled contagion system with $K=2$, compact controls $[0,\bar a]^2$, bounded Lipschitz killing, well-posed controlled martingale problem, nondegenerate alive-state idiosyncratic diffusions, continuous convex running costs, and bounded uniformly continuous terminal loss penalty. Then the common-noise central control problem admits an optimal relaxed control and its value is characterized by the decomposed killed-state HJB. Section~\ref{sec:control-proofs} proves comparison on the state space of alive sub-probability measures and cemetery masses, identified with $\mathcal M_{\le1,2}(\mathbb R)^2\times[0,1]^2$ subject to $|\nu^k|+L^k=1$. The alive measures are handled by the Wasserstein smooth-gauge/Ishii argument, the cemetery masses by finite-dimensional doubled-variable penalties, and the killing jump by a bounded Lipschitz absorption estimate. Under the standard chattering and measurable-selection conditions stated in the proof section, Section~\ref{sec:control-proofs}, the relaxed value agrees with the strict-control value when the Hamiltonian minimizer has a progressively measurable strict selector.

The viscosity characterization does not require classical differentiability. If a smooth verification is available locally, and for the quadratic running cost $c_k(a)=\frac12 r_k a^2$ in the population-weighted objective $\sum_k\pi_k c_k(a^k)$, the Hamiltonian minimizer satisfies the diagnostic condition
\[
    a_k^*(t)=\Pi_{[0,\bar a_k]}
    \left(
    \frac{\eta_k}{\pi_k r_k}
    \int_{\mathbb R}\lambda_k(x,a_k^*(t))\,\mathcal M_k(t,x,\boldsymbol\mu_t)\,\mu_t^k(dx)
    \right),
\]
where $\mathcal M_k$ is the complete marginal value of suppressing a type-$k$ killing event, including the direct cemetery-jump value and induced loss-feedback value. In the finite-dimensional projected HJB used numerically, this reduces formally to the pointwise fixed-point condition
\[
    a_k^*(t,x_1,x_2,L_1,L_2)=\Pi_{[0,\bar a_k]}
    \left(\frac{\eta_k\lambda_k(x_k,a_k^*)\,\mathcal M_k(t,x_1,x_2,L_1,L_2)}{\pi_k r_k}\right).
\]
\end{theorem}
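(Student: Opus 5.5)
The plan has four parts: reformulate the problem on the decomposed state space, obtain existence of an optimal relaxed control by compactness, prove comparison for the killed HJB, and derive the formulas for strict controls and the minimizer.

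\emph{Reformulation.} Using Proposition~\ref{prop:controlled-multiclass-wellposedness}, conditionally on $\mathcal F^{W^0}$ the controlled system is described by the state $S_t=(\nu_t^1,\nu_t^2,L_t^1,L_t^2)$. Here $\nu^k$ is the alive sub-probability measure, and the constraint $|\nu^k_t|+L^k_t=1$ holds because killing transfers exactly the mass $\int\lambda_k(x,a^k)\,\nu^k_t(dx)\,dt$ into the cemetery coordinate. Since the dynamics and cost are Markov in $S_t$ given common-noise adapted controls, the value $V(t,S)$ is well defined.

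\emph{Existence of an optimal relaxed control.} I would take a minimizing sequence of relaxed controls, i.e. measure-valued processes $q_t(da)$ on $[0,\bar a]^2$ adapted to $W^0$. The space of such controls is compact in the stable topology, and the joint laws of $(W^0,q,S)$ are tight: moments are uniform by Proposition~\ref{prop:controlled-multiclass-wellposedness}, and $L$ is Lipschitz in time because $\lambda\le\Lambda$. The stability part of Proposition~\ref{prop:controlled-multiclass-wellposedness}, together with the well-posed martingale problem, identifies every limit point as a controlled solution. The cost is lower semicontinuous because $\Phi$ is bounded and continuous and $c_k$ is convex and continuous. Hence the infimum is attained.

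\emph{Dynamic programming and viscosity property.} The dynamic programming principle follows from measurable selection on the canonical relaxed space. From it, $V$ is a viscosity solution of
\[
-\partial_tV-\inf_{a}\Bigl\{\mathcal L^a_{\mathrm{alive}}V+\sum_k\Bigl(\int\lambda_k(x,a^k)\nu^k(dx)\Bigr)\partial_{L^k}V-\int\lambda_k(x,a^k)\,\nu^k(dx)\,[\cdot]+\sum_k\pi_kc_k(a^k)\Bigr\}=0 .
\]
In this equation:
\begin{itemize}
\item $\mathcal L^a_{\mathrm{alive}}$ contains the Lions-derivative terms, namely drift, idiosyncratic diffusion, the common-noise second-order cross terms, and the loss-feedback drift $-\sum_\ell\Gamma_{k\ell}\,dL^\ell$;
\item the killing term acts on $\nu^k$ as removal of mass at rate $\lambda_k$.
\end{itemize}
Test functions are taken in the smooth-gauge class of \cite{CossoGozziKharroubiPhamRosestolato2024}, extended to sub-probability measures, times $C^2$ functions of $(L^1,L^2)$.

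\emph{Comparison, the main step.} Let $u$ be a subsolution and $v$ a supersolution. I would double variables and consider
\[
u(t,\nu,L)-v(t,\nu',L')-\tfrac1\varepsilon\rho_F(\nu-\nu')^2-\tfrac1{2\varepsilon}|L-L'|^2-\delta\,\text{(gauge/time penalty)}.
\]
Here $\rho_F$ is a smooth gauge, for instance a Fourier-based metric equivalent to $d_{\mathrm{BL}}$/$W_2$ on the compact set $\mathcal M_{\le1,2}$. A Borwein--Preiss variational principle provides maximizers, and Ishii's lemma is applied in the finite-dimensional projections. Subtracting the two viscosity inequalities leaves three groups of terms.
\begin{enumerate}
\item The alive diffusion and drift terms are handled as in the cited smooth-gauge argument, with nondegeneracy used to control the second-order cross terms.
\item The cemetery drift $\int\lambda\,d\nu\,\partial_LV$ produces
\[
\tfrac1\varepsilon(L-L')\Bigl(\int\lambda\,d\nu-\int\lambda\,d\nu'\Bigr)\le\tfrac{C}{\varepsilon}|L-L'|\,\|\lambda\|_{\mathrm{BL}}\,d_{\mathrm{BL}}(\nu,\nu'),
\]
which is absorbed by Young's inequality into the two penalties. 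This is the \emph{killing-jump absorption estimate}.
\item The mass-removal term acting on $\nu$ is controlled by the same bounded Lipschitz bound applied to the gauge derivative.
\end{enumerate}
Letting $\varepsilon,\delta\to0$ then gives $u\le v$. I expect the hardest part to be group 3: the gauge must remain smooth and coercive under non-conservative mass loss. I would first try a gauge that is linear in total mass plus the Fourier distance on normalized measures. The restriction to $K=2$ enters only through constants in this absorption.

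\emph{Strict controls and the minimizer formula.} Under the chattering and measurable-selection conditions, a progressively measurable strict selector of the Hamiltonian minimizer gives equality of the relaxed and strict values, by the chattering lemma together with the uniqueness supplied by comparison. If $V$ is smooth, the pointwise first-order condition in $a^k$ gives the stated formula. Differentiating $\exp(-\eta_ka)$ produces the factor $\eta_k\lambda_k$, and the marginal value $\mathcal M_k$ collects $\partial_{L^k}V$, the removal derivative, and the feedback effect through $\Gamma$. Projection onto $[0,\bar a_k]$ accounts for the box constraint. In the finite-dimensional projection the same computation gives the pointwise version.
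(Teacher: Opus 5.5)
The genuine gap is in the comparison step, at your group~3. Your existence, dynamic-programming and first-order-condition steps follow the paper's proof: compact relaxed controls with Prokhorov, stability of the controlled flow to identify limits, lower semicontinuity, chattering, It\^o on test functions, and differentiation of $e^{-\eta_k a}$ with $\mathcal M_k$ collecting the cemetery jump and the $\Gamma$-feedback.

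Group~3 is exactly the term the paper treats as its new ingredient. You give no argument for the alive mass-removal term, and the one you indicate does not close. Bounded-Lipschitz duality bounds the sink difference by $\varepsilon^{-1}\|\lambda_k(\cdot,a^k)\,g\|_{\mathrm{BL}}\,d_{\mathrm{BL}}(\nu_\varepsilon,\nu'_\varepsilon)$, where $\varepsilon^{-1}g$ is the jet of your penalty. For a Hilbertian Fourier gauge $\rho_F(m)^2=\langle Km,m\rangle$ with $K=(1-\partial_{xx})^{-s}$, one has $g=2K(\nu-\nu')$. Its Lipschitz constant is not controlled by $\rho_F(\nu-\nu')$ once $\nu-\nu'$ has atoms. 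So you only get $O(\varepsilon^{-1}d_{\mathrm{BL}})$, while the maximum-point estimate controls only $\varepsilon^{-1}\rho_F^2$; nothing prevents $\varepsilon^{-1}d_{\mathrm{BL}}$ from blowing up.

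What is needed is a bound \emph{quadratic} in the distance, which requires pairing the sub- and supersolution jets. The paper does this with an optimal coupling $\pi^k_\varepsilon$ of the augmented probability measures $\bar\nu^k_\varepsilon,\bar\omega^k_\varepsilon$ on $\mathsf E$, where $\bar\nu=\nu+(1-|\nu|)\delta_\partial$. This also resolves your worry about non-conservative mass: unequal alive masses are coupled to the cemetery, where $\lambda_k(\partial,a)=0$. With the same control on both sides, the two jets share the representative $(x-y)/\varepsilon$ on the support of the coupling, so
\[
|I^S_k|\le\int_{\mathsf E\times\mathsf E}\bigl|\lambda_k(y,a^k)-\lambda_k(x,a^k)\bigr|\,\frac{d_\partial(x,y)}{\varepsilon}\,\pi^k_\varepsilon(dx,dy)\le C_R(L_x+\Lambda)\,\frac{W_2^2(\bar\nu^k_\varepsilon,\bar\omega^k_\varepsilon)}{\varepsilon}.
\]
Here $C_R(L_x+\Lambda)$ is the localized Lipschitz constant of $\lambda_k(\cdot,a)$ on $[-R,R]\cup\{\partial\}$, including the alive--cemetery jump. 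Since $\Theta^k_\varepsilon\ge W_2^2/(2\varepsilon)-\rho^k_\varepsilon$ and $\Theta^k_\varepsilon(\nu^k_\varepsilon,\omega^k_\varepsilon)\to0$, this term vanishes. If you want to keep the Hilbertian gauge, the analogous repair is $H^s$--$H^{-s}$ duality instead of bounded-Lipschitz duality. The sink difference is $2\varepsilon^{-1}|\langle\lambda_k(\cdot,a^k)\,Km,m\rangle|$ with $m=\nu-\nu'$. For $s\in(\frac12,1]$ a bounded Lipschitz $\lambda_k(\cdot,a)$ is an $H^s$-multiplier, giving the bound $\le C\varepsilon^{-1}\rho_F(m)^2$. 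Your proposal contains neither mechanism.

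Two further points need fixing. First, $\mathcal M_{\le1,2}(\mathbb R)$ is not compact, and on it $d_{\mathrm{BL}}\le C\rho_F$ fails. Spreading unit mass over $n$ well-separated atoms and shifting each by one unit gives $\rho_F=O(n^{-1/2})$ while $d_{\mathrm{BL}}$ stays of order one. Your Young's-inequality absorption in group~2 needs exactly this inequality to turn $\varepsilon^{-1}d_{\mathrm{BL}}^2$ into the penalty. The maximum-point estimate also needs $u,w$ to be uniformly continuous for $\rho_F$. Both require the localization at radius $R$ and the coercive second-moment penalty $\delta\mathcal R$ that the paper builds in. With those in place, your single-parameter treatment of the loss drift is a legitimate variant of the paper's. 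The paper instead keeps a separate penalty $\frac{\beta}{2}|L-M|^2$, uses $\mathsf W_2(\nu^k_\varepsilon,\omega^k_\varepsilon)\to0$ with $\beta|L^k_\varepsilon-M^k_\varepsilon|\le\beta$ at fixed $\beta$, and sends $\beta\to\infty$ only after $\varepsilon\downarrow0$ and $\delta\downarrow0$.

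Second, in group~1 you cannot quote the estimates of \cite{CossoGozziKharroubiPhamRosestolato2024} ``as in the cited argument'' while using a Fourier gauge. Those estimates are tied to their Wasserstein-type gauge. With $\rho_F$ you would have to re-derive the $x$-dependent drift, the $\rho_F$-Lipschitz dependence of $b_k$ and $\Lambda_\ell$ on the measures, and the common-noise second-order terms. The consistent choice is the paper's: a $W_2$-based smooth gauge throughout, with group~3 handled by the coupling argument above.
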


Under the stated comparison, relaxed-control, and selector inputs, this theorem places the rule-based controls used in this paper within a stochastic control framework. In the numerical section, we solve the finite-dimensional projection HJB for $K=2$ and compare the resulting feedback control with rule-based controls. This projection is the computational counterpart of the measure-valued characterization. The comparison principle is proved for the bounded regularized killing specification with $K=2$; extension to general $K$ requires a $K$-fold product Wasserstein smooth-gauge construction and a complete doubled-variable maximum argument for all cross-type terms. The numerical HJB benchmark is correspondingly kept at $K=2$, a natural core--periphery partition in which systemically important institutions and other banks form the two interacting populations.

\begin{proposition}[Steep killing and absorbing-boundary default]\label{thm:primitive-steep-bridge}\label{prop:killing-to-absorbing-boundary}
The regularized killing mechanism used in the paper has a steep-intensity limit that selects an absorbing-boundary model. Appendix~\ref{app:steep-killing-primitive} verifies the bridge under a localized one-dimensional boundary-regularity package: compact localization, bounded Lipschitz frozen coefficients, nondegenerate idiosyncratic diffusion near the boundary, uniform killed-diffusion boundary estimates, and an absorbing no-atom modulus. The proof combines a Volterra short-time contraction, concatenation, local stability of the absorbing fixed point, and uniform convergence of the driven steep-killing maps on compact localized classes.

If $\lambda_0^{(n)}(x)=\lambda_{\mathrm{base}}\exp(n(x_b-x)^+)$, the selected limit is absorbing-boundary default together with baseline Cox killing above the boundary. For a pure hard absorbing boundary with no baseline Cox killing above the boundary, one can instead use $\widetilde\lambda_0^{(n)}(x)=\lambda_{\mathrm{base}}\{\exp(n(x_b-x)^+)-1\}$, or let the above-boundary baseline intensity vanish with $n$.
\end{proposition}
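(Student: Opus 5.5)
The plan is to prove the steep-killing bridge by casting both the steep-killing systems and the absorbing-boundary system as fixed points of maps on loss flows, and then passing to the limit in the fixed-point equation. The localization is one-dimensional and the coefficients are frozen.

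\textbf{Step 1: the driven maps.} Fix a compact localization and freeze the coefficients. For a given continuous nondecreasing loss path $\ell$ on $[0,T]$, with common noise fixed pathwise, let $X^\ell$ be the one-dimensional diffusion driven by $\ell$ through the contagion term. Define:
\begin{itemize}
\item the steep-killing map $\Psi_n(\ell)_t := 1-\mathbb E\bigl[\exp\{-\int_0^t\lambda^{(n)}(X^\ell_s)\,ds\}\bigr]$;
\item the absorbing map $\Psi_\infty(\ell)_t := \mathbb P(\tau^\ell\le t)$, with $\tau^\ell$ the first hitting time of $x_b$ by $X^\ell$, combined with the baseline Cox killing above the boundary for the first intensity family.
\end{itemize}
The steep-killing loss $L^{(n)}$ is a fixed point of $\Psi_n$, and the absorbing-boundary loss $L$ is a fixed point of $\Psi_\infty$.

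\textbf{Step 2: uniform convergence of the maps.} I will show $\Psi_n\to\Psi_\infty$ uniformly on the compact localized class of loss paths. Nondegeneracy near the boundary means that once $X$ crosses into $\{x<x_b\}$ it spends positive time at depth $\varepsilon$ almost immediately. Because $\exp(n(x_b-x)^+)$ blows up there, killing occurs before time $\tau^\ell+\delta$ with probability tending to one. Conversely, on $\{x\ge x_b\}$ the intensity equals $\lambda_{\mathrm{base}}$, which yields the Cox term. The uniform killed-diffusion boundary estimates convert these two statements into a modulus that is uniform in $\ell$.

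For the pure-absorption variant, $\widetilde\lambda^{(n)}$ vanishes above the boundary, so the Cox term disappears. The same holds if the baseline intensity is sent to zero with $n$.

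\textbf{Step 3: Volterra contraction and concatenation.} I will establish a Volterra-type short-time estimate
\[
\sup_{s\le t}|\Psi_\infty(\ell)_s-\Psi_\infty(\ell')_s|\le \theta\sup_{s\le t}|\ell_s-\ell'_s|,\qquad \theta<1,
\]
for $t$ small on the localized class. The mechanism is the bounded contagion matrix combined with the absorbing no-atom modulus, which controls how a shift in $\ell$ moves the hitting-time distribution. This gives local stability of the absorbing fixed point:
\[
\|L^{(n)}-L\|\le (1-\theta)^{-1}\|\Psi_n(L^{(n)})-\Psi_\infty(L^{(n)})\|\to0.
\]
Concatenating over a finite partition of $[0,T]$ then gives global convergence. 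Convergence of the alive measures follows from this and from convergence of the driven diffusions.

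\textbf{Main obstacle.} I expect Step 3 to be the hardest part, specifically the Lipschitz dependence of the hitting-time law on the driving loss path. Absorbing-boundary contagion can exhibit blow-ups, and the contraction must be uniform across the concatenation steps. My first approach is to use the no-atom modulus to bound the boundary density on short intervals. If that bound is insufficient, I would restrict to the localized class where that modulus yields $\theta<1$, and treat uniqueness only within that class.
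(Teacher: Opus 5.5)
Your architecture matches the paper's. You use driven maps, uniform convergence $\Psi_n\to\Psi_\infty$ from post-hit occupation, a Volterra stability estimate for the absorbing map, and the residual bound $\|L^{(n)}-L\|\lesssim\|\Psi_n(L^{(n)})-\Psi_\infty(L^{(n)})\|$. However, the mechanism you propose does not close the step you flag as the main obstacle.

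\textbf{The Lipschitz estimate.} The absorbing no-atom modulus (P5), $\mathbb P(t<\tau\le t+h)\le\omega(h)$, is temporal regularity of the default time. Lipschitz dependence of $\Psi_\infty$ on $\ell$ is a spatial question. Replacing $\ell'$ by $\ell$ shifts the driven path by up to $\|\Gamma\|\sup_{s\le t}|\ell_s-\ell'_s|$, so the hitting probability changes by the mass of the running minimum in a window of that width above $x_b$. A bare modulus $\omega$ gives no bound on that mass proportional to the window width. Whatever bound one can extract is governed by how much mass sits near the boundary, not by the length of the time interval, so it cannot give $\theta<1$ by shrinking $t$.

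The paper uses a different input: the killed-semigroup boundary-gradient bound in (P4), $\|\partial_xP^{k,z'}_{s,t}f\|_\infty\le C(t-s)^{-1/2}\|f\|_\infty$. It inserts this into a Duhamel formula for $P^{k,z}_{0,t}f-P^{k,z'}_{0,t}f$, which gives the causal estimate $\mathcal D_t(\Phi^{\mathrm{abs}}(z),\Phi^{\mathrm{abs}}(z'))\le C\int_0^t(1+(t-s)^{-1/2})\mathcal D_s(z,z')\,ds$. Short-time smallness then comes from integrability of the kernel, and the global stability constant $C_{\mathrm{st}}$ comes from the Volterra resolvent. For a convergence statement, the resolvent is what handles the whole horizon, because $L^{(n)}$ and $L$ already differ before each restart time. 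The paper uses restart-and-concatenate only for uniqueness. Your fallback of restricting to a class where $\theta<1$ is circular unless you show that the steep fixed points belong to that class.

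\textbf{Eventual entrance.} In the paper, the stability estimate holds only on a small neighborhood $\mathcal U$ of $z^{\mathrm{abs}}$, and the uniform map convergence holds only on compact input classes. You evaluate both at $L^{(n)}$. Since $\lambda^{(n)}$ is of order $e^{Cn}$ on the localized interval, the steep losses $L^{(n)}$ have no uniform-in-$n$ modulus of continuity. Nothing a priori places them in a sup-norm compact class, let alone in $\mathcal U$.

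The paper closes this with an eventual-entrance argument, in four steps:
\begin{enumerate}
\item Helly selection for the monotone losses.
\item Identification of every subsequential limit as a fixed point of $\Phi^{\mathrm{abs}}$, via the uniform clock-mismatch estimate.
\item Localized uniqueness of the absorbing fixed point, from the Volterra contraction plus concatenation.
\item P\'olya's theorem, using continuity of the absorbing loss from (P5), to upgrade Helly convergence to uniform convergence, so that $z^{(n)}\in\mathcal U$ for large $n$.
\end{enumerate}
Only then does the paper apply the residual inequality. Without this step, or a proof that both of your estimates hold uniformly over all nondecreasing inputs, your final inequality cannot be invoked.

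\textbf{Loss-only maps.} Iterating on loss paths alone is legitimate only if the drift ignores the alive measures. In the model it does not: there is mean reversion to $m_t$, and $b_k$ depends on $\mu$. This is why the paper's maps act on measure--loss inputs $z$ with the metric $\mathcal D_t$.
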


\section{Particle convergence diagnostics}
\label{sec:convergence-numerics}

Figure~\ref{fig:theorem2-n-convergence} tests the quantitative convergence rate in Theorem~\ref{thm:quantitative-rate}. The horizontal axis is the minimum particle number across types, $N_*$, and the vertical axes report terminal loss MSE and a one-dimensional empirical $W_2^2$ diagnostic of the lifted empirical measure. The theorem is stated in the bounded-Lipschitz metric; the empirical $W_2^2$ panel is used only as a numerical stability diagnostic for the same first-order particle scale. The observed curves follow the $C_T/N_*$ reference order, and Table~\ref{tab:theorem2_convergence_constants} reports log--log slopes and fitted constants. Slopes near $-1$ at the MSE level are consistent with the $N_*^{-1/2}$ RMSE rate in Theorem~\ref{thm:quantitative-rate}.

\begin{figure}[H]
    \centering
    \includegraphics[width=0.78\textwidth]{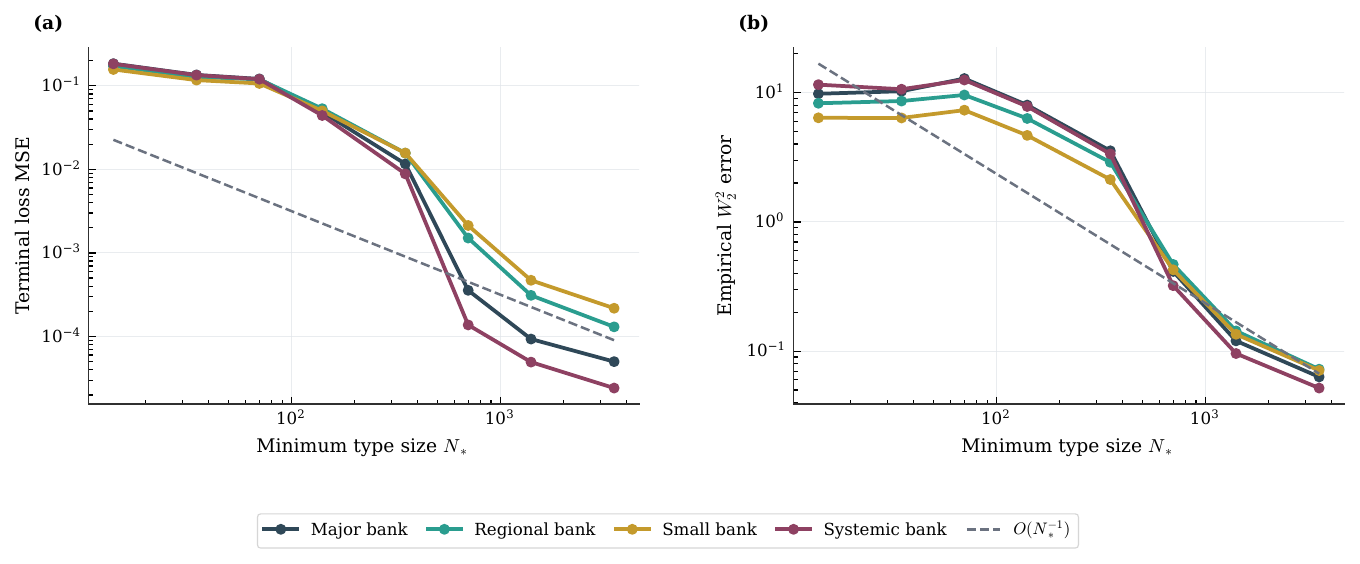}
    \caption{Particle-number convergence validation for Theorem~\ref{thm:quantitative-rate}.}
    \label{fig:theorem2-n-convergence}
\end{figure}

\begin{table}[H]
\centering
\caption{Log--log convergence regressions for Theorem~\ref{thm:quantitative-rate}.}
\label{tab:theorem2_convergence_constants}
\small
\begin{tabular}{llrrrr}
\toprule
Type & Metric & Slope & SE & $R^2$ & $C_T$ \\
\midrule
major bank & $E|L_T^{k,N}-L_T^k|^2$ & -1.730 & 0.234 & 0.901 & 3.399 \\
major bank & $E W_2^2(\widehat\mu_T^{k,N},\widehat\mu_T^k)$ & -1.053 & 0.187 & 0.840 & 554.6 \\
regional bank & $E|L_T^{k,N}-L_T^k|^2$ & -1.455 & 0.183 & 0.913 & 3.766 \\
regional bank & $E W_2^2(\widehat\mu_T^{k,N},\widehat\mu_T^k)$ & -0.974 & 0.164 & 0.855 & 470.8 \\
small bank & $E|L_T^{k,N}-L_T^k|^2$ & -1.329 & 0.160 & 0.920 & 3.648 \\
small bank & $E W_2^2(\widehat\mu_T^{k,N},\widehat\mu_T^k)$ & -0.922 & 0.150 & 0.864 & 369.6 \\
systemically important bank & $E|L_T^{k,N}-L_T^k|^2$ & -1.902 & 0.266 & 0.895 & 3.149 \\
systemically important bank & $E W_2^2(\widehat\mu_T^{k,N},\widehat\mu_T^k)$ & -1.123 & 0.188 & 0.856 & 527.4 \\
\bottomrule
\end{tabular}
\end{table}

\section{Two-population HJB feedback and sensitivity diagnostics}
\label{sec:hjb-numerics}

Theorem~\ref{thm:two-pop-optimal-control-hjb-common-noise} gives the HJB characterization and smooth-verification first-order condition for the controlled McKean--Vlasov system with $K=2$ under common noise. For a finite numerical check, we project the state to $(x_1,x_2,L_1,L_2)$, use controls in $[0,\bar a]^2$, solve the resulting finite-difference HJB by backward induction, and validate the stored feedback in a forward $K=2$ core--periphery particle system. The diffusion terms in the two alive-state coordinates are handled by semi-implicit steps, the common-noise cross term is retained explicitly, and the killing term is treated as a one-way semi-Lagrangian jump in the loss coordinates. This construction is a finite-dimensional diagnostic for the Hamiltonian structure; it is not used as an additional theorem.

The main policy comparison is based on direct forward validation. In the representative two-population system, no control and the representative graph-pressure threshold rule both have mean terminal loss about $0.362$, 95\% loss about $0.375$, and cascade probability $1.000$. Type-uniform control reduces the 95\% loss to $0.211$ and cascade probability to $0.455$ at cost $0.0143$. The HJB feedback further reduces the 95\% loss to $0.202$ and cascade probability to $0.095$ at cost $0.0162$. In a larger forward system with $N=100{,}000$ and 200 replications, the stored $N_x=160,N_L=80$ feedback gives mean terminal loss $0.1569$, 95\% loss $0.1638$, cascade probability $0.000$, and mean control cost $0.0087$. This forward evaluation is the primary numerical conclusion: value-level discretization error is driven mainly by loss-coordinate projection and killing-jump interpolation, so raw grid values $V_0$ are treated as discretization diagnostics rather than as stable policy values.

The multi-grid diagnostic records this discretization issue. Along the coupled grid sequence $N_L=N_x/2$, $V_0$ falls from $0.1713$ to $0.0459$, the initial control falls from $(1.625,1.750)$ to $(0.625,0.625)$, and the average control cost falls from $0.0680$ to $0.0087$. The Richardson-type diagnostic gives an exponent $p\simeq0.476$, indicating that loss-coordinate projection and semi-Lagrangian interpolation dominate the four-dimensional feedback error. Asymmetric-refinement and fixed-loss-slice checks support the same interpretation; these grid-level diagnostics are kept outside the main text so that the numerical section focuses on direct forward validation.

Figure~\ref{fig:hjb-policy-heatmaps} shows representative control slices at a fixed loss state. Control is concentrated in high-pressure regions, with the core-type intervention increasing earlier as system pressure rises. This pattern is consistent with the marginal-benefit term in the Hamiltonian: when a type and state region have larger marginal contribution to future loss, the feedback tilts intervention resources toward that region. A separate recalibration of graph-pressure threshold control in a large-scale $K=2$ forward system reaches 95\% loss $0.2025$ and cascade probability $0.205$ at cost $0.0300$, while the HJB feedback reaches 95\% loss $0.2034$ and cascade probability $0.120$ at cost $0.0166$. The comparison is included only to benchmark the low-dimensional feedback; node-level pressure information in sparse graphs is outside the scope of the present paper.

\begin{figure}[!htbp]
    \centering
    \includegraphics[width=0.82\textwidth]{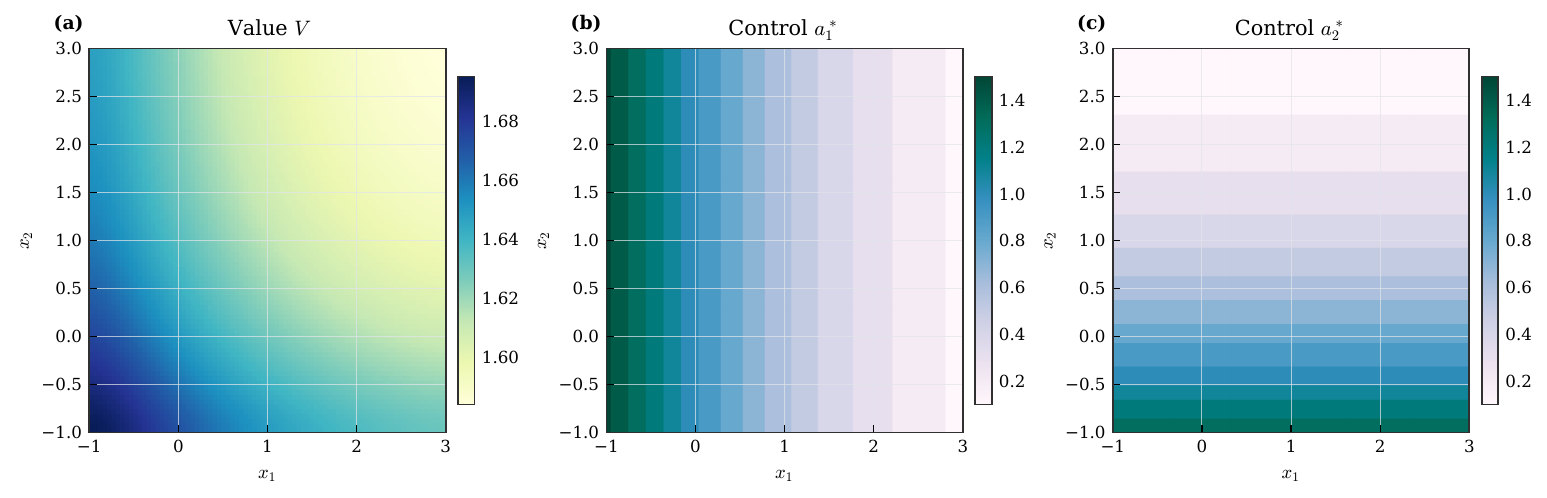}
    \caption{$(x_1,x_2)$ slices of $K=2$ HJB feedback control at a fixed loss state.}
    \label{fig:hjb-policy-heatmaps}
\end{figure}

Killing-parameter sensitivity is summarized at the exposure multiplier used for the high-pressure regime. Increasing the killing-function slope $\gamma$ or the baseline kill intensity $\lambda_{\mathrm{base}}$ raises mean and tail losses until cascade probabilities saturate. At $\gamma=5$, cascade probabilities remain close to zero for $\lambda_{\mathrm{base}}\leq0.05$ and jump to $1.000$ at $0.10$, with 95\% loss rising from $0.101$ to $0.259$. For $\gamma=10$, saturation already occurs once $\lambda_{\mathrm{base}}\geq0.03$, and for $\gamma\geq15$ all reported baseline intensities generate cascade probability $1.000$. Thus the effect of the soft killing mechanism is visible below the cascade threshold and becomes saturated after the system enters the high-pressure region.

\section{Proofs of the main analytical results}
\label{sec:main-analytical-proofs}

\subsection{Mean-field limit, quantitative rate, and propagation of chaos}
\label{sec:mkv-proofs}

\begin{lemma}[Conditional empirical LLN for killed diffusion paths]
\label{lem:conditional-killed-empirical-lln}
Fix a type \(k\). Given the common-noise path \(W^0\) and a common-noise adapted type-level control, the limiting killed particles
 \((\bar X_i^k,\bar\xi_i^k,\bar\tau_i^k)_{i\ge1}\)
are conditionally independent and identically distributed. Assume that the conditional law of \(\bar\tau_i^k\) has no fixed-time atoms on \([0,T]\), which is satisfied for the bounded Cox killing construction used here. Then
 \(\mathbb E\sup_{t\le T} d_{\mathrm{BL}}(\widetilde\mu_t^{k,N},\mu_t^k) \longrightarrow0,\)
and
 \(\mathbb E\sup_{t\le T} |\widetilde L_t^{k,N}-L_t^k| \longrightarrow0 .\)
If the conditional empirical process of the killed paths satisfies the standard quadratic bounded-Lipschitz fluctuation bound, then the same argument gives the corresponding \(N_k^{-1}\) mean-square estimate used in Theorem~\ref{thm:quantitative-rate}.
\end{lemma}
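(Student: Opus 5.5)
The plan is to work conditionally on the common-noise path. The objects are
\[
\widetilde\mu_t^{k,N}=\frac1{N_k}\sum_{i=1}^{N_k}\delta_{\bar X_i^k(t)}\mathbf 1_{\{\bar\tau_i^k>t\}},
\qquad
\widetilde L_t^{k,N}=\frac1{N_k}\sum_{i=1}^{N_k}\mathbf 1_{\{\bar\tau_i^k\le t\}},
\]
the auxiliary empirical measure and loss built from the limiting particles. Since the control is type-level and adapted to the common-noise filtration, once $W^0$ is fixed the inputs to particle $i$ are fixed deterministic (or $W^0$-measurable) flows of control, measure and loss. The remaining randomness is the i.i.d.\ triple consisting of initial state, idiosyncratic Brownian motion and Poisson thinning measure. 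Hence the killed paths $(\bar X_i^k,\bar\tau_i^k)$ are conditionally i.i.d., with common conditional law whose time-$t$ alive marginal is $\mu_t^k$ and whose killed mass is $L_t^k$. This gives the first assertion.

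\textbf{Loss process.} The function $t\mapsto \widetilde L_t^{k,N}$ is the empirical distribution function of the conditionally i.i.d.\ times $\bar\tau_i^k\wedge(T+)$. Its conditional mean is $t\mapsto L_t^k=\mathbb P(\bar\tau^k\le t\mid W^0)$. The Glivenko--Cantelli theorem gives $\sup_t|\widetilde L_t^{k,N}-L_t^k|\to0$ almost surely, conditionally. The quantitative form is the DKW inequality, $\mathbb E[\sup_t|\cdot|\mid W^0]\le C N_k^{-1/2}$ with a universal $C$, and this holds with or without atoms. The no-atom assumption will be used below for the measure part. Since the quantity is bounded by $1$, integrating over $W^0$ by dominated convergence gives the unconditional statement.

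\textbf{Measure part.} I will lift to path space. Let $E=D([0,T];\mathbb R\cup\{\partial\})$, or work with the pair (continuous path, killing time) in $C([0,T])\times[0,T]$. Consider the empirical measure $\Xi^N$ of the lifted paths, whose conditional law $\Xi$ is tight because of the uniform first moments and Lipschitz drift. The conditional Varadarajan theorem gives $\Xi^N\Rightarrow\Xi$ almost surely. Set
\[
F_t(\Xi)=\int f(\omega(t))\mathbf 1_{\{\tau>t\}}\,\Xi(d\omega,d\tau),
\]
so that $\widetilde\mu_t^{k,N}(f)=F_t(\Xi^N)$ for bounded Lipschitz $f$ with $\|f\|_{\mathrm{BL}}\le1$.

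\textbf{Main obstacle.} The hard step is uniformity in $t$ and the supremum over the bounded-Lipschitz class simultaneously, because the indicator $\mathbf 1_{\{\tau>t\}}$ is discontinuous. I would handle this in three moves.

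First, I localize. Tightness of the paths in $C([0,T])$ gives a compact set $K_\varepsilon$, an Arzelà--Ascoli class, carrying conditional mass at least $1-\varepsilon$. On $K_\varepsilon$ the paths are equicontinuous, and the class $\{f:\|f\|_{\mathrm{BL}}\le1\}$ restricted to a compact interval is totally bounded in sup norm. So there is a finite $\varepsilon$-net $f_1,\dots,f_M$ and a time grid $t_0<\dots<t_J$ of mesh $\delta$.

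Second, I bound the error at off-grid times. For $t\in[t_j,t_{j+1}]$, the difference between the value at $t$ and the value at $t_j$ is bounded by two terms: the modulus of continuity on $K_\varepsilon$, plus the fraction of particles killed in $(t_j,t_{j+1}]$, which is $\widetilde L^{k,N}_{t_{j+1}}-\widetilde L^{k,N}_{t_j}$. By the loss step, this increment converges uniformly to $L_{t_{j+1}}^k-L_{t_j}^k$. The no-fixed-atom assumption, and in fact the bounded intensity $\Lambda$, make this limit at most $\Lambda\delta$.

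Third, at the finitely many grid pairs $(f_m,t_j)$ the ordinary conditional strong LLN applies.

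Sending $N\to\infty$, then $\delta,\varepsilon\to0$, gives $\sup_t d_{\mathrm{BL}}(\widetilde\mu_t^{k,N},\mu_t^k)\to0$ conditionally almost surely. This quantity is bounded by $2$, so bounded convergence over $W^0$ gives the stated expectation limit.

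\textbf{Quadratic estimate.} Under the assumed quadratic bounded-Lipschitz fluctuation bound
\[
\mathbb E\Bigl[\sup_{\|f\|_{\mathrm{BL}}\le1}|\widetilde\mu_t^{k,N}(f)-\mu_t^k(f)|^2\,\Big|\,W^0\Bigr]\le C/N_k,
\]
with $C$ independent of $W^0$, the mean-square estimate follows by integrating over $W^0$. Combining with the mean-square bound $\mathbb E[|\widetilde L_t^{k,N}-L_t^k|^2\mid W^0]=L_t^k(1-L_t^k)/N_k\le 1/(4N_k)$ gives the $N_k^{-1}$ input required in Theorem~\ref{thm:quantitative-rate}.
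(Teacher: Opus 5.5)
Your argument is correct and follows the same skeleton as the paper's proof: condition on $\mathcal F_T^{W^0}$, use that the limiting killed paths are conditionally i.i.d., apply a law of large numbers on a reduced family of test functionals and a time grid, control off-grid errors through continuity of the loss, and integrate out $W^0$ by bounded convergence. The mechanism you use for uniformity is different and more explicit.

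The paper works on $D([0,T];\mathsf E)$ with a countable determining subclass of the bounded-Lipschitz ball. It then invokes ``monotone approximation of the time supremum over a dense grid'' together with the no-fixed-atom property. That monotone step really only covers the nondecreasing loss, and the paper leaves implicit how $\sup_t$ and $\sup_f$ are controlled simultaneously for the non-monotone alive component.

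You make that step concrete in three pieces:
\begin{itemize}
\item compact localization in $C([0,T])$;
\item a finite sup-norm net of the bounded-Lipschitz ball on the range of $K_\varepsilon$;
\item a split of the off-grid increment into an equicontinuity modulus, which is uniform in $f$ since $\mathrm{Lip}(f)\le1$, and the killed fraction $\widetilde L^{k,N}_{t_{j+1}}-\widetilde L^{k,N}_{t_j}$, which is uniform in $f$ since $\|f\|_\infty\le1$.
\end{itemize}

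Your use of Glivenko--Cantelli and DKW for the loss also gives two things the paper does not claim. You get an assumption-free $O(N_k^{-1/2})$ bound for $\mathbb E\sup_t|\widetilde L_t^{k,N}-L_t^k|$, and the exact conditional variance $L_t^k(1-L_t^k)/N_k$. So only the measure component of the quadratic statement actually needs the assumed fluctuation bound.

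The price is that you rely on continuity of the limiting alive paths and of $L^k$. Both hold here because all limiting losses are absolutely continuous with rate at most $\Lambda$. For the killed-fraction term alone, the no-fixed-atom hypothesis would suffice, with $\Lambda\delta$ replaced by the modulus of continuity of $L^k$.

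Two details to add when you write it out:
\begin{itemize}
\item Include the SLLN for the single indicator $\mathbf 1_{K_\varepsilon^c}$. It controls the empirical fraction of paths outside $K_\varepsilon$ and adds $O(\varepsilon)$ to each bound.
\item Send $\delta\downarrow0$ before $\varepsilon\downarrow0$, since the modulus depends on $K_\varepsilon$.
\end{itemize}
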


\begin{proof}
Condition on \(\mathcal F_T^{W^0}\). The common-noise adapted control is then a deterministic path, and the remaining randomness comes only from the independent initial states, idiosyncratic Brownian motions, and Poisson thinning measures. Hence the limiting killed paths are i.i.d. on the path space \(D([0,T];\mathsf E)\), with \(\mathsf E=\mathbb R\cup\{\partial\}\). The bounded-Lipschitz unit ball on this Polish path space has a countable determining subclass. The ordinary Glivenko--Cantelli theorem for this countable determining class, followed by monotone approximation of the time supremum over a dense grid and the no-fixed-atom property of the killing times, gives the two uniform convergence statements after removing the conditioning. The quadratic version is the standard empirical-process strengthening of the same conditional LLN and is stated separately in Theorem~\ref{thm:quantitative-rate} because it is the finite-sample input needed for the \(N_k^{-1}\) rate.
\end{proof}

\begin{lemma}[Metric-level quadratic bounded-Lipschitz stability]
\label{lem:metric-level-bl-stability}
Under the synchronous coupling used in Theorem~\ref{thm:multiclass-mkv-limit}, suppose the system has a uniform second-moment bound and the bounded-Lipschitz empirical process of the paired killed particles satisfies the usual separability and quadratic fluctuation estimate on the bounded-Lipschitz unit ball. Then, for the auxiliary empirical surviving measure \(\widetilde\mu_t^{k,N}\),
\[
\begin{aligned}
&\mathbb E\sup_{r\le t}
 d_{\mathrm{BL}}^2(\mu_r^{k,N},\widetilde\mu_r^{k,N}) \\
&\qquad\le
C\int_0^t
\left(
E_k(s)+\sum_{\ell=1}^KD_\ell(s)+\sum_{\ell=1}^KV_\ell(s)
\right)ds
+\frac{C}{N_k},
\end{aligned}
\]
where
\[
E_k(t)=\mathbb E|X_i^{k,N}(t)-\bar X_i^k(t)|^2,
\quad
D_k(t)=\mathbb E d_{\mathrm{BL}}^2(\mu_t^{k,N},\mu_t^k),
\quad
V_k(t)=\mathbb E|L_t^{k,N}-L_t^k|^2.
\]
\end{lemma}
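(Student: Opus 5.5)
The plan is to work at the level of test functions. For each $f$ in the bounded-Lipschitz unit ball, write the difference between the two empirical surviving measures as
\[
\langle f,\mu_r^{k,N}-\widetilde\mu_r^{k,N}\rangle
=\frac1{N_k}\sum_{i:g(i)=k}\Bigl(f(X_i^{k,N}(r))\mathbf 1_{\{\tau_i>r\}}-f(\bar X_i^k(r))\mathbf 1_{\{\bar\tau_i^k>r\}}\Bigr).
\]
We then split each summand into a state-mismatch part and a survival-mismatch part. The state-mismatch part is $\bigl(f(X_i^{k,N}(r))-f(\bar X_i^k(r))\bigr)\mathbf 1_{\{\tau_i>r,\bar\tau_i^k>r\}}$. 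It is bounded by $|X_i^{k,N}(r)-\bar X_i^k(r)|\wedge 2$, uniformly in $f$. After Cauchy--Schwarz and exchangeability within type $k$, its squared contribution is at most $E_k(r)$. Since the $\sup_{r\le t}$ sits outside the expectation, we would control it through the SDE representation of $X_i^{k,N}-\bar X_i^k$: the common noise and the idiosyncratic noise cancel under synchronous coupling. The difference then consists of a drift integral plus the contagion term $\sum_\ell\Gamma_{k\ell}(L^{\ell,N}-L^\ell)$. Using Lipschitz drift in state and measure, together with the BL-Lipschitz dependence on $m^{k,N}$ under the compact localization, gives a bound by $C\int_0^t(E_k+\sum_\ell D_\ell+\sum_\ell V_\ell)\,ds$.

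The survival-mismatch part is bounded by $\frac{1}{N_k}\sum_i\mathbf 1_{\{\tau_i\wedge\bar\tau_i^k\le r<\tau_i\vee\bar\tau_i^k\}}$, again uniformly in $f$. This is the key feature: it has no $f$-dependence, so no supremum over the BL ball is needed here. Under the synchronous Poisson-thinning coupling, the count of discordant killings up to $t$ is a counting process. Its compensator is $\int_0^t\frac1{N_k}\sum_i|\lambda_k(X_i^{k,N}(s),a_s)-\lambda_k(\bar X_i^k(s),a_s)|\,ds$ on the set where both particles are alive, plus a term from the set where only one is alive. Particles that have already decoupled contribute at most $\Lambda$ times the current discordant fraction. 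Monotonicity in $r$ lets us take the supremum at $r=t$. The Doob--Meyer decomposition then gives mean square at most
\[
C\,\mathbb E\Bigl(\int_0^t\bigl(L_\lambda|X_i^{k,N}-\bar X_i^k|+\Lambda\,\mathrm{disc}(s)\bigr)ds\Bigr)^2+\frac{C}{N_k}.
\]
Here the $C/N_k$ term is the quadratic variation of the compensated martingale, which is an average of $N_k$ jumps of size $1/N_k$. Applying Jensen, and Gronwall on the discordance fraction, closes this part in terms of $\int_0^t E_k\,ds$.

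Where the BL empirical-process hypothesis enters is the case where the drift of a finite particle depends on $\mu^{k,N}$, while the paired limiting particle sees $\mu^k$. To convert $d_{\mathrm{BL}}(\mu^{k,N},\mu^k)$ into the $D_\ell$ terms we need the triangle inequality through $\widetilde\mu^{k,N}$. The piece $\mathbb E\,d_{\mathrm{BL}}^2(\widetilde\mu^{k,N},\mu^k)$ is bounded by $C/N_k$ via the assumed quadratic fluctuation estimate (cf.\ Lemma~\ref{lem:conditional-killed-empirical-lln}), applied conditionally on $W^0$ and then integrated. Separability of the BL class makes the supremum over $f$ measurable.

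I expect the main obstacle to be the survival-mismatch step. Specifically, the hard part is ensuring that the discordance bound holds uniformly in time with only linear dependence on $E_k$ rather than $\sqrt{E_k}$. The compensator naturally gives $\mathbb E|X-\bar X|\le\sqrt{E_k}$, which is not of the form in the claimed bound. To handle it, I would first try working with the squared discordance via Itô's formula for the jump process. If that fails, I would bound the discordant fraction by an $L^2$ quantity using $\mathbf 1\le\mathbf 1^2$ and absorb it into the Gronwall argument of the parent theorem, accepting that $E_k$ controls the mismatch in this squared sense.
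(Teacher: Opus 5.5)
Your argument is correct in substance, but it reaches the metric-level bound by a different route from the paper.

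\textbf{Comparison with the paper.} The paper proves a squared estimate one test function at a time. It then passes to $d_{\mathrm{BL}}$ by restricting to a countable determining subclass of the bounded-Lipschitz unit ball and invoking the assumed quadratic empirical-process bound and separability. You instead use the pathwise inequality
\[
\bigl|\xi_i^{k,N}(r)f(X_i^{k,N}(r))-\bar\xi_i^k(r)f(\bar X_i^k(r))\bigr|
\le\bigl(|X_i^{k,N}(r)-\bar X_i^k(r)|\wedge2\bigr)+|\xi_i^{k,N}(r)-\bar\xi_i^k(r)|,
\]
which is uniform over the unit ball, so the supremum over $f$ costs nothing. This is the squared analogue of the corresponding step in the proof of Theorem~\ref{thm:multiclass-mkv-limit}. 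The lemma then reduces to two pieces: $\mathbb E\sup_{r\le t}|e_i^k(r)|^2$, estimated from the synchronized SDE, and a Doob--Meyer bound on the averaged discordance.

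What your route buys is a self-contained proof with explicit constants. The empirical-process/separability hypothesis is never used for the pairing of $\mu^{k,N}$ with $\widetilde\mu^{k,N}$; it is only needed for $d_{\mathrm{BL}}(\widetilde\mu^{k,N},\mu^k)$ in the parent theorem. The paper's formulation keeps the lemma parallel to its other empirical inputs, but its transfer step is only sketched. For the same reason, your paragraph on where the hypothesis ``enters'' can be dropped. The drift difference produces $d_{\mathrm{BL}}(\mu_s^{\ell,N},\mu_s^\ell)$, whose mean square is $D_\ell(s)$ by definition, so no detour through $\widetilde\mu^{k,N}$ is needed.

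\textbf{The contagion term.} The term $\sum_\ell\Gamma_{k\ell}(L_r^{\ell,N}-L_r^\ell)$ in $e_i^k(r)$ is not a time integral, so Lipschitz bounds alone do not give $\int_0^tV_\ell\,ds$. Write $L_t^{\ell,N}=M_t^{\ell,N}+\int_0^t\langle\lambda_\ell(\cdot,a_s^\ell),\mu_s^{\ell,N}\rangle\,ds$ and use Doob's inequality to get $\mathbb E\sup_{r\le t}|L_r^{\ell,N}-L_r^\ell|^2\le C/N_\ell+C\int_0^tD_\ell(s)\,ds$. You then get the bound even without the $V_\ell$ terms, but with $C\sum_\ell N_\ell^{-1}$ in place of $C/N_k$. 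That is what Theorem~\ref{thm:quantitative-rate} actually uses through $N_*$, and the paper's sketch passes over the same point.

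\textbf{The ``main obstacle''.} It is not an obstacle. Let $A(t)$ be $N_k^{-1}$ times the number of pairs that have decoupled (exactly one member killed) by time $t$. It is nondecreasing and dominates $N_k^{-1}\sum_i|\xi_i^{k,N}(r)-\bar\xi_i^k(r)|$ for all $r\le t$; the indicator itself is not monotone. Its compensator has rate
\[
N_k^{-1}\sum_i\mathbf 1_{\{\text{both alive}\}}|\lambda_k(X_i^{k,N}(s),a_s^k)-\lambda_k(\bar X_i^k(s),a_s^k)|\le N_k^{-1}\sum_iL_\lambda|e_i^k(s)|,
\]
and the compensated martingale has predictable quadratic variation at most $\Lambda t/N_k$. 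Jensen applied to the squared average gives
\[
\mathbb E\sup_{r\le t}\Bigl(\frac1{N_k}\sum_i|\xi_i^{k,N}(r)-\bar\xi_i^k(r)|\Bigr)^2\le\mathbb E A(t)^2\le\frac{2\Lambda t}{N_k}+2L_\lambda^2t\int_0^tE_k(s)\,ds.
\]
This is linear in $E_k$ and needs no Gronwall step on the discordance.

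The $\sqrt{E_k}$ scale only appears for a single pair, and that is exactly where your fallback would fail. The quantity $\mathbb E\sup_{r\le t}|\xi_i^{k,N}(r)-\bar\xi_i^k(r)|^2$ is the probability that pair $i$ decouples by time $t$. That probability is in general of order $\int_0^t\mathbb E|e_i^k(s)|\,ds$, not $\int_0^tE_k(s)\,ds$. Averaging before squaring is therefore essential. The single-particle display in the proof of Lemma~\ref{lem:sufficient-quadratic-inputs} should be read in this averaged form.
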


\begin{proof}
For each bounded Lipschitz test function, the synchronous Poisson thinning construction bounds the squared difference of the paired empirical integrals by the squared state-coupling error, the squared loss-coupling error, and the martingale fluctuation of order \(N_k^{-1}\). To pass from a single test function to \(d_{\mathrm{BL}}\), take a countable dense determining subclass of the bounded-Lipschitz unit ball, use the assumed quadratic empirical-process bound on this class, and then pass to the full supremum by separability. This is precisely the metric-level form of the synchronous stability estimate; it is recorded as a lemma so that Theorem~\ref{thm:quantitative-rate} does not rely on an invalid interchange between a pointwise test-function estimate and the bounded-Lipschitz supremum.

\end{proof}

\begin{lemma}[Sufficient finite-entropy verification of the quadratic empirical inputs]
\label{lem:sufficient-quadratic-inputs}
Fix a type \(k\) and work conditionally on \(\mathcal F_T^{W^0}\). Suppose, after localization to a compact alive-state interval and adjoining the cemetery point, that the bounded-Lipschitz test class used to metrize the lifted killed paths admits a countable envelope-normalized subclass \(\mathcal F_k\) with entropy integral
 \(\int_0^1 \sqrt{\log N(\varepsilon,\mathcal F_k,L^2(Q))}\,d\varepsilon <\infty\)
uniformly over all probability laws \(Q\) generated by the conditional limiting killed paths in the model class. Assume also the bounded killing intensity, the Lipschitz state dependence of \(\lambda_k\), and the uniform second-moment bound used in Theorem~\ref{thm:quantitative-rate}. Then the auxiliary empirical measure satisfies
\[
  \mathbb E\sup_{t\le T}d_{\mathrm{BL}}^2(\widetilde\mu_t^{k,N},\mu_t^k)
  \le \frac{C}{N_k},
  \qquad
  \mathbb E\sup_{t\le T}|\widetilde L_t^{k,N}-L_t^k|^2
  \le \frac{C}{N_k},
\]
and the metric-level synchronous-thinning estimate in Lemma~\ref{lem:metric-level-bl-stability} holds with the same order. The constant depends on the entropy integral, the localization radius, \(T\), \(L_\lambda\), \(\Lambda\), and the moment bound, but not on \(N_k\).
\end{lemma}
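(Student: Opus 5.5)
The plan is to work conditionally on \(\mathcal F_T^{W^0}\). Under this conditioning, Lemma~\ref{lem:conditional-killed-empirical-lln} shows that the limiting killed paths \((\bar X_i^k,\bar\xi_i^k,\bar\tau_i^k)_{i\le N_k}\) are i.i.d. on \(D([0,T];\mathsf E)\). I would then reduce both displayed bounds to a maximal inequality for an empirical process indexed by a single function class \(\mathcal G_k\) on path space. Define
\[
\mathcal G_k=\{\omega\mapsto f(\omega_t)\mathbf 1_{\{\omega_t\neq\partial\}}: f\in\mathcal F_k,\ t\in[0,T]\}\cup\{\omega\mapsto \mathbf 1_{\{\omega_t=\partial\}}: t\in[0,T]\},
\]
where \(f\) is extended by zero at the cemetery. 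With this class,
\[
\sup_t d_{\mathrm{BL}}(\widetilde\mu_t^{k,N},\mu_t^k)\vee \sup_t|\widetilde L_t^{k,N}-L_t^k|\le \sup_{g\in\mathcal G_k}|(P_{N_k}-P)g|,
\]
up to the localization error. The localization error is the mass outside the compact interval \([-R,R]\). It is controlled by Chebyshev's inequality together with the uniform second-moment bound, giving a term of order \(R^{-2}\). I would choose \(R\) fixed and absorb this term into the constant.

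Step two controls the entropy of \(\mathcal G_k\). Fix \(\varepsilon\) and a grid \(0=t_0<\dots<t_M=T\) with mesh \(\delta\). For \(t\in[t_j,t_{j+1}]\), I would bound \(|g_t-g_{t_j}|\) in \(L^2(Q)\). Conditionally, the alive path is a continuous diffusion with bounded-moment increments, and contagion jumps are deterministic given the common-noise path. The killing indicator changes only if a kill occurs in \([t_j,t_{j+1}]\), which has \(Q\)-probability at most \(\Lambda\delta\) by bounded intensity. This gives \(\|g_t-g_{t_j}\|_{L^2(Q)}^2\le C(\delta+\Lambda\delta)\) for Lipschitz \(f\), using the no-fixed-atom property. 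Taking \(\delta\asymp\varepsilon^2\) then yields
\[
\log N(\varepsilon,\mathcal G_k,L^2(Q))\le \log(C\varepsilon^{-2})+\log N(\varepsilon/2,\mathcal F_k,L^2(Q_{t_j})).
\]
The entropy integral therefore stays finite, uniformly in \(Q\). Since \(\mathcal G_k\) has envelope 1 and is pointwise-measurable through the countable subclass and a dense time grid, the uniform-entropy maximal inequality (Dudley/Koltchinskii–Pollard chaining) gives
\[
\mathbb E\bigl[\sup_{\mathcal G_k}|(P_{N_k}-P)g|^2\,\big|\,\mathcal F_T^{W^0}\bigr]\le C J^2/N_k.
\]
Here \(J\) is the entropy integral, and \(C\) is deterministic. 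Integrating over the common noise gives the two displayed bounds.

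Step three proves the metric-level synchronous estimate of Lemma~\ref{lem:metric-level-bl-stability}. For the paired particles, I would write
\[
\langle\mu^{k,N}_r-\widetilde\mu^{k,N}_r,f\rangle=\frac1{N_k}\sum_i\Bigl[f(X_i)\mathbf 1_{\text{alive}}-f(\bar X_i)\mathbf 1_{\text{alive}}\Bigr].
\]
I would split this into two pieces. The first is a state term bounded by \(|X_i-\bar X_i|\), uniformly over \(f\) in the Lipschitz unit ball, so the supremum over \(f\) is harmless there. The second is a mismatch term \(\frac1{N_k}\sum_i\mathbf 1\{\text{exactly one of the pair killed by } r\}\), which does not depend on \(f\). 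Under synchronous thinning, this indicator's compensator is bounded by \(\int_0^r|\lambda(X_i,a)-\lambda(\bar X_i,a)|ds\le L_\lambda\int|X_i-\bar X_i|\). Its martingale part has quadratic variation of order \(N_k^{-1}\), and Doob's inequality gives the \(C/N_k\) term. The dependence on \(D_\ell\) and \(V_\ell\) enters through \(E_k\) via the drift and contagion terms.

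The main obstacle I anticipate is step two: making the chaining bound genuinely uniform over the time supremum and over all conditional laws \(Q\). The killed paths are càdlàg with a jump to \(\partial\) at a random time, and the contagion jumps in the alive state are common. The time-continuity estimate must therefore rely on bounded intensity and on the absence of fixed atoms, not on path continuity. If the \(L^2(Q)\) modulus argument fails near common jump times, I would instead place those (\(\mathcal F^{W^0}\)-measurable, countably many) jump times in the grid.
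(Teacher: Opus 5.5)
Your overall route is the paper's, written out in more detail: conditionally i.i.d.\ limiting paths, an entropy maximal inequality for the time-indexed alive-measure and loss tests, and a displacement/mismatch/compensator split for the paired systems. The step that fails is the localization remainder in Step~1. A Chebyshev term of order $R^{-2}$ with $R$ fixed does not depend on $N_k$, so it cannot be absorbed into the constant. What your argument actually yields is $C/N_k$ plus an $N_k$-independent remainder, which is not the claimed bound. Letting $R=R(N_k)\to\infty$ does not rescue it either: the entropy integral of the bounded-Lipschitz ball on $[-R,R]$ grows with $R$, so the constant would then depend on $N_k$. The lemma is a statement about the localized model. The entropy hypothesis is imposed after localization, and $C$ is allowed to depend on the localization radius; Remark~\ref{rem:localized-entropy-verification} uses the lemma ``after localization before removing the localization error''. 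In that model the alive states stay in the compact interval and the remainder is identically zero, so you should simply delete the Chebyshev step. Removing the localization is not part of this lemma.

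Step~2 also needs repair. Your grid estimate for $\|g_t-g_{t_j}\|_{L^2(Q)}$ relies on properties of the true conditional path law: diffusion increments, and a kill in a cell with probability at most $\Lambda\delta$. It therefore bounds covering numbers only for model laws $Q$. The Koltchinskii--Pollard inequality needs covering numbers under finitely discrete $Q$, in particular the empirical measure of the sample paths. A covering bound under the true law alone gives no maximal inequality.

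Turn the grid argument into a bracketing argument. For $t$ in the $j$-th cell and every $f$ in the unit ball, $|g_t-g_{t_j}|\le\Delta_j:=\sup_{s\in[t_j,t_{j+1}]}\bigl(|\bar X^k_s-\bar X^k_{t_j}|\wedge2\bigr)+\mathbf 1_{\{t_j<\bar\tau^k\le t_{j+1}\}}$. By BDG, $\|\Delta_j\|_{L^2(P)}\le C\delta^{1/2}+\sigma_0\varpi_{W^0}(\delta)+(\Lambda\delta)^{1/2}$, where $\varpi_{W^0}$ is the modulus of continuity of the conditioned common-noise path. Combine this with sup-norm brackets for the bounded-Lipschitz ball on the compact interval, whose log-cardinality is of order $\varepsilon^{-1}$. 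This gives a finite bracketing integral and hence the bracketing maximal inequality, as in Remark~\ref{rem:localized-entropy-verification}.

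Two smaller corrections. The resulting conditional constant is random through $\varpi_{W^0}$, not deterministic as you claim; it enters only logarithmically and has all moments, so integrating over the common noise still gives $C/N_k$. Also, the limiting losses are $\Lambda$-Lipschitz, so there are no contagion jump times to add to the grid.

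Step~3 is correct and cleaner than the paper's sketch. The paper invokes the entropy maximal inequality again to reach the $d_{\mathrm{BL}}$ supremum, and records the per-particle bound $\mathbb E\sup_{r\le t}|\xi_i^{k,N}(r)-\bar\xi_i^k(r)|^2\le C\int_0^t\mathbb E|X_i^{k,N}(s)-\bar X_i^k(s)|^2ds$. You instead observe that the pathwise bound $\frac1{N_k}\sum_i|X_i^{k,N}-\bar X_i^k|+\frac1{N_k}\sum_i|\xi_i^{k,N}-\bar\xi_i^k|$ is already uniform in $f$, so no entropy is needed. You also average before squaring. The averaged compensator is at most $L_\lambda\frac1{N_k}\sum_i\int_0^t|e_i^k(s)|ds$, whose square has expectation at most $L_\lambda^2t\int_0^tE_k(s)ds$. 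The compensated counts from the independent Poisson measures contribute order $\Lambda t/N_k$ by Doob's inequality.

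Averaging first is what produces the quadratic order. Particle by particle, $\mathbb E|\xi_i^{k,N}-\bar\xi_i^k|^2=\mathbb E|\xi_i^{k,N}-\bar\xi_i^k|$ is naturally of order $\int_0^t\mathbb E|e_i^k(s)|ds$, which is linear rather than quadratic in the coupling error. One bookkeeping point remains. The supremum over $r$ of the displacement term gives $\mathbb E\sup_{r\le t}|e_i^k(r)|^2$, and you still need to convert this to the integral form of Lemma~\ref{lem:metric-level-bl-stability}. Do this through the error equation and the Doob--Meyer decomposition of the loss.
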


\begin{proof}
Conditional on \(\mathcal F_T^{W^0}\), the limiting killed paths are i.i.d. random elements of the localized Polish path space. The assumed uniform entropy integral makes the bounded-Lipschitz determining class \(\mathcal F_k\) conditionally Donsker and gives the maximal inequality
\[
  \mathbb E\sup_{f\in\mathcal F_k}
  \left|\frac1{N_k}\sum_{i=1}^{N_k}\bigl(f(\bar X_i^k,\bar\xi_i^k)-\mathbb E[f(\bar X_i^k,\bar\xi_i^k)\mid\mathcal F_T^{W^0}]\bigr)\right|^2
  \le \frac{C}{N_k}.
\]
Applying this estimate to the time-indexed alive-measure and loss-coordinate tests gives the two displayed auxiliary empirical bounds. For the paired finite and limiting systems, write the difference of empirical integrals as the sum of an alive-state displacement term, a cemetery-state mismatch term, and a centered compensator martingale. The displacement term is controlled by the squared state-coupling error. The cemetery mismatch is controlled by synchronous Poisson thinning:
\[
  \mathbb E\sup_{r\le t}|\xi_i^{k,N}(r)-\bar\xi_i^k(r)|^2
  \le C\int_0^t \mathbb E|X_i^{k,N}(s)-\bar X_i^k(s)|^2\,ds .
\]
The centered compensator martingale has quadratic variation bounded by \(C/N_k\), because the intensity is bounded by \(\Lambda\). The same entropy maximal inequality transfers these pointwise estimates to the bounded-Lipschitz supremum. This yields Lemma~\ref{lem:metric-level-bl-stability} under the stated finite-entropy condition.
\end{proof}

\begin{remark}[Concrete localized entropy verification]
\label{rem:localized-entropy-verification}
In the compact alive-state version used for the bounded-killing theory, the finite-entropy condition in Lemma~\ref{lem:sufficient-quadratic-inputs} can be checked from standard bracketing entropy bounds. After adjoining the isolated cemetery point, the lifted killed-path space is the union of a compact one-dimensional diffusion path class with one cemetery jump coordinate. The bounded drift, bounded diffusion coefficient, and bounded killing rate give a uniform modulus-in-probability for the alive diffusion paths, and the cemetery coordinate is a monotone indicator with no fixed-time atom. The bounded-Lipschitz unit ball restricted to this localized class has polynomial bracketing entropy under the laws generated by the model; hence the Dudley--bracketing integral is finite by the entropy integral criterion of van der Vaart and Wellner~\cite[Theorem~2.7.11]{VanDerVaartWellner1996}. Therefore, on compact state grids or after localization before removing the localization error, the quadratic empirical input in Theorem~\ref{thm:quantitative-rate} follows from the stated bounded-Lipschitz and bounded-killing primitives.
\end{remark}

\begin{remark}[Empirical-process input for the conditional rate]
The quadratic empirical inputs in Theorem~\ref{thm:quantitative-rate} are verified by Lemma~\ref{lem:sufficient-quadratic-inputs} under a localized finite-entropy condition, and are otherwise stated explicitly because the killed-particle state space is path-valued and contains a cemetery state. They are standard once one works conditionally on the common-noise path. Conditional on \(\mathcal F_T^{W^0}\), the limiting killed paths are i.i.d. random elements of the Polish space \(D([0,T];\mathsf E)\), with \(\mathsf E=\mathbb R\cup\{\partial\}\). The bounded-Lipschitz unit ball on this space admits a countable determining subclass, and the usual separability and symmetrization arguments for empirical processes give the \(N_k^{-1}\) second-moment fluctuation scale for bounded Lipschitz test functions; see, for example, van der Vaart and Wellner~\cite{VanDerVaartWellner1996}. The additional metric-level stability term in Lemma~\ref{lem:metric-level-bl-stability} is the synchronous Poisson-thinning analogue of the same argument: the alive-state displacement, cemetery-state mismatch, and compensator martingale terms are controlled before taking the bounded-Lipschitz supremum. Thus Theorem~\ref{thm:quantitative-rate} should be read as a conditional quantitative propagation estimate under these standard empirical-process and thinning-stability inputs, not as a pointwise-test-function estimate upgraded without justification.
\end{remark}

\begin{theorem*}[Detailed form of Theorem~\ref{thm:multiclass-mkv-limit}: McKean--Vlasov limit for the multi-class bank particle system]
Fix $T>0$. The system contains $K$ classes of banks. Class $k$ contains $N_k$ particles, and write
 \(N_*=\min_{1\le k\le K}N_k .\)
For $k=1,\dots,K$ and $i=1,\dots,N_k$, the finite-particle system satisfies
\[
dX_i^{k,N}(t)
=
b_k\bigl(X_i^{k,N}(t),\mu_t^{1,N},\dots,\mu_t^{K,N},a_t^k\bigr)\,dt
+
\sigma_k\,dW_i^k(t)
+
\sigma_0\,dW^0(t)
-
\sum_{\ell=1}^K\Gamma_{k\ell}\,dL_t^{\ell,N}.
\]
Here $W^0$ is the common noise, $W_i^k$ is the idiosyncratic noise, and $\Gamma=(\Gamma_{k\ell})_{1\le k,\ell\le K}$ is the contagion matrix. The surviving empirical measure and loss process of class $k$ are defined by
\[
\mu_t^{k,N}
=
\frac1{N_k}\sum_{i=1}^{N_k}
\xi_i^{k,N}(t)\delta_{X_i^{k,N}(t)},
\qquad
L_t^{k,N}
=
\frac1{N_k}\sum_{i=1}^{N_k}
\bigl(1-\xi_i^{k,N}(t)\bigr),
\]
where
 \(\xi_i^{k,N}(t)=\mathbf 1_{\{\tau_i^{k,N}>t\}} .\)
The killing time is given by Poisson thinning. Choose $\Lambda>0$ such that
 \(0\le \lambda_k(x,a)\le \Lambda .\)
Let $\mathcal N_i^k(ds,du)$ be an independent Poisson random measure on $[0,\infty)\times[0,\Lambda]$ with intensity $ds\,du$, and define
\[
\tau_i^{k,N}
=
\inf\left\{
t:
\int_0^t\int_0^\Lambda
\mathbf 1_{\{u\le \lambda_k(X_i^{k,N}(s-),a_s^k)\}}
\mathcal N_i^k(ds,du)
\ge 1
\right\}.
\]

Assume that the initial states are independent and identically distributed within each class and have uniformly finite first moments. Assume that the drift function $b_k$ is Lipschitz in the state variable and measure variables: there exists a constant $L_b>0$ such that
\[
\left|
b_k(x,\nu^1,\dots,\nu^K,a)
-
b_k(y,\eta^1,\dots,\eta^K,a)
\right|
\le
L_b
\left(
|x-y|
+
\sum_{\ell=1}^K d_{\mathrm{BL}}(\nu^\ell,\eta^\ell)
\right),
\]
where $d_{\mathrm{BL}}$ denotes the bounded Lipschitz distance on finite measures,
\[
d_{\mathrm{BL}}(\nu,\eta)
=
\sup_{\|f\|_\infty\le 1,\ \mathrm{Lip}(f)\le 1}
\left|
\int f\,d\nu-\int f\,d\eta
\right|.
\]
Assume also that the kill intensity $\lambda_k$ is Lipschitz in the state variable, namely
 \(|\lambda_k(x,a)-\lambda_k(y,a)| \le L_\lambda |x-y|,\)
and that the contagion matrix $\Gamma$ is bounded. The type-level control $a_t=(a_t^1,\ldots,a_t^K)$ is progressively measurable with respect to the common-noise filtration $\mathbb F^{W^0}$ and takes values in a compact set. This convention is the open-loop mean-field control convention used in the limit theorem; node-level graph-feedback controls are treated later as finite-network rules. Assume that both the finite-particle system and the limiting system below are well posed.

The limiting McKean--Vlasov system is defined as follows. Given the common noise $W^0$, the limiting particle $\bar X_i^k$ of class $k$ satisfies
\[
d\bar X_i^k(t)
=
b_k\bigl(\bar X_i^k(t),\mu_t^1,\dots,\mu_t^K,a_t^k\bigr)\,dt
+
\sigma_k\,dW_i^k(t)
+
\sigma_0\,dW^0(t)
-
\sum_{\ell=1}^K\Gamma_{k\ell}\,dL_t^\ell .
\]
The limiting surviving measure is the conditional sub-probability measure
\[
\mu_t^k
=
\mathbb E\!\left[
\bar\xi_i^k(t)\delta_{\bar X_i^k(t)}
\,\big|\,
\mathcal F_t^{W^0}
\right],
\]
and the limiting loss process is
 \(L_t^k = 1- \mathbb E\!\left[ \bar\xi_i^k(t) \,\big|\, \mathcal F_t^{W^0} \right].\)
Equivalently,
\[
L_t^k
=
\int_0^t
\mathbb E\!\left[
\lambda_k(\bar X_i^k(s),a_s^k)\bar\xi_i^k(s)
\,\big|\,
\mathcal F_s^{W^0}
\right]ds .
\]
Then, as $N_*\to\infty$, for each $k=1,\dots,K$,
\[
\sup_{t\le T}
d_{\mathrm{BL}}(\mu_t^{k,N},\mu_t^k)
\longrightarrow 0
\qquad\text{in probability},
\]
and
 \(\sup_{t\le T} |L_t^{k,N}-L_t^k| \longrightarrow 0 \qquad\text{in probability}.\)
Thus the empirical surviving measures of the finite-particle system converge weakly to the coupled McKean--Vlasov limit, and the loss processes converge uniformly in probability on $[0,T]$.
\end{theorem*}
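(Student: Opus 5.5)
We argue by synchronous coupling against the auxiliary system of conditionally i.i.d.\ limiting particles. For each class $k$ and $i\le N_k$, the limiting particle $(\bar X_i^k,\bar\xi_i^k,\bar\tau_i^k)$ is built from the same initial state $X_i^k(0)$, the same Brownian motions $W_i^k,W^0$, and the same Poisson random measure $\mathcal N_i^k$. Its killing time is
\[
\bar\tau_i^k=\inf\Bigl\{t:\int_0^t\int_0^\Lambda \mathbf 1_{\{u\le\lambda_k(\bar X_i^k(s-),a_s^k)\}}\mathcal N_i^k(ds,du)\ge1\Bigr\}.
\]
Let $\widetilde\mu_t^{k,N}=N_k^{-1}\sum_i\bar\xi_i^k(t)\delta_{\bar X_i^k(t)}$ and $\widetilde L_t^{k,N}=N_k^{-1}\sum_i(1-\bar\xi_i^k(t))$ denote the auxiliary empirical quantities. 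The triangle inequality splits each error into a coupling part, $d_{\mathrm{BL}}(\mu_t^{k,N},\widetilde\mu_t^{k,N})$ and $|L_t^{k,N}-\widetilde L_t^{k,N}|$, and a sampling part, $d_{\mathrm{BL}}(\widetilde\mu_t^{k,N},\mu_t^k)$ and $|\widetilde L_t^{k,N}-L_t^k|$. Lemma~\ref{lem:conditional-killed-empirical-lln} already gives $\mathbb E\sup_{t\le T}$ of the sampling part tending to $0$. This uses the facts that, given $\mathcal F_T^{W^0}$, the control is a deterministic path and the auxiliary particles are i.i.d., and that bounded Cox killing has no fixed-time atoms. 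Uniqueness of the limit is supplied by Proposition~\ref{prop:controlled-multiclass-wellposedness}.

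For the coupling part we use first-moment quantities. Set
\[
e_k(t)=\mathbb E\sup_{s\le t}\frac1{N_k}\sum_i|X_i^{k,N}(s)-\bar X_i^k(s)|\wedge 1+\mathbb E\sup_{s\le t}\frac1{N_k}\sum_i|\xi_i^{k,N}(s)-\bar\xi_i^k(s)|,
\]
where the truncation $\wedge 1$ is harmless since test functions are bounded by $1$. There are three steps.

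\textbf{Step 1 (measure and loss).} We have $d_{\mathrm{BL}}(\mu_t^{k,N},\widetilde\mu_t^{k,N})\le N_k^{-1}\sum_i\bigl(|X_i^{k,N}-\bar X_i^k|\wedge2+|\xi_i^{k,N}-\bar\xi_i^k|\bigr)$. Likewise $|L^{k,N}_t-\widetilde L^{k,N}_t|$ is bounded by the indicator mismatch, so both are $\lesssim e_k$.

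\textbf{Step 2 (states).} Subtract the two SDEs. The noises cancel, so
\[
|X_i^{k,N}(t)-\bar X_i^k(t)|\le L_b\int_0^t\Bigl(|X_i^{k,N}-\bar X_i^k|+\sum_\ell d_{\mathrm{BL}}(\mu_s^{\ell,N},\mu_s^\ell)\Bigr)ds+\sum_\ell|\Gamma_{k\ell}|\,|L_t^{\ell,N}-L_t^\ell|.
\]
Each $d_{\mathrm{BL}}(\mu^{\ell,N},\mu^\ell)$ and $|L^{\ell,N}-L^\ell|$ is then split again into coupling and sampling parts.

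\textbf{Step 3 (killing indicators).} By synchronous thinning, a mismatch of the two killing indicators before $t$ requires an atom of $\mathcal N_i^k$ between the two intensity curves. Taking compensators gives
\[
\mathbb E\sup_{s\le t}|\xi_i^{k,N}(s)-\bar\xi_i^k(s)|\le \mathbb E\int_0^t\bigl(L_\lambda|X_i^{k,N}-\bar X_i^k|\wedge\Lambda+\Lambda|\xi_i^{k,N}-\bar\xi_i^k|\bigr)ds.
\]
The second term accounts for one particle being dead while the other is alive. After one dies the mismatch is frozen, and the bound is in any case of Gronwall type.

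Summing Steps 1--3 over $k$ gives
\[
\sum_k e_k(t)\le C\int_0^t\sum_k e_k(s)\,ds+C\sum_\ell\Gamma\text{-weighted}\,\Bigl(\sum_\ell e_\ell(t)\Bigr)+C\,\varepsilon_N,
\]
where $\varepsilon_N=\sum_\ell\mathbb E\sup_{t\le T}\bigl(d_{\mathrm{BL}}(\widetilde\mu^{\ell,N}_t,\mu^\ell_t)+|\widetilde L^{\ell,N}_t-L^\ell_t|\bigr)\to0$. From this we conclude by Gronwall, and convergence in probability of the suprema follows by Markov's inequality.

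The main obstacle is the \emph{non-integrated} loss-feedback term $\Gamma\,|L_t^{\ell,N}-L_t^\ell|$ in Step 2, which appears at the same time $t$ and not inside a $ds$ integral. We will handle it as follows.
\begin{itemize}
\item Step 3 shows that the coupling part of the loss error is itself bounded by a time integral of state and indicator errors.
\item Substituting that bound back turns the loss term into an integrated term, so the inequality closes as a Volterra-type Gronwall inequality.
\item The sampling part of the loss error enters only through $\varepsilon_N$, uniformly in $t$, via the supremum in Lemma~\ref{lem:conditional-killed-empirical-lln}.
\end{itemize}
If this substitution gives a constant too large to absorb directly, we will work on short intervals $[t_0,t_0+h]$ with $C h<1$ and then concatenate finitely many of them. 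This mirrors the proof of Proposition~\ref{prop:controlled-multiclass-wellposedness}, and is possible because $\Lambda$ and $\Gamma$ are bounded. A secondary technical point is the truncation $\wedge1$: only first moments are assumed, so we will state the state bound with bounded differences and use the bounded Lipschitz structure of $d_{\mathrm{BL}}$ and $\lambda_k$, so that no higher moments are needed.
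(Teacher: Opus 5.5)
Your proposal is correct and follows essentially the same route as the paper: synchronous coupling with the conditionally i.i.d.\ limiting particles, and a split of each error into a coupling part and a sampling part, with the sampling part handled by Lemma~\ref{lem:conditional-killed-empirical-lln}. The non-integrated loss-feedback term is removed, as you propose, by inserting the thinning bound $\mathbb E\sup_{r\le t}|\xi_i^{k,N}(r)-\bar\xi_i^k(r)|\le L_\lambda\int_0^t\mathbb E|X_i^{k,N}(s)-\bar X_i^k(s)|\,ds$ before a single Gronwall step, so no short-interval concatenation is needed.

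The one adjustment concerns the truncation $\wedge1$. It is unnecessary, and on its own it would not pass through the untruncated Lipschitz-drift Gronwall estimate. The initial data and noises cancel, $d_{\mathrm{BL}}\le2$ on sub-probability measures, and $|L_t^{\ell,N}-L_t^\ell|\le1$, so the coupled state difference is pathwise bounded by a deterministic constant. This is why the paper simply works with $\mathbb E\sup_{r\le t}|X_i^{k,N}(r)-\bar X_i^k(r)|$, and no moment issue arises.
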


\begin{proof}
Construct the finite-particle system and the limiting system on the same probability space. For each $i$ and $k$, let $X_i^{k,N}$ and $\bar X_i^k$ use the same initial state, the same idiosyncratic Brownian motion $W_i^k$, the same common Brownian motion $W^0$, and the same Poisson random measure $\mathcal N_i^k$. Define the limiting killing time by
\[
\bar\tau_i^k
=
\inf\left\{
t:
\int_0^t\int_0^\Lambda
\mathbf 1_{\{u\le \lambda_k(\bar X_i^k(s-),a_s^k)\}}
\mathcal N_i^k(ds,du)
\ge 1
\right\},
\]
and write
 \(\bar\xi_i^k(t)=\mathbf 1_{\{\bar\tau_i^k>t\}} .\)
This synchronous Poisson thinning construction ensures that the difference between the killing indicators of the finite particle and the limiting particle comes only from the difference between the two kill-intensity paths
 \(\lambda_k(X_i^{k,N}(t),a_t^k), \qquad \lambda_k(\bar X_i^k(t),a_t^k).\)

Define the coupling error
 \(e_i^k(t)=X_i^{k,N}(t)-\bar X_i^k(t).\)
Since the finite particle and the limiting particle use the same Brownian motions, the diffusion terms cancel. Hence
\[
\begin{aligned}
e_i^k(t)
&=
\int_0^t
\Big[
b_k(X_i^{k,N}(s),\mu_s^{1,N},\dots,\mu_s^{K,N},a_s^k) \\
&\qquad\qquad
-
b_k(\bar X_i^k(s),\mu_s^1,\dots,\mu_s^K,a_s^k)
\Big]ds
-
\sum_{\ell=1}^K
\Gamma_{k\ell}
\bigl(L_t^{\ell,N}-L_t^\ell\bigr).
\end{aligned}
\]
By the Lipschitz property of $b_k$ and the boundedness of $\Gamma$, there exists a constant $C>0$, independent of $\mathbf N=(N_1,\dots,N_K)$, such that
\[
\sup_{r\le t}|e_i^k(r)|
\le
C
\int_0^t
\left(
|e_i^k(s)|
+
\sum_{\ell=1}^K
d_{\mathrm{BL}}(\mu_s^{\ell,N},\mu_s^\ell)
\right)ds
+
C\sum_{\ell=1}^K
\sup_{r\le t}|L_r^{\ell,N}-L_r^\ell|.
\]
Write
\[
u^k(t)
=
\mathbb E\sup_{r\le t}|e_i^k(r)|,
\qquad
m^k(t)
=
\mathbb E\sup_{r\le t}
d_{\mathrm{BL}}(\mu_r^{k,N},\mu_r^k),
\]
and
 \(v^k(t) = \mathbb E\sup_{r\le t}|L_r^{k,N}-L_r^k|.\)
Taking expectations gives
\[
u^k(t)
\le
C
\int_0^t
\left(
u^k(s)+\sum_{\ell=1}^K m^\ell(s)
\right)ds
+
C\sum_{\ell=1}^K v^\ell(t).
\]

We next estimate the empirical measure error. Introduce the auxiliary empirical measure generated by the limiting particles,
 \(\widetilde\mu_t^{k,N} = \frac1{N_k}\sum_{i=1}^{N_k} \bar\xi_i^k(t)\delta_{\bar X_i^k(t)}.\)
By the triangle inequality,
\[
d_{\mathrm{BL}}(\mu_t^{k,N},\mu_t^k)
\le
d_{\mathrm{BL}}(\mu_t^{k,N},\widetilde\mu_t^{k,N})
+
d_{\mathrm{BL}}(\widetilde\mu_t^{k,N},\mu_t^k).
\]
For any test function $f$ with $\|f\|_\infty\le 1$ and $\mathrm{Lip}(f)\le 1$,
\[
\begin{aligned}
\left|
\langle f,\mu_t^{k,N}-\widetilde\mu_t^{k,N}\rangle
\right|
&\le
\frac1{N_k}\sum_{i=1}^{N_k}
\left|
\xi_i^{k,N}(t)f(X_i^{k,N}(t))
-
\bar\xi_i^k(t)f(\bar X_i^k(t))
\right| \\
&\le
\frac1{N_k}\sum_{i=1}^{N_k}
|X_i^{k,N}(t)-\bar X_i^k(t)|
+
\frac1{N_k}\sum_{i=1}^{N_k}
|\xi_i^{k,N}(t)-\bar\xi_i^k(t)|.
\end{aligned}
\]
The second term only uses $\|f\|_\infty\le 1$. Taking the supremum over test functions, then the supremum over $r\le t$, and finally expectation, yields
\[
\mathbb E\sup_{r\le t}
d_{\mathrm{BL}}(\mu_r^{k,N},\widetilde\mu_r^{k,N})
\le
u^k(t)
+
\mathbb E\sup_{r\le t}
\frac1{N_k}\sum_{i=1}^{N_k}
|\xi_i^{k,N}(r)-\bar\xi_i^k(r)|.
\]
Synchronous Poisson thinning gives
\[
\mathbb E\sup_{r\le t}
|\xi_i^{k,N}(r)-\bar\xi_i^k(r)|
\le
\mathbb E\int_0^t
\left|
\lambda_k(X_i^{k,N}(s),a_s^k)
-
\lambda_k(\bar X_i^k(s),a_s^k)
\right|ds .
\]
By the Lipschitz property of $\lambda_k$,
 \(\mathbb E\sup_{r\le t} |\xi_i^{k,N}(r)-\bar\xi_i^k(r)| \le L_\lambda \int_0^t u^k(s)\,ds .\)
Therefore,
\[
\mathbb E\sup_{r\le t}
d_{\mathrm{BL}}(\mu_r^{k,N},\widetilde\mu_r^{k,N})
\le
u^k(t)
+
C\int_0^t u^k(s)\,ds .
\]

On the other hand, conditional on the common noise $W^0$, the limiting particles
 \(\{(\bar X_i^k,\bar\xi_i^k)\}_{i=1}^{N_k}\)
are conditionally independent and identically distributed. Lemma~\ref{lem:conditional-killed-empirical-lln} gives
\[
\alpha_{N_k}^k(T)
:=
\mathbb E\sup_{t\le T}
d_{\mathrm{BL}}(\widetilde\mu_t^{k,N},\mu_t^k)
\longrightarrow 0
\qquad (N_k\to\infty).
\]
Hence
 \(m^k(t) \le u^k(t) + C\int_0^t u^k(s)\,ds + \alpha_{N_k}^k(T).\)

We now estimate the loss process. Define the auxiliary empirical loss process of the limiting particles by
 \(\widetilde L_t^{k,N} = \frac1{N_k}\sum_{i=1}^{N_k} (1-\bar\xi_i^k(t)).\)
Then
 \(|L_t^{k,N}-L_t^k| \le |L_t^{k,N}-\widetilde L_t^{k,N}| + |\widetilde L_t^{k,N}-L_t^k|.\)
The first term is controlled by the synchronous killing coupling:
\[
\begin{aligned}
\mathbb E\sup_{r\le t}
|L_r^{k,N}-\widetilde L_r^{k,N}|
&\le
\mathbb E\sup_{r\le t}
\frac1{N_k}\sum_{i=1}^{N_k}
|\xi_i^{k,N}(r)-\bar\xi_i^k(r)| \\
&\le
C\int_0^t u^k(s)\,ds .
\end{aligned}
\]
The second term is controlled by Lemma~\ref{lem:conditional-killed-empirical-lln}. Given the common noise $W^0$, the killing times
 \(\{\bar\tau_i^k\}_{i=1}^{N_k}\)
are conditionally independent and identically distributed with conditional distribution function $L_t^k$. Hence
\[
\beta_{N_k}^k(T)
:=
\mathbb E\sup_{t\le T}
|\widetilde L_t^{k,N}-L_t^k|
\longrightarrow 0
\qquad (N_k\to\infty).
\]
Thus
 \(v^k(t) \le C\int_0^t u^k(s)\,ds + \beta_{N_k}^k(T).\)

Let
\[
U(t)=\sum_{k=1}^K u^k(t),
\qquad
M(t)=\sum_{k=1}^K m^k(t),
\qquad
V(t)=\sum_{k=1}^K v^k(t).
\]
The estimates above give
 \(U(t) \le C\int_0^t\bigl(U(s)+M(s)\bigr)ds + CV(t),\)
 \(M(t) \le CU(t) + C\int_0^t U(s)\,ds + A_{\mathbf N}(T),\)
and
 \(V(t) \le C\int_0^t U(s)\,ds + B_{\mathbf N}(T),\)
where
\[
A_{\mathbf N}(T)=\sum_{k=1}^K\alpha_{N_k}^k(T),
\qquad
B_{\mathbf N}(T)=\sum_{k=1}^K\beta_{N_k}^k(T).
\]
As $N_*\to\infty$,
 \(A_{\mathbf N}(T)\to 0, \qquad B_{\mathbf N}(T)\to 0.\)
Substituting the estimates for $M$ and $V$ into the estimate for $U$ yields
 \(U(t) \le C\int_0^t U(s)\,ds + C A_{\mathbf N}(T) + C B_{\mathbf N}(T).\)
By Gronwall's inequality,
 \(U(T) \le C_T\bigl(A_{\mathbf N}(T)+B_{\mathbf N}(T)\bigr) \longrightarrow 0.\)
Substituting this bound back gives
 \(M(T)\longrightarrow 0, \qquad V(T)\longrightarrow 0.\)
Therefore, for each $k=1,\dots,K$,
 \(\mathbb E\sup_{t\le T} d_{\mathrm{BL}}(\mu_t^{k,N},\mu_t^k) \longrightarrow 0,\)
and
 \(\mathbb E\sup_{t\le T} |L_t^{k,N}-L_t^k| \longrightarrow 0.\)
Markov's inequality gives
\[
\sup_{t\le T}
d_{\mathrm{BL}}(\mu_t^{k,N},\mu_t^k)
\longrightarrow 0
\qquad\text{in probability},
\]
and
 \(\sup_{t\le T} |L_t^{k,N}-L_t^k| \longrightarrow 0 \qquad\text{in probability}.\)
Since $d_{\mathrm{BL}}$ metrizes weak convergence of finite measures, the empirical surviving measures of the finite-particle system converge weakly to the McKean--Vlasov limit, and the loss processes converge uniformly in probability on $[0,T]$. The proof is complete.
\end{proof}

\begin{theorem*}[Detailed form of Theorem~\ref{thm:quantitative-rate}: Conditional quantitative convergence rate under strengthened conditions]
Under the conditions of Theorem~\ref{thm:multiclass-mkv-limit}, assume also that the system has a uniform second-moment bound and that the quadratic versions of the two empirical inputs in Lemmas~\ref{lem:conditional-killed-empirical-lln} and~\ref{lem:metric-level-bl-stability} hold. Equivalently, the auxiliary empirical surviving measure
 \(\widetilde\mu_t^{k,N} = \frac1{N_k}\sum_{i=1}^{N_k}\bar\xi_i^k(t)\delta_{\bar X_i^k(t)}\)
satisfies
\[
\sup_{t\in[0,T]}
\mathbb E d_{\mathrm{BL}}^2(\widetilde\mu_t^{k,N},\mu_t^k)
\le \frac{C_{\mathrm{emp}}(T)}{N_k},
\qquad k=1,\ldots,K,
\]
and the paired finite and limiting auxiliary systems satisfy the metric-level stability estimate of Lemma~\ref{lem:metric-level-bl-stability}. This formulation records the empirical-process step explicitly: the rate is stated in the bounded-Lipschitz metric, and the supremum over bounded-Lipschitz test functions is handled by a separability/quadratic-fluctuation input rather than by a pointwise test-function estimate.

Then there exists a constant
 \(C_T=C\bigl(T,L_b,L_\lambda,\|\Gamma\|,\Lambda,C_{\mathrm{emp}}(T)\bigr),\)
such that, for each $k=1,\dots,K$,
 \(\sup_{t\in[0,T]} \mathbb E d_{\mathrm{BL}}^2(\mu_t^{k,N},\mu_t^k) \le \frac{C_T}{N_*},\)
and
 \(\sup_{t\in[0,T]} \mathbb E|L_t^{k,N}-L_t^k|^2 \le \frac{C_T}{N_*}.\)
Thus the first-order bounded-Lipschitz empirical-measure error and the first-order loss-process error are both $O(N_*^{-1/2})$, uniformly over $t\in[0,T]$. The constant has Gronwall-type growth and can be written as
 \(C_T\le C_0\exp\{C_1T(1+\|\Gamma\|)^2\},\)
where $C_0,C_1$ do not depend on $\mathbf N=(N_1,\dots,N_K)$.
\end{theorem*}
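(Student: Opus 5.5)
The plan is to rerun the synchronous coupling from the proof of the detailed form of Theorem~\ref{thm:multiclass-mkv-limit}, but at the quadratic level and without suprema over time inside the measure terms. We will then close a Gronwall inequality for the three quantities $E_k(t)$, $D_k(t)$ and $V_k(t)$ of Lemma~\ref{lem:metric-level-bl-stability}. The coupling is unchanged: finite and limiting particles share initial states, idiosyncratic and common Brownian motions, and Poisson random measures. The error $e_i^k$ therefore has no martingale part and satisfies the same integral identity as before.

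\textbf{Step 1: state coupling error.} Square the identity for $e_i^k(t)$ and use Cauchy--Schwarz on the time integral, the Lipschitz bound on $b_k$, and boundedness of $\Gamma$. This gives
\[
E_k(t)\le C\int_0^t\Bigl(E_k(s)+\sum_\ell D_\ell(s)\Bigr)ds+C\|\Gamma\|^2\sum_\ell V_\ell(t).
\]
This is where the factor $(1+\|\Gamma\|)^2$ in the exponent will come from.

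\textbf{Step 2: measure error.} Split $d_{\mathrm{BL}}^2(\mu_t^{k,N},\mu_t^k)\le 2d_{\mathrm{BL}}^2(\mu_t^{k,N},\widetilde\mu_t^{k,N})+2d_{\mathrm{BL}}^2(\widetilde\mu_t^{k,N},\mu_t^k)$. The first term is bounded by Lemma~\ref{lem:metric-level-bl-stability}, that is, by $C\int_0^t(E_k+\sum D_\ell+\sum V_\ell)\,ds+C/N_k$. The second term is bounded by the assumed empirical input, $C_{\mathrm{emp}}(T)/N_k$.

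\textbf{Step 3: loss error.} Split $L^{k,N}-L^k=(L^{k,N}-\widetilde L^{k,N})+(\widetilde L^{k,N}-L^k)$. For the second piece, write the conditional empirical distribution function of the i.i.d.\ killing times, which has conditional variance at most $1/(4N_k)$, so its second moment is $O(N_k^{-1})$. For the first piece, note that $|L^{k,N}-\widetilde L^{k,N}|\le N_k^{-1}\sum_i|\xi_i^{k,N}-\bar\xi_i^k|$. Since indicators satisfy $|\xi-\bar\xi|^2=|\xi-\bar\xi|$, Jensen and synchronous thinning give $\mathbb E|\xi_i^{k,N}(t)-\bar\xi_i^k(t)|\le L_\lambda\int_0^t\mathbb E|e_i^k(s)|\,ds$. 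This is only first order in $e$.

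\textbf{Main obstacle.} The bound in Step~3 is linear in $\mathbb E|e|$, so it does not close quadratically. We would address this as in Lemma~\ref{lem:sufficient-quadratic-inputs}. Decompose $N_k^{-1}\sum_i(\xi_i^{k,N}-\bar\xi_i^k)$ into a compensator part and a centered martingale. The compensator part is $\int_0^t N_k^{-1}\sum_i \pm(\lambda_k(X_i^{k,N})-\lambda_k(\bar X_i^k))\,ds$ restricted to alive particles. Its square is bounded by $C\int_0^t E_k$ via Cauchy--Schwarz over $i$, using exchangeability within the type. The martingale has quadratic variation at most $\Lambda t/N_k$. Together these give
\[
V_k(t)\le C\int_0^t E_k(s)\,ds+\frac{C}{N_k}.
\]
If this Doob--Meyer route fails to be tight, we would instead invoke the quadratic thinning estimate recorded in the proof of Lemma~\ref{lem:sufficient-quadratic-inputs} directly.

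\textbf{Step 4: close the system.} Set $\Phi(t)=\sum_k(E_k+D_k+V_k)(t)$. Substituting the bounds for $V_\ell$ and $D_\ell$ from Steps~2 and~3 into Step~1 gives
\[
\Phi(t)\le C_1(1+\|\Gamma\|)^2\int_0^t\Phi(s)\,ds+\frac{C_0}{N_*}.
\]
Gronwall then yields $\Phi(t)\le C_0N_*^{-1}\exp\{C_1T(1+\|\Gamma\|)^2\}$ uniformly in $t\le T$.

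Everything above is done conditionally on $\mathcal F_T^{W^0}$, where the control is a fixed path and the constants do not depend on that path. Integrating over $W^0$ therefore gives the unconditional bounds stated in the theorem.
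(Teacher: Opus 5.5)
Your Steps 1, 2 and 4 follow the paper's argument, and your treatment of $\widetilde L^{k,N}-L^k$ via the conditional Bernoulli variance is fine. The gap is in the repaired Step 3, exactly where you located the obstacle. The compensator of $N_k^{-1}\sum_i(\xi_i^{k,N}-\bar\xi_i^k)$ is
\[
\int_0^t\frac1{N_k}\sum_{i=1}^{N_k}\Bigl[\bar\xi_i^k(s)\lambda_k(\bar X_i^k(s),a_s^k)-\xi_i^{k,N}(s)\lambda_k(X_i^{k,N}(s),a_s^k)\Bigr]ds .
\]
Its integrand reduces to $\pm(\lambda_k(X_i^{k,N})-\lambda_k(\bar X_i^k))$ only when both copies are alive. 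When exactly one copy is alive, it equals $\pm\lambda_k$ at the surviving copy, which can be as large as $\Lambda$ and is unrelated to $|e_i^k|$. Cauchy--Schwarz over $i$ therefore bounds only the both-alive part by $C\int_0^tE_k\,ds$. The remainder is of size $\Lambda\int_0^tN_k^{-1}\sum_i|\xi_i^{k,N}(s)-\bar\xi_i^k(s)|\,ds$, which is the very first-order mismatch you set out to avoid. So $V_k(t)\le C\int_0^tE_k\,ds+C/N_k$ is not established by the argument as written.

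The fallback does not rescue it. A per-particle bound $\mathbb E\sup_{r\le t}|\xi_i^{k,N}(r)-\bar\xi_i^k(r)|^2\le C\int_0^t\mathbb E|e_i^k(s)|^2ds$ cannot hold in general: as you noted, $\mathbb E|\xi-\bar\xi|^2=\mathbb E|\xi-\bar\xi|$ is typically of first order in the coupling error. It is also not among the hypotheses of the statement.

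There are two ways to close the gap. The paper never passes through $\widetilde L^{k,N}$ for the loss. It applies Doob--Meyer to $L^{k,N}$ itself, $L_t^{k,N}=M_t^{k,N}+\int_0^t\langle\lambda_k(\cdot,a_s^k),\mu_s^{k,N}\rangle ds$ with $\mathbb E|M_t^{k,N}|^2\le C\Lambda T/N_k$, and compares with $L_t^k=\int_0^t\langle\lambda_k(\cdot,a_s^k),\mu_s^k\rangle ds$. This gives $V_k(t)\le C/N_k+C\int_0^tD_k(s)\,ds$. The alive/dead mismatch is thereby moved into $D_k$, where the assumed metric-level stability estimate absorbs it, and the Gronwall system closes as in your Step 4.

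Alternatively, keep your idea but apply the compensator/martingale split to the \emph{unsigned} mismatch. We have $|\xi_i^{k,N}(t)-\bar\xi_i^k(t)|\le U_i(t)$, where $U_i\in\{0,1\}$ counts atoms of $\mathcal N_i^k$ that fall between the two intensities while both copies are alive. The compensator of $U_i$ is
$\int_0^t\xi_i^{k,N}(s)\bar\xi_i^k(s)|\lambda_k(X_i^{k,N}(s),a_s^k)-\lambda_k(\bar X_i^k(s),a_s^k)|ds\le L_\lambda\int_0^t|e_i^k(s)|ds$.
The compensated parts are orthogonal across $i$, with second moments at most $\Lambda t$. Averaging before squaring then gives $\mathbb E\bigl(N_k^{-1}\sum_i|\xi_i^{k,N}(t)-\bar\xi_i^k(t)|\bigr)^2\le 2L_\lambda^2t\int_0^tE_k(s)\,ds+2\Lambda t/N_k$. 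This proves your Step 3 bound. Combined with $d_{\mathrm{BL}}(\mu_t^{k,N},\widetilde\mu_t^{k,N})\le N_k^{-1}\sum_i\bigl(|e_i^k(t)|+|\xi_i^{k,N}(t)-\bar\xi_i^k(t)|\bigr)$, it also controls the first term in your Step 2. That route would therefore not need the metric-level stability hypothesis at all.
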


\begin{proof}
Use the synchronous coupling from the proof of Theorem~\ref{thm:multiclass-mkv-limit}. For each $i,k$, the finite particle $X_i^{k,N}$ and the limiting particle $\bar X_i^k$ use the same initial state, the same idiosyncratic Brownian motion, the same common Brownian motion, and the same Poisson random measure. Let
 \(e_i^k(t)=X_i^{k,N}(t)-\bar X_i^k(t).\)
Define
\[
E_k(t):=\mathbb E|e_i^k(t)|^2,
\qquad
D_k(t):=\mathbb E d_{\mathrm{BL}}^2(\mu_t^{k,N},\mu_t^k),
\qquad
V_k(t):=\mathbb E|L_t^{k,N}-L_t^k|^2 .
\]
Since the Brownian terms are fully synchronized,
\[
\begin{aligned}
e_i^k(t)
&=
\int_0^t
\Big[
 b_k(X_i^{k,N}(s),\boldsymbol\mu_s^N,a_s^k)
 -b_k(\bar X_i^k(s),\boldsymbol\mu_s,a_s^k)
\Big]ds \\
&\qquad-
\sum_{\ell=1}^K\Gamma_{k\ell}
\bigl(L_t^{\ell,N}-L_t^\ell\bigr),
\end{aligned}
\]
where $\boldsymbol\mu_s^N=(\mu_s^{1,N},\ldots,\mu_s^{K,N})$ and $\boldsymbol\mu_s=(\mu_s^1,\ldots,\mu_s^K)$. By the Lipschitz property of $b_k$, the boundedness of $\Gamma$, and the definition of $d_{\mathrm{BL}}$,
\[
E_k(t)
\le
C\int_0^t\left(E_k(s)+\sum_{\ell=1}^KD_\ell(s)\right)ds
+C\sum_{\ell=1}^KV_\ell(t).
\]

We next estimate the surviving-measure error. By the triangle inequality,
\[
d_{\mathrm{BL}}(\mu_t^{k,N},\mu_t^k)
\le
d_{\mathrm{BL}}(\mu_t^{k,N},\widetilde\mu_t^{k,N})
+d_{\mathrm{BL}}(\widetilde\mu_t^{k,N},\mu_t^k).
\]
The assumed quadratic synchronous-thinning stability gives
\[
\mathbb E d_{\mathrm{BL}}^2(\mu_t^{k,N},\widetilde\mu_t^{k,N})
\le
C\int_0^t\left(E_k(s)+\sum_{\ell=1}^KD_\ell(s)+\sum_{\ell=1}^KV_\ell(s)\right)ds+\frac{C}{N_k}.
\]
The auxiliary empirical fluctuation estimate gives
\[
\mathbb E d_{\mathrm{BL}}^2(\widetilde\mu_t^{k,N},\mu_t^k)
\le
\frac{C_{\mathrm{emp}}(T)}{N_k}.
\]
Therefore,
\[
D_k(t)
\le
C\int_0^t\left(E_k(s)+\sum_{\ell=1}^KD_\ell(s)+\sum_{\ell=1}^KV_\ell(s)\right)ds+\frac{C}{N_k}.
\]

For the loss process, the finite empirical loss has the Doob--Meyer decomposition
 \(L_t^{k,N}=M_t^{k,N}+ \int_0^t\langle\lambda_k(\cdot,a_s^k),\mu_s^{k,N}\rangle ds,\)
where $M^{k,N}$ is a martingale with
 \(\mathbb E|M_t^{k,N}|^2\le \frac{C\Lambda T}{N_k}.\)
The limiting loss process satisfies
 \(L_t^k=\int_0^t\langle\lambda_k(\cdot,a_s^k),\mu_s^k\rangle ds.\)
Since $\lambda_k$ is bounded and Lipschitz, the pairing with $\lambda_k$ is controlled by the bounded-Lipschitz distance after rescaling the test function by a deterministic constant. Hence
\[
\left|
\langle\lambda_k(\cdot,a_s^k),\mu_s^{k,N}\rangle
-
\langle\lambda_k(\cdot,a_s^k),\mu_s^k\rangle
\right|
\le C d_{\mathrm{BL}}(\mu_s^{k,N},\mu_s^k).
\]
By Cauchy's inequality,
 \(V_k(t) \le \frac{C}{N_k}+C\int_0^tD_k(s)ds.\)

Let
 \(E(t)=\sum_{k=1}^K E_k(t),\qquad D(t)=\sum_{k=1}^K D_k(t),\qquad V(t)=\sum_{k=1}^K V_k(t).\)
The estimates above imply
 \(E(t)\le C\int_0^t(E(s)+D(s))ds+CV(t),\)
 \(D(t)\le \frac{C}{N_*}+C\int_0^t(E(s)+D(s)+V(s))ds,\)
and
 \(V(t)\le \frac{C}{N_*}+C\int_0^tD(s)ds.\)
Substituting the last estimate into the first and absorbing repeated time integrals into the constant gives
 \(E(t)\le \frac{C}{N_*}+C\int_0^t(E(s)+D(s))ds.\)
Together with the estimates for $D$ and $V$, this yields
 \(\Phi(t):=E(t)+D(t)+V(t) \le \frac{C}{N_*}+C\int_0^t\Phi(s)ds.\)
Gronwall's inequality gives
 \(\Phi(t)\le \frac{C_T}{N_*},\qquad 0\le t\le T.\)
The asserted bounds for each type follow from $D_k\le D$ and $V_k\le V$. The proof is complete.
\end{proof}

\begin{proposition*}[Detailed form of Proposition~\ref{prop:propagation-of-chaos-same-type}: Propagation of chaos: asymptotic independence of particles of the same type]
Under the conditions of Theorem~\ref{thm:multiclass-mkv-limit}, fix a type
\(k\in\{1,\ldots,K\}\) and a terminal time \(t\in[0,T]\). Let
\[
  \widehat X_i^{k,N}(t)
  :=
  \begin{cases}
  X_i^{k,N}(t), & \xi_i^{k,N}(t)=1,\\
  \partial, & \xi_i^{k,N}(t)=0,
  \end{cases}
\]
where \(\partial\) is the cemetery point, and define the full probability measures
\[
  \widehat\mu_t^{k,N}
  :=
  \mu_t^{k,N}+L_t^{k,N}\delta_\partial,
  \qquad
  \widehat\mu_t^k
  :=
  \mu_t^k+L_t^k\delta_\partial .
\]
Then, for any fixed integer \(m\ge1\) and any pairwise distinct indices
 \(i_1,\ldots,i_m\in\{i:g(i)=k\},\)
we have
\[
  \mathcal L
  \left(
  \widehat X_{i_1}^{k,N}(t),
  \ldots,
  \widehat X_{i_m}^{k,N}(t)
  \,\big|\,
  \mathcal F_t^{W^0}
  \right)
  \Longrightarrow
  \bigl(\widehat\mu_t^k\bigr)^{\otimes m}
\]
weakly in probability. Equivalently, for any bounded Lipschitz test function
\(\Phi:\mathsf E^m\to\mathbb R\), where
\(\mathsf E=\mathbb R\cup\{\partial\}\),
\[
\begin{aligned}
&\mathbb E
\left[
  \Phi
  \left(
  \widehat X_{i_1}^{k,N}(t),
  \ldots,
  \widehat X_{i_m}^{k,N}(t)
  \right)
  \,\big|\,
  \mathcal F_t^{W^0}
\right]
\\
&\hspace{2cm}\longrightarrow
\int_{\mathsf E^m}
  \Phi(y_1,\ldots,y_m)
  \bigl(\widehat\mu_t^k\bigr)^{\otimes m}
  (dy_1\cdots dy_m)
\end{aligned}
\]
in probability.

In particular, if \(\sigma_0=0\), then \(\widehat\mu_t^k\) is deterministic, and the conditional convergence above reduces to the usual unconditional propagation of chaos:
\[
  \mathcal L
  \left(
  \widehat X_{i_1}^{k,N}(t),
  \ldots,
  \widehat X_{i_m}^{k,N}(t)
  \right)
  \Longrightarrow
  \bigl(\widehat\mu_t^k\bigr)^{\otimes m}.
\]
If the main text already incorporates the dead-particle state into \(X_i^k(t)\), then one may directly write
\(\widehat X_i^{k,N}\) and \(\widehat\mu_t^k\) as
\(X_i^{k,N}\) and \(\mu_t^k\), respectively, and obtain the shortened form
\[
  \mathcal L
  \left(
  X_{i_1}^{k,N}(t),
  \ldots,
  X_{i_m}^{k,N}(t)
  \,\big|\,
  \mathcal F_t^{W^0}
  \right)
  \Longrightarrow
  \bigl(\mu_t^k\bigr)^{\otimes m}.
\]
\end{proposition*}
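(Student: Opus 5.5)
The plan is to reduce the statement to the synchronous coupling already built in the proof of Theorem~\ref{thm:multiclass-mkv-limit}. Under that coupling, each finite particle $X_i^{k,N}$ shares its initial state, idiosyncratic Brownian motion $W_i^k$, common noise $W^0$ and Poisson measure $\mathcal N_i^k$ with its limiting partner $\bar X_i^k$. Write $\widehat{\bar X}_i^k(t)$ for the lifted limiting state, equal to $\bar X_i^k(t)$ if $\bar\xi_i^k(t)=1$ and to $\partial$ otherwise.

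Because the control is $\mathbb F^{W^0}$-adapted and the limiting flow $(\mu^k,L^k)$ is $\mathcal F^{W^0}$-measurable, the limiting particles are conditionally i.i.d. given the common noise. This is the statement recorded in Lemma~\ref{lem:conditional-killed-empirical-lln}. One subtlety is that the statement conditions on $\mathcal F_t^{W^0}$ rather than $\mathcal F_T^{W^0}$. Since $\bar X_i^k(t)$ and $\bar\xi_i^k(t)$ depend on $W^0$ only through its path on $[0,t]$, independence of the increments gives
\[
\mathcal L\bigl(\widehat{\bar X}_{i_1}^k(t),\ldots,\widehat{\bar X}_{i_m}^k(t)\,\big|\,\mathcal F_t^{W^0}\bigr)=(\widehat\mu_t^k)^{\otimes m}
\]
exactly, for every $N$. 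Here we use that the conditional law of a single lifted limiting particle is $\mu_t^k+L_t^k\delta_\partial=\widehat\mu_t^k$, by the definitions of $\mu_t^k$ and $L_t^k$.

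It then suffices to show, for each bounded Lipschitz $\Phi$ on $\mathsf E^m$, that
\[
\mathbb E\bigl|\Phi(\widehat X_{i_1}^{k,N}(t),\ldots)-\Phi(\widehat{\bar X}_{i_1}^k(t),\ldots)\bigr|\to0 .
\]
To do this, equip $\mathsf E$ with the metric that extends $|x-y|\wedge 2$ on $\mathbb R$ and puts $\partial$ at distance $1$ from every real point, so that $\partial$ is isolated. For a single coordinate $i$, the lifted distance is bounded by $|e_i^k(t)|\wedge 2+|\xi_i^{k,N}(t)-\bar\xi_i^k(t)|$, up to a constant. Hence the difference above is at most
\[
C\sum_{j=1}^m\Bigl(\mathbb E|e_{i_j}^k(t)|+\mathbb E|\xi_{i_j}^{k,N}(t)-\bar\xi_{i_j}^k(t)|\Bigr).
\]
By exchangeability within type $k$, each term equals the quantity $u^k(t)$ from the proof of Theorem~\ref{thm:multiclass-mkv-limit}. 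That proof shows $u^k(T)\le C_T(A_{\mathbf N}(T)+B_{\mathbf N}(T))\to0$. The killing-indicator mismatch is bounded by $L_\lambda\int_0^t u^k(s)\,ds$ through the synchronous thinning estimate. So the bound is $O(m\,u^k(T))\to0$.

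Finally, an $L^1$ bound on the difference implies that the conditional expectations given $\mathcal F_t^{W^0}$ differ by a quantity converging to $0$ in $L^1$, hence in probability. Combined with the exact identity above, this gives the claimed convergence for each test $\Phi$. To pass from fixed test functions to weak convergence in probability, take a countable convergence-determining family of bounded Lipschitz functions on the Polish space $\mathsf E^m$ and use a diagonal subsequence argument.

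When $\sigma_0=0$, the filtration $\mathcal F^{W^0}$ is trivial, so $\widehat\mu_t^k$ is deterministic and the statement reduces to unconditional chaos. The shortened form is just notation.

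The main obstacle is justifying that exchangeability makes $u^k$ independent of $i$, and that the coupling bound applies to the specific fixed indices $i_1,\ldots,i_m$. Under i.i.d. within-type initial data and a type-level control this symmetry holds. I would state it explicitly and not rely on averaging over $i$.
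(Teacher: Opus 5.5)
Your proposal is correct and follows essentially the same route as the paper's proof. The shared steps are: synchronous coupling; the exact identity $\mathcal L(\widehat{\bar X}_{i_1}^k(t),\ldots,\widehat{\bar X}_{i_m}^k(t)\,|\,\mathcal F_t^{W^0})=(\widehat\mu_t^k)^{\otimes m}$; a per-particle bound by $|e_i^k(t)|\wedge c+|\xi_i^{k,N}(t)-\bar\xi_i^k(t)|$, controlled through the coupling estimates of Theorem~\ref{thm:multiclass-mkv-limit}; and an upgrade from single test functions via a countable determining class. You also make explicit several steps the paper leaves implicit (conditioning on $\mathcal F_t^{W^0}$ rather than $\mathcal F_T^{W^0}$, within-type exchangeability, and the subsequence argument), and the only caveat, shared with the paper, is that the $\sigma_0=0$ reduction tacitly needs deterministic controls, since a $W^0$-adapted control would otherwise keep $\widehat\mu_t^k$ random.
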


\begin{proof}
Use the synchronous coupling from the proof of Theorem~\ref{thm:multiclass-mkv-limit}. For each \(i\) and \(k\), the finite particle
\(X_i^{k,N}\) and the limiting particle \(\bar X_i^k\) use the same initial state, the same idiosyncratic Brownian motion
\(W_i^k\), the same common Brownian motion \(W^0\), and the same Poisson random measure
\(\mathcal N_i^k\). Let
\[
  \widehat{\bar X}_i^k(t)
  :=
  \begin{cases}
  \bar X_i^k(t), & \bar\xi_i^k(t)=1,\\
  \partial, & \bar\xi_i^k(t)=0 .
  \end{cases}
\]
By the definition of the limiting McKean--Vlasov system, conditional on the common noise \(\mathcal F_t^{W^0}\),
 \(\widehat{\bar X}_{i_1}^k(t), \ldots, \widehat{\bar X}_{i_m}^k(t)\)
are conditionally independent and identically distributed, with common conditional law
 \(\widehat\mu_t^k = \mu_t^k+L_t^k\delta_\partial .\)
Therefore,
\[
  \mathcal L
  \left(
  \widehat{\bar X}_{i_1}^k(t),
  \ldots,
  \widehat{\bar X}_{i_m}^k(t)
  \,\big|\,
  \mathcal F_t^{W^0}
  \right)
  =
  \bigl(\widehat\mu_t^k\bigr)^{\otimes m}.
\]

It remains to show that the finite-particle tuple and the independent limiting tuple converge to each other in distance under the synchronous coupling. On
\(\mathsf E=\mathbb R\cup\{\partial\}\), choose a bounded metric \(d_\partial\) that agrees with the truncated Euclidean distance on
\(\mathbb R\) and satisfies
 \(d_\partial(x,y)\le |x-y|\wedge1, \qquad x,y\in\mathbb R,\)
and
 \(d_\partial(x,\partial)\le 1, \qquad x\in\mathbb R .\)
Then, for each \(i\),
\[
  d_\partial
  \left(
  \widehat X_i^{k,N}(t),
  \widehat{\bar X}_i^k(t)
  \right)
  \le
  |X_i^{k,N}(t)-\bar X_i^k(t)|\wedge1
  +
  |\xi_i^{k,N}(t)-\bar\xi_i^k(t)|.
\]
The synchronous coupling estimates in Theorem~\ref{thm:multiclass-mkv-limit} give
 \(\mathbb E \sup_{0\le s\le T} |X_i^{k,N}(s)-\bar X_i^k(s)| \longrightarrow 0\)
and
 \(\mathbb E \sup_{0\le s\le T} |\xi_i^{k,N}(s)-\bar\xi_i^k(s)| \longrightarrow 0 .\)
Hence
\[
  \mathbb E
  d_\partial
  \left(
  \widehat X_i^{k,N}(t),
  \widehat{\bar X}_i^k(t)
  \right)
  \longrightarrow 0 .
\]
For fixed \(i_1,\ldots,i_m\), this gives
\[
  \mathbb E
  \sum_{r=1}^m
  d_\partial
  \left(
  \widehat X_{i_r}^{k,N}(t),
  \widehat{\bar X}_{i_r}^k(t)
  \right)
  \longrightarrow 0 .
\]

Let \(\Phi:\mathsf E^m\to\mathbb R\) be any bounded Lipschitz function. By the Lipschitz property,
\[
\begin{aligned}
&\left|
\Phi
\left(
  \widehat X_{i_1}^{k,N}(t),
  \ldots,
  \widehat X_{i_m}^{k,N}(t)
\right)
-
\Phi
\left(
  \widehat{\bar X}_{i_1}^k(t),
  \ldots,
  \widehat{\bar X}_{i_m}^k(t)
\right)
\right|
\\
&\hspace{2cm}\le
\mathrm{Lip}(\Phi)
\sum_{r=1}^m
d_\partial
\left(
  \widehat X_{i_r}^{k,N}(t),
  \widehat{\bar X}_{i_r}^k(t)
\right).
\end{aligned}
\]
Taking conditional expectation with respect to \(\mathcal F_t^{W^0}\), and writing
\[
  \mathcal Q_{N,t}^{k,m}
  :=
  \mathcal L
  \left(
  \widehat X_{i_1}^{k,N}(t),
  \ldots,
  \widehat X_{i_m}^{k,N}(t)
  \,\big|\,
  \mathcal F_t^{W^0}
  \right),
\]
we obtain
\[
\begin{aligned}
&\left|
\int_{\mathsf E^m}\Phi\,d\mathcal Q_{N,t}^{k,m}
-
\int_{\mathsf E^m}\Phi\,d(\widehat\mu_t^k)^{\otimes m}
\right|
\\
&\hspace{2cm}\le
\mathrm{Lip}(\Phi)\,
\mathbb E
\left[
  \sum_{r=1}^m
  d_\partial
  \left(
  \widehat X_{i_r}^{k,N}(t),
  \widehat{\bar X}_{i_r}^k(t)
  \right)
  \,\big|\,
  \mathcal F_t^{W^0}
\right].
\end{aligned}
\]
Taking outer expectation gives
\[
\mathbb E
\left|
\int_{\mathsf E^m}\Phi\,d\mathcal Q_{N,t}^{k,m}
-
\int_{\mathsf E^m}\Phi\,d(\widehat\mu_t^k)^{\otimes m}
\right|
\le
\mathrm{Lip}(\Phi)
\sum_{r=1}^m
\mathbb E
d_\partial
\left(
  \widehat X_{i_r}^{k,N}(t),
  \widehat{\bar X}_{i_r}^k(t)
\right)
\longrightarrow 0 .
\]
Thus the difference above converges to zero in \(L^1\), and hence in probability, for every bounded Lipschitz \(\Phi\). To phrase the conclusion as bounded-Lipschitz convergence, take a countable determining subclass of the bounded-Lipschitz unit ball on the Polish space \(\mathsf E^m\). The preceding estimate holds simultaneously on this countable subclass after a diagonal argument, and separability of the bounded-Lipschitz metric extends it to the full unit ball. Hence
\[
  d_{\mathrm{BL}}
  \left(
  \mathcal Q_{N,t}^{k,m},
  \bigl(\widehat\mu_t^k\bigr)^{\otimes m}
  \right)
  \longrightarrow 0
  \qquad\text{in probability}.
\]
Since the bounded Lipschitz distance metrizes weak convergence on the Polish space \(\mathsf E^m\), the proposition follows.
\end{proof}

\subsection{Controlled well-posedness and killed-HJB comparison}
\label{sec:control-proofs}
\begin{assumption}[Regularity conditions for the two-population controlled contagion system]
\label{ass:two-pop-control-regularity}
Let \(T>0\). The two type shares \(\pi_1,\pi_2\) are fixed strictly positive constants. Set
 \(\mathsf E=\mathbb R\cup\{\partial\},\)
where \(\partial\) denotes the cemetery state. Equip \(\mathsf E\) with a metric \(d_{\partial}\) that agrees with the Euclidean distance on \(\mathbb R\) and satisfies
 \(d_{\partial}(x,\partial)^2\le C_{\partial}(1+|x|^2), \qquad x\in\mathbb R .\)
For \(k=1,2\), let
 \(\mu^k\in\mathcal P_2(\mathsf E), \qquad L^k(\mu^k):=\mu^k(\{\partial\}),\)
and write
 \(\langle f,\mu^k\rangle_{\circ} := \int_{\mathbb R} f(x)\,\mu^k(dx),\)
so that the integral is taken only over the alive-state set \(\mathbb R\).

Assume that the drift function
 \(b_k:\mathbb R\times\mathcal P_2(\mathsf E)^2\to\mathbb R\)
is uniformly Lipschitz in \((x,\mu^1,\mu^2)\) and satisfies a linear growth condition. Assume that the diffusion coefficients
 \(\sigma_k>0,\qquad \sigma_0\ge0\)
are constants, where \(\sigma_0dW^0_t\) denotes the common-noise term. The strict positivity of the idiosyncratic coefficients is the nondegeneracy condition used in the comparison theorem for the killed HJB below. The contagion matrix
 \(\Gamma=(\Gamma_{k\ell})_{1\le k,\ell\le2}\)
is nonnegative and bounded.

The kill intensity
 \(\lambda_k:\mathbb R\times[0,\infty)\to[0,\infty)\)
is continuous, and there exists a constant \(\Lambda<\infty\) such that
 \(0\le \lambda_k(x,a)\le\Lambda, \qquad x\in\mathbb R,\ a\ge0.\)
Moreover, \(\lambda_k\) is uniformly Lipschitz in \(x\) and locally Lipschitz in the control variable \(a\). Extend the kill intensity to the cemetery state by setting
 \(\lambda_k(\partial,a)=0.\)

The terminal loss function
 \(\Phi:[0,1]^2\to\mathbb R\)
is bounded and Lipschitz. The control cost function
 \(c_k:[0,\infty)\to[0,\infty)\)
is continuous, convex, lower semicontinuous, and satisfies the superlinear growth condition
 \(\lim_{a\to\infty}\frac{c_k(a)}{a}=+\infty.\)

In addition, assume that the controlled martingale problem is well posed for every progressively measurable compact-valued control. The dynamic-programming and viscosity framework follows the standard infinite-dimensional control approach of Fabbri--Gozzi--Swiech~\cite{FabbriGozziSwiech2017}. The comparison-principle argument for the present killed Wasserstein HJB is formulated after the decomposition \(\mu^k=\nu^k+L^k\delta_\partial\), with alive finite measures and cemetery masses treated as separate coordinates. It combines the Wasserstein smooth-gauge method of Cosso--Gozzi--Kharroubi--Pham--Rosestolato~\cite{CossoGozziKharroubiPhamRosestolato2024} for the alive-measure variables with the paper-specific finite-dimensional estimates for the cemetery-state killing jump.

For the existence statement, admissible controls are taken in the relaxed sense. A relaxed control is a progressively measurable probability-measure-valued process
 \(\vartheta_t\in\mathcal P([0,\bar a]^2),\)
and strict controls correspond to Dirac relaxed controls \(\vartheta_t=\delta_{a_t}\). Coefficients and costs under \(\vartheta_t\) are obtained by averaging the corresponding strict-control coefficients over \([0,\bar a]^2\). Because the action set is compact and the coefficients are continuous in the action, this relaxed admissible class is compact under weak convergence.
\end{assumption}

\subsection{Comparison principle for the killed two-population HJB}
\label{app:killed-two-pop-comparison-proof}

This subsection proves the comparison-principle interface for the killed two-population HJB. The proof is written on the decomposed state space
\[
  \mathcal D_2
  :=\left\{(\nu^1,\nu^2,L^1,L^2):
  \nu^k\in\mathcal M_{\le 1,2}(\mathbb R),\ L^k\in[0,1],\ |\nu^k|+L^k=1\right\},
\]
where \(\mathcal M_{\le 1,2}(\mathbb R)\) denotes finite sub-probability measures on \(\mathbb R\) with finite second moment. The Wasserstein smooth-gauge/Ishii template is used for the alive-measure coordinates \(\nu^k\), and the cemetery masses \(L^k\) are handled by the finite-dimensional doubled-variable penalty \(\beta|L-M|^2/2\). The only term outside the standard alive-measure template is the mass transfer from \(\nu^k\) to \(L^k\); Lemma~\ref{lem:killing-absorption-doubling} verifies that this killing jump is a bounded Lipschitz first-order perturbation compatible with the doubled-variable estimate.

Throughout this subsection the control set is the compact interval \([0,\bar a]^2\). Let
 \(\mathsf E=\mathbb R\cup\{\partial\}\)
be endowed with an auxiliary metric \(d_\partial\) that agrees with the Euclidean metric on compact alive-state sets and separates the cemetery point from \(\mathbb R\). This metric is used only to identify \((\nu^k,L^k)\) with the killed probability measure \(\nu^k+L^k\delta_\partial\) and to define localization estimates for unmatched alive/cemetery mass; the comparison argument itself is carried out on \(\mathcal D_2\). On each localization interval \([-R,R]\) there is \(c_R>0\) such that \(d_\partial(x,\partial)\ge c_R\) for \(x\in[-R,R]\). The localization is removed by the usual coercive second-moment penalty in the Wasserstein comparison argument. Equivalently, one may work directly with a metric such as \(d_\partial(x,y)=|x-y|\) on \(\mathbb R\) and \(d_\partial(x,\partial)=1+|x|\). We extend the kill intensity by \(\lambda_k(\partial,a)=0\). The assumptions used below are
\[
  0\le \lambda_k(x,a)\le \Lambda,
  \qquad
  |\lambda_k(x,a)-\lambda_k(y,a)|\le L_x |x-y|,
  \qquad
  |\lambda_k(x,a)-\lambda_k(x,\tilde a)|\le L_a |a-\tilde a|,
\]
for \(x,y\in\mathbb R\) and \(a,\tilde a\in[0,\bar a]\). After the extension \(\lambda_k(\partial,a)=0\), the localized Lipschitz constant of \(\lambda_k(\cdot,a)\) on \([-R,R]\cup\{\partial\}\) is bounded by \(C_R(L_x+\Lambda)\), because the alive--cemetery jump satisfies \(|\lambda_k(x,a)-\lambda_k(\partial,a)|\le\Lambda\) and \(d_\partial(x,\partial)\ge c_R\) on \([-R,R]\). The drift is Lipschitz in the alive state, in the measure arguments, and in the loss variables, with at most linear growth. The diffusion coefficients are constant, and in the comparison statement below the alive idiosyncratic coefficients satisfy \(\sigma_k>0\), \(k=1,2\). The terminal cost is bounded and uniformly continuous, and the running control costs are continuous on the compact action set.

For \(\nu\in\mathcal M_{\le1}(\mathbb R)\), define the augmented probability measure
 \(\bar\nu:=\nu+(1-|\nu|)\delta_\partial, \qquad |\nu|:=\nu(\mathbb R),\)
and define
 \(\mathsf W_p(\nu,\omega):=W_p(\bar\nu,\bar\omega;d_\partial), \qquad p=1,2 .\)
On each localization interval, the indicator of the alive set is Lipschitz after extension to \(\partial\), hence
 \(\big||\nu|-|\omega|\big| \le C_R\mathsf W_1(\nu,\omega) \le C_R\mathsf W_2(\nu,\omega).\)
All constants below are uniform on fixed localizations; the coercive penalty lets \(R\to\infty\) at the end.

\begin{lemma}[Bounded Lipschitz control of the killing sink]
\label{lem:killing-bounded-lip-sink}
Let \(p:\mathbb R\to\mathbb R\) be bounded and Lipschitz, with \(\|p\|_\infty\le B\) and \(\operatorname{Lip}(p)\le L_p\). Extend \(p\) to \(\mathsf E\) by setting \(p(\partial)=0\). Then, for every \(\nu,\omega\in\mathcal M_{\le1}(\mathbb R)\) and every \(a,\tilde a\in[0,\bar a]\),
\[
\begin{aligned}
&\left|
\int_{\mathbb R}\lambda_k(x,a)p(x)\,\nu(dx)
-
\int_{\mathbb R}\lambda_k(y,\tilde a)p(y)\,\omega(dy)
\right| \\
&\qquad\le
C_R(\Lambda L_p+L_xB+\Lambda B)\mathsf W_1(\nu,\omega)
+L_aB|a-\tilde a| .
\end{aligned}
\]
Equivalently, the cemetery contribution may be written separately as
\[
\begin{aligned}
&\left|
\int_{\mathbb R}\lambda_k(x,a)p(x)\,\nu(dx)
-
\int_{\mathbb R}\lambda_k(y,\tilde a)p(y)\,\omega(dy)
\right| \\
&\qquad\le
C_R(\Lambda L_p+L_xB)\mathsf W_1(\nu,\omega)
+C_R\Lambda B\big||\nu|-|\omega|\big|
+L_aB|a-\tilde a| .
\end{aligned}
\]
\end{lemma}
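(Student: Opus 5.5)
The plan is to split the difference into a control-variation part and a measure-variation part, and to handle the measure variation by Kantorovich--Rubinstein duality on the augmented probability measures $\bar\nu,\bar\omega$ over $(\mathsf E,d_\partial)$. Write
\[
\int_{\mathbb R}\lambda_k(x,a)p\,d\nu-\int_{\mathbb R}\lambda_k(y,\tilde a)p\,d\omega
=\int_{\mathbb R}\bigl(\lambda_k(\cdot,a)-\lambda_k(\cdot,\tilde a)\bigr)p\,d\omega
+\Bigl(\int_{\mathbb R}\lambda_k(\cdot,a)p\,d\nu-\int_{\mathbb R}\lambda_k(\cdot,a)p\,d\omega\Bigr).
\]
The first term is at most $L_aB|a-\tilde a|\,|\omega|\le L_aB|a-\tilde a|$, because $|\omega|\le1$ and $\lambda_k$ is $L_a$-Lipschitz in the control.

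For the second term, set $g(x):=\lambda_k(x,a)p(x)$ on $\mathbb R$ and $g(\partial):=0$. This is consistent with both extensions $\lambda_k(\partial,a)=0$ and $p(\partial)=0$. Since $\bar\nu$ places no alive mass beyond $\nu$, we have $\int_{\mathbb R}g\,d\nu=\int_{\mathsf E}g\,d\bar\nu$, and similarly for $\omega$. The second term is therefore $\int g\,d(\bar\nu-\bar\omega)$ over two probability measures on $\mathsf E$. By duality it is bounded by $\operatorname{Lip}_{d_\partial}(g)\,\mathsf W_1(\nu,\omega)$, so it only remains to estimate the Lipschitz constant of $g$ on the localized set $[-R,R]\cup\{\partial\}$. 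The estimate splits by the location of the two points.

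\textbf{Alive--alive pairs.} We have $|g(x)-g(y)|\le \Lambda|p(x)-p(y)|+B|\lambda_k(x,a)-\lambda_k(y,a)|\le(\Lambda L_p+L_xB)|x-y|$. Since $d_\partial$ agrees with the Euclidean metric on compact alive sets, this is at most $C_R(\Lambda L_p+L_xB)\,d_\partial(x,y)$.

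\textbf{Alive--cemetery pairs.} We have $|g(x)-g(\partial)|\le\Lambda B\le (\Lambda B/c_R)\,d_\partial(x,\partial)$, using $d_\partial(x,\partial)\ge c_R$ on $[-R,R]$.

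Combining the two cases gives $\operatorname{Lip}(g)\le C_R(\Lambda L_p+L_xB+\Lambda B)$, which is the first inequality. This mirrors the paper's preceding remark that the localized Lipschitz constant of $\lambda_k(\cdot,a)$ is $C_R(L_x+\Lambda)$.

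For the "equivalent" second form, I would separate out the unmatched mass. Take an optimal coupling $\gamma$ of $\bar\nu,\bar\omega$. The integrand of $\int g\,d(\bar\nu-\bar\omega)$ is then supported on alive--alive pairs, where the bound $C_R(\Lambda L_p+L_xB)d_\partial$ holds, and on alive--cemetery pairs, where it is bounded by $\Lambda B$. The $\gamma$-mass of the alive--cemetery pairs is controlled by the mass mismatch. Cleanly, this mass equals $\bar\nu(\mathbb R\times\{\partial\}\text{-part})+\dots$, which is at least $\big||\nu|-|\omega|\big|$, and for a suitably chosen coupling it equals that quantity. I would therefore first choose a coupling that transports $\min(|\nu|,|\omega|)$ alive-to-alive and matches cemeteries maximally, and only then bound the alive--alive cost.

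The main obstacle is exactly this: an optimal $\mathsf W_1$ coupling need not minimize the cross mass. If a direct coupling argument is awkward, I would fall back on a decomposition by normalization. Write $\nu=|\nu|\hat\nu$, compare $\hat\nu$ to $\hat\omega$, and bound the mass difference by $\Lambda B\big||\nu|-|\omega|\big|$. The main part is then controlled via the already established inequality $\big||\nu|-|\omega|\big|\le C_R\mathsf W_1(\nu,\omega)$, absorbing constants into $C_R$. In any case, the first form already implies a bound of the second type up to constants, since the mass term is itself dominated by $C_R\mathsf W_1$.

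Finally, the localization enters only through $C_R$ and $c_R$, and it is removed later by the coercive penalty, as stated before the lemma.
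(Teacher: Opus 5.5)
Your proof of the first inequality is the paper's proof. You split off the control variation against \(\omega\) and set \(g=\lambda_k(\cdot,a)p\) on \(\mathbb R\) with \(g(\partial)=0\), so the alive integrals become integrals against \(\bar\nu,\bar\omega\). You then bound \(\operatorname{Lip}_{d_\partial}(g)\) case by case (alive--alive by \(\Lambda L_p+L_xB\), alive--cemetery by \(\Lambda B/c_R\)) and apply Kantorovich--Rubinstein duality.

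The gap is in the second form. The paper's proof only asserts it, so it needs a real argument.

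\textbf{The closing claim is reversed.} You say the first form already implies the second. It is the other way round: since \(\bigl||\nu|-|\omega|\bigr|\le C_R\mathsf W_1(\nu,\omega)\), the second form implies the first. The point of the second form is that \(\Lambda B\) multiplies only the unmatched mass, while \(\mathsf W_1\) carries only \(\Lambda L_p+L_xB\). For example, take \(a=\tilde a\), \(p\) and \(\lambda_k(\cdot,a)\) constant, and \(|\nu|=|\omega|\) with different supports. Then the second right-hand side is \(0\), but the first is not.

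\textbf{The normalization fallback is incomplete.} It can be completed, but the mass inequality alone is not enough. You would also need \(|\omega|\,W_1(\hat\nu,\hat\omega)\le C_R\mathsf W_1(\nu,\omega)\), which requires its own duality argument.

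\textbf{A direct fix.} Subtract a constant before applying duality. Fix \(x_0\in[-R,R]\), let \(\tilde g:=g-g(x_0)\) on \(\mathbb R\) and \(\tilde g(\partial):=0\), and write
\[
\int_{\mathbb R}g\,d\nu-\int_{\mathbb R}g\,d\omega=\int_{\mathsf E}\tilde g\,d(\bar\nu-\bar\omega)+g(x_0)\bigl(|\nu|-|\omega|\bigr).
\]
The alive--cemetery jump of \(\tilde g\) is \(|g(x)-g(x_0)|\le 2R(\Lambda L_p+L_xB)\). Hence \(\operatorname{Lip}_{d_\partial}(\tilde g)\le C_R(\Lambda L_p+L_xB)\) on \([-R,R]\cup\{\partial\}\), while \(|g(x_0)|\le\Lambda B\). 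Duality on the first term and the trivial bound on the second give exactly the second displayed estimate, with no coupling selection needed.
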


\begin{proof}
Set \(f_a(x)=\lambda_k(x,a)p(x)\) for \(x\in\mathbb R\) and \(f_a(\partial)=0\). On \(\mathsf E\),
 \(\|f_a\|_\infty\le \Lambda B.\)
For \(x,y\in\mathbb R\),
\[
\begin{aligned}
|f_a(x)-f_a(y)|
&\le \lambda_k(x,a)|p(x)-p(y)|
  + |p(y)|\,|\lambda_k(x,a)-\lambda_k(y,a)| \\
&\le (\Lambda L_p+L_xB)|x-y| .
\end{aligned}
\]
For \(x\in[-R,R]\), the jump from \(x\) to \(\partial\) is bounded by \(\Lambda B\), and the cemetery point is separated from the alive set; this gives
 \(\operatorname{Lip}_{d_\partial}(f_a) \le C_R(\Lambda L_p+L_xB+\Lambda B).\)
Therefore the Kantorovich--Rubinstein duality gives
\[
  \left|\int f_a\,d\bar\nu-\int f_a\,d\bar\omega\right|
  \le C_R(\Lambda L_p+L_xB+\Lambda B)\mathsf W_1(\nu,\omega).
\]
Since \(f_a(\partial)=0\), these are exactly the alive-state integrals. The control difference is estimated by
\[
  \left|\int_{\mathbb R} [\lambda_k(y,a)-\lambda_k(y,\tilde a)]p(y)\,\omega(dy)\right|
  \le L_aB|a-\tilde a|.
\]
Combining the last two inequalities proves the first bound. The second displayed bound is the same estimate with the cemetery jump contribution separated into the unmatched-mass term.
\end{proof}

\begin{lemma}[Aggregate kill rate is Wasserstein-Lipschitz]
\label{lem:aggregate-kill-rate-lip}
Let
\[
  \Lambda_k(\nu,a):=\int_{\mathbb R}\lambda_k(x,a)\,\nu(dx),
  \qquad \nu\in\mathcal M_{\le1}(\mathbb R),\quad a\in[0,\bar a].
\]
Then
\[
  |\Lambda_k(\nu,a)-\Lambda_k(\omega,\tilde a)|
  \le C_R(\Lambda+L_x)\mathsf W_1(\nu,\omega)
  +L_a|a-\tilde a| .
\]
In particular, for the same control \(a\),
 \(|\Lambda_k(\nu,a)-\Lambda_k(\omega,a)| \le C_R(\Lambda+L_x)\mathsf W_2(\nu,\omega).\)
\end{lemma}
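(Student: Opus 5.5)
The statement is the special case $p\equiv1$ of Lemma~\ref{lem:killing-bounded-lip-sink}, so I would obtain it by direct specialization rather than a new argument. Take $p(x)=1$ for $x\in\mathbb R$, extended by $p(\partial)=0$ as in that lemma. Then $B=\|p\|_\infty=1$ and $L_p=\operatorname{Lip}(p)=0$ on $\mathbb R$. The integrand $\lambda_k(x,a)p(x)$ reduces to $\lambda_k(x,a)$, so the left-hand side of Lemma~\ref{lem:killing-bounded-lip-sink} is exactly $|\Lambda_k(\nu,a)-\Lambda_k(\omega,\tilde a)|$. The first displayed bound of that lemma then reads
\[
C_R(\Lambda\cdot 0+L_x\cdot 1+\Lambda\cdot 1)\,\mathsf W_1(\nu,\omega)+L_a|a-\tilde a|
= C_R(\Lambda+L_x)\,\mathsf W_1(\nu,\omega)+L_a|a-\tilde a|,
\]
which is the first claim.

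To be safe, I would recheck the one point where the specialization interacts with the cemetery extension. The function $f_a(x)=\lambda_k(x,a)$ on $\mathbb R$, with $f_a(\partial)=0$, is precisely the extension $\lambda_k(\partial,a)=0$ fixed before the lemma. Its $d_\partial$-Lipschitz constant has two sources. Between alive points it is $L_x$. Across the alive/cemetery gap, on the localization $[-R,R]$, it is at most $\Lambda/c_R$, because $|f_a(x)-f_a(\partial)|\le\Lambda$ and $d_\partial(x,\partial)\ge c_R$. This gives $\operatorname{Lip}_{d_\partial}(f_a)\le C_R(\Lambda+L_x)$. Since $f_a(\partial)=0$, we have $\int f_a\,d\bar\nu=\Lambda_k(\nu,a)$, and Kantorovich--Rubinstein duality for $\bar\nu,\bar\omega\in\mathcal P(\mathsf E)$ yields the $\mathsf W_1$ term. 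The control increment is bounded by
\[
\int_{\mathbb R}|\lambda_k(y,a)-\lambda_k(y,\tilde a)|\,\omega(dy)\le L_a|a-\tilde a|\,|\omega|\le L_a|a-\tilde a|,
\]
using $|\omega|\le 1$.

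The second claim comes from setting $\tilde a=a$, which removes the control term, and then applying $\mathsf W_1\le\mathsf W_2$. That inequality holds because $\bar\nu$ and $\bar\omega$ are probability measures: by Jensen/Cauchy--Schwarz, every coupling satisfies $\int d_\partial\,d\pi\le(\int d_\partial^2\,d\pi)^{1/2}$, and we then minimize over couplings.

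The only delicate point is that the constant is uniform only on a fixed localization $[-R,R]$, because $c_R$ enters through the cemetery jump. I would state the lemma with that understanding, consistent with the convention in the surrounding subsection that $R\to\infty$ is removed by the coercive second-moment penalty. Alternatively, with the global metric $d_\partial(x,\partial)=1+|x|\ge1$, one has $c_R=1$ and the bound holds with $C_R=1$.
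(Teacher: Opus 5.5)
Your proposal is correct and matches the paper's proof: specialize Lemma~\ref{lem:killing-bounded-lip-sink} to $p\equiv 1$ on $\mathbb R$ with $p(\partial)=0$ (so $B=1$, $L_p=0$), then use $\mathsf W_1\le\mathsf W_2$ for the augmented probability measures. Your additional checks are consistent with the paper's localization convention: the cemetery-gap Lipschitz bound $\Lambda/c_R$, the control increment using $|\omega|\le 1$, and the observation that the metric $d_\partial(x,\partial)=1+|x|$ makes the constant global.
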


\begin{proof}
Apply Lemma~\ref{lem:killing-bounded-lip-sink} with \(p\equiv1\) on \(\mathbb R\), extended by \(p(\partial)=0\). Then \(B=1\), \(L_p=0\), and the result follows from \(\mathsf W_1\le\mathsf W_2\) for the augmented probability measures.
\end{proof}

\begin{lemma}[State decomposition and reformulated killed HJB]
\label{lem:state-decomposition-killed-hjb}
Every \(\mu^k\in\mathcal P_2(\mathsf E)\) has the unique decomposition
\[
  \mu^k=\nu^k+L^k\delta_\partial,
  \qquad
  \nu^k:=\mu^k|_{\mathbb R}\in\mathcal M_{\le1}(\mathbb R),
  \qquad
  L^k:=\mu^k(\{\partial\})\in[0,1],
\]
with \(|\nu^k|+L^k=1\). Let
 \(v(t,\nu^1,\nu^2,L^1,L^2) :=V(t,\nu^1+L^1\delta_\partial,\nu^2+L^2\delta_\partial).\)
At smooth points, after taking a local extension of \(v\) to a neighborhood in the positive cone of finite alive measures and cemetery masses,
\[
  \frac{\delta V}{\delta\mu^k}(x)
  =
  \frac{\delta v}{\delta\nu^k}(x),
  \qquad x\in\mathbb R,
  \qquad
  \frac{\delta V}{\delta\mu^k}(\partial)
  =
  \partial_{L^k}v .
\]
The difference
 \(\partial_{L^k}v-\frac{\delta v}{\delta\nu^k}(x)\)
is independent of the additive normalization of the linear functional derivative. Consequently the killing term becomes
\[
\begin{aligned}
&\int_{\mathbb R}\lambda_k(x,a^k)
\left[
  \frac{\delta V}{\delta\mu^k}(\partial)
  -
  \frac{\delta V}{\delta\mu^k}(x)
\right]\mu^k(dx) \\
&\qquad=
\Lambda_k(\nu^k,a^k)\partial_{L^k}v
-
\int_{\mathbb R}\lambda_k(x,a^k)
  \frac{\delta v}{\delta\nu^k}(x)\,\nu^k(dx).
\end{aligned}
\]
\end{lemma}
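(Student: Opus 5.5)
The lemma has three parts: uniqueness of the decomposition, the identification of derivatives, and the rewriting of the killing term. I will prove them in that order.

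\emph{Decomposition.} Since \(\mathsf E=\mathbb R\cup\{\partial\}\) is a disjoint union of two Borel sets, every \(\mu^k\in\mathcal P_2(\mathsf E)\) splits uniquely as \(\mu^k|_{\mathbb R}+\mu^k(\{\partial\})\delta_\partial\). Set \(\nu^k=\mu^k|_{\mathbb R}\) and \(L^k=\mu^k(\{\partial\})\); then \(|\nu^k|+L^k=\mu^k(\mathsf E)=1\). The second moment of \(\nu^k\) is finite because \(d_\partial\) restricts to the Euclidean metric on \(\mathbb R\), so \(\nu^k\in\mathcal M_{\le1,2}(\mathbb R)\). Conversely, every \((\nu^k,L^k)\) with \(|\nu^k|+L^k=1\) reassembles into a probability measure. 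The growth bound \(d_\partial(x,\partial)^2\le C_\partial(1+|x|^2)\) puts this measure in \(\mathcal P_2(\mathsf E)\). Hence the map \(\mu^k\mapsto(\nu^k,L^k)\) is a bijection onto \(\mathcal D_2\), and \(v\) is well defined.

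\emph{Derivatives.} I will use the linear functional derivative characterized by
\[
V(\mu+\varepsilon(\rho-\mu))-V(\mu)=\varepsilon\int\frac{\delta V}{\delta\mu}\,d(\rho-\mu)+o(\varepsilon).
\]
Take a local extension \(\tilde v\) of \(v\) to the cone of pairs \((\nu,L)\) with \(\nu\) a finite measure and \(L\ge0\). By definition of a smooth extension,
\[
\tilde v(\nu+\varepsilon m,L+\varepsilon\ell)=\tilde v(\nu,L)+\varepsilon\Bigl(\int\frac{\delta v}{\delta\nu}\,dm+\ell\,\partial_Lv\Bigr)+o(\varepsilon).
\]
A perturbation \(\rho-\mu\) of \(\mu\) decomposes as \(m=(\rho-\mu)|_{\mathbb R}\) and \(\ell=(\rho-\mu)(\{\partial\})\). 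Comparing the first-order terms gives
\[
\int\frac{\delta V}{\delta\mu}\,d(\rho-\mu)=\int_{\mathbb R}\frac{\delta v}{\delta\nu}\,dm+\ell\,\partial_Lv
\]
for all admissible \(\rho\). The signed measures \(\rho-\mu\) arising this way have total mass zero, so they determine the function \(\delta V/\delta\mu\) only up to an additive constant. We can therefore choose the normalization in which \(\delta V/\delta\mu=\delta v/\delta\nu\) on \(\mathbb R\) and \(\delta V/\delta\mu(\partial)=\partial_Lv\).

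\emph{Normalization invariance.} The point requiring care is that these identities hold only modulo constants. The derivative \(\delta V/\delta\mu\) is defined up to an additive constant. The extension \(\tilde v\) is not unique either: two extensions may differ to first order by a multiple of the total-mass functional \(|\nu|+L\), which is constant on \(\mathcal D_2\). Such a multiple shifts \(\delta v/\delta\nu\) and \(\partial_Lv\) by the same constant \(c\), so the difference \(\partial_{L^k}v-\frac{\delta v}{\delta\nu^k}(x)\) is unchanged. This invariance is what makes the final formula well defined.

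\emph{Killing term.} Substituting the identified derivatives gives
\[
\int_{\mathbb R}\lambda_k\Bigl[\frac{\delta V}{\delta\mu^k}(\partial)-\frac{\delta V}{\delta\mu^k}(x)\Bigr]\mu^k(dx).
\]
On \(\mathbb R\) the measure \(\mu^k\) coincides with \(\nu^k\), so this equals
\[
\int_{\mathbb R}\lambda_k\Bigl[\partial_{L^k}v-\frac{\delta v}{\delta\nu^k}(x)\Bigr]\nu^k(dx).
\]
The factor \(\partial_{L^k}v\) does not depend on \(x\), so pulling it out of the integral produces \(\Lambda_k(\nu^k,a^k)\,\partial_{L^k}v\), which is the stated formula. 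The integral over \(\{\partial\}\) contributes nothing because \(\lambda_k(\partial,a)=0\). All integrals are finite, since \(\lambda_k\) is bounded and the derivatives have at most quadratic growth at smooth points.
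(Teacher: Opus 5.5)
Your proof is correct and follows the paper's argument. You split $\mathsf E$ as a disjoint union, identify the first variations of $V$ and of the extended $v$ along alive and cemetery perturbations, observe that only the difference $\partial_{L^k}v-\frac{\delta v}{\delta\nu^k}(x)$ (the variation along $\delta_\partial-\delta_x$) is normalization-free, and substitute into the generator. Your use of mass-preserving perturbations $\rho-\mu$, and your remark that two extensions differ to first order by a multiple of total mass, make this step a bit more careful than the paper's version, which perturbs the alive and cemetery parts separately and so implicitly leaves the probability simplex.
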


\begin{proof}
The decomposition follows from the disjoint union \(\mathsf E=\mathbb R\cup\{\partial\}\). A signed perturbation supported on \(\mathbb R\) changes only the alive finite measure \(\nu^k\), and therefore gives the identity of the alive functional derivatives. A perturbation in the cemetery atom changes only the scalar variable \(L^k\), and therefore gives the identity at \(\partial\). Since linear functional derivatives on probability measures are defined up to additive constants, the invariant object is the difference between the cemetery derivative and the alive derivative. This is exactly the first variation along the jump direction \(-\delta_x+\delta_\partial\), and substituting it into the original generator gives the displayed decomposition.
\end{proof}

\begin{lemma}[Absorption of the killing term in the doubled-variable argument]
\label{lem:killing-absorption-doubling}
Let \(u\) be a bounded uniformly continuous viscosity subsolution and \(w\) a bounded uniformly continuous viscosity supersolution of the decomposed killed HJB on the decomposed state space. Let
\[
  (t_\varepsilon,s_\varepsilon,
  \nu_\varepsilon,\omega_\varepsilon,
  L_\varepsilon,M_\varepsilon)
\]
be a maximum point of the doubled functional
\[
\begin{aligned}
&u(t,\nu,L)-w(s,\omega,M)
-\frac{|t-s|^2}{2\alpha}
-\sum_{k=1}^2\Theta_\varepsilon^k(\nu^k,\omega^k)
-\frac{\beta}{2}|L-M|^2
-\delta\mathcal R(\nu,\omega),
\end{aligned}
\]
where \(\Theta_\varepsilon^k\) is the Wasserstein smooth gauge used in the comparison proof and \(\mathcal R\) is a coercive second-moment penalty. Let \(p_\varepsilon^k\) and \(q_\varepsilon^k\) be the first-order measure jets generated by the gauge and the coercive penalty in the \(\nu^k\)- and \(\omega^k\)-variables, with signs corresponding to the subsolution and supersolution tests. If the same control \(a\in[0,\bar a]^2\) is used on the two sides, then the killing contribution in the subsolution-minus-supersolution Hamiltonian difference satisfies, for fixed \(\beta\),
\[
  \lim_{\delta\downarrow0}\limsup_{\varepsilon\downarrow0}
  |I_{\mathrm{kill}}(\varepsilon,\alpha,\beta,\delta)|
  \le
  C\limsup_{\varepsilon\downarrow0}
  \sum_{k=1}^2\beta |L^k_\varepsilon-M^k_\varepsilon|^2 .
\]
Consequently,
\[
  \lim_{\beta\to\infty}\lim_{\delta\downarrow0}\limsup_{\varepsilon\downarrow0}
  |I_{\mathrm{kill}}(\varepsilon,\alpha,\beta,\delta)|=0 .
\]
\end{lemma}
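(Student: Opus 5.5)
We write out the killing contribution at the maximum point explicitly, using the reformulated generator of Lemma~\ref{lem:state-decomposition-killed-hjb}. At the doubled maximum, the subsolution test produces the cemetery jet \(\partial_{L^k}\) equal to \(\beta(L^k_\varepsilon-M^k_\varepsilon)\) and the alive jet \(p^k_\varepsilon\). The supersolution test produces the same cemetery jet \(\beta(L^k_\varepsilon-M^k_\varepsilon)\), since the quadratic penalty has opposite-sign derivatives in \(L\) and \(M\) and the supersolution enters with a minus sign, together with the alive jet \(q^k_\varepsilon\). With a common control \(a\), we therefore split
\[
I_{\mathrm{kill}}=\sum_{k=1}^2\Bigl(I^k_{\mathrm{cem}}-I^k_{\mathrm{alive}}\Bigr),
\]
where
\[
I^k_{\mathrm{cem}}=\beta(L^k_\varepsilon-M^k_\varepsilon)\bigl[\Lambda_k(\nu^k_\varepsilon,a^k)-\Lambda_k(\omega^k_\varepsilon,a^k)\bigr]
\]
and
\[
I^k_{\mathrm{alive}}=\int\lambda_k p^k_\varepsilon\,d\nu^k_\varepsilon-\int\lambda_k q^k_\varepsilon\,d\omega^k_\varepsilon .
\]
We treat the two pieces separately.

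For the cemetery piece, Lemma~\ref{lem:aggregate-kill-rate-lip} gives \(|I^k_{\mathrm{cem}}|\le C_R(\Lambda+L_x)\,\beta|L^k_\varepsilon-M^k_\varepsilon|\,\mathsf W_1(\nu^k_\varepsilon,\omega^k_\varepsilon)\). We bound the unmatched-mass part of \(\mathsf W_1\) by the mass difference: since \(|\nu^k|+L^k=1\) and \(|\omega^k|+M^k=1\), we have \(\bigl||\nu^k_\varepsilon|-|\omega^k_\varepsilon|\bigr|=|L^k_\varepsilon-M^k_\varepsilon|\). Using the second form of Lemma~\ref{lem:killing-bounded-lip-sink}, this part contributes exactly \(C\beta|L^k_\varepsilon-M^k_\varepsilon|^2\). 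The remaining alive-transport part is \(\beta|L-M|\) times the gauge-controlled distance. For it we use two standard facts. First, the maximum-point inequality with a bounded \(u-w\) gives \(\beta|L_\varepsilon-M_\varepsilon|^2\le C\), so \(\beta|L_\varepsilon-M_\varepsilon|\le C\sqrt\beta\). Second, the smooth-gauge property of Cosso et al.\ forces \(\mathsf W_2(\nu_\varepsilon,\omega_\varepsilon)\to0\) as \(\varepsilon\downarrow0\). Hence this cross term vanishes in the \(\limsup_\varepsilon\) for fixed \(\beta\).

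For the alive piece, we decompose the jets as gauge part plus \(\delta\)-penalty part. The \(\delta\mathcal R\) contributions are bounded by \(C\delta(1+M_2)\), where \(M_2\) is a second-moment bound that is uniform at maximum points by coercivity. These contributions vanish as \(\delta\downarrow0\); this is why the limit in \(\delta\) is taken outside. For the gauge parts, the jets of \(\Theta^k_\varepsilon\) in the two variables are matched: \(p=-q\) up to sign conventions, they are bounded and Lipschitz with constants \(B,L_p\) controlled by the gauge, and they coincide as functions. We then apply the first form of Lemma~\ref{lem:killing-bounded-lip-sink} with \(\tilde a=a\), which gives a bound \(C_R(\Lambda L_p+L_xB+\Lambda B)\mathsf W_1(\nu_\varepsilon,\omega_\varepsilon)\). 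Again we split off the unmatched-mass term, which is bounded by \(C_R\Lambda B|L_\varepsilon-M_\varepsilon|\).

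The main obstacle is that the gauge jets have \(B,L_p\) that may blow up like \(\varepsilon^{-1}\). We expect to need the standard product estimate from the smooth-gauge method: the jet size times \(\mathsf W\) stays bounded, and indeed tends to zero, at maximum points, exactly as for the transport drift terms. The unmatched-mass term \(B|L-M|\) must also be absorbed. Our plan is to use the identity \(|\nu|-|\omega|=M-L\) to rewrite this term through the \(L\)-coordinate. We would then add and subtract \(\beta(L-M)\), so that the cemetery-side Lipschitz bound dominates it by \(C\beta|L-M|^2+o_\varepsilon(1)\). This yields the displayed \(\lim_\delta\limsup_\varepsilon\) bound.

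For the final consequence, we use that \(\limsup_\varepsilon\beta|L_\varepsilon-M_\varepsilon|^2\to0\) as \(\beta\to\infty\). This is the standard doubling lemma: from the maximum inequality and the uniform continuity of \(u\) and \(w\) in \(L\), the penalty term is bounded by the modulus of continuity evaluated at \(C/\sqrt\beta\).
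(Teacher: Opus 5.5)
The gap is in the alive sink. Your treatment of the loss-drift piece \(I^k_{\mathrm{cem}}\), of the \(\delta\mathcal R\) contributions, and of the final \(\beta\to\infty\) step agrees with the paper.

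\textbf{Why your bound does not close.} You estimate the alive sink by the first form of Lemma~\ref{lem:killing-bounded-lip-sink}, i.e.\ by Kantorovich--Rubinstein duality. This gives \(C_R(\Lambda L_p+L_xB+\Lambda B)\,\mathsf W_1(\nu^k_\varepsilon,\omega^k_\varepsilon)\), where \(B,L_p\) are the sup and Lipschitz norms of the gauge jets. As you note, \(B\) and \(L_p\) can be of order \(\varepsilon^{-1}\). The doubling only yields \(\Theta^k_\varepsilon(\nu^k_\varepsilon,\omega^k_\varepsilon)\to0\), hence \(\mathsf W_2^2=o(\varepsilon)\). That leaves \(\varepsilon^{-1}\mathsf W_1\) only of order \(o(\varepsilon^{-1/2})\), which need not be small.

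The product estimate you defer to does not hold in the form you need. What is small at the maximum point is a quadratic quantity such as \(\|p_\varepsilon\|_{L^2(\nu_\varepsilon)}\,\mathsf W_2\approx\mathsf W_2^2/\varepsilon\lesssim\Theta_\varepsilon+\rho_\varepsilon\). It is not \((\|p_\varepsilon\|_\infty+\operatorname{Lip}(p_\varepsilon))\,\mathsf W_1\).

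Your plan for the unmatched-mass term fails for the same reason. The term \(C_R\Lambda B|L_\varepsilon-M_\varepsilon|\) carries the \(\varepsilon\)-dependent gauge factor \(B\), which has nothing to do with \(\beta\). Adding and subtracting \(\beta(L-M)\) cannot turn \(B|L-M|\) into \(C\beta|L-M|^2\).

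\textbf{The missing idea.} You need to keep the two jets paired, rather than integrating one function against \(\nu-\omega\). The paper integrates over an optimal coupling \(\pi^k_\varepsilon\) of the augmented measures \(\bar\nu^k_\varepsilon,\bar\omega^k_\varepsilon\). On its support the sub- and supersolution jets take the same value at paired points: \(p^k_\varepsilon(x)=q^k_\varepsilon(y)=(x-y)/\varepsilon\) on alive--alive pairs. The sink therefore becomes \(\int(\lambda_k(y,a^k)-\lambda_k(x,a^k))\,p^k_\varepsilon(x)\,\pi^k_\varepsilon(dx,dy)\), which is quadratic in the displacement:
\[
  |I^S_k|\le\int_{\mathsf E\times\mathsf E}|\lambda_k(y,a^k)-\lambda_k(x,a^k)|\,\frac{d_\partial(x,y)}{\varepsilon}\,\pi^k_\varepsilon(dx,dy)\le C_R(L_x+\Lambda)\,\frac{W_2^2(\bar\nu^k_\varepsilon,\bar\omega^k_\varepsilon)}{\varepsilon}.
\]
Alive--cemetery pairs are included in the same bound. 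Since \(\lambda_k(\partial,a)=0\) and \(d_\partial(x,\partial)\ge c_R\), the contribution \(\Lambda d_\partial/\varepsilon\) becomes \((\Lambda/c_R)d_\partial^2/\varepsilon\). So no separate absorption of unmatched mass by the \(\beta\)-penalty is needed.

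The smooth-gauge inequality \(\Theta^k_\varepsilon\ge W_2^2/(2\varepsilon)-\rho^k_\varepsilon\) then gives \(|I^S_k|\le C_R\Theta^k_\varepsilon+\rho^k_\varepsilon+C_R\delta(1+m_2(\nu^k_\varepsilon)+m_2(\omega^k_\varepsilon))\). It is the gauge value itself that vanishes as \(\varepsilon\downarrow0\).

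Even if the two jets are the same function, as you assert, duality still leaves the term \(\int\lambda_k(x,a^k)\,(p(y)-p(x))\,d\pi\). Its size is \(\Lambda\operatorname{Lip}(p)\,\mathsf W_1\), which is uncontrolled for the same reason. The paper avoids it by using the pairing along the coupling.
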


\begin{proof}
By Lemma~\ref{lem:state-decomposition-killed-hjb}, the decomposed killing operator is
\[
  \mathcal K^a v(\nu,L)
  =
  \sum_{k=1}^2\Lambda_k(\nu^k,a^k)\partial_{L^k}v(\nu,L)
  -
  \sum_{k=1}^2\int_{\mathbb R}\lambda_k(x,a^k)
  \frac{\delta v}{\delta\nu^k}(\nu,L)(x)\,\nu^k(dx).
\]
At the doubled maximum point, the loss derivatives in the two tests are
 \(r_\varepsilon^k:=\beta(L^k_\varepsilon-M^k_\varepsilon).\)
The loss-drift part of the killing difference is therefore
\[
  I^L_k
  =
  \bigl[\Lambda_k(\nu^k_\varepsilon,a^k)-\Lambda_k(\omega^k_\varepsilon,a^k)\bigr]r_\varepsilon^k .
\]
Lemma~\ref{lem:aggregate-kill-rate-lip} gives
\[
  |I^L_k|
  \le
  C_R\mathsf W_2(\nu^k_\varepsilon,\omega^k_\varepsilon)
  \beta|L^k_\varepsilon-M^k_\varepsilon| .
\]
The doubled maximum-point estimate gives \(\mathsf W_2(\nu^k_\varepsilon,\omega^k_\varepsilon)\to0\) as \(\varepsilon\downarrow0\) for fixed \(\beta\), and \(\beta|L^k_\varepsilon-M^k_\varepsilon|\) is bounded for fixed \(\beta\) because \(L^k_\varepsilon,M^k_\varepsilon\in[0,1]\). If the cemetery mass is treated as an independent scalar coordinate in the local extension, the same estimate also contains the harmless term \(C\beta|L^k_\varepsilon-M^k_\varepsilon|^2\), which is retained in the final bound.

It remains to estimate the alive-measure sink
\[
  I^S_k
  =
  -\int_{\mathbb R}\lambda_k(x,a^k)p^k_\varepsilon(x)\,\nu^k_\varepsilon(dx)
  +\int_{\mathbb R}\lambda_k(y,a^k)q^k_\varepsilon(y)\,\omega^k_\varepsilon(dy).
\]
For the unsmoothed squared-Wasserstein gauge, let \(\pi^k_\varepsilon\) be an optimal coupling of the augmented measures. On the alive-alive part of the coupling, the two first-order jets have the same paired representative
 \(p^k_\varepsilon(x)=q^k_\varepsilon(y)=\frac{x-y}{\varepsilon}.\)
This is the gradient of the quadratic cost \(d_\partial(x,y)^2/(2\varepsilon)\) evaluated on the support of the optimal coupling, where \(p^k_\varepsilon\) and \(q^k_\varepsilon\) denote the first-order Lions-derivative representatives in the subsolution and supersolution tests, respectively. The cemetery contribution is represented by the corresponding metric derivative in the chosen cemetery metric after setting \(\lambda_k(\partial,a^k)=0\). Hence
\[
\begin{aligned}
  |I^S_k|
  &\le
  \int_{\mathsf E\times\mathsf E}
  |\lambda_k(y,a^k)-\lambda_k(x,a^k)|
  \frac{d_\partial(x,y)}{\varepsilon}\,
  \pi^k_\varepsilon(dx,dy) \\
  &\le
  C_R(L_x+\Lambda)\frac{W_2^2(\bar\nu^k_\varepsilon,\bar\omega^k_\varepsilon)}{\varepsilon}.
\end{aligned}
\]
The smooth gauge of the Wasserstein Ishii lemma is obtained by regularizing the squared-Wasserstein penalty. More precisely, the smooth gauge \(\Theta_\varepsilon^k\) is chosen so that, uniformly on bounded second-moment sets,
\[
  \Theta_\varepsilon^k(\nu,\omega)
  \ge
  \frac{W_2^2(\bar\nu,\bar\omega)}{2\varepsilon}
  -\rho_\varepsilon^k,
  \qquad
  \rho_\varepsilon^k\downarrow0,
\]
and its first-order Lions jets approximate the optimal-coupling jets of the unsmoothed quadratic penalty up to the same remainder. This is the smooth-gauge approximation mechanism in \cite[Lemma~4.4 and Theorem~4.5]{CossoGozziKharroubiPhamRosestolato2024}. Therefore the preceding bilinear estimate carries through with the regularization error absorbed into \(\rho_\varepsilon^k\), and becomes
\[
  |I^S_k|
  \le
  C_R\Theta_\varepsilon^k(\nu^k_\varepsilon,\omega^k_\varepsilon)
  +\rho_\varepsilon^k
  +C_R\delta(1+m_2(\nu^k_\varepsilon)+m_2(\omega^k_\varepsilon)),
\]
where \(\rho_\varepsilon^k\to0\) is the smooth-gauge approximation error. The maximum-point estimate for bounded uniformly continuous sub- and supersolutions gives
\[
  \Theta_\varepsilon^k(\nu^k_\varepsilon,\omega^k_\varepsilon)\to0
  \qquad\text{as }\varepsilon\downarrow0,
\]
and the coercive penalty makes the last term vanish as \(\delta\downarrow0\). Therefore
 \(\lim_{\delta\downarrow0}\limsup_{\varepsilon\downarrow0}|I^S_k|=0.\)
Combining the loss-drift and alive-sink estimates gives the first displayed bound.

Finally, the finite-dimensional loss penalty and the uniform continuity of \(u,w\) imply
\[
  \lim_{\beta\to\infty}\limsup_{\varepsilon\downarrow0}
  \sum_{k=1}^2\beta|L^k_\varepsilon-M^k_\varepsilon|^2=0.
\]
This is the standard finite-dimensional part of the doubling argument: if this quantity stayed bounded away from zero, replacing one loss vector by the other while keeping the other coordinates fixed would increase the doubled functional up to a modulus-of-continuity error smaller than the penalty gain. The second displayed limit follows.
\end{proof}

\begin{proposition}[Wasserstein smooth-gauge/Ishii interface]
\label{prop:wasserstein-ishii-interface-killed-hjb}
After the state decomposition in Lemma~\ref{lem:state-decomposition-killed-hjb}, the killed two-population HJB reduces to a comparison problem on \(\mathcal D_2\). On this decomposed space, the alive-measure part satisfies the structural hypotheses of the Wasserstein smooth-gauge comparison template, while the cemetery killing jump is controlled by Lemma~\ref{lem:killing-absorption-doubling}. The reference template is \cite[Assumptions~(A)--(D), Lemma~4.4, Theorems~4.5 and~5.1]{CossoGozziKharroubiPhamRosestolato2024}. The Hamiltonian is proper, continuous in the state variables under \(\mathsf W_2\times\mathsf W_2\times|\cdot|\), locally uniformly continuous in the first-order jets on bounded jet sets, and degenerate elliptic in the second-order jets. The diagonal idiosyncratic terms are nondegenerate in the alive directions because \(\sigma_k>0\), and the common-noise term has the usual positive-semidefinite product-measure covariance form. It is thus a reduction to the standard alive-measure smooth-gauge argument plus the finite-dimensional cemetery-mass estimate proved here.
\end{proposition}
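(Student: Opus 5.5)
The plan is to verify, one item at a time, that the decomposed killed HJB on \(\mathcal D_2\) satisfies the hypotheses of the smooth-gauge comparison template of \cite{CossoGozziKharroubiPhamRosestolato2024}, and to isolate the killing jump as the only non-template term, handled by Lemma~\ref{lem:killing-absorption-doubling}. First I will write the Hamiltonian explicitly, using Lemma~\ref{lem:state-decomposition-killed-hjb}. It has four parts: the alive drift/diffusion part \(\sum_k\int[b_k\partial_x\frac{\delta v}{\delta\nu^k}+\tfrac12(\sigma_k^2+\sigma_0^2)\partial_{xx}\frac{\delta v}{\delta\nu^k}]\,d\nu^k\); the common-noise cross term \(\tfrac12\sigma_0^2\sum_{k,\ell}\iint\partial_x\partial_y\frac{\delta^2 v}{\delta\nu^k\delta\nu^\ell}\,d\nu^k d\nu^\ell\); the loss-feedback drift \(-\sum_\ell\Gamma_{k\ell}\,dL^\ell\) acting as a shift of the alive measures; and the killing operator \(\mathcal K^a\). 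The infimum over the compact set \([0,\bar a]^2\) is taken together with the running cost \(\sum_k\pi_kc_k(a^k)\). Properness is immediate because there is no zeroth-order term in \(v\). Continuity in \((\nu,L)\) under \(\mathsf W_2\times\mathsf W_2\times|\cdot|\) follows from three facts: the Lipschitz drift assumption, Lemma~\ref{lem:aggregate-kill-rate-lip} for \(\Lambda_k\), and Lemma~\ref{lem:killing-bounded-lip-sink} for the sink integral. Since all coefficients are continuous in \(a\) and \(a\) ranges over a compact set, the infimum preserves this continuity, locally uniformly on bounded jet sets.

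Second, I will check degenerate ellipticity and the covariance structure. The second-order part is monotone in the second-order jets, since each diagonal coefficient \(\sigma_k^2+\sigma_0^2>0\). The common-noise term is a Gram form \(\tfrac12\sigma_0^2\langle \mathbf 1\otimes\mathbf 1,\cdot\rangle\) applied to the product-measure covariance, so it is positive semidefinite. This places the alive part within Assumptions (A)--(D) of the cited template. The strict positivity \(\sigma_k>0\) supplies the nondegeneracy that their Theorem 5.1 uses to control the second-order terms through the Ishii matrix inequality produced by Lemma 4.4. The loss coordinates \(L^k\) are finite dimensional and enter only through first-order terms: the feedback shift and the jump rate \(\Lambda_k\partial_{L^k}v\). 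For them the penalty \(\beta|L-M|^2/2\) works exactly as in the classical Crandall--Ishii--Lions doubling. The cross effect of \(L\) on the alive drift is Lipschitz, so it produces an error of order \(\beta|L-M|^2+\mathsf W_2^2/\varepsilon\), which is absorbed in the standard way.

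Third, I will run the doubled-variable argument. Suppose \(\sup(u-w)>0\) at \(t=T\)-compatible data. Maximize the doubled functional of Lemma~\ref{lem:killing-absorption-doubling}, including the term \(-\eta(T-t)\) to obtain a strict margin. Apply the smooth-gauge Ishii lemma to get jets for \(u\) and \(w\), use the same near-optimal control for both sides in the supersolution inequality, and subtract the two inequalities. The difference then splits into the template terms, the \(L\)-penalty terms, and \(I_{\mathrm{kill}}\). The template terms vanish in the iterated limit \(\varepsilon\downarrow0\), \(\delta\downarrow0\) by the cited Theorem 4.5. By Lemma~\ref{lem:killing-absorption-doubling}, \(I_{\mathrm{kill}}\) vanishes after the further limit \(\beta\to\infty\). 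The time penalty is removed as \(\alpha\downarrow0\). What remains is \(\eta\le0\), a contradiction. Finally, \(R\to\infty\) is handled by the coercive penalty \(\delta\mathcal R\), which localizes maximizers to bounded second-moment sets uniformly in \(\varepsilon\).

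The main obstacle is the order of limits together with the constraint \(|\nu^k|+L^k=1\). Because of the constraint, the maximizer may lie on a face where the alive mass and the cemetery mass move together. Perturbations of \(\nu\) alone are then not admissible in \(\mathcal D_2\), while the gauge is built on unconstrained alive measures. I would first try to work with the local extension of \(v\) to the positive cone, as in Lemma~\ref{lem:state-decomposition-killed-hjb}, so that the gauge and the \(L\)-penalty act on independent coordinates. The constraint is then recovered through the invariance of \(\partial_{L^k}v-\frac{\delta v}{\delta\nu^k}\) under additive normalization, which makes the Hamiltonian depend only on admissible directions. A second point to check is that the bound \(\mathsf W_2\cdot\beta|L-M|\) in the loss-drift term is uniform in \(\delta\), so that taking \(\varepsilon\downarrow0\) before \(\beta\to\infty\) is legitimate.
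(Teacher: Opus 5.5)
Your proposal is correct and follows the paper's proof: properness from the absence of a zeroth-order term, state continuity from the Lipschitz drift together with Lemmas~\ref{lem:aggregate-kill-rate-lip} and~\ref{lem:killing-bounded-lip-sink}, degenerate ellipticity from the constant diffusions with \(\sigma_k>0\) and the positive-semidefinite common-noise term, and the cemetery killing jump isolated as the only non-template term and handed to Lemma~\ref{lem:killing-absorption-doubling}. Your doubling sketch (same limit order \(\varepsilon,\delta,\beta,\alpha\)) and your positive-cone extension for the constraint are what the paper does in the comparison theorem and in Lemma~\ref{lem:state-decomposition-killed-hjb}; the one estimate to justify differently is the \(L\)-cross term in the drift, which is \(O(\mathsf W_2^2/\varepsilon)\) not merely by Lipschitz continuity in \(L\) but because on \(\mathcal D_2\) one has \(|L^k-M^k|=\bigl||\nu^k|-|\omega^k|\bigr|\le C_R\mathsf W_1(\nu^k,\omega^k)\).
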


\begin{proof}
The cited comparison argument follows the viscosity doubling-variable framework of Crandall, Ishii, and Lions~\cite{CrandallIshiiLions1992} in its Wasserstein smooth-gauge form, and assumes bounded continuous coefficients with Lipschitz dependence on the state and the measure variable, compactness of the control set, sufficient regularity of the diffusion coefficient for the smooth-gauge construction, bounded uniformly continuous terminal data, coercive localization on the unbounded second-moment state space, properness, local continuity of the Hamiltonian along admissible Wasserstein jets, and a degenerate-elliptic second-order structure. In the present model these conditions are verified after the alive--loss decomposition as follows. The Hamiltonian contains no zeroth-order dependence on the value itself, so properness follows directly. The control set is compact and the population-weighted running costs are continuous. The drift and contagion feedback are Lipschitz in the alive state, measure variables, and loss variables; Lemma~\ref{lem:aggregate-kill-rate-lip} gives the same continuity for the aggregate kill rates appearing in both the feedback drift and the loss drift. Lemma~\ref{lem:killing-bounded-lip-sink} gives the continuity estimate for the alive-measure sink. The diffusion coefficients are constants, so the second-order part is continuous in the state and monotone in the second-order jet in the viscosity order. The idiosyncratic covariance matrices are positive definite in the alive directions under \(\sigma_k>0\), and the common-noise covariance matrix is positive semidefinite; hence the full second-order operator has the ellipticity structure required in the smooth-gauge/Ishii comparison proof. The only additional term relative to the template is the cemetery killing jump, and Lemma~\ref{lem:killing-absorption-doubling} supplies the needed doubled-variable estimate for that term. The coercive second-moment penalty gives compactness of maximizing sequences before the localization radius is removed; the bounded uniformly continuous terminal condition preserves the terminal ordering, and the common-noise second-order term enters through the positive-semidefinite product covariance already allowed by the template.
\end{proof}

\begin{theorem*}[Comparison component of Theorem~\ref{thm:two-pop-optimal-control-hjb-common-noise}: killed HJB comparison principle]
Let \(U\) and \(W\) be bounded uniformly continuous functions on \([0,T]\times\mathcal P_2(\mathsf E)^2\), and let their decomposed representatives \(u,w\) on \(\mathcal D_2\) be viscosity sub- and supersolutions of the decomposed killed HJB associated with
 \(\partial_t V(t,\mu^1,\mu^2) +H(t,\mu^1,\mu^2,D_\mu V,D^2_{\mu\mu}V)=0.\)
The Hamiltonian and generator are those stated in the killed two-population HJB after the alive--cemetery decomposition. We use the backward terminal-time convention: subsolutions satisfy \(\partial_t\phi+H[\phi]\ge0\) at upper tests, and supersolutions satisfy \(\partial_t\phi+H[\phi]\le0\) at lower tests. Assume the bounded Lipschitz killing, Lipschitz drift, constant diffusion, compact-control, nondegenerate alive idiosyncratic diffusion \(\sigma_k>0\), and bounded uniformly continuous terminal conditions stated above. If
 \(U(T,\mu^1,\mu^2)\le W(T,\mu^1,\mu^2), \qquad (\mu^1,\mu^2)\in\mathcal P_2(\mathsf E)^2,\)
then
\[
  U(t,\mu^1,\mu^2)
  \le
  W(t,\mu^1,\mu^2),
  \qquad
  (t,\mu^1,\mu^2)\in[0,T]\times\mathcal P_2(\mathsf E)^2 .
\]
\end{theorem*}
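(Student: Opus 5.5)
The argument will be by contradiction, using the doubled-variable scheme already set up in Lemma~\ref{lem:killing-absorption-doubling} and transported to $\mathcal D_2$ by Lemma~\ref{lem:state-decomposition-killed-hjb}. Suppose $\sup(U-W)=\theta>0$ at some $(t_0,\mu_0^1,\mu_0^2)$. Through the decomposition $\mu^k=\nu^k+L^k\delta_\partial$ this is the same as $\sup(u-w)=\theta$ on $[0,T]\times\mathcal D_2$. Replacing $u$ by $u-\kappa(T-t)$ with a small $\kappa>0$ makes $u$ a strict subsolution with margin $\kappa$. This is harmless because the Hamiltonian has no zeroth-order term, which is the properness recorded in Proposition~\ref{prop:wasserstein-ishii-interface-killed-hjb}.

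I will then form the doubled functional of Lemma~\ref{lem:killing-absorption-doubling}, with time penalty $|t-s|^2/(2\alpha)$, smooth gauges $\Theta^k_\varepsilon$, cemetery penalty $\beta|L-M|^2/2$ and coercive penalty $\delta\mathcal R$. Boundedness of $u,w$ together with coercivity of $\mathcal R$ confines near-maximizers to bounded second-moment sets. On those sets I will apply the smooth variational principle of \cite[Lemma~4.4]{CossoGozziKharroubiPhamRosestolato2024}, adding an arbitrarily small smooth perturbation, to obtain an actual maximum point $(t_\varepsilon,s_\varepsilon,\nu_\varepsilon,\omega_\varepsilon,L_\varepsilon,M_\varepsilon)$. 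The standard penalty estimates, which use uniform continuity of $u,w$, then give
\[
|t_\varepsilon-s_\varepsilon|^2/\alpha\to0,\qquad \Theta^k_\varepsilon(\nu^k_\varepsilon,\omega^k_\varepsilon)\to0,\qquad \beta|L_\varepsilon-M_\varepsilon|^2\to0,
\]
in the iterated limits used in Lemma~\ref{lem:killing-absorption-doubling}. If $t_\varepsilon$ or $s_\varepsilon$ tends to $T$, the ordering $U(T,\cdot)\le W(T,\cdot)$ combined with uniform continuity contradicts $\theta>0$. So both times stay in the interior.

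At an interior maximum, the Wasserstein Ishii lemma \cite[Theorem~4.5]{CossoGozziKharroubiPhamRosestolato2024} supplies first- and second-order jets in the alive variables. The cemetery coordinates receive the finite-dimensional jets $r^k_\varepsilon=\beta(L^k_\varepsilon-M^k_\varepsilon)$ on both sides. I will subtract the supersolution inequality from the strict subsolution inequality, choose a near-optimal control $a$ for the supersolution Hamiltonian, and use the same $a$ in the subsolution Hamiltonian. Compactness of $[0,\bar a]^2$ and continuity of the costs make this legitimate. The resulting difference splits into four pieces:
\begin{itemize}
\item a drift and contagion-feedback term, controlled by the Lipschitz property in $(x,\nu,L)$ together with Lemma~\ref{lem:aggregate-kill-rate-lip}, which yields bounds of order $\Theta_\varepsilon+\beta|L_\varepsilon-M_\varepsilon|^2+\delta(1+m_2)$;
\item a second-order term, which is nonpositive up to $o(1)$ by the Ishii matrix inequality, using constant $\sigma_k>0$ and the positive-semidefinite common-noise covariance;
\item the running cost, which cancels exactly because the same control is used;
\item the killing term $I_{\mathrm{kill}}$.
\end{itemize}

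For $I_{\mathrm{kill}}$ I will invoke Lemma~\ref{lem:killing-absorption-doubling}, which sends it to zero after $\varepsilon\downarrow0$, then $\delta\downarrow0$, then $\beta\to\infty$. Passing to these limits and then $\alpha\to0$ leaves $\kappa\le0$, the desired contradiction.

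The main obstacle is the uniformity of the limits. The Ishii jets must be matched in a way that is compatible with the killing sink estimate, and the localization constant $C_R$ appearing in Lemmas~\ref{lem:killing-bounded-lip-sink}–\ref{lem:killing-absorption-doubling} must not degrade as $R\to\infty$. My first attempt is to fix $\delta$ and choose $R=R(\delta)$ so that, by coercivity, maximizers lie in $\{m_2\le C/\delta\}$. The mass outside $[-R,R]$ is then $O(\delta R^{-2})$-small, which justifies bounding the sink outside $[-R,R]$ by $\Lambda$ times that tail mass. With this choice the $C_R$ factors multiply quantities such as $\Theta_\varepsilon$, $\rho_\varepsilon$, $\mathsf W_2(\nu_\varepsilon,\omega_\varepsilon)$ and the gap $|L_\varepsilon-M_\varepsilon|$, which vanish as $\varepsilon\downarrow0$ and then $\beta\to\infty$ before $\delta\downarrow0$ is taken. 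The order of limits therefore has to be arranged as $\varepsilon$, then $\beta$, then $\delta$, and I will check that Lemma~\ref{lem:killing-absorption-doubling} still holds in that order.
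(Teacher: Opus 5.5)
Your proposal is essentially the paper's proof. It uses the same reduction to $\mathcal D_2$ by Lemma~\ref{lem:state-decomposition-killed-hjb} and the same doubled functional with $\Theta^k_\varepsilon$, $\beta|L-M|^2/2$ and $\delta\mathcal R$. It also invokes the smooth variational principle and the Wasserstein Ishii lemma as black boxes, makes the same split into drift, second-order, cost and killing terms, and uses Lemma~\ref{lem:killing-absorption-doubling} for the killing jump.

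The one real variation is your strictification $u-\kappa(T-t)$, and it is the correct one. An upper test $\phi$ for $u-\kappa(T-t)$ yields $\partial_t\phi+H[\phi]\ge\kappa$. The paper's barrier $u-\eta/(T-t)$, under the stated backward convention, only yields $\partial_t\phi+H[\phi]\ge-\eta/(T-t)^2$, so its inequality $\eta/(T-t_\varepsilon)^2\le\cdots$ has the wrong sign. Handling $t$ near $T$ by the terminal ordering plus uniform continuity, as you do, is the right complement to your choice.

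You should drop your final paragraph rather than carry it out. The $C_R$ issue comes only from the compact-localization choice of $d_\partial$. Take instead $d_\partial(x,\partial)=1+|x|$, which the paper explicitly permits. Then $d_\partial(x,\partial)\ge1$, so $\lambda_k(\cdot,a)$ extended by $0$ at $\partial$ is globally $\max(L_x,\Lambda)$-Lipschitz on $\mathsf E$, and $\mathbf 1_{\mathbb R}$ is $1$-Lipschitz. All constants in Lemmas~\ref{lem:killing-bounded-lip-sink}--\ref{lem:killing-absorption-doubling} are then independent of $R$. You can therefore keep the paper's order $\varepsilon\downarrow0$, $\delta\downarrow0$, $\beta\to\infty$, $\alpha\downarrow0$, which is also the order you invoke earlier in your proposal.

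The sketched reordering would not work as written anyway:
\begin{itemize}
\item From $m_2\le C/\delta$, Chebyshev gives tail mass at most $C/(\delta R^2)$, not $O(\delta R^{-2})$.
\item The sink integrand is $\lambda_k p^k_\varepsilon$, with jets like $(x-y)/\varepsilon$ that are unbounded, so it is not controlled by $\Lambda$ times the tail mass.
\end{itemize}
What has to be estimated is the \emph{difference} of the two sinks along the coupling. On alive--alive pairs that difference is bounded by $L_x|x-y|^2/\varepsilon$ without any localization.
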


\begin{proof}
By Lemma~\ref{lem:state-decomposition-killed-hjb}, it is enough to prove comparison for the decomposed functions
\[
  u(t,\nu,L)=U(t,\nu^1+L^1\delta_\partial,\nu^2+L^2\delta_\partial),
  \qquad
  w(t,\nu,L)=W(t,\nu^1+L^1\delta_\partial,\nu^2+L^2\delta_\partial),
\]
restricted to \(|\nu^k|+L^k=1\). Suppose by contradiction that \(\sup(u-w)>0\). For \(\eta>0\), set
 \(u^\eta(t,\nu,L)=u(t,\nu,L)-\frac{\eta}{T-t}, \qquad t<T.\)
For sufficiently small \(\eta\), \(\sup(u^\eta-w)>0\), and the singular terminal barrier together with the terminal ordering forces the positive maximum to occur at an interior time.

Fix \(\alpha,\varepsilon,\delta>0\) and \(\beta>0\), and consider
\[
\begin{aligned}
\Psi(t,s,\nu,\omega,L,M)
&=u^\eta(t,\nu,L)-w(s,\omega,M)
-\frac{|t-s|^2}{2\alpha}
-\sum_{k=1}^2\Theta_\varepsilon^k(\nu^k,\omega^k) \\
&\qquad
-\frac{\beta}{2}|L-M|^2
-\delta\mathcal R(\nu,\omega).
\end{aligned}
\]
Let \((t_\varepsilon,s_\varepsilon,\nu_\varepsilon,\omega_\varepsilon,L_\varepsilon,M_\varepsilon)\) be a maximum point supplied by the Wasserstein smooth variational principle. The standard maximum-point estimates give
\[
  |t_\varepsilon-s_\varepsilon|\to0,
  \qquad
  \Theta_\varepsilon^k(\nu^k_\varepsilon,\omega^k_\varepsilon)\to0,
  \qquad
  \mathsf W_2(\nu^k_\varepsilon,\omega^k_\varepsilon)\to0,
\]
for \(k=1,2\), and
\[
  \lim_{\beta\to\infty}\limsup_{\varepsilon\downarrow0}
  \beta|L_\varepsilon-M_\varepsilon|^2=0 .
\]

With the backward terminal-time convention that viscosity subsolutions of \(\partial_tV+H=0\) satisfy \(\partial_t\phi+H[\phi]\ge0\) at upper tests and supersolutions satisfy \(\partial_t\phi+H[\phi]\le0\) at lower tests, the penalized function \(u^\eta\) yields the usual strict interior comparison inequality. Applying the subsolution inequality to the upper test for \(u^\eta\), the supersolution inequality to the lower test for \(w\), and using
 \(\inf_a A(a)-\inf_a B(a)\le \sup_{a\in[0,\bar a]^2}\{A(a)-B(a)\},\)
one obtains
\[
  \frac{\eta}{(T-t_\varepsilon)^2}
  \le
  \Delta_{\mathrm{time}}
  +\Delta_{\mathrm{drift}}
  +\Delta_{\mathrm{diff}}
  +\Delta_{\mathrm{common}}
  +\Delta_{\mathrm{kill}}
  +o(1),
\]
where \(o(1)\) denotes the smooth-gauge and localization errors.

The time penalty gives \(\Delta_{\mathrm{time}}\to0\) as \(\alpha\downarrow0\). The drift and running-cost terms are controlled by their Lipschitz continuity and the maximum-point estimates. The diagonal diffusion and common-noise terms are controlled by the Wasserstein smooth-gauge/Ishii inequality; the positive-semidefinite second-order structure yields
 \(\limsup_{\varepsilon\downarrow0} (\Delta_{\mathrm{diff}}+\Delta_{\mathrm{common}}) \le0\)
after the coercive regularization is removed. Proposition~\ref{prop:wasserstein-ishii-interface-killed-hjb} verifies the structural conditions for these standard terms. Lemma~\ref{lem:killing-absorption-doubling} gives
\[
  \lim_{\beta\to\infty}\lim_{\delta\downarrow0}\limsup_{\varepsilon\downarrow0}
  |\Delta_{\mathrm{kill}}|=0.
\]
Taking the comparison limits in the standard order \(\varepsilon\downarrow0\), then \(\delta\downarrow0\), then \(\beta\to\infty\), and finally \(\alpha\downarrow0\), gives
 \(\frac{\eta}{(T-t_*)^2}\le0\)
for an interior limiting time \(t_*<T\), a contradiction. Hence \(u\le w\) on the decomposed state space. Returning to \(\mu^k=\nu^k+L^k\delta_\partial\) proves the comparison principle on \([0,T]\times\mathcal P_2(\mathsf E)^2\).
\end{proof}

\begin{theorem*}[Dynamic-programming component of Theorem~\ref{thm:two-pop-optimal-control-hjb-common-noise}: existence of optimal control and HJB characterization]
Under Assumption~\ref{ass:two-pop-control-regularity}, consider the controlled McKean--Vlasov contagion system with \(K=2\). Given a control
 \(a_t=(a^1_t,a^2_t)\in[0,\infty)^2,\)
define the total kill intensity of type \(k\) by
 \(\Lambda_k(\mu^k,a^k) := \int_{\mathbb R}\lambda_k(x,a^k)\,\mu^k(dx).\)
The lifted measure state \((\mu^1_t,\mu^2_t)\in\mathcal P_2(\mathsf E)^2\) satisfies the following weak form: for any \(\varphi\in C_b^2(\mathbb R)\),
\begin{equation}
\label{eq:two-pop-controlled-measure-spde}
\begin{aligned}
d\langle \varphi,\mu^k_t\rangle_{\circ}
&=
\Bigg\{
\int_{\mathbb R}
\left[
  b_k(x,\mu^1_t,\mu^2_t)
  -
  \sum_{\ell=1}^2
  \Gamma_{k\ell}\Lambda_\ell(\mu^\ell_t,a^\ell_t)
\right]
\varphi'(x)\,\mu^k_t(dx)
\\
&\qquad
+
\frac12\sigma_k^2
\int_{\mathbb R}
\varphi''(x)\,\mu^k_t(dx)
-
\int_{\mathbb R}
\lambda_k(x,a^k_t)\varphi(x)\,\mu^k_t(dx)
\Bigg\}dt
\\
&\qquad
+
\sigma_0
\int_{\mathbb R}
\varphi'(x)\,\mu^k_t(dx)\,dW^0_t .
\end{aligned}
\end{equation}
The loss process is given by the cemetery mass:
 \(L^k_t=\mu^k_t(\{\partial\}),\)
and
\begin{equation}
\label{eq:loss-intensity-two-pop}
  dL^k_t
  =
  \Lambda_k(\mu^k_t,a^k_t)\,dt .
\end{equation}

The central controller's objective for a strict control is
\[
  J(t,\mu^1,\mu^2;a)
  :=
  \mathbb E
  \left[
  \Phi(L^1_T,L^2_T)
  +
  \int_t^T
  \bigl(\pi_1c_1(a^1_s)+\pi_2c_2(a^2_s)\bigr)\,ds
  \right].
\]
For a relaxed control \(\vartheta\), the notation \(J(t,\mu^1,\mu^2;\vartheta)\) denotes the same objective with the drift, killing intensity, and population-weighted running cost averaged with respect to \(\vartheta_s(da)\). The value function is
\begin{equation}
\label{eq:value-function-two-pop}
  V(t,\mu^1,\mu^2)
  :=
  \inf_{\vartheta} J(t,\mu^1,\mu^2;\vartheta).
\end{equation}
Then there exists a constant \(\bar a<\infty\) such that the original problem has the same value as the compact relaxed-control problem on \([0,\bar a]^2\), and the compact relaxed-control problem admits an optimal relaxed control
\begin{equation}
\label{eq:compact-optimal-control}
  \vartheta_t^*\in\mathcal P([0,\bar a]^2),
  \qquad t\in[0,T].
\end{equation}
If the Hamiltonian minimizer admits a progressively measurable selector, then the relaxed optimum is attained by a strict feedback control \(a_t^*\in[0,\bar a]^2\).
Moreover, after the alive--cemetery decomposition, the value function is a bounded uniformly continuous viscosity solution of the decomposed HJB equation, equivalently of the killed-state HJB written in lifted notation,
\begin{equation}
\label{eq:two-pop-hjb-common-noise}
  \partial_t V(t,\mu^1,\mu^2)
  +
  H\bigl(t,\mu^1,\mu^2,D_\mu V,D^2_{\mu\mu}V\bigr)
  =
  0,
\end{equation}
with terminal condition
\begin{equation}
\label{eq:terminal-condition-two-pop}
  V(T,\mu^1,\mu^2)
  =
  \Phi\bigl(\mu^1(\{\partial\}),\mu^2(\{\partial\})\bigr).
\end{equation}
Here the Hamiltonian is
\begin{equation}
\label{eq:two-pop-hamiltonian-common-noise}
  H(t,\mu^1,\mu^2,D_\mu U,D^2_{\mu\mu}U)
  =
  \inf_{a\in[0,\bar a]^2}
  \left\{
  \pi_1c_1(a^1)+\pi_2c_2(a^2)
  +
  \mathcal L^a U(\mu^1,\mu^2)
  \right\},
\end{equation}
and for a sufficiently smooth functional \(U:\mathcal P_2(\mathsf E)^2\to\mathbb R\), the common-noise generator \(\mathcal L^a\) is
\begin{equation}
\label{eq:two-pop-full-generator-common-noise}
\begin{aligned}
\mathcal L^a U(\mu^1,\mu^2)
&=
\sum_{k=1}^2
\int_{\mathbb R}
\left[
  b_k(x,\mu^1,\mu^2)
  -
  \sum_{\ell=1}^2
  \Gamma_{k\ell}\Lambda_\ell(\mu^\ell,a^\ell)
\right]
D_{\mu^k}U(\mu^1,\mu^2)(x)\,\mu^k(dx)
\\
&\quad
+
\frac12
\sum_{k=1}^2
(\sigma_k^2+\sigma_0^2)
\int_{\mathbb R}
\partial_xD_{\mu^k}U(\mu^1,\mu^2)(x)\,\mu^k(dx)
\\
&\quad
+
\frac12\sigma_0^2
\sum_{k,\ell=1}^2
\int_{\mathbb R}\int_{\mathbb R}
\partial_{xy}^2
\frac{\delta^2 U}{\delta\mu^k\delta\mu^\ell}
(\mu^1,\mu^2)(x,y)
\,\mu^k(dx)\mu^\ell(dy)
\\
&\quad
+
\sum_{k=1}^2
\int_{\mathbb R}
\lambda_k(x,a^k)
\left[
  \frac{\delta U}{\delta\mu^k}(\mu^1,\mu^2)(\partial)
  -
  \frac{\delta U}{\delta\mu^k}(\mu^1,\mu^2)(x)
\right]
\mu^k(dx).
\end{aligned}
\end{equation}
Here \(D_{\mu^k}U\) denotes the Lions derivative, \(\partial_xD_{\mu^k}U\) denotes its spatial derivative,
 \(\frac{\delta U}{\delta\mu^k}\)
denotes the first-order linear functional derivative, and
 \(\frac{\delta^2 U}{\delta\mu^k\delta\mu^\ell}\)
denotes the second-order linear functional derivative. In \eqref{eq:two-pop-full-generator-common-noise}, the first line corresponds to drift and contagion feedback, the second line to the diagonal diffusion terms from idiosyncratic and common noise, the third line to the second-order measure derivative induced by common noise, and the fourth line to the killing jump \(x\mapsto\partial\).

In representative two-particle coordinates, if one formally writes
 \(v=v(t,x_1,x_2,L_1,L_2),\)
then the second-order local operator corresponding to common noise contains
\[
  \frac12(\sigma_1^2+\sigma_0^2)\partial_{x_1x_1}^2v
  +
  \frac12(\sigma_2^2+\sigma_0^2)\partial_{x_2x_2}^2v
  +
  \sigma_0^2\partial_{x_1x_2}^2v.
\]
This finite-dimensional representative form is used only to explain the cross second-order term generated by common noise in the HJB equation; the rigorous equation is still given by \eqref{eq:two-pop-hjb-common-noise}--\eqref{eq:two-pop-full-generator-common-noise}.

If, in addition, \(c_k\in C^1\) is strictly convex and
\begin{equation}
\label{eq:exponential-killing-control}
  \lambda_k(x,a)=\lambda_{0,k}(x)e^{-\eta_k a},
  \qquad \eta_k>0,
\end{equation}
then, at differentiability points of the Hamiltonian, every interior optimal control satisfies the first-order condition
\begin{equation}
\label{eq:optimal-control-foc}
  \pi_k c_k'(a^{k,*})
  =
  \eta_k
  \int_{\mathbb R}
  \lambda_k(x,a^{k,*})Q_k^U(x)\,\mu^k(dx),
\end{equation}
where
\begin{equation}
\label{eq:q-marginal-value-control}
  Q_k^U(x)
  :=
  \left[
  \frac{\delta U}{\delta\mu^k}(\mu^1,\mu^2)(\partial)
  -
  \frac{\delta U}{\delta\mu^k}(\mu^1,\mu^2)(x)
  \right]
  -
  \sum_{r=1}^2
  \Gamma_{rk}
  \int_{\mathbb R}
  D_{\mu^r}U(\mu^1,\mu^2)(y)\,\mu^r(dy).
\end{equation}
Under the constraint \(a^k\in[0,\bar a]\), \eqref{eq:optimal-control-foc} should be interpreted as the projection condition
\begin{equation}
\label{eq:projected-foc-control}
  a^{k,*}
  =
  \Pi_{[0,\bar a]}
  \left[
  (c_k')^{-1}
  \left(
  \frac{\eta_k}{\pi_k}
  \int_{\mathbb R}
  \lambda_k(x,a^{k,*})Q_k^U(x)\,\mu^k(dx)
  \right)
  \right].
\end{equation}
In particular, when
 \(c_k(a)=\frac12r_ka^2,\)
we have
\begin{equation}
\label{eq:quadratic-cost-control}
  a^{k,*}
  =
  \Pi_{[0,\bar a]}
  \left[
  \frac{\eta_k}{\pi_k r_k}
  \int_{\mathbb R}
  \lambda_k(x,a^{k,*})Q_k^U(x)\,\mu^k(dx)
  \right].
\end{equation}
Furthermore, if for each \(k=1,2\), the map
 \(a \longmapsto \pi_kc_k(a) + \int_{\mathbb R} \lambda_k(x,a)Q_k^U(x)\,\mu^k(dx)\)
is strictly convex on \([0,\bar a]\), then the projection condition \eqref{eq:projected-foc-control} determines a unique optimal control component. A common sufficient condition is
 \(Q_k^U(x)\ge0 \quad \mu^k\text{-a.e.},\)
together with strict convexity of \(c_k\). In that case, the convexity of \(a\mapsto e^{-\eta_ka}\) and \(Q_k^U\ge0\) imply convexity of the killing term in \(a\), and adding the strictly convex control cost gives a strictly convex objective. More generally, if \(c_k\in C^2\) and there exists \(m_k>0\) such that
\[
  \pi_k c_k''(a)
  +
  \eta_k^2
  \int_{\mathbb R}
  \lambda_k(x,a)Q_k^U(x)\,\mu^k(dx)
  \ge m_k,
  \qquad a\in[0,\bar a],
\]
then this one-dimensional minimization problem is strictly convex, and \eqref{eq:projected-foc-control} has a unique solution on \([0,\bar a]\).
\end{theorem*}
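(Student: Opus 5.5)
The plan has four steps, taken in order: reduce to a compact action set, prove existence by compactness, derive dynamic programming and the viscosity property, then identify the first-order condition by direct differentiation of the Hamiltonian.

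\emph{Reduction to compact controls.} We first bound the admissible actions. Since $\Phi$ is bounded, any control with total cost exceeding $\|\Phi\|_\infty$ above the cost of the zero control is suboptimal. Superlinear growth of $c_k$, together with boundedness of $\lambda_k$ uniformly in $a\ge0$, shows that truncating $a^k$ at a level $\bar a$ changes killing rates by at most $L_a$ times the truncation. The truncation also strictly lowers the running cost wherever it is active. More precisely, we choose $\bar a$ so that $c_k(a)\ge c_k(\bar a)$ for $a\ge\bar a$; this is possible by convexity and coercivity. With $\eta_k>0$ (or by monotonicity in $a$ of the loss contributions), we then compare the truncated control with the original. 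We expect this monotone comparison to require care in general, so we fall back on a direct estimate: for $a\ge\bar a$ the cost gain $c_k(a)-c_k(\bar a)$ dominates $\|\Phi\|_{\mathrm{Lip}}$ times the induced loss change. The loss change is controlled through Proposition~\ref{prop:controlled-multiclass-wellposedness}'s stability estimate by $C\int|a-\bar a|\wedge$ bounded.

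\emph{Existence of an optimal relaxed control.} On $[0,\bar a]^2$ the relaxed controls, viewed as measures $\vartheta_t(da)\,dt$ jointly with $W^0$ in the weak formulation, form a tight, in fact compact, set. Take a minimizing sequence. By Proposition~\ref{prop:controlled-multiclass-wellposedness} and the well-posed martingale problem, the state laws are tight, and limits solve the relaxed martingale problem; this uses that the coefficients are continuous and affine in $\vartheta$. The cost functional is lower semicontinuous because $\Phi$ is bounded and continuous and $c_k$ is convex and lsc, so a minimizer exists. When a measurable selector of the Hamiltonian minimizer exists, standard measurable selection together with convexity of the averaged Hamiltonian in $\vartheta$ replaces $\vartheta^*$ by a Dirac strict feedback.

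\emph{Viscosity solution property and uniqueness.} Boundedness of $V$ is immediate. Uniform continuity in $(\mu^1,\mu^2)$ follows from the stability of Proposition~\ref{prop:controlled-multiclass-wellposedness} together with Lemma~\ref{lem:aggregate-kill-rate-lip}, which makes the loss drift $\mathsf W$-Lipschitz. Uniform continuity in $t$ follows from boundedness of the rates. We then establish the dynamic programming principle by the usual conditioning on $\mathcal F^{W^0}_t$ and concatenation of relaxed controls; compactness of the relaxed class avoids measurable-selection issues. Applying Itô's formula for smooth cylindrical test functions along \eqref{eq:two-pop-controlled-measure-spde}--\eqref{eq:loss-intensity-two-pop} yields the generator \eqref{eq:two-pop-full-generator-common-noise}. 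The killing line comes from the jump $x\mapsto\partial$, rewritten via Lemma~\ref{lem:state-decomposition-killed-hjb}. From this, dynamic programming gives the sub- and supersolution inequalities in the decomposed form. The comparison theorem proved just above then gives uniqueness among bounded uniformly continuous solutions, and hence the characterization. We regard the dynamic programming principle with common noise and relaxed controls as the main obstacle. We would follow the Fabbri--Gozzi--Święch route, working in the weak formulation on a canonical space and using the compactness above.

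\emph{First-order condition.} At a differentiability point we differentiate the $a^k$-dependent part of the Hamiltonian. This part is $\pi_kc_k(a^k)-\int\lambda_k Q_k^U\,d\mu^k$, where $Q_k^U$ collects the cemetery-jump difference and the feedback term $-\Gamma_{rk}\Lambda_k\int D_{\mu^r}U\,d\mu^r$ coming from line one of \eqref{eq:two-pop-full-generator-common-noise}. Using $\partial_a\lambda_k=-\eta_k\lambda_k$ gives \eqref{eq:optimal-control-foc}, and the KKT conditions on $[0,\bar a]$ give the projection form \eqref{eq:projected-foc-control}, with the quadratic case following. For uniqueness, the $a$-objective has second derivative $\pi_kc_k''+\eta_k^2\int\lambda_kQ_k^U\,d\mu^k\ge m_k$. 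Nonnegativity of $Q_k^U$, combined with convexity of $e^{-\eta_k a}$, gives strict convexity, and hence a unique minimizer characterized by the projection.
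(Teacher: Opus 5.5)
Your plan follows the paper's proof essentially step for step: truncation of the action set by superlinear cost against the control-stability estimate, existence by compactness of relaxed controls and lower semicontinuity, dynamic programming plus an It\^o formula for the viscosity inequalities, comparison for uniqueness, and direct differentiation with $\partial_a\lambda_k=-\eta_k\lambda_k$. The first-order step, however, contains a sign error. The $a^k$-dependent part of the Hamiltonian is $\pi_kc_k(a^k)+\int_{\mathbb R}\lambda_k(x,a^k)Q_k^U(x)\,\mu^k(dx)$, not $\pi_kc_k(a^k)-\int\lambda_kQ_k^U\,d\mu^k$. The killing line contributes $+\int\lambda_k\bigl[\frac{\delta U}{\delta\mu^k}(\partial)-\frac{\delta U}{\delta\mu^k}(x)\bigr]d\mu^k$. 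The $\ell=k$ terms of the drift line contribute $-\sum_r\Gamma_{rk}\Lambda_k(\mu^k,a^k)\int D_{\mu^r}U\,d\mu^r=\int\lambda_k(x,a^k)\bigl[-\sum_r\Gamma_{rk}\int D_{\mu^r}U\,d\mu^r\bigr]\mu^k(dx)$. With your sign, differentiating gives $\pi_kc_k'(a^{k,*})=-\eta_k\int\lambda_kQ_k^U\,d\mu^k$, the opposite of \eqref{eq:optimal-control-foc}. It also gives second derivative $\pi_kc_k''-\eta_k^2\int\lambda_kQ_k^U\,d\mu^k$, which contradicts the convexity argument you then give; that argument is correct for the plus sign.

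Two further points. First, in the truncation step, write your direct estimate as a slope bound. By convexity, $c_k(a)-c_k(\bar a)\ge c_k'(\bar a+)(a-\bar a)$, with $c_k'(\bar a+)\to\infty$ as $\bar a\to\infty$. The control-stability estimate bounds the terminal-cost increase by $C\|\Phi\|_{\mathrm{Lip}}L_a\,\mathbb E\int_t^T(a^k_s-\bar a)^+ds$. This needs a Lipschitz constant in $a$ that is uniform on $[\bar a,\infty)$. That holds for the exponential specification, where $|\partial_a\lambda_k|\le\eta_k\Lambda$, but is not implied by local Lipschitz continuity. The paper instead bounds the terminal increase by $C\,\mathbb E|B|$, using only $0\le\lambda_k\le\Lambda$. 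As stated, its uniform gap $c_k(a)-c_k(\bar a)>C_0$ for all $a>\bar a$ fails as $a\downarrow\bar a$. That version is repaired by sending only actions above a second threshold $\bar a'$ to $\bar a$, where $c_k$ is increasing beyond $\bar a$ and $c_k(\bar a')-c_k(\bar a)>C_0$. It then needs no regularity in $a$ at all, and the compact action set becomes $[0,\bar a']^2$.

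Second, the comparison proof tests $u$ and $w$ with $\Theta^k_\varepsilon$, the coercive penalty $\mathcal R$ and $\frac{\beta}{2}|L-M|^2$. Checking the viscosity inequalities only for cylindrical test functions therefore does not let you invoke it. You need them for that class, either by an approximation argument or, as the paper does, through the Lions lift and a Hilbert-space It\^o formula with jumps.
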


\begin{proof}
We first prove stability of the controlled measure flow with respect to the initial measure and the control. Fix two initial states
 \(\mu=(\mu^1,\mu^2), \qquad \nu=(\nu^1,\nu^2),\)
and let the two systems be constructed on the same probability space with the same common noise \(W^0\), the same idiosyncratic noises, the same Poisson thinning variables, and the same control \(a\). Denote the corresponding measure flows by
 \(\mu_t=(\mu^1_t,\mu^2_t), \qquad \nu_t=(\nu^1_t,\nu^2_t).\)
By the Lipschitz property of the drift \(b_k\), the boundedness of the contagion matrix \(\Gamma\), the Lipschitz property of the kill intensity \(\lambda_k\), and Wasserstein control after lifting the cemetery state, there exists a constant \(C_T<\infty\) such that
\begin{equation}
\label{eq:flow-stability-initial-measure}
\mathbb E
\left[
  \sup_{s\in[t,T]}
  \sum_{k=1}^2
  W_2(\mu^k_s,\nu^k_s)
\right]
\le
C_T
\sum_{k=1}^2
W_2(\mu^k,\nu^k).
\end{equation}
More precisely, a quadratic coupling estimate first gives
\[
\mathbb E
\left[
  \sup_{s\in[t,T]}
  \sum_{k=1}^2
  W_2^2(\mu^k_s,\nu^k_s)
\right]
\le
C_T
\sum_{k=1}^2
W_2^2(\mu^k,\nu^k),
\]
and \eqref{eq:flow-stability-initial-measure} follows by Jensen's inequality. Under synchronous coupling, the diffusion noise terms cancel, and the difference comes only from drift, contagion loss feedback, and the difference in kill intensities. Taking square expectations of the state difference on the alive state and using
\[
  d_{\mathrm{BL}}(\alpha,\beta)
  \le
  W_1(\widehat\alpha,\widehat\beta)
  \le
  W_2(\widehat\alpha,\widehat\beta)
\]
controls the bounded Lipschitz distance between surviving sub-probability measures by the lifted Wasserstein distance. The dead mass has already been placed at \(\partial\), so
 \(|L^k(\mu^k_s)-L^k(\nu^k_s)| \le W_1(\mu^k_s,\nu^k_s) \le W_2(\mu^k_s,\nu^k_s).\)
Gronwall's inequality then gives the stated estimates.

Similarly, if two systems start from the same initial state but use controls \(a\) and \(\widetilde a\), respectively, then
\begin{equation}
\label{eq:flow-stability-control}
\mathbb E
\left[
  \sup_{s\in[t,T]}
  \sum_{k=1}^2
  W_2(\mu^{k,a}_s,\mu^{k,\widetilde a}_s)
\right]
\le
C_T
\sum_{k=1}^2
\mathbb E\int_t^T
\left\|
  \lambda_k(\cdot,a^k_r)
  -
  \lambda_k(\cdot,\widetilde a^k_r)
\right\|_{\infty}
dr.
\end{equation}
Since \(0\le\lambda_k\le\Lambda\), if the two controls differ only on a set \(B\subset[t,T]\), then
\begin{equation}
\label{eq:flow-stability-control-support}
\mathbb E
\left[
  \sup_{s\in[t,T]}
  \sum_{k=1}^2
  W_2(\mu^{k,a}_s,\mu^{k,\widetilde a}_s)
\right]
\le
C_T |B|.
\end{equation}
By the Lipschitz property of \(\Phi\) and \eqref{eq:flow-stability-initial-measure}, the value function is Lipschitz in the initial measure:
\begin{equation}
\label{eq:value-lipschitz-measure}
\left|
V(t,\mu^1,\mu^2)-V(t,\nu^1,\nu^2)
\right|
\le
C_T
\sum_{k=1}^2
W_2(\mu^k,\nu^k).
\end{equation}
Using the bounded kill intensities, the linear-growth drift, and diffusion estimates in \eqref{eq:two-pop-controlled-measure-spde}, we also obtain uniform continuity in time:
\begin{equation}
\label{eq:value-time-continuity}
  |V(t,\mu^1,\mu^2)-V(s,\mu^1,\mu^2)|
  \le
  C_T|t-s|^{1/2}+C_T|t-s|.
\end{equation}

We next prove compactification of the control set. Since \(c_k\) is convex with superlinear growth, there exist \(\bar a<\infty\) and a constant \(C_0\) such that, whenever \(a>\bar a\),
\begin{equation}
\label{eq:superlinear-truncation-threshold}
  c_k(a)-c_k(\bar a)>C_0,
\end{equation}
where \(C_0\) is chosen larger than the maximal terminal-loss variation constant generated by \eqref{eq:flow-stability-control-support} and the Lipschitz constant of \(\Phi\). For any admissible control \(a\), define the truncated control
 \(a^{k,\bar a}_s:=a^k_s\wedge \bar a, \qquad k=1,2.\)
Let
 \(B_k:=\{s\in[t,T]:a^k_s>\bar a\}.\)
By \eqref{eq:flow-stability-control-support} and the Lipschitz property of \(\Phi\), the increase in terminal cost due to truncation is at most
\begin{equation}
\label{eq:terminal-cost-truncation}
  C\sum_{k=1}^2|B_k|.
\end{equation}
On the other hand, by \eqref{eq:superlinear-truncation-threshold}, the decrease in running cost is at least
\begin{equation}
\label{eq:running-cost-truncation}
  C_0\sum_{k=1}^2|B_k|.
\end{equation}
Taking \(C_0>C\), we get
\begin{equation}
\label{eq:truncation-improves-cost}
  J(t,\mu^1,\mu^2;a^{\bar a})
  \le
  J(t,\mu^1,\mu^2;a).
\end{equation}
Thus the original noncompact control problem and the compact control problem
 \(a_t\in[0,\bar a]^2\)
have the same value function, and it is sufficient to search for an optimal control in \([0,\bar a]^2\).

We now prove existence of an optimal relaxed control on the compact control set. Take a minimizing sequence of relaxed controls \((\vartheta^n)_{n\ge1}\), and let \((\mu^{1,n},\mu^{2,n})\) be the corresponding measure flows. Since \([0,\bar a]^2\) is compact, the space \(\mathcal P([0,\bar a]^2)\) is compact under weak convergence. The bounded kill intensity and the linear-growth drift give tightness of the corresponding controlled measure flows in
 \(C([t,T];\mathcal P_2(\mathsf E)^2).\)
By Prokhorov's theorem and the compactness of relaxed controls, there is a subsequence along which the relaxed control--state laws converge weakly. By the stability of the controlled martingale problem under weak convergence in Assumption~\ref{ass:two-pop-control-regularity}, the limiting state process satisfies the relaxed version of \eqref{eq:two-pop-controlled-measure-spde}; denote the limiting relaxed control by \(\vartheta^*\). By lower semicontinuity of the running cost and continuity of the terminal cost,
\[
\begin{aligned}
J(t,\mu^1,\mu^2;\vartheta^*)
&\le
\liminf_{n\to\infty}
J(t,\mu^1,\mu^2;\vartheta^n)
=
V(t,\mu^1,\mu^2).
\end{aligned}
\]
Thus \(\vartheta^*\) is an optimal relaxed control.

The compact relaxed-control problem admits an optimal relaxed control. Under the standard chattering condition, every relaxed control can be approximated in the weak occupation-measure topology by progressively measurable strict controls, and the stability estimate \eqref{eq:flow-stability-control} transfers this approximation to the measure flow and terminal cost. The relaxed value then agrees with the closure of strict-control values. If the pointwise Hamiltonian minimizer admits a progressively measurable selector, selecting this minimizer gives a strict feedback control attaining the same value. This proves \eqref{eq:compact-optimal-control}.

We now establish the dynamic programming principle. By \eqref{eq:flow-stability-initial-measure}, the controlled measure flow depends continuously on the current state. By \eqref{eq:value-lipschitz-measure} and \eqref{eq:value-time-continuity}, the value function is uniformly continuous on the Wasserstein state space. The controlled martingale problem is stable under concatenation of controls. Hence, for every stopping time \(\theta\in[t,T]\),
\begin{equation}
\label{eq:dpp-two-pop}
  V(t,\mu^1,\mu^2)
  =
  \inf_{\vartheta}
  \mathbb E
  \left[
  \int_t^\theta
  \int_{[0,\bar a]^2}
  \bigl(\pi_1c_1(a^1)+\pi_2c_2(a^2)\bigr)\,\vartheta_s(da)\,ds
  +
  V(\theta,\mu^1_\theta,\mu^2_\theta)
  \right].
\end{equation}
Here \((\mu^1_s,\mu^2_s)\) is the controlled measure flow starting from \((t,\mu^1,\mu^2)\) under the relaxed control \(\vartheta\). One inequality follows by using an arbitrary relaxed control until \(\theta\) and then concatenating an \(\varepsilon\)-optimal relaxed control after \(\theta\); the other follows because the conditional remaining cost of any global relaxed control after \(\theta\) is bounded below by \(V(\theta,\mu^1_\theta,\mu^2_\theta)\).

We next turn \eqref{eq:dpp-two-pop} into the HJB equation. Take a sufficiently rich nonatomic probability space \((\Omega,\mathcal F,\mathbb P)\). For each \(\mu^k\in\mathcal P_2(\mathsf E)\), choose a random variable
 \(\xi^k\in L^2(\Omega;\mathsf E), \qquad \mathcal L(\xi^k)=\mu^k.\)
Define the Lions lift
\begin{equation}
\label{eq:lions-lift-value}
  \mathcal V(t,\xi^1,\xi^2)
  :=
  V(t,\mathcal L(\xi^1),\mathcal L(\xi^2)).
\end{equation}
If \(U\) is a smooth functional on \(\mathcal P_2(\mathsf E)^2\), set
 \(\mathcal U(t,\xi^1,\xi^2) := U(t,\mathcal L(\xi^1),\mathcal L(\xi^2)).\)
The Lions derivative and the Fréchet derivative of the lifted functional satisfy
\begin{equation}
\label{eq:lions-frechet-correspondence}
  D_{\xi^k}\mathcal U(t,\xi^1,\xi^2)
  =
  D_{\mu^k}U(t,\mu^1,\mu^2)(\xi^k).
\end{equation}
The second derivative correspondence gives the common-noise term
\begin{equation}
\label{eq:common-noise-second-measure-term}
  \frac12\sigma_0^2
  \sum_{k,\ell=1}^2
  \int_{\mathbb R}\int_{\mathbb R}
  \partial_{xy}^2
  \frac{\delta^2 U}{\delta\mu^k\delta\mu^\ell}
  (\mu^1,\mu^2)(x,y)
  \,\mu^k(dx)\mu^\ell(dy).
\end{equation}
Apply the Hilbert-space Itô formula with jumps to the lifted test function. The continuous martingale part yields the diffusion terms and the common-noise second-order term; the compensator of the killing jump \(x\mapsto\partial\) is
\begin{equation}
\label{eq:killing-jump-compensator}
  \lambda_k(x,a^k)
  \left[
  \frac{\delta U}{\delta\mu^k}(\mu^1,\mu^2)(\partial)
  -
  \frac{\delta U}{\delta\mu^k}(\mu^1,\mu^2)(x)
  \right].
\end{equation}
Thus the lifted infinite-dimensional HJB equation is
\begin{equation}
\label{eq:lifted-hjb}
  \partial_t\mathcal V
  +
  \inf_{a\in[0,\bar a]^2}
  \left\{
  \pi_1c_1(a^1)+\pi_2c_2(a^2)
  +
  \mathscr L^a\mathcal V
  \right\}
  =
  0,
\end{equation}
where \(\mathscr L^a\), when projected back to measure space, is exactly \eqref{eq:two-pop-full-generator-common-noise}.

We now verify the viscosity solution property. We use the following sign convention for the backward HJB equation: if a smooth test function touches the value function from above, then
 \(\partial_t\phi+H(\phi)\ge0;\)
if it touches from below, then
 \(\partial_t\phi+H(\phi)\le0.\)
Let \(\mathcal U\) be a smooth test function on \(L^2(\Omega;\mathsf E)^2\), and suppose that
 \(\mathcal V-\mathcal U\)
has a local maximum at \((t,\xi^1,\xi^2)\). Fix any constant control \(a\in[0,\bar a]^2\). By the dynamic programming principle, for small \(h>0\),
\[
  \mathcal V(t,\xi^1,\xi^2)
  \le
  \mathbb E
  \left[
  \int_t^{t+h}
  \bigl(\pi_1c_1(a^1)+\pi_2c_2(a^2)\bigr)\,ds
  +
  \mathcal V(t+h,\xi^1_{t+h},\xi^2_{t+h})
  \right].
\]
Since \(\mathcal V-\mathcal U\) has a local maximum at the initial point,
\[
\begin{aligned}
0
&\le
\mathbb E
\left[
  \int_t^{t+h}
  \bigl(\pi_1c_1(a^1)+\pi_2c_2(a^2)\bigr)\,ds
  +
  \mathcal U(t+h,\xi^1_{t+h},\xi^2_{t+h})
  -
  \mathcal U(t,\xi^1,\xi^2)
\right].
\end{aligned}
\]
Applying the Hilbert-space Itô formula with jumps to \(\mathcal U\), dividing by \(h\), and sending \(h\downarrow0\), we obtain
 \(0 \le \partial_t\mathcal U + \mathscr L^a\mathcal U + \pi_1c_1(a^1)+\pi_2c_2(a^2).\)
Since \(a\in[0,\bar a]^2\) is arbitrary,
\begin{equation}
\label{eq:viscosity-upper-test}
  \partial_t\mathcal U
  +
  \inf_{a\in[0,\bar a]^2}
  \left\{
  \mathscr L^a\mathcal U
  +
  \pi_1c_1(a^1)+\pi_2c_2(a^2)
  \right\}
  \ge0.
\end{equation}

Conversely, let \(\mathcal U\) be a smooth test function such that
 \(\mathcal V-\mathcal U\)
has a local minimum at \((t,\xi^1,\xi^2)\). By the chattering equivalence of relaxed and strict values, an \(\varepsilon\)-optimal relaxed control can be approximated by a strict representative without changing the first-order Hamiltonian inequality. We therefore write the local argument for an \(\varepsilon\)-optimal strict control \(a^\varepsilon\). By the dynamic programming principle,
\[
\begin{aligned}
\mathcal V(t,\xi^1,\xi^2)+\varepsilon h
&\ge
\mathbb E
\left[
  \int_t^{t+h}
  \bigl(\pi_1c_1(a^{1,\varepsilon}_s)+\pi_2c_2(a^{2,\varepsilon}_s)\bigr)\,ds
  +
  \mathcal V(t+h,\xi^1_{t+h},\xi^2_{t+h})
\right].
\end{aligned}
\]
The local minimum property allows us to replace \(\mathcal V\) on the right-hand side by \(\mathcal U\), so
\[
\begin{aligned}
\varepsilon h
&\ge
\mathbb E
\left[
  \int_t^{t+h}
  \bigl(\pi_1c_1(a^{1,\varepsilon}_s)+\pi_2c_2(a^{2,\varepsilon}_s)\bigr)\,ds
  +
  \mathcal U(t+h,\xi^1_{t+h},\xi^2_{t+h})
  -
  \mathcal U(t,\xi^1,\xi^2)
\right].
\end{aligned}
\]
Applying the Hilbert-space Itô formula with jumps, dividing by \(h\), sending \(h\downarrow0\), and then letting \(\varepsilon\downarrow0\), gives
\begin{equation}
\label{eq:viscosity-lower-test}
  \partial_t\mathcal U
  +
  \inf_{a\in[0,\bar a]^2}
  \left\{
  \mathscr L^a\mathcal U
  +
  \pi_1c_1(a^1)+\pi_2c_2(a^2)
  \right\}
  \le0.
\end{equation}
By \eqref{eq:viscosity-upper-test} and \eqref{eq:viscosity-lower-test}, \(\mathcal V\) is a viscosity solution of the lifted Hilbert-space equation. By the correspondence between Lions lifts and derivatives on Wasserstein space, \(V\) is a Wasserstein--Lions viscosity solution of \eqref{eq:two-pop-hjb-common-noise}.

The terminal condition follows directly from the definition of the objective. At \(t=T\), the running cost integral vanishes, and no subsequent control can change the cemetery masses already recorded in the state. Therefore
\[
\begin{aligned}
V(T,\mu^1,\mu^2)
&=
\inf_{\vartheta}
\mathbb E
\left[
  \Phi(L^1_T,L^2_T)
\right]
\\
&=
\Phi\bigl(\mu^1(\{\partial\}),\mu^2(\{\partial\})\bigr),
\end{aligned}
\]
which proves \eqref{eq:terminal-condition-two-pop}.

The comparison theorem proved in Theorem~\ref{thm:killed-two-pop-comparison-principle} applies to the killed two-population HJB under the bounded Lipschitz killing, compact-control, Lipschitz-drift, constant-diffusion, and alive-diffusion nondegeneracy assumptions above. Hence any bounded uniformly continuous viscosity subsolution and supersolution ordered at terminal time remain ordered on \([0,T]\). Applying this result to two viscosity solutions of \eqref{eq:two-pop-hjb-common-noise} gives uniqueness in the class of bounded uniformly continuous Wasserstein--Lions viscosity solutions. The value function \(V\) constructed by dynamic programming is therefore the unique viscosity solution of this HJB equation in that class.

The viscosity characterization above does not require classical differentiability of the value function. The following first-order condition is a smooth-verification statement: assume that, on the region under consideration, a strict Hamiltonian minimizer exists and the value admits Lions derivatives that identify the marginal cemetery-jump and loss-feedback terms. The relaxed formulation gives the same Hamiltonian because the generator and the population-weighted running cost are averaged linearly over actions, so the infimum over relaxed actions is attained at an extreme point whenever a strict measurable minimizer exists. For a smooth functional \(U\), the part of the Hamiltonian depending on \(a^k\) is
\[
  \pi_kc_k(a^k)
  +
  \int_{\mathbb R}
  \lambda_k(x,a^k)
  \left[
  \frac{\delta U}{\delta\mu^k}(\mu^1,\mu^2)(\partial)
  -
  \frac{\delta U}{\delta\mu^k}(\mu^1,\mu^2)(x)
  \right]
  \mu^k(dx)
\]
together with the \(a^k\)-dependent part in the contagion-feedback drift,
\[
  -
  \sum_{r=1}^2
  \int_{\mathbb R}
  \Gamma_{rk}\Lambda_k(\mu^k,a^k)
  D_{\mu^r}U(\mu^1,\mu^2)(y)\,\mu^r(dy).
\]
Combining these two parts yields
 \(\pi_kc_k(a^k) + \int_{\mathbb R} \lambda_k(x,a^k)Q_k^U(x)\,\mu^k(dx),\)
where \(Q_k^U\) is exactly \eqref{eq:q-marginal-value-control}. If
 \(\lambda_k(x,a)=\lambda_{0,k}(x)e^{-\eta_k a},\)
then
 \(\partial_a\lambda_k(x,a) = -\eta_k\lambda_k(x,a).\)
Differentiating the \(a^k\)-component of the Hamiltonian, an interior minimizer satisfies
 \(0 = \pi_k c_k'(a^{k,*}) - \eta_k \int_{\mathbb R} \lambda_k(x,a^{k,*})Q_k^U(x)\,\mu^k(dx),\)
which is \eqref{eq:optimal-control-foc}. On the constrained interval \([0,\bar a]\), first-order optimality together with endpoint complementarity is equivalent to the projected form \eqref{eq:projected-foc-control}. If \(c_k(a)=\frac12r_ka^2\), then the population-weighted derivative is \(\pi_k r_ka\), giving \eqref{eq:quadratic-cost-control}. The proof is complete.
\end{proof}

\begin{proposition*}[Detailed form of Proposition~\ref{prop:controlled-multiclass-wellposedness}: Well-posedness of the controlled multi-population contagion system]
Assume the following controlled-contagion regularity conditions. More specifically, the drift term
\[
  b_k:\mathbb R\times\mathcal M_{\le 1}(\mathbb R)^K\times A_k\to\mathbb R,
  \qquad k=1,\ldots,K,
\]
is uniformly Lipschitz in the state and measure variables and satisfies a linear growth condition; the diffusion coefficients
\(\sigma_k,\sigma_0\) are bounded; the control sets \(A_k\) are compact; the contagion matrix
 \(\Gamma=(\Gamma_{k\ell})_{1\le k,\ell\le K}\)
is nonnegative and bounded; and the kill intensity \(\lambda_k(x,a)\) is nonnegative, bounded, and uniformly Lipschitz in \(x\), namely there exist constants
 \(\Lambda<\infty,\qquad L_\lambda<\infty,\)
such that for every \(x,y\in\mathbb R\) and \(a\in A_k\),
\[
  0\le \lambda_k(x,a)\le \Lambda,
  \qquad
  |\lambda_k(x,a)-\lambda_k(y,a)|
  \le L_\lambda |x-y|.
\]
Assume that the initial states and noises are mutually independent and satisfy
 \(\max_{1\le k\le K}\mathbb E|X_1^k(0)|^2<\infty .\)
Admissible controls in this paper are processes \(a=(a^1,\ldots,a^K)\) taking values in
\(A_1\times\cdots\times A_K\) and progressively measurable with respect to the common-noise filtration
\(\mathbb F^0=(\mathcal F_t^{W^0})_{t\in[0,T]}\).

Then the following statements hold.

 \(\text{\rm (i)}\)
For any finite particle size \((N_1,\ldots,N_K)\), the controlled multi-population particle system admits a unique strong solution. More precisely, there exists a unique càdlàg process adapted to the filtration generated by the given Brownian motions and Poisson random measures,
 \(\bigl(X_i^{k,N}(t),\xi_i^{k,N}(t)\bigr)_ {1\le i\le N_k,\ 1\le k\le K,\ 0\le t\le T},\)
satisfying
\[
dX_i^{k,N}(t)
=
b_k\bigl(
X_i^{k,N}(t),
\mu_t^{1,N},\ldots,\mu_t^{K,N},
a_t^k
\bigr)\,dt
+
\sigma_k\,dW_i^k(t)
+
\sigma_0\,dW^0(t)
-
\sum_{\ell=1}^K
\Gamma_{k\ell}\,dL_t^{\ell,N},
\]
where
\[
  \mu_t^{k,N}
  =
  \frac1{N_k}
  \sum_{i=1}^{N_k}
  \xi_i^{k,N}(t)\delta_{X_i^{k,N}(t)},
  \qquad
  L_t^{k,N}
  =
  \frac1{N_k}
  \sum_{i=1}^{N_k}
  \bigl(1-\xi_i^{k,N}(t)\bigr).
\]
If \(\mathcal N_i^k(ds,du)\) are independent Poisson random measures on
\([0,\infty)\times[0,\Lambda]\) with intensity \(ds\,du\), then the killing indicator is given by
\[
  \xi_i^{k,N}(t)
  =
  1-
  \int_0^t\int_0^\Lambda
  \xi_i^{k,N}(s-)
  \mathbf 1_{\{u\le \lambda_k(X_i^{k,N}(s-),a_s^k)\}}
  \mathcal N_i^k(ds,du).
\]

 \(\text{\rm (ii)}\)
The controlled McKean--Vlasov limiting system admits a unique solution. That is, there exist a unique \(\mathbb F^0\)-progressively measurable conditional measure flow and loss flow
 \(\bigl(\mu_t^k,L_t^k\bigr)_{1\le k\le K,\ 0\le t\le T}\)
and representative particle processes \((\bar X^k,\bar\xi^k)_{1\le k\le K}\) such that
\[
d\bar X^k(t)
=
b_k\bigl(
\bar X^k(t),
\mu_t^1,\ldots,\mu_t^K,
a_t^k
\bigr)\,dt
+
\sigma_k\,dW^k(t)
+
\sigma_0\,dW^0(t)
-
\sum_{\ell=1}^K
\Gamma_{k\ell}\,dL_t^\ell ,
\]
and
\[
  \mu_t^k
  =
  \mathbb E
  \left[
  \bar\xi^k(t)\delta_{\bar X^k(t)}
  \,\big|\,
  \mathcal F_t^{W^0}
  \right],
  \qquad
  L_t^k
  =
  1-
  \mathbb E
  \left[
  \bar\xi^k(t)
  \,\big|\,
  \mathcal F_t^{W^0}
  \right].
\]
The killing indicator of the representative particle is defined by
\[
  \bar\xi^k(t)
  =
  1-
  \int_0^t\int_0^\Lambda
  \bar\xi^k(s-)
  \mathbf 1_{\{u\le \lambda_k(\bar X^k(s-),a_s^k)\}}
  \mathcal N^k(ds,du).
\]

 \(\text{\rm (iii)}\)
For each \(k=1,\ldots,K\), the limiting loss process \(L^k\) is continuous and nondecreasing on \([0,T]\). In fact,
\[
  L_t^k
  =
  \int_0^t
  \mathbb E
  \left[
  \lambda_k(\bar X^k(s),a_s^k)\bar\xi^k(s)
  \,\big|\,
  \mathcal F_s^{W^0}
  \right]ds,
  \qquad 0\le t\le T.
\]
Therefore \(L^k\) is absolutely continuous, and
 \(0\le L_t^k-L_s^k\le \Lambda(t-s), \qquad 0\le s\le t\le T .\)
The finite-particle loss \(L^{k,N}\) is a càdlàg nondecreasing step process.
\end{proposition*}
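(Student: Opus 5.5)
For (i), I will work pathwise with a finite system driven by finitely many Brownian motions and Poisson random measures on \([0,\Lambda]\) of bounded intensity. On \([0,T]\) the superposition \(\sum_{i,k}\mathcal N_i^k([0,T]\times[0,\Lambda])\) is a.s. finite, so the candidate kill times lie among the finitely many atom times \(T_1<T_2<\dots\). Between consecutive atoms every indicator \(\xi_i^{k,N}\) is constant, and the losses \(L^{\ell,N}\) are constant as well. On such an interval the system is therefore a finite-dimensional SDE. Its drift \(x\mapsto b_k(x_i,\mu^N(x),a_t^k)\) is Lipschitz, because \(d_{\mathrm{BL}}(\mu^N(x),\mu^N(y))\le N_k^{-1}\sum_i|x_i-y_i|\), it has linear growth, and its noise is additive. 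Standard strong existence and uniqueness then applies on each interval. At an atom \((T_j,u_j)\) of \(\mathcal N_i^k\), I set \(\xi_i^{k,N}(T_j)=0\) exactly when the particle was alive and \(u_j\le\lambda_k(X_i^{k,N}(T_j-),a^k_{T_j})\). Since this decision depends only on left limits, it is unique. I then apply the jump \(-\sum_\ell\Gamma_{k\ell}\Delta L^{\ell,N}\) to all survivors and restart. Concatenating over the finitely many atoms gives the unique càdlàg strong solution. This construction also shows that \(L^{k,N}\) is a nondecreasing step process with jumps of size \(N_k^{-1}\).

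For (ii), I will use the fixed-point scheme announced after Proposition~\ref{prop:controlled-multiclass-wellposedness}. Let \(\mathcal S\) be the space of \(\mathbb F^0\)-progressively measurable flows \((\nu^k_t,\ell^k_t)_{k\le K}\), where \(\nu^k_t\) is a sub-probability measure and \(\ell^k\) is continuous, nondecreasing and \(\Lambda\)-Lipschitz. I equip \(\mathcal S\) with the metric \(\rho_t(\cdot,\cdot)^2=\mathbb E\sup_{s\le t}\sum_k\bigl(d_{\mathrm{BL}}^2(\nu^k_s,\tilde\nu^k_s)+|\ell^k_s-\tilde\ell^k_s|^2\bigr)\). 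Given an input, I solve the linear SDE for \(\bar X^k\) with \(\nu,\ell\) frozen, which has a unique strong solution. I define \(\bar\xi^k\) by thinning \(\mathcal N^k\), and I take the output \(\Psi(\nu,\ell)=\bigl(\mathbb E[\bar\xi^k(t)\delta_{\bar X^k(t)}\mid\mathcal F^{W^0}_t],\,1-\mathbb E[\bar\xi^k(t)\mid\mathcal F^{W^0}_t]\bigr)\). The control is \(\mathbb F^0\)-adapted and the idiosyncratic inputs are independent of \(W^0\), so these conditional expectations are \(\mathbb F^0\)-progressive.

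For the contraction estimate, I couple two inputs synchronously with the same noises and the same \(\mathcal N^k\). Exactly as in the proofs of Theorems~\ref{thm:multiclass-mkv-limit} and~\ref{thm:quantitative-rate}, the state error satisfies \(\mathbb E\sup_{s\le t}|\bar X-\tilde{\bar X}|^2\le C\int_0^t\sum_k\mathbb E\,d_{\mathrm{BL}}^2\,ds+C\,\mathbb E\sup_{s\le t}|\ell-\tilde\ell|^2\). The indicator error satisfies \(\mathbb E\sup_{s\le t}|\bar\xi-\tilde{\bar\xi}|\le L_\lambda\int_0^t\mathbb E|\bar X-\tilde{\bar X}|\,ds\). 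The output distances are bounded by \(\mathbb E[|\bar X-\tilde{\bar X}|\wedge 2+|\bar\xi-\tilde{\bar\xi}|]\) after conditioning. \textbf{The main obstacle} is the loss-feedback term: \(\Gamma(\ell-\tilde\ell)\) enters the state error with no time integral, so the map is not obviously contractive. I will handle this by rewriting the output loss through the intensity formula in (iii), namely \(\ell^{\mathrm{out}}_t=\int_0^t\mathbb E[\lambda_k(\bar X,a)\bar\xi\mid\mathcal F^{W^0}_s]\,ds\). This puts a time integral in front of every error term, giving \(\rho_t(\Psi x,\Psi y)\le C\sqrt t\,\rho_t(x,y)\) in a suitable norm. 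The map is therefore a contraction on a short interval \([0,h]\) with \(h\) independent of the initial data, and I will concatenate over \([jh,(j+1)h]\) to cover \([0,T]\). Uniqueness follows because any solution is a fixed point.

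For (iii), I will apply the compensator of the thinning measure: \(\bar\xi^k(t)=1-\int_0^t\bar\xi^k(s)\lambda_k(\bar X^k(s),a_s)\,ds-M_t\), where \(M\) is a martingale. I then take \(\mathcal F^{W^0}_t\)-conditional expectations, using the independence of \(\mathcal N^k\) and \(W^k\) from \(W^0\) to kill the martingale part and to replace \(\mathcal F^{W^0}_t\) by \(\mathcal F^{W^0}_s\) inside the integral. This gives the displayed formula. The bounds \(0\le\lambda\bar\xi\le\Lambda\) then yield \(0\le L^k_t-L^k_s\le\Lambda(t-s)\), hence continuity and monotonicity. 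The finite-particle statement already follows from (i).
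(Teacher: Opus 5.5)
Parts (i) and (iii) are sound and follow the paper's route. For (i) this is a piecewise construction between the a.s.\ finitely many atoms of the superposed thinning measures; for (iii) it is the compensator identity conditioned on the common noise. In (i) you even get pathwise uniqueness directly from the construction, since any solution has constant indicators between atoms and a determined jump at each atom; the paper instead runs a separate Gronwall argument on state and indicator mismatches.

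The gap is in the contraction step of (ii). You rightly identify the undamped feedback $-\sum_\ell\Gamma_{k\ell}(\ell^\ell_t-\widetilde\ell^\ell_t)$ as the obstacle. The paper does not resolve it either: its claimed $R(t)\le CtD(t)+C\int_0^tR(s)\,ds$ does not follow from its displayed state bound, which carries $C\sum_\ell\mathbb E\sup_{r\le t}|\ell^\ell_r-\widetilde\ell^\ell_r|$ with no factor $t$. But rewriting the output loss through the intensity formula damps only the \emph{loss} component of $\Psi$. The \emph{measure} component $\mathbb E[\bar\xi^k(t)\delta_{\bar X^k(t)}\mid\mathcal F^{W^0}_t]$ is evaluated at the current state, and two inputs translate that state apart by $\sum_\ell\Gamma_{k\ell}(\ell^\ell_t-\widetilde\ell^\ell_t)$. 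Take $\nu=\widetilde\nu$ and $\ell\neq\widetilde\ell$: then $d_{\mathrm{BL}}$ between the output measures is of order $\|\Gamma\|\,|\ell_t-\widetilde\ell_t|$, with a constant that does not vanish as $t\downarrow0$. So the claim that every error term acquires a time integral is false, and so is $\rho_t(\Psi x,\Psi y)\le C\sqrt t\,\rho_t(x,y)$ in the metric you defined. Unless $\|\Gamma\|$ is small, $\Psi$ need not be a contraction on any short interval.

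The missing idea is to exploit the triangular structure your estimates already give. On $[0,h]$, with $\delta^\nu,\delta^\ell$ the input discrepancies, you have $\delta^\nu_{\mathrm{out}}\le C(h\,\delta^\nu+\delta^\ell)$ and $\delta^\ell_{\mathrm{out}}\le Ch(\delta^\nu+\delta^\ell)$. You can then either weight the loss coordinate, using $\rho^\nu+M\rho^\ell$ with, say, $M=4C$ and then $h$ small, or apply Banach's theorem to $\Psi\circ\Psi$, which is an $O(h)$-contraction. If ``in a suitable norm'' was meant as such a weighting, it must be made explicit, because it is exactly what closes the argument. With it, your proof is correct and also repairs the paper's.

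A second, smaller point concerns your squared metric. Since $|\bar\xi-\widetilde{\bar\xi}|\in\{0,1\}$, the unconditional thinning bound you wrote, combined with Jensen, gives only $\rho_{\mathrm{out}}^2\lesssim\rho_{\mathrm{in}}^2+t\,\rho_{\mathrm{in}}$, which is not Lipschitz at $0$. You need the conditional version $\mathbb P(\sup_{s\le t}|\bar\xi_s-\widetilde{\bar\xi}_s|\neq0\mid\mathcal F^{W^0}_T)\le L_\lambda\int_0^t\mathbb E[|\bar X_s-\widetilde{\bar X}_s|\mid\mathcal F^{W^0}_T]\,ds$. This holds because the thinning measures, idiosyncratic noises and initial data are independent of $W^0$. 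Square only afterwards, using that conditioning a time-$t$ quantity on $\mathcal F^{W^0}_t$ or on $\mathcal F^{W^0}_T$ gives the same result. Alternatively, use the paper's $L^1$ metric, where the unconditional bound suffices.
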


\begin{proof}
We first prove strong well-posedness of the finite-particle system. Fix a particle size
\((N_1,\ldots,N_K)\), and place all Brownian motions and Poisson random measures on the same complete filtered probability space. Since each particle can be killed at most once and the kill intensity satisfies
 \(0\le \lambda_k(x,a)\le \Lambda,\)
the total killing jump intensity in the finite system is uniformly controlled by
 \(\Lambda\sum_{k=1}^K N_k .\)
Thus the total number of killing jumps is almost surely finite on any finite time interval. Given a candidate path, the Poisson thinning formula uniquely determines the killing indicators; given the killing indicators, the state equation is an SDE with Lipschitz coefficients in finite dimension and with the finite-variation contagion term
 \(-\sum_{\ell=1}^K\Gamma_{k\ell}\,dL_t^{\ell,N}.\)
The standard piecewise construction gives a unique strong solution between consecutive killing jump times. At a killing jump time, the jump of the loss process uniquely determines the contagion shock to all state variables. Recursively concatenating these intervals gives existence of a strong solution to the finite-particle system.

We now prove pathwise uniqueness. Let
 \((X_i^{k,N},\xi_i^{k,N}) \quad\text{and}\quad (\widetilde X_i^{k,N},\widetilde\xi_i^{k,N})\)
be two solutions driven by the same initial states, Brownian motions, and Poisson random measures. Denote the corresponding empirical measures and loss processes by
 \((\mu_t^{k,N},L_t^{k,N}), \qquad (\widetilde\mu_t^{k,N},\widetilde L_t^{k,N}).\)
For any test function \(f\) satisfying
 \(\|f\|_\infty\le 1, \qquad \operatorname{Lip}(f)\le 1,\)
use the decomposition
\[
\begin{aligned}
\xi_i^{k,N}(t)f(X_i^{k,N}(t))
-
\widetilde\xi_i^{k,N}(t)f(\widetilde X_i^{k,N}(t))
&=
\xi_i^{k,N}(t)
\Bigl[
f(X_i^{k,N}(t))-f(\widetilde X_i^{k,N}(t))
\Bigr] \\
&\quad+
\Bigl[
\xi_i^{k,N}(t)-\widetilde\xi_i^{k,N}(t)
\Bigr]
f(\widetilde X_i^{k,N}(t)).
\end{aligned}
\]
Using \(\xi_i^{k,N}(t)\in\{0,1\}\), \(\|f\|_\infty\le 1\), and
\(\operatorname{Lip}(f)\le1\), we obtain
\[
\left|
\xi_i^{k,N}(t)f(X_i^{k,N}(t))
-
\widetilde\xi_i^{k,N}(t)f(\widetilde X_i^{k,N}(t))
\right|
\le
|X_i^{k,N}(t)-\widetilde X_i^{k,N}(t)|
+
|\xi_i^{k,N}(t)-\widetilde\xi_i^{k,N}(t)|.
\]
Hence, after taking the supremum over bounded Lipschitz test functions,
\[
  d_{\mathrm{BL}}(\mu_t^{k,N},\widetilde\mu_t^{k,N})
  \le
  \frac1{N_k}
  \sum_{i=1}^{N_k}
  |X_i^{k,N}(t)-\widetilde X_i^{k,N}(t)|
  +
  \frac1{N_k}
  \sum_{i=1}^{N_k}
  |\xi_i^{k,N}(t)-\widetilde\xi_i^{k,N}(t)| .
\]
The second term is the individual alive--dead mismatch. It cannot be replaced by the aggregate loss difference alone, since positive and negative mismatches in the two systems may cancel in the type average. The loss difference is controlled by the same nonnegative mismatch term:
\[
  |L_t^{k,N}-\widetilde L_t^{k,N}|
  \le
  \frac1{N_k}
  \sum_{i=1}^{N_k}
  |\xi_i^{k,N}(t)-\widetilde\xi_i^{k,N}(t)| .
\]
By synchronous Poisson thinning, the difference in killing indicators can only come from the difference between the two kill-intensity paths. Thus
\[
\begin{aligned}
\mathbb E
\sup_{r\le t}
|\xi_i^{k,N}(r)-\widetilde\xi_i^{k,N}(r)|
&\le
\mathbb E
\int_0^t
\left|
\lambda_k(X_i^{k,N}(s),a_s^k)
-
\lambda_k(\widetilde X_i^{k,N}(s),a_s^k)
\right|ds \\
&\le
L_\lambda
\int_0^t
\mathbb E
|X_i^{k,N}(s)-\widetilde X_i^{k,N}(s)|ds .
\end{aligned}
\]
Subtract the two state equations. The Brownian terms cancel, giving
\[
\begin{aligned}
|X_i^{k,N}(t)-\widetilde X_i^{k,N}(t)|
&\le
\int_0^t
\left|
\begin{aligned}
&b_k(X_i^{k,N}(s),\mu_s^{1,N},\ldots,\mu_s^{K,N},a_s^k)\\
&\quad-
b_k(\widetilde X_i^{k,N}(s),\widetilde\mu_s^{1,N},\ldots,\widetilde\mu_s^{K,N},a_s^k)
\end{aligned}
\right|ds \\
&\quad+
\sum_{\ell=1}^K
\Gamma_{k\ell}
|L_t^{\ell,N}-\widetilde L_t^{\ell,N}|.
\end{aligned}
\]
For each type set
\[
  A_k(t):=
  \frac1{N_k}
  \sum_{i=1}^{N_k}
  \mathbb E\sup_{r\le t}
  |X_i^{k,N}(r)-\widetilde X_i^{k,N}(r)|,
\]
and
\[
  B_k(t):=
  \frac1{N_k}
  \sum_{i=1}^{N_k}
  \mathbb E\sup_{r\le t}
  |\xi_i^{k,N}(r)-\widetilde\xi_i^{k,N}(r)| .
\]
The preceding estimates imply
 \(B_k(t) \le L_\lambda\int_0^t A_k(s)\,ds,\)
while the Lipschitz property of the drift and the boundedness of \(\Gamma\), together with the bounds on \(d_{\mathrm{BL}}\) and the loss process above, give
\[
  A_k(t)
  \le
  C\int_0^t A_k(s)\,ds
  +C\int_0^t\sum_{\ell=1}^K\bigl(A_\ell(s)+B_\ell(s)\bigr)\,ds
  +C\sum_{\ell=1}^K B_\ell(t).
\]
Let
 \(\Delta_N(t):= \sum_{k=1}^K\bigl(A_k(t)+B_k(t)\bigr).\)
Since \(B_k(t)\le C\int_0^t A_k(s)\,ds\), summing over \(k\) yields
 \(\Delta_N(t) \le C\int_0^t\Delta_N(s)\,ds .\)
By Gronwall's inequality,
 \(\Delta_N(t)=0, \qquad 0\le t\le T .\)
Thus the state processes and killing indicators of the two solutions are indistinguishable. The empirical measures and loss processes are consequently indistinguishable as well, so the finite-particle system has pathwise uniqueness. Combined with the piecewise construction above, this proves existence and uniqueness of a strong solution.

We next prove well-posedness of the McKean--Vlasov limiting system. Let \(\mathcal X_T\) be the space of all
 \((\boldsymbol\nu,\boldsymbol\ell) = \bigl((\nu^1,\ldots,\nu^K),(\ell^1,\ldots,\ell^K)\bigr)\)
such that \(\nu_t^k\) is an \(\mathcal M_{\le1}(\mathbb R)\)-valued
\(\mathbb F^0\)-progressively measurable flow, \(\ell_t^k\) is a \([0,1]\)-valued
\(\mathbb F^0\)-progressively measurable continuous flow, and
 \(\ell_0^k=0, \qquad 0\le \ell_s^k\le \ell_t^k\le 1, \qquad 0\le s\le t\le T .\)
On \(\mathcal X_T\), define the seminorm
\[
  \rho_t
  \bigl(
  (\boldsymbol\nu,\boldsymbol\ell),
  (\widetilde{\boldsymbol\nu},\widetilde{\boldsymbol\ell})
  \bigr)
  :=
  \mathbb E
  \left[
  \sum_{k=1}^K
  \sup_{0\le s\le t}
  d_{\mathrm{BL}}(\nu_s^k,\widetilde\nu_s^k)
  +
  \sum_{k=1}^K
  \sup_{0\le s\le t}
  |\ell_s^k-\widetilde\ell_s^k|
  \right].
\]
The initial second-moment condition ensures that the solution of the linearized equation below has finite second moments, so the conditional measure flow
 \(\mathbb E \left[ \zeta^k(t)\delta_{Y^k(t)} \,\big|\, \mathcal F_t^{W^0} \right]\)
is well defined and remains in the space of sub-probability measures. Specifically, the linear growth of the drift, \(\ell^k\in[0,1]\), boundedness of \(\Gamma\), and bounded diffusion coefficients jointly imply
\[
  \mathbb E\sup_{0\le t\le T}|Y^k(t)|^2
  \le
  C_T
  \left(
  1+\mathbb E|X_1^k(0)|^2
  \right)
  <\infty .
\]

For any candidate flow \((\boldsymbol\nu,\boldsymbol\ell)\in\mathcal X_T\), define the linearized representative particle by
\[
dY^k(t)
=
b_k\bigl(
Y^k(t),
\nu_t^1,\ldots,\nu_t^K,
a_t^k
\bigr)\,dt
+
\sigma_k\,dW^k(t)
+
\sigma_0\,dW^0(t)
-
\sum_{\ell=1}^K
\Gamma_{k\ell}\,d\ell_t^\ell ,
\]
and define
\[
  \zeta^k(t)
  =
  1-
  \int_0^t\int_0^\Lambda
  \zeta^k(s-)
  \mathbf 1_{\{u\le \lambda_k(Y^k(s-),a_s^k)\}}
  \mathcal N^k(ds,du)
\]
by Poisson thinning. For a fixed candidate flow, this is an SDE with Lipschitz coefficients and a given finite-variation input, and hence has a unique strong solution. Define the map \(\mathcal T\) by
 \(\mathcal T(\boldsymbol\nu,\boldsymbol\ell) = (\boldsymbol\mu,\boldsymbol L),\)
where
\[
  \mu_t^k
  =
  \mathbb E
  \left[
  \zeta^k(t)\delta_{Y^k(t)}
  \,\big|\,
  \mathcal F_t^{W^0}
  \right],
  \qquad
  L_t^k
  =
  1-
  \mathbb E
  \left[
  \zeta^k(t)
  \,\big|\,
  \mathcal F_t^{W^0}
  \right].
\]
Since \(Y^k\), \(\zeta^k\), and \(a^k\) are constructed conditionally on the common noise, \(\mu^k\) and \(L^k\) are \(\mathbb F^0\)-progressively measurable. By boundedness of the kill intensity and the compensation formula,
\[
  L_t^k
  =
  \int_0^t
  \mathbb E
  \left[
  \lambda_k(Y^k(s),a_s^k)\zeta^k(s)
  \,\big|\,
  \mathcal F_s^{W^0}
  \right]ds,
\]
so \(L^k\) is continuous, nondecreasing, and satisfies
 \(0\le L_t^k-L_s^k\le \Lambda(t-s).\)
Thus \(\mathcal T\) maps \(\mathcal X_T\) into itself.

We now prove that \(\mathcal T\) is a contraction on a short time interval. Take two candidate flows
\[
  Z=(\boldsymbol\nu,\boldsymbol\ell),
  \qquad
  \widetilde Z=(\widetilde{\boldsymbol\nu},\widetilde{\boldsymbol\ell}),
\]
with corresponding linearized solutions
\[
  (Y^k,\zeta^k,\mu^k,L^k),
  \qquad
  (\widetilde Y^k,\widetilde\zeta^k,\widetilde\mu^k,\widetilde L^k).
\]
Couple the two systems with the same initial state, Brownian motions, and Poisson random measures. As in the finite-particle part, the state difference satisfies
\[
\begin{aligned}
\mathbb E\sup_{r\le t}
|Y^k(r)-\widetilde Y^k(r)|
&\le
C\int_0^t
\mathbb E\sup_{u\le s}
|Y^k(u)-\widetilde Y^k(u)|ds \\
&\quad+
C\int_0^t
\sum_{\ell=1}^K
\mathbb E\sup_{u\le s}
d_{\mathrm{BL}}(\nu_u^\ell,\widetilde\nu_u^\ell)ds
+
C\sum_{\ell=1}^K
\mathbb E\sup_{r\le t}
|\ell_r^\ell-\widetilde\ell_r^\ell|.
\end{aligned}
\]
Synchronous Poisson thinning and the Lipschitz property of \(\lambda_k\) give
\[
  \mathbb E\sup_{r\le t}
  |\zeta^k(r)-\widetilde\zeta^k(r)|
  \le
  C\int_0^t
  \mathbb E
  |Y^k(s)-\widetilde Y^k(s)|ds .
\]
Using the \(d_{\mathrm{BL}}\) decomposition above, the output measure and output loss satisfy
\[
\begin{aligned}
\sup_{r\le t}
d_{\mathrm{BL}}(\mu_r^k,\widetilde\mu_r^k)
+
\sup_{r\le t}
|L_r^k-\widetilde L_r^k|
&\le
C
\mathbb E\sup_{r\le t}
|Y^k(r)-\widetilde Y^k(r)| \\
&\quad+
C
\mathbb E\sup_{r\le t}
|\zeta^k(r)-\widetilde\zeta^k(r)|.
\end{aligned}
\]
Combining the estimates and summing over \(k\) yields
 \(R(t) \le C t D(t) + C\int_0^t R(s)\,ds,\)
where
\[
  R(t)
  :=
  \rho_t
  \bigl(
  \mathcal T Z,
  \mathcal T\widetilde Z
  \bigr),
  \qquad
  D(t)
  :=
  \rho_t
  \bigl(
  Z,
  \widetilde Z
  \bigr).
\]
Since \(D(s)\le D(t)\) for \(s\le t\), this is a Volterra-type Gronwall inequality for \(R\). Fixing \(t\le T\) and applying Gronwall's inequality to \(R\), we get
 \(R(t) \le C t D(t)\exp(Ct).\)
Thus
\[
  \rho_t
  \bigl(
  \mathcal T Z,
  \mathcal T\widetilde Z
  \bigr)
  \le
  C t e^{Ct}
  \rho_t
  \bigl(
  Z,
  \widetilde Z
  \bigr).
\]
Choose \(\delta>0\) sufficiently small so that
 \(C\delta e^{C\delta}<1.\)
Then \(\mathcal T\) is a contraction on \([0,\delta]\). Banach's fixed point theorem gives a unique fixed point on \([0,\delta]\). Taking the conditional distribution induced by this fixed point at time \(\delta\) as the new initial condition, and repeating the same argument on
 \([\delta,2\delta],\quad [2\delta,3\delta],\quad \ldots,\)
we obtain a unique solution on \([0,T]\) after finitely many concatenations. The short-time contraction constant depends only on
 \(T,\ L_b,\ L_\lambda,\ \Lambda,\ \|\Gamma\|,\ \max_k \mathbb E|X_1^k(0)|^2,\)
and the upper bound on the diffusion coefficients, so the estimates remain uniform at every concatenation step.

Finally, we prove continuity and monotonicity of the limiting loss process. By the fixed point definition and the compensation formula for Poisson thinning,
\[
  L_t^k
  =
  \int_0^t
  \mathbb E
  \left[
  \lambda_k(\bar X^k(s),a_s^k)\bar\xi^k(s)
  \,\big|\,
  \mathcal F_s^{W^0}
  \right]ds .
\]
Since
 \(0\le \lambda_k(\bar X^k(s),a_s^k)\bar\xi^k(s) \le \Lambda,\)
the right-hand side is the time integral of a nonnegative bounded process. Hence \(L^k\) is absolutely continuous and therefore continuous. For any
\(0\le s\le t\le T\),
\[
  L_t^k-L_s^k
  =
  \int_s^t
  \mathbb E
  \left[
  \lambda_k(\bar X^k(r),a_r^k)\bar\xi^k(r)
  \,\big|\,
  \mathcal F_r^{W^0}
  \right]dr
  \ge 0 .
\]
Also,
 \(L_t^k-L_s^k\le \Lambda(t-s).\)
Thus \(L^k\) is continuous and nondecreasing. The finite-particle loss process
 \(L_t^{k,N} = \frac1{N_k}\sum_{i=1}^{N_k} \mathbf 1_{\{\tau_i^{k,N}\le t\}}\)
is the average of finitely many killing indicators, and is therefore a càdlàg nondecreasing step process. The proof is complete.
\end{proof}

\section{Discussion and conclusion}
\label{sec:discussion-conclusion}

The paper develops a controlled killed-contagion framework in which multi-population McKean--Vlasov limits, common noise, state-dependent killing, and dynamic programming can be handled in one system. The mean-field theorem gives the finite-particle foundation, and the quantitative rate explains the observed first-order particle scale in the convergence diagnostics. The controlled well-posedness result then supplies the state process required for the optimization problem.

The main control result is the two-population killed HJB comparison principle. Its state decomposition separates alive measures from cemetery masses, allowing the Wasserstein smooth-gauge comparison argument to be combined with a finite-dimensional treatment of the killing jump. The numerical HJB loop illustrates the corresponding marginal-value structure in a compact finite-dimensional projection. The resulting feedback is not a substitute for the measure-valued theorem, but it shows that the Hamiltonian structure generates an implementable intervention rule with direct forward risk reduction.

The steep-killing bridge clarifies how the bounded state-dependent killing model relates to absorbing-boundary contagion. The bounded killed model is the object used for well-posed control and HJB comparison, while the steep-intensity limit identifies the hard-default model selected under localized boundary regularity and absorbing fixed-point stability. Extending the comparison principle beyond two populations, weakening the nondegeneracy assumptions near the boundary, and deriving scalable feedback classes for high-dimensional heterogeneous systems are natural next steps.

\section*{Code and data availability}
\label{sec:code-data-availability}
Replication code is available at \url{https://github.com/Hopper-221/systemic_mkv}. The repository contains the forward simulation, finite-dimensional HJB feedback, and figure/table generation scripts used for the numerical experiments.

\newpage
\appendix
\section{Numerical discretization and parameter calibration}
\label{app:numerical-calibration}

This appendix records the finite-dimensional numerical procedure used only for the two-population HJB feedback diagnostic in Section~\ref{sec:hjb-numerics}. The HJB in Theorem~\ref{thm:two-pop-optimal-control-hjb-common-noise} is projected onto $(x_1,x_2,L_1,L_2)$ and solved by backward induction from $T$ to $0$. The first-order terms in the $x$ directions use central differences, the pure second-order diffusion terms use semi-implicit tridiagonal steps, and the common-noise cross term $\sigma_0^2\partial_{x_1x_2}^2V$ is retained explicitly. Controls are searched on a finite grid in $[0,\bar a]^2$. The killing contribution is implemented as a one-way semi-Lagrangian jump in the loss coordinates, using monotone piecewise-linear interpolation for $V(x_1,x_2,L_1+\Delta L_1,L_2)-V(x_1,x_2,L_1,L_2)$ and its type-2 analogue.

The $K=2$ experiments are calibrated to a near-critical region in which the uncontrolled system has material cascade risk, HJB feedback is nonzero, and HJB and rule-based controls have comparable cost magnitudes. The final calibration uses kill-intensity multiplier $25$, terminal-loss weight multiplier $6$, and running-cost multiplier $0.05$. The finite-difference computations were implemented as CPU-based NumPy/SciPy workloads under Python 3.12.7 with NumPy 1.26.4, SciPy 1.13.1, pandas 2.2.2, and Matplotlib 3.9.2.

\paragraph{Algorithm A.1 ($K=2$ finite-dimensional HJB feedback and forward validation).}
Set $V(T,x_1,x_2,L_1,L_2)=\Phi(L_1,L_2)$; apply backward induction with semi-implicit alive-state diffusion, explicit common-noise cross term, and one-way semi-Lagrangian loss jumps; minimize the discrete Hamiltonian over $[0,\bar a]^2$ at each grid point; interpolate the stored feedback in the forward $K=2$ particle system; and report $V_0$, initial control, adjacent-grid control-field changes, terminal tail loss, cascade probability, and intervention cost.

\section{Technical verification for the steep-killing bridge}
\label{app:steep-killing-primitive}

The main text states Proposition~\ref{prop:killing-to-absorbing-boundary} as the singular-limit bridge from bounded regularized killing to absorbing-boundary default. The proof uses a localized boundary-regularity package and a sequence of stability lemmas. Since these ingredients are technical and auxiliary to the killed-HJB comparison theorem, the detailed primitive verification is collected here.

\subsection{Primitive verification of the steep-killing bridge}
\label{subsec:primitive-steep-bridge}

This subsection gives a primitive verification of the steep-killing bridge. The argument derives eventual entrance of the steep-killing fixed points into a neighborhood of the absorbing fixed point, local stability of the absorbing fixed point, and uniform convergence of the driven steep-killing maps to the absorbing-boundary map from a localized regularity package: compact localization, a uniformly nondegenerate idiosyncratic diffusion coefficient near the boundary, uniform one-dimensional killed-diffusion estimates, and uniqueness of the absorbing-boundary McKean--Vlasov fixed point in the localized class.

Throughout, the common noise is retained. All estimates below may be read conditionally on the common-noise path and then integrated over the common noise. After conditioning on the common noise, the term \(\sigma_0 dW^0_t\) is a given continuous forcing path and does not provide conditional smoothing. Hence the nondegeneracy assumption is placed on the idiosyncratic coefficient \(\sigma_k\), not on \(\sigma_k^2+\sigma_0^2\).

\subsection*{State space, frozen maps, and metrics}

Let \(\mathsf E=\mathbb R\cup\{\partial\}\) be the alive--dead state space, where \(\partial\) is a cemetery point. Equip \(\mathsf E\) with any bounded metric \(d_{\partial}\) that agrees with \(|x-y|\wedge1\) on compact alive-state intervals and puts \(\partial\) at positive distance from the alive region. For finite measures on \(\mathsf E\), let \(d_{\mathrm{BL}}\) denote the bounded-Lipschitz distance associated with \(d_{\partial}\).

A measure--loss input is denoted by
 \(z=(\widehat\mu^1,\ldots,\widehat\mu^K,L^1,\ldots,L^K),\)
where \(\widehat\mu^k_t\) is a probability measure on \(\mathsf E\), its mass at \(\partial\) equals \(L^k_t\), and \(L^k\) is nondecreasing with values in \([0,1]\). For \(t\le T\), set
\[
\mathcal D_t(z,z'):=
  \sum_{k=1}^K
  \mathbb E\left[
  \sup_{r\le t}d_{\mathrm{BL}}(\widehat\mu^k_r,\widehat\mu^{\prime k}_r)
  +
  \sup_{r\le t}|L^k_r-L^{\prime k}_r|
  \right].
\]
The expectation includes the common noise. The full-horizon metric is \(\mathcal D_T\).

For a frozen input \(z\), the driven type-\(k\) state process is written as \(X^{k,z}\). In the localized formulation used below, it solves, up to the localization boundary,
\[
\begin{aligned}
  dX^{k,z}_t
  &=
  b_k\bigl(t,X^{k,z}_t,z_t,a^k_t\bigr)\,dt
  +\sigma_k\,dW^k_t+\sigma_0\,dW^0_t
  -\sum_{\ell=1}^K\Gamma_{k\ell}\,dL^\ell_t .
\end{aligned}
\]
The coefficients are frozen in \(z\); default is then generated from the driven path. Let
 \(\bar\lambda_k(a):=\lambda_{\mathrm{base},k}\exp\{-\eta_k a\}.\)
For \(n\ge1\), define
 \(\lambda_k^{(n)}(x,a) :=\bar\lambda_k(a)\exp\{n(x_{b,k}-x)^+\}.\)
Given a unit exponential random variable \(E_k\), independent of the Brownian noises and common noise, define the steep-killing clock
\[
  A_t^{k,n,z}:=
  \int_0^t
  \bar\lambda_k(a^k_s)
  \exp\{n(x_{b,k}-X^{k,z}_s)^+\}\,ds,
  \qquad
  \tau_k^{n,z}:=\inf\{t:A_t^{k,n,z}\ge E_k\}.
\]
The baseline Cox clock and absorbing hitting time are
\[
  A_t^{k,0}:=\int_0^t\bar\lambda_k(a^k_s)\,ds,
  \qquad
  \tau_k^{\mathrm{base}}:=\inf\{t:A_t^{k,0}\ge E_k\},
  \qquad
  H_k^z:=\inf\{t:X^{k,z}_t\le x_{b,k}\}.
\]
The absorbing-limit default time is
 \(\tau_k^{\mathrm{abs},z}:=\tau_k^{\mathrm{base}}\wedge H_k^z .\)
For \(\star\in\{n,\mathrm{abs}\}\), the map \(\Phi^\star\) sends \(z\) to the lifted conditional laws and losses generated by the same driven state path and the corresponding default time:
\[
  \widehat\mu^{k,\star,z}_t
  :=\mathcal L\bigl(X^{k,z}_t\mathbf 1_{\{\tau_k^{\star,z}>t\}}+
  \partial\mathbf 1_{\{\tau_k^{\star,z}\le t\}}\mid\mathcal F_t^{W^0}\bigr),
  \qquad
  L^{k,\star,z}_t:=\mathbb P(\tau_k^{\star,z}\le t\mid\mathcal F_t^{W^0}).
\]
Thus \(\Phi^n(z)=(\widehat\mu^{1,n,z},\ldots,L^{K,n,z})\) and \(\Phi^{\mathrm{abs}}(z)=(\widehat\mu^{1,\mathrm{abs},z},\ldots,L^{K,\mathrm{abs},z})\).

\subsection*{Primitive regularity package}

\begin{assumption}[Localized primitive regularity]
\label{ass:primitive-package}
The following conditions hold on \([0,T]\).

\begin{enumerate}
\item[\textup{(P1)}] \textbf{Compact localization.} For each type \(k\), alive states are localized in a compact interval \(I_k=[\underline x_k,\overline x_k]\), with \(x_{b,k}\in(\underline x_k,\overline x_k)\). Equivalently, the original unbounded model is stopped at a large compact interval and all estimates below are applied before the localization error is removed.

\item[\textup{(P2)}] \textbf{Bounded Lipschitz frozen coefficients.} The drift \(b_k(t,x,z_t,a)\) is bounded and Lipschitz in \((x,z_t)\), uniformly in \((t,a)\). The controls take values in a compact set. There are constants \(0<\underline\lambda\le\overline\lambda<\infty\) such that
 \(\underline\lambda\le \bar\lambda_k(a^k_t)\le\overline\lambda \qquad\text{for all }k,t.\)
The lower bound is satisfied by the exponential regulatory-control specification on any compact control set.

\item[\textup{(P3)}] \textbf{Conditional nondegeneracy.} The idiosyncratic volatility is uniformly positive near the boundary:
 \(\sigma_k\ge\underline\sigma>0, \qquad k=1,\ldots,K.\)
This condition is imposed on \(\sigma_k\), rather than on \(\sigma_k^2+\sigma_0^2\), because the arguments are conditional on the common noise.

\item[\textup{(P4)}] \textbf{Uniform killed-diffusion estimates.} For every compact class of frozen inputs satisfying the moment and modulus bounds used below, the one-dimensional diffusion killed at \(x_{b,k}\) has transition densities and survival probabilities satisfying the standard Aronson and boundary-gradient bounds, uniformly over that class. In particular, for bounded measurable \(f\), the killed semigroup \(P^{k,z}_{s,t}f(x):=\mathbb E_{s,x}[f(X^{k,z}_t)\mathbf 1_{\{H_k^z>t\}}]\) satisfies
 \(\|\partial_x P^{k,z}_{s,t}f\|_\infty \le C(t-s)^{-1/2}\|f\|_\infty, \qquad 0\le s<t\le T,\)
with a constant \(C\) independent of \(z\) in the class. The same estimates hold for the baseline-Cox killed semigroup, since the baseline killing rate is bounded.

\item[\textup{(P5)}] \textbf{Uniform absorbing no-atom modulus.} For each compact frozen-input class considered below, the absorbing default times have a uniform distributional modulus: there is a deterministic modulus \(\omega_{\mathrm{abs}}(h)\downarrow0\) as \(h\downarrow0\) such that
\[
  \sup_z\sup_{0\le t\le t+h\le T}
  \mathbb P\bigl(t<\tau_k^{\mathrm{abs},z}\le t+h\bigr)
  \le \omega_{\mathrm{abs}}(h).
\]
This condition is required only for the absorbing default time. No uniform-in-\(n\) no-atom modulus is imposed on \(\tau_k^{n,z}\), since the steep intensity has a Lipschitz and size constant that grows exponentially in \(n\) inside the localized interval. Compactness of the steep fixed-point loss paths will instead use monotonicity, Helly selection, and the continuity of the absorbing limiting loss path.

\item[\textup{(P6)}] \textbf{Absorbing fixed-point class.} The localized absorbing-boundary McKean--Vlasov map \(\Phi^{\mathrm{abs}}\) is considered on the compact class generated by (P1)--(P5). Its fixed-point uniqueness is not imposed as a separate black-box assumption; it is verified below from the Volterra stability estimate and a short-time contraction--concatenation argument.
\end{enumerate}
\end{assumption}

\begin{remark}[Why only the absorbing modulus is imposed]
The absorbing no-atom modulus in (P5) is the regularity input that controls continuity of the boundary-default map at the level of loss paths. This condition follows from one-dimensional nondegenerate killed-diffusion density estimates and bounded baseline Cox killing. It is imposed only on the absorbing default time and should not be required for \(\tau_k^{n,z}\) uniformly in \(n\): on a compact interval the bound on \(\lambda_k^{(n)}\) is of order \(\exp(Cn)\), so a direct density bound for the steep default time would degenerate with \(n\). The proof below avoids such a false uniform modulus. Uniform convergence of the maps is obtained from the post-hit occupation and clock-mismatch estimate, while compactness of the fixed-point loss paths uses Helly selection and the eventual identification of every limit as the continuous absorbing fixed point.
\end{remark}

\subsection*{Step 1: pointwise steep-clock convergence}

\begin{lemma}[Pointwise convergence of steep killing clocks]
\label{lem:pointwise-clock}
Assume (P1)--(P3). Fix a frozen input \(z\) and a type \(k\). If the driven path is nonsticky at \(x_{b,k}\), in the sense that on \(\{H_k^z<T\}\) it spends positive Lebesgue time below \(x_{b,k}\) on every right-neighborhood of \(H_k^z\), then
 \(\tau_k^{n,z}\downarrow \tau_k^{\mathrm{abs},z} \qquad\text{a.s.}\)
Consequently,
\[
  \mathbb E\sup_{t\le T}
  \left|\mathbf 1_{\{\tau_k^{n,z}\le t\}}-
  \mathbf 1_{\{\tau_k^{\mathrm{abs},z}\le t\}}\right|
  \longrightarrow0.
\]
\end{lemma}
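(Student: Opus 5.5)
The plan is to realize all clocks on one probability space: same driven path $X^{k,z}$, same unit exponential $E_k$. I then compare the three additive functionals $A^{k,0}$, $A^{k,n,z}$ and the hitting time $H_k^z$ path by path. Since $\exp\{n(x_{b,k}-x)^+\}\ge1$ and is nondecreasing in $n$, we have $A^{k,0}_t\le A^{k,n,z}_t\le A^{k,n+1,z}_t$ for all $t$. Hence $\tau_k^{n,z}$ is nonincreasing in $n$ and $\tau_k^{n,z}\le\tau_k^{\mathrm{base}}$.

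\emph{Lower bound.} On $[0,H_k^z]$ the path satisfies $X^{k,z}\ge x_{b,k}$, so $A^{k,n,z}_t=A^{k,0}_t$ there. This gives $\tau_k^{n,z}\ge\tau_k^{\mathrm{base}}\wedge H_k^z=\tau_k^{\mathrm{abs},z}$. If $\tau_k^{\mathrm{base}}<H_k^z$, then $\tau_k^{n,z}=\tau_k^{\mathrm{base}}$ for every $n$ and there is nothing more to show.

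\emph{Upper bound on $\{H_k^z\le\tau_k^{\mathrm{base}}\}$.} Fix $h>0$. By the nonstickiness hypothesis, the set $\{s\in(H_k^z,H_k^z+h):X^{k,z}_s<x_{b,k}\}$ has positive Lebesgue measure. Continuity of paths yields $\epsilon>0$ and a set of positive measure on which $X^{k,z}_s\le x_{b,k}-\epsilon$. Using (P2), the increment of the steep clock over this window is at least $\underline\lambda\, e^{n\epsilon}\cdot(\text{positive measure})$, which tends to $\infty$. Hence $\tau_k^{n,z}\le H_k^z+h$ for all large $n$. Monotonicity then gives $\tau_k^{n,z}\downarrow\tau_k^{\mathrm{abs},z}$ a.s. (on $\{H_k^z\ge T\}$ one only needs the values truncated at $T$). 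Nonstickiness holds a.s. by (P3) and the strong Markov property at $H_k^z$: Brownian oscillation immediately after a hitting time, with bounded drift and a continuous finite-variation loss forcing via Girsanov, spends positive time below the level.

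\emph{The consequence.} Write $D_n:=\sup_{t\le T}\bigl|\mathbf 1_{\{\tau_k^{n,z}\le t\}}-\mathbf 1_{\{\tau_k^{\mathrm{abs},z}\le t\}}\bigr|$. Since $\tau_k^{n,z}\ge\tau_k^{\mathrm{abs},z}$, the two indicators differ exactly when $t\in[\tau_k^{\mathrm{abs},z},\tau_k^{n,z})$, so
\[
D_n=\mathbf 1_{\{\tau_k^{\mathrm{abs},z}\le T,\ \tau_k^{n,z}>\tau_k^{\mathrm{abs},z}\}}.
\]
The argument would then conclude by bounded convergence, provided $D_n\to0$ a.s. That in turn needs $\tau_k^{n,z}=\tau_k^{\mathrm{abs},z}$ eventually, not merely convergence from above.

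This last step is the main obstacle, and I expect it to fail as written. On $\{H_k^z\le\tau_k^{\mathrm{base}}\wedge T\}$ the steep clock at time $H_k^z$ equals $A^{k,0}_{H_k^z}<E_k$ a.s. (no atoms of $E_k$). Since the clock is continuous, $\tau_k^{n,z}>H_k^z$ strictly for every $n$. Thus $D_n=1$ on that event and
\[
\mathbb E D_n\to\mathbb P\bigl(H_k^z\le\tau_k^{\mathrm{base}}\wedge T\bigr),
\]
which is generally positive.

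I would therefore first try to rescue the stated uniform bound by showing that the overshoot $\tau_k^{n,z}-H_k^z\to0$ in probability, which the argument above does give. This controls $\sup_{t\le T}$ of the indicator difference only up to the window $[H_k^z,\tau_k^{n,z})$. If no reformulation removes that window, the stated $\mathbb E\sup_t$ consequence has to be flagged as requiring correction in the paper. The a.s. convergence $\tau_k^{n,z}\downarrow\tau_k^{\mathrm{abs},z}$ and the pointwise-in-$t$ convergence $\mathbb E\bigl|\mathbf 1_{\{\tau_k^{n,z}\le t\}}-\mathbf 1_{\{\tau_k^{\mathrm{abs},z}\le t\}}\bigr|\to0$ are the parts of the lemma that this plan proves.
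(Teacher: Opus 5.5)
Your proof of the almost-sure monotone convergence follows the paper's own argument. It uses monotonicity of $A^{k,n,z}_t$ in $n$, and the sandwich $\tau_k^{\mathrm{abs},z}\le\tau_k^{n,z}\le\tau_k^{\mathrm{base}}$, which comes from the hazards agreeing before $H_k^z$. It then uses divergence of the steep clock over a positive-measure set where the path lies below $x_{b,k}-\varepsilon$. Two small corrections:
\begin{itemize}
\item That $\varepsilon$ comes from countable additivity along $\{X^{k,z}<x_{b,k}\}=\bigcup_m\{X^{k,z}\le x_{b,k}-1/m\}$, not from path continuity.
\item Your aside that nonstickiness holds a.s.\ is unnecessary, since it is a hypothesis. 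A Girsanov argument would also not cover a general nondecreasing frozen loss input, which may be singular or jump.
\end{itemize}

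Your objection to the ``Consequently'' part is correct, and it identifies exactly where the paper's proof fails. The paper identifies the pathwise supremum with $\mathbf 1_{\{\tau_k^{\mathrm{abs},z}<\tau_k^{n,z}\}}$ and asserts that this indicator ``decreases to zero''. That would require $\tau_k^{n,z}=\tau_k^{\mathrm{abs},z}$ eventually, but the argument only gives convergence from above. On $\{H_k^z<\tau_k^{\mathrm{base}},\ H_k^z\le T\}$ you have $A^{k,n,z}_{H_k^z}=A^{k,0}_{H_k^z}<E_k$. Moreover, $t\mapsto A^{k,n,z}_t$ is continuous, because its integrand is bounded for each fixed $n$ under (P1). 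Hence $\tau_k^{n,z}>H_k^z$ for every $n$. Since $\mathbb P(E_k>A^{k,0}_{H_k^z}\mid X^{k,z})\ge e^{-\overline\lambda T}$ there, the expectation stays above $e^{-\overline\lambda T}\,\mathbb P(H_k^z<T)$. So the stated conclusion is false whenever $\mathbb P(H_k^z<T)>0$.

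The same non sequitur recurs in Lemma~\ref{lem:uniform-clock-mismatch}. Its proof bounds $\mathbb P(\tau_k^{n,z}>H_k^z+h)$ but then concludes a bound on $\mathbb P(\tau_k^{\mathrm{abs},z}<\tau_k^{n,z})$. You are right that no pathwise reformulation can remove the window $[H_k^z,\tau_k^{n,z})$.

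The repair should be distributional, and that is all Proposition~\ref{prop:uniform-map-convergence} actually needs. Its first display involves only $\mathbb P(\tau_k^{\mathrm{abs},z}\le t<\tau_k^{n,z}\mid\mathcal F_t^{W^0})$; the pathwise supremum enters only as a needlessly crude upper bound. For $h>0$,
\[
  \mathbb P\bigl(\tau_k^{\mathrm{abs},z}\le t<\tau_k^{n,z}\bigr)
  \le
  \mathbb P\bigl(t-h<\tau_k^{\mathrm{abs},z}\le t\bigr)
  +\mathbb P\bigl(\tau_k^{n,z}>\tau_k^{\mathrm{abs},z}+h\bigr).
\]
The first term is at most $\omega_{\mathrm{abs}}(h)$ uniformly in $t$ by (P5), read conditionally on the common noise as the appendix does. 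This needs the initial law not to charge $(-\infty,x_{b,k}]$; otherwise $\tau_k^{\mathrm{abs},z}$ has an atom at $0$ that no steep clock matches. The second term tends to zero by your overshoot convergence.

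The same point qualifies your fallback. The pointwise-in-$t$ limit of $\mathbb P(\tau_k^{\mathrm{abs},z}\le t<\tau_k^{n,z})$ equals $\mathbb P(H_k^z=t<\tau_k^{\mathrm{base}})$. This vanishes only if $H_k^z$ has no atom at $t$, which (P1)--(P3) alone do not guarantee, for example when the frozen loss input jumps at $t$.
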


\begin{proof}
The functions \(x\mapsto \exp\{n(x_{b,k}-x)^+\}\) increase pointwise in \(n\), hence \(A_t^{k,n,z}\) increases in \(n\) for every \(t\), and \(\tau_k^{n,z}\) decreases in \(n\). Also \(A_t^{k,n,z}\ge A_t^{k,0}\), so \(\tau_k^{n,z}\le \tau_k^{\mathrm{base}}\). The cumulative hazards agree before the boundary is reached: \(A_t^{k,n,z}=A_t^{k,0}\) for every \(t<H_k^z\), since \((x_{b,k}-X_s^{k,z})^+=0\) whenever \(X_s^{k,z}>x_{b,k}\). Therefore no steep clock can default before \(\tau_k^{\mathrm{base}}\wedge H_k^z\). Thus
 \(\tau_k^{\mathrm{abs},z}\le \tau_k^{n,z}\le\tau_k^{\mathrm{base}}.\)
If \(\tau_k^{\mathrm{base}}<H_k^z\), then the baseline clock rings before the path reaches the boundary, and all clocks give the same default time. If \(H_k^z<\tau_k^{\mathrm{base}}\), then after \(H_k^z\) the path spends positive Lebesgue time below the boundary on every interval \([H_k^z,H_k^z+\delta]\). Since \(\bar\lambda_k\ge\underline\lambda>0\), for every \(\delta>0\),
\[
  \int_{H_k^z}^{(H_k^z+\delta)\wedge T}
  \bar\lambda_k(a^k_s)
  \exp\{n(x_{b,k}-X^{k,z}_s)^+\}\,ds
  \longrightarrow \infty
\]
by monotone convergence on a set of positive Lebesgue measure where \(X^{k,z}_s<x_{b,k}\). Hence the steep clock rings before \(H_k^z+\delta\) for all sufficiently large \(n\). Letting \(\delta\downarrow0\) gives \(\tau_k^{n,z}\downarrow H_k^z=\tau_k^{\mathrm{abs},z}\) on this event. The remaining case \(H_k^z=\tau_k^{\mathrm{base}}\) is negligible under the no-atom condition; even without removing it, the monotone bounds force the same limit. Finally,
\[
  \sup_{t\le T}
  \left|\mathbf 1_{\{\tau_k^{n,z}\le t\}}-
  \mathbf 1_{\{\tau_k^{\mathrm{abs},z}\le t\}}\right|
  =\mathbf 1_{\{\tau_k^{\mathrm{abs},z}<\tau_k^{n,z}\}},
\]
and the right-hand side decreases to zero. Dominated convergence completes the proof.
\end{proof}

\subsection*{Step 2: uniform steep-limit continuity of the driven maps}

\begin{lemma}[Uniform post-hit occupation]
\label{lem:uniform-posthit-occupation}
Let \(\mathcal K\) be a compact frozen-input class satisfying Assumption~\ref{ass:primitive-package}. For each type \(k\), each \(h>0\), and each \(\varepsilon>0\), define
\[
  O^{k,z}_{h,\varepsilon}
  :=\int_{H_k^z}^{(H_k^z+h)\wedge T}
  \mathbf 1_{\{X_s^{k,z}\le x_{b,k}-\varepsilon\}}\,ds .
\]
Then, for every \(\eta>0\), there exist \(h>0\), \(\varepsilon>0\), and \(q>0\) such that
\[
  \sup_{z\in\mathcal K}
  \mathbb P\bigl(H_k^z<T-h,\ O^{k,z}_{h,\varepsilon}<q\bigr)
  \le \eta .
\]
\end{lemma}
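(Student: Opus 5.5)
We plan to prove the lemma with the strong Markov property at the hitting time \(H_k^z\), followed by a uniform small-time estimate for a nondegenerate one-dimensional diffusion that starts exactly at the boundary. Since \(H_k^z\) is a stopping time for the filtration generated by the idiosyncratic and common noises, we condition on \(\mathcal F_{H_k^z}\) on the event \(\{H_k^z<T-h\}\). At that time \(X^{k,z}_{H_k^z}=x_{b,k}\), because the path is continuous away from loss jumps, and the loss inputs \(L^\ell\) in the compact class are continuous. The post-hit increment then has the form
\[
X^{k,z}_{H+s}-x_{b,k}=\int_H^{H+s}b_k\,dr+\sigma_k(W^k_{H+s}-W^k_H)+\sigma_0(W^0_{H+s}-W^0_H)-\sum_\ell\Gamma_{k\ell}(L^\ell_{H+s}-L^\ell_H).
\]
The drift is bounded by (P2), so its contribution is at most \(\|b\|_\infty h\). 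The contagion term is nonincreasing in \(s\), since \(\Gamma\ge0\) and \(L\) is nondecreasing. It therefore only pushes the path downward, and we may drop it to get a lower bound on the time spent below the boundary. The common-noise term is independent of \(W^k\). We would handle it either by treating \(\sigma_0 W^0+\sigma_k W^k\) as a single Brownian motion with variance \(\sigma_k^2+\sigma_0^2\ge\underline\sigma^2\), which is legitimate because we bound an unconditional probability, or by conditioning on \(W^0\) and using only the \(\sigma_k W^k\) part. The second route matches the conditional spirit of (P3).

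Set \(B_s:=\sigma_k(W^k_{H+s}-W^k_H)\). By the strong Markov property, \(B\) is a Brownian motion with variance \(\sigma_k^2\ge\underline\sigma^2\), independent of \(\mathcal F_H\) and of \(W^0\). On the event that the increment \(\sigma_0(W^0_{H+s}-W^0_H)\) stays below \(\varepsilon\) for \(s\le h\), we have
\[
\{s\le h:\ B_s\le -2\varepsilon-\|b\|_\infty h\}\subset\{s\le h:\ X^{k,z}_{H+s}\le x_{b,k}-\varepsilon\}.
\]
The probability of the bad event is controlled by three pieces:
\[
\mathbb P\bigl(H<T-h,\ O<q\bigr)\le \mathbb P\Bigl(\sup_{s\le h}\sigma_0|W^0_{H+s}-W^0_H|\ge\varepsilon\Bigr)+\mathbb P\Bigl(\int_0^h\mathbf 1_{\{B_s\le-2\varepsilon-\|b\|_\infty h\}}ds<q\Bigr).
\]
Each piece depends only on Brownian laws and the constants \(\|b\|_\infty,\underline\sigma,\sigma_0\), so the bound is automatically uniform in \(z\in\mathcal K\). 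This uniformity is the reason for working conditionally at \(H\) rather than using (P4).

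The parameters are chosen in a fixed order. First fix \(h\) small. By Brownian scaling, \(\int_0^h\mathbf 1_{\{B_s\le -c\}}ds\) has the law of \(h\int_0^1\mathbf 1_{\{\beta_u\le -c/(\sigma_k\sqrt h)\}}du\). When \(c=2\varepsilon+\|b\|_\infty h\) is of order \(\sqrt h\cdot\theta\) with \(\theta\) small, this law is close to that of the time \(\beta\) spends below \(-\theta\) on \([0,1]\). That quantity is a.s. positive, because Brownian motion started at \(0\) immediately visits negative values and spends positive time below any sufficiently small negative level. So we choose \(\theta\) and then \(q\) so that \(\mathbb P(\int_0^1\mathbf 1_{\{\beta_u\le-\theta\}}du<q/h)\le\eta/3\). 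The bounded-drift contribution \(\|b\|_\infty h\ll\theta\sqrt h\) holds for \(h\) small. Finally choose \(\varepsilon\le\theta\sqrt h/4\) small enough that the common-noise sup term is at most \(\eta/3\); for fixed \(h\) this probability vanishes as \(\varepsilon\to\varepsilon\) shrinks only if \(h\) is also small. The term must therefore be balanced by choosing \(h\) so that \(\sigma_0\sqrt h\ll\theta\sqrt h\) fails in general.

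\textbf{Main obstacle.} The main obstacle is exactly this competition between the common noise and the idiosyncratic noise: both fluctuate on the same \(\sqrt h\) scale. To resolve it, we would not try to make the common-noise increment small. Instead we condition on \(W^0\) and use the independence of \(W^k\) from \(W^0\). For each fixed common path, we lower-bound the probability that \(\sigma_k W^k\) stays below \(-K\sqrt h\) for a fixed fraction of \([0,h]\), where \(K\) exceeds a high quantile of \(\sup_{s\le h}|\sigma_0 W^0_s|/\sqrt h\). Excluding the common paths above that quantile costs at most \(\eta/2\). By scaling, all of these quantities are independent of \(h\), which yields the uniform \(h,\varepsilon,q\).

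The remaining care point is the localization boundary from (P1). Since \(x_{b,k}\) lies strictly inside \(I_k\), we choose \(h\) small enough that exit from \(I_k\) within \(h\) after \(H\) has probability at most \(\eta/3\). This again follows from a uniform Brownian sup bound together with bounded drift and the bounded total loss jump \(\|\Gamma\|\).
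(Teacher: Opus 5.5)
\textbf{The route you commit to in your ``Main obstacle'' paragraph does not close.}

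You control the common increment only through its size, $\sup_{s\le h}\sigma_0|W^0_{H+s}-W^0_H|\le K\sqrt h$, with $K$ a high quantile so that excluding the remaining common paths costs $\eta/2$. The good event then forces the independent idiosyncratic increment $\sigma_k(W^k_{H+s}-W^k_H)$ to reach $-K\sqrt h$ somewhere in $[0,h]$. By the reflection principle this has probability at most $2\,\mathbb P(N(0,1)\ge K/\sigma_k)$. By scaling this number does not depend on $h$, and it is small precisely because $K$ was chosen large.

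So the bound this route yields for $\mathbb P(H_k^z<T-h,\ O^{k,z}_{h,\varepsilon}<q)$ is at least $\eta/2+1-2\,\mathbb P(N(0,1)\ge K/\sigma_k)$. That is close to $1$, not $\le\eta$: a positive lower bound on the success probability is not the required $1-\eta$. Treating $W^0$ only through its supremum throws away the fact you actually need, namely that $W^0$ itself oscillates after $H$. The conditional argument therefore closes only when $\sigma_0=0$. Your first attempt, which makes the common increment smaller than $\varepsilon$, fails for the $\sqrt h$-scale reason you already identified.

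\textbf{The repair is the option you raised and then set aside.} The lemma bounds an unconditional probability, and it is used unconditionally in Lemma~\ref{lem:uniform-clock-mismatch}.

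\begin{enumerate}
\item Set $\varsigma:=(\sigma_k^2+\sigma_0^2)^{1/2}$ and $\tilde\beta_s:=\varsigma^{-1}[\sigma_k(W^k_{H+s}-W^k_H)+\sigma_0(W^0_{H+s}-W^0_H)]$. By the strong Markov property of $(W^k,W^0)$ at $H$, this is a standard Brownian motion independent of $\mathcal F_H$.
\item You only need $X_H\le x_{b,k}$. This holds even if the inputs $L^\ell$ jump, since $\Gamma\ge0$ makes every jump downward, so continuity of $L^\ell$ is not required.
\item Using the bounded drift and the nonpositive contagion term, on $\{H<T-h\}$ you get $O_{h,\varepsilon}\ge\int_0^h\mathbf 1_{\{\varsigma\tilde\beta_s\le-\varepsilon-\|b\|_\infty h\}}\,ds$.
\item With $\beta_u:=h^{-1/2}\tilde\beta_{hu}$, this integral is at least $hA_\theta$, where $A_\theta:=\int_0^1\mathbf 1_{\{\beta_u\le-\theta\}}\,du$, as soon as $\varepsilon=\theta\varsigma\sqrt h/2$ and $\|b\|_\infty h\le\theta\varsigma\sqrt h/2$.
\item As $\theta\downarrow0$, $A_\theta$ increases to the arcsine-distributed occupation time $A_0$ of $(-\infty,0)$, which is a.s.\ positive. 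Hence $\lim_{q'\downarrow0}\lim_{\theta\downarrow0}\mathbb P(A_\theta<q')=\mathbb P(A_0=0)=0$.
\item Choose $q'$ and $\theta$ from this limit, then $h$ small, then $\varepsilon$ as above, and finally $q=q'h$.
\end{enumerate}

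Only $\|b\|_\infty,\sigma_k,\sigma_0$ enter, so uniformity over $\mathcal K$ is automatic, and your handling of (P1) carries over.

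\textbf{Comparison with the paper.} Repaired this way, your proof is a genuinely different and more elementary route. The paper conditions on the common-noise path and argues by contradiction, using compactness of $\mathcal K$, the boundary estimates in (P4), stability of hitting times, and the no-atom modulus (P5). Your argument needs none of these. The price is that it proves only the unconditional bound, but that is exactly what the lemma states.
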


\begin{proof}
Condition on the common-noise path. On the compact localized state interval, the drift and finite-variation forcing are uniformly bounded in the frozen-input class, and \(\sigma_k\ge\underline\sigma>0\). The boundary process therefore has the standard one-dimensional regular-boundary property uniformly over \(\mathcal K\): after the first entrance into \((-\infty,x_{b,k}]\), the diffusion visits the strict interior \((-\infty,x_{b,k}-\varepsilon]\) during every right-neighborhood with probability tending to one as \(\varepsilon\downarrow0\). The uniform version follows from the boundary Harnack and killed-density estimates in (P4), together with compactness of \(\mathcal K\). If the displayed quantitative assertion failed, one could find \(z_m\in\mathcal K\), \(h_m\downarrow0\), and \(\varepsilon_m\downarrow0\) for which the post-hit occupation probability is bounded away from one. Compactness gives a subsequential limit \(z_m\to z\). Stability of the driven SDE gives \(X^{k,z_m}\to X^{k,z}\) uniformly on \([0,T]\) in probability. The regular-boundary property gives the standard hitting-time stability criterion for continuous one-dimensional paths: for every \(\delta>0\), survival strictly before \(H_k^z-\delta\) and immediate entrance below the boundary after \(H_k^z\), together with uniform path convergence, imply \(\mathbb P(H_k^{z_m}<H_k^z-\delta)+\mathbb P(H_k^{z_m}>H_k^z+\delta)\to0\). The absorbing no-atom modulus in (P5) removes the endpoint ambiguity uniformly. Hence \(H_k^{z_m}\to H_k^z\) in probability. Passing to the limit contradicts the regular-boundary property of the nondegenerate one-dimensional diffusion driven by \(z\). This proves the uniform occupation estimate.
\end{proof}

\begin{lemma}[Uniform clock mismatch]
\label{lem:uniform-clock-mismatch}
Let \(\mathcal K\) be a compact frozen-input class satisfying Assumption~\ref{ass:primitive-package}. Then, for every type \(k\),
\[
  \lim_{n\to\infty}
  \sup_{z\in\mathcal K}
  \mathbb E\sup_{t\le T}
  \left|\mathbf 1_{\{\tau_k^{n,z}\le t\}}-
  \mathbf 1_{\{\tau_k^{\mathrm{abs},z}\le t\}}\right|=0 .
\]
\end{lemma}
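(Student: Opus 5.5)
The plan is to prove the uniform statement by upgrading the pointwise argument of Lemma~\ref{lem:pointwise-clock} with the uniform occupation bound of Lemma~\ref{lem:uniform-posthit-occupation}. I flag at the outset that the third step below may not go through as the statement is worded.

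\textbf{Step 1 (reduction to one event).} The sandwich from the proof of Lemma~\ref{lem:pointwise-clock} gives \(\tau_k^{\mathrm{abs},z}\le\tau_k^{n,z}\le\tau_k^{\mathrm{base}}\). Hence, for every \(z\in\mathcal K\),
\[
  \mathbb E\sup_{t\le T}\bigl|\mathbf 1_{\{\tau_k^{n,z}\le t\}}-\mathbf 1_{\{\tau_k^{\mathrm{abs},z}\le t\}}\bigr|
  =\mathbb P\bigl(\tau_k^{\mathrm{abs},z}\le T,\ \tau_k^{\mathrm{abs},z}<\tau_k^{n,z}\bigr).
\]
Before the boundary hit the steep and baseline clocks coincide, so this event is contained in \(\{H_k^z\le T,\ H_k^z<\tau_k^{\mathrm{base}}\}\).

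\textbf{Step 2 (uniform time displacement).} Fix \(\eta>0\). Choose \(h,\varepsilon,q\) from Lemma~\ref{lem:uniform-posthit-occupation}. On the event \(\{H_k^z<T-h,\ O^{k,z}_{h,\varepsilon}\ge q\}\), assumption (P2) gives
\[
  A^{k,n,z}_{H+h}-A^{k,n,z}_{H}\ \ge\ \underline\lambda\, q\, e^{n\varepsilon}.
\]
Since \(E_k\) is a unit exponential independent of the driven path, the steep clock fails to ring in \([H_k^z,H_k^z+h]\) with conditional probability at most \(\exp(-\underline\lambda q e^{n\varepsilon})\). This bound is uniform in \(z\in\mathcal K\). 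The event \(\{H_k^z\in[T-h,T]\}\) is handled by the absorbing no-atom modulus (P5), and contributes at most \(\omega_{\mathrm{abs}}(h)\). Together these give
\[
  \limsup_{n\to\infty}\sup_{z\in\mathcal K}\mathbb P\bigl(\tau_k^{n,z}>\tau_k^{\mathrm{abs},z}+h,\ \tau_k^{\mathrm{abs},z}\le T\bigr)\le \eta+\omega_{\mathrm{abs}}(h).
\]

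\textbf{Step 3 (closing the window; expected main obstacle).} It remains to control the residual event \(\{\tau_k^{\mathrm{abs},z}<\tau_k^{n,z}\le\tau_k^{\mathrm{abs},z}+h\}\) uniformly in \(z\). This is where I expect the argument to break. After the hit, the steep clock \(A^{k,n,z}\) is continuous in time, so it needs positive time to accumulate mass \(E_k\). Consequently, on \(\{H_k^z\le T,\ H_k^z<\tau_k^{\mathrm{base}}\}\) one has \(\tau_k^{n,z}>H_k^z\) almost surely for every fixed \(n\). The residual probability therefore converges to \(\mathbb P(H_k^z\le T,\ H_k^z<\tau_k^{\mathrm{base}})\), not to zero, and the indicator supremum does not vanish.

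What I would try first is to shrink the window: replace the fixed \(h\) by \(h_n\downarrow 0\) chosen so that \(\underline\lambda\, q(h_n)\, e^{n\varepsilon(h_n)}\to\infty\). This makes the time displacement vanish in probability, uniformly over \(\mathcal K\). It does not shrink the indicator mismatch \(\mathbf 1_{\{\tau_k^{\mathrm{abs},z}<\tau_k^{n,z}\}}\), however.

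If this route fails, the conclusion that Steps 1--2 actually support is a weaker one. Namely,
\[
  \sup_{z}\mathbb E\,\bigl|\tau^{n,z}_k\wedge T-\tau^{\mathrm{abs},z}_k\wedge T\bigr|\to0 .
\]
Combined with (P5), this yields \(\sup_{t}\mathbb E\bigl|\mathbf 1_{\{\tau_k^{n,z}\le t\}}-\mathbf 1_{\{\tau_k^{\mathrm{abs},z}\le t\}}\bigr|\to 0\) uniformly in \(z\). That weaker form is what the subsequent map-convergence step in \(\mathcal D_T\) really needs. I would record it in place of the \(\mathbb E\sup_{t\le T}\) form as stated.
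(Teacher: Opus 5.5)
Your diagnosis is correct: the statement cannot be proved as written, and the paper's own proof breaks at exactly your Step~3. Your Steps~1--2 reproduce that proof: the sandwich, the uniform post-hit occupation from Lemma~\ref{lem:uniform-posthit-occupation}, the accumulated hazard $\underline\lambda\,q\,e^{n\varepsilon}$ on $[H_k^z,H_k^z+h]$, and (P5) for hits near $T$. The paper then writes ``Hence $\sup_{z}\mathbb P(\tau_k^{\mathrm{abs},z}<\tau_k^{n,z})\le C\eta+\exp\{-\underline\lambda qe^{n\varepsilon}\}$''. Its estimate, however, only shows that the clock has rung by $H_k^z+h$, i.e.\ it bounds your event $\{\tau_k^{n,z}>\tau_k^{\mathrm{abs},z}+h\}$.

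Your obstruction is in fact exact. Before $H_k^z$ the steep and baseline clocks have the same integrand, so $A^{k,n,z}_{H_k^z}=A^{k,0}_{H_k^z}<E_k$ on $\{H_k^z<\tau_k^{\mathrm{base}}\}$. Since $t\mapsto A^{k,n,z}_t$ is continuous for each fixed $n$, this gives $\tau_k^{n,z}>H_k^z=\tau_k^{\mathrm{abs},z}$ on that event. On $\{\tau_k^{\mathrm{base}}<H_k^z\}$ all clocks ring together. Thus, for every $n$,
\[
  \mathbb E\sup_{t\le T}\bigl|\mathbf 1_{\{\tau_k^{n,z}\le t\}}-\mathbf 1_{\{\tau_k^{\mathrm{abs},z}\le t\}}\bigr|
  =\mathbb P\bigl(H_k^z\le T,\ H_k^z<\tau_k^{\mathrm{base}}\bigr)
  \ge e^{-\overline\lambda T}\,\mathbb P\bigl(H_k^z\le T\bigr).
\]
This does not depend on $n$. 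It is strictly positive whenever the driven path reaches $x_{b,k}$ before $T$ with positive probability, which (P3) ensures in any nontrivial class. So the lemma is false, and no choice of shrinking windows $h_n$ can close Step~3. The same error occurs in the last sentence of the proof of Lemma~\ref{lem:pointwise-clock}: $\tau_k^{n,z}\downarrow\tau_k^{\mathrm{abs},z}$ does not make $\mathbf 1_{\{\tau_k^{\mathrm{abs},z}<\tau_k^{n,z}\}}$ decrease to zero.

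Your replacement is the right statement, and Steps~1--2 prove it. This requires Lemma~\ref{lem:uniform-posthit-occupation} to hold for arbitrarily small $h$, which the paper's proof also assumes (``after decreasing $h$''). For every $h>0$,
\[
  \sup_{t\le T}\mathbb P\bigl(\tau_k^{\mathrm{abs},z}\le t<\tau_k^{n,z}\bigr)
  \le \mathbb P\bigl(\tau_k^{\mathrm{abs},z}\le T,\ \tau_k^{n,z}>\tau_k^{\mathrm{abs},z}+h\bigr)+\omega_{\mathrm{abs}}(h),
\]
and $\mathbb E|\tau_k^{n,z}\wedge T-\tau_k^{\mathrm{abs},z}\wedge T|\le h+T\,\mathbb P(\tau_k^{\mathrm{abs},z}\le T,\ \tau_k^{n,z}>\tau_k^{\mathrm{abs},z}+h)$. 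This bound, not the $\mathbb E\sup_t$ bound, is what Proposition~\ref{prop:uniform-map-convergence} should rest on.

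One refinement is needed when $\sigma_0>0$. The type-$k$ loss part of $\mathcal D_T$ is $\mathbb E\sup_{t\le T}\mathbb P(\tau_k^{\mathrm{abs},z}\le t<\tau_k^{n,z}\mid\mathcal F_t^{W^0})$. This lies between your $\sup_t\mathbb E$ form and the false $\mathbb E\sup_t$ form. To handle it, apply the displayed event inclusion conditionally on the common-noise path before taking the supremum in $t$. The first term then integrates back to your Step~2 bound. Only the no-atom term remains, and it requires control of $\mathbb E\sup_t\mathbb P(t-h<\tau_k^{\mathrm{abs},z}\le t\mid\mathcal F_T^{W^0})$ uniformly in $z$. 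The unconditional modulus in (P5) bounds only $\sup_t\mathbb E$ of this quantity. You therefore need (P5) in its conditional form, which the appendix's convention of reading all estimates conditionally on $W^0$ is meant to supply.
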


\begin{proof}
Since \(\tau_k^{\mathrm{abs},z}\le\tau_k^{n,z}\), the supremum of the two indicator processes equals
 \(\mathbf 1_{\{\tau_k^{\mathrm{abs},z}<\tau_k^{n,z}\}} .\)
Fix \(\eta>0\). Choose \(h>0\) so small that, uniformly in \(z\in\mathcal K\), the probability that \(\tau_k^{\mathrm{abs},z}\) lies within distance \(h\) of \(T\) is at most \(\eta\), using the uniform no-atom modulus in (P5). Decrease \(h\) further so that \(\mathbb P(|\tau_k^{\mathrm{base}}-H_k^z|\le h)\le\eta\) uniformly in \(z\). This second bound follows conditionally on the driven path from the independent exponential clock and the bounded baseline hazard: \(\mathbb P(|\tau_k^{\mathrm{base}}-H_k^z|\le h\mid X^{k,z})\le 2\overline\lambda h\). By Lemma~\ref{lem:uniform-posthit-occupation}, after decreasing \(h\) if necessary, choose \(\varepsilon>0\) and \(q>0\) such that
\[
  \sup_{z\in\mathcal K}
  \mathbb P\bigl(H_k^z<T-h,
  \int_{H_k^z}^{H_k^z+h}
  \mathbf 1_{\{X_s^{k,z}\le x_{b,k}-\varepsilon\}}ds<q\bigr)
  \le\eta .
\]
On the complementary high-probability event, if the absorbing default is caused by boundary hitting before the baseline Cox clock, then during \([H_k^z,H_k^z+h]\) the steep hazard accumulated below the strict boundary layer is bounded below by
 \(\underline\lambda\, q\, e^{n\varepsilon} .\)
Thus the conditional probability that the same exponential clock has not rung by \(H_k^z+h\) is at most
 \(\exp\{-\underline\lambda q e^{n\varepsilon}\} .\)
If the absorbing default is caused by the baseline Cox clock at least \(h\) before the hitting time, all steep clocks default at the same baseline time. The remaining exceptional cases have probability at most a constant multiple of \(\eta\). Hence
\[
  \sup_{z\in\mathcal K}
  \mathbb P(\tau_k^{\mathrm{abs},z}<\tau_k^{n,z})
  \le C\eta+
  \exp\{-\underline\lambda q e^{n\varepsilon}\} .
\]
Letting first \(n\to\infty\) and then \(\eta\downarrow0\) proves the claim.
\end{proof}

\begin{proposition}[Uniform steep-limit continuity on compact input classes]
\label{prop:uniform-map-convergence}
Let \(\mathcal K\) be a compact frozen-input class satisfying Assumption~\ref{ass:primitive-package}. Then
\[
  \lim_{n\to\infty}
  \sup_{z\in\mathcal K}
  \mathcal D_T\bigl(\Phi^n(z),\Phi^{\mathrm{abs}}(z)\bigr)=0.
\]
\end{proposition}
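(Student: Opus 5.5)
The plan is to reduce the map-level distance \(\mathcal D_T(\Phi^n(z),\Phi^{\mathrm{abs}}(z))\) to the clock-mismatch quantity already controlled uniformly by Lemma~\ref{lem:uniform-clock-mismatch}. The key structural observation is that, for a frozen input \(z\), both maps use the \emph{same} driven path \(X^{k,z}\), because the drift and the contagion forcing are frozen in \(z\) and do not depend on which default time is used. The two lifted states
\[
\widehat X^{k,\star,z}_t:=X^{k,z}_t\mathbf 1_{\{\tau_k^{\star,z}>t\}}+\partial\,\mathbf 1_{\{\tau_k^{\star,z}\le t\}},\qquad \star\in\{n,\mathrm{abs}\},
\]
therefore coincide except on \(\{\tau_k^{\mathrm{abs},z}\le t<\tau_k^{n,z}\}\), using the ordering \(\tau_k^{\mathrm{abs},z}\le\tau_k^{n,z}\) from the proof of Lemma~\ref{lem:pointwise-clock}.

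\textbf{Step 1 (measure component).} Take any test function \(f\) with \(\|f\|_\infty\le1\) and \(\mathrm{Lip}_{d_\partial}(f)\le1\). Then, conditionally on \(\mathcal F_t^{W^0}\),
\[
|\langle f,\widehat\mu^{k,n,z}_t-\widehat\mu^{k,\mathrm{abs},z}_t\rangle|
\le 2\,\mathbb E\bigl[\mathbf 1_{\{\tau_k^{\mathrm{abs},z}\le t<\tau_k^{n,z}\}}\mid\mathcal F_t^{W^0}\bigr].
\]
This bound is uniform in \(f\), so it controls \(d_{\mathrm{BL}}\) directly, with no empirical-process or separability issue. Next bound the indicator by \(\sup_{r\le T}|\mathbf 1_{\{\tau^{n}\le r\}}-\mathbf 1_{\{\tau^{\mathrm{abs}}\le r\}}|=\mathbf 1_{\{\tau^{\mathrm{abs}}<\tau^{n}\}}\). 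This random variable does not depend on \(t\). Hence
\[
\sup_{t\le T}d_{\mathrm{BL}}\le 2\sup_{t\le T}M_t,\qquad M_t:=\mathbb E[\mathbf 1_{\{\tau^{\mathrm{abs}}<\tau^n\}}\mid\mathcal F_t^{W^0}],
\]
where \(M\) is a bounded martingale in \(t\).

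\textbf{Step 2 (loss component).} In the same way, \(|L^{k,n,z}_t-L^{k,\mathrm{abs},z}_t|\le M_t\).

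\textbf{Step 3 (removing the supremum).} The supremum over \(t\) of a conditional expectation is the delicate point. I would first try Doob's \(L^2\) maximal inequality:
\[
\mathbb E\sup_t M_t\le \bigl(\mathbb E\sup_t M_t^2\bigr)^{1/2}\le 2\bigl(\mathbb E M_T^2\bigr)^{1/2}\le 2\,\mathbb P(\tau^{\mathrm{abs}}<\tau^n)^{1/2},
\]
using \(M_T\le1\). Alternatively, if the independent exponential clock and the idiosyncratic noise make the conditional mismatch probabilities adapted at each fixed \(t\), one can avoid the square root by a direct estimate. Either way, summing over \(k\) gives
\[
\mathcal D_T(\Phi^n(z),\Phi^{\mathrm{abs}}(z))\le C\sum_k\Bigl(\mathbb E\sup_{t\le T}\bigl|\mathbf 1_{\{\tau_k^{n,z}\le t\}}-\mathbf 1_{\{\tau_k^{\mathrm{abs},z}\le t\}}\bigr|\Bigr)^{1/2}.
\]
Taking \(\sup_{z\in\mathcal K}\) and applying Lemma~\ref{lem:uniform-clock-mismatch} yields the claim.

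\textbf{Main obstacle.} The heart of the proof is not in this proposition but is already inside Lemma~\ref{lem:uniform-clock-mismatch}: the uniform-in-\(z\) post-hit occupation estimate of Lemma~\ref{lem:uniform-posthit-occupation}. Here the only subtle points are two.
\begin{enumerate}
\item[(i)] One must check that the driven path is genuinely shared by \(\Phi^n\) and \(\Phi^{\mathrm{abs}}\). This needs the frozen-input convention: the contagion term uses the input loss \(L^\ell\) from \(z\), not the output loss.
\item[(ii)] One must handle the time supremum of the conditional laws with respect to the growing filtration \(\mathcal F_t^{W^0}\), which Doob's inequality resolves at the cost of a square root. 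That cost is harmless for a qualitative limit.
\end{enumerate}
The localization in (P1) keeps all of these objects inside the compact class where the lemma applies.
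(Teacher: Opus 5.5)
Your proposal is correct and follows essentially the paper's route: the frozen driven path is shared by $\Phi^n(z)$ and $\Phi^{\mathrm{abs}}(z)$, so both the bounded-Lipschitz and the loss discrepancies are bounded by $2\,\mathbb P(\tau_k^{\mathrm{abs},z}\le t<\tau_k^{n,z}\mid\mathcal F_t^{W^0})$, and Lemma~\ref{lem:uniform-clock-mismatch} then finishes uniformly over $\mathcal K$. The one difference is that your Doob step makes explicit the interchange of $\sup_t$ and conditional expectation, which the paper performs without comment; your square-root bound suffices for the qualitative limit. The paper's linear bound corresponds to your ``alternative'': for adapted frozen inputs, the time-$t$ mismatch event is independent of future common-noise increments, so conditioning on $\mathcal F_t^{W^0}$ can be replaced by conditioning on $\mathcal F_T^{W^0}$.
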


\begin{proof}
For a fixed input \(z\), the driven alive-state path is the same under \(\Phi^n(z)\) and \(\Phi^{\mathrm{abs}}(z)\); only the default time differs. For any bounded-Lipschitz test function \(f\) on \(\mathsf E\),
\[
\begin{aligned}
&\left|
\mathbb E\left[f\bigl(X_t^{k,z}\mathbf 1_{\{\tau_k^{n,z}>t\}}+
\partial\mathbf 1_{\{\tau_k^{n,z}\le t\}}\bigr)
-
 f\bigl(X_t^{k,z}\mathbf 1_{\{\tau_k^{\mathrm{abs},z}>t\}}+
\partial\mathbf 1_{\{\tau_k^{\mathrm{abs},z}\le t\}}\bigr)
\mid\mathcal F_t^{W^0}\right]
\right| \\
&\hspace{3cm}
\le 2\,\mathbb P\bigl(\tau_k^{\mathrm{abs},z}\le t<\tau_k^{n,z}\mid\mathcal F_t^{W^0}\bigr).
\end{aligned}
\]
The same bound controls the loss difference. Taking the time supremum and expectation gives
\[
  \mathcal D_T\bigl(\Phi^n(z),\Phi^{\mathrm{abs}}(z)\bigr)
  \le C\sum_{k=1}^K
  \mathbb E\sup_{t\le T}
  \left|\mathbf 1_{\{\tau_k^{n,z}\le t\}}-
  \mathbf 1_{\{\tau_k^{\mathrm{abs},z}\le t\}}\right| .
\]
Lemma~\ref{lem:uniform-clock-mismatch} gives convergence to zero uniformly over \(z\in\mathcal K\), and the asserted uniform map convergence follows.
\end{proof}

\begin{remark}[Why pointwise convergence is not enough]
For a moving sequence of fixed points \(z^{(n)}\), pointwise convergence of \(\Phi^n(z)\) to \(\Phi^{\mathrm{abs}}(z)\) at each fixed \(z\) does not imply
\(\Phi^n(z^{(n)})-\Phi^{\mathrm{abs}}(z^{(n)})\to0\). The compactness argument and the uniform clock-mismatch estimate above are therefore essential for verifying eventual entrance of the fixed points into a stability neighborhood.
\end{remark}

\subsection*{Step 3: local stability of the absorbing-boundary fixed point}

\begin{lemma}[Causal killed-semigroup stability]
\label{lem:absorbing-volterra}
Under Assumption~\ref{ass:primitive-package}, there exist a constant \(C<\infty\) and an integrable kernel
 \(g(u):=1+u^{-1/2},\qquad 0<u\le T,\)
such that for any two frozen inputs \(z,z'\) in a sufficiently small compact neighborhood of \(z^{\mathrm{abs}}\),
\[
  \mathcal D_t\bigl(\Phi^{\mathrm{abs}}(z),\Phi^{\mathrm{abs}}(z')\bigr)
  \le
  C\int_0^t g(t-s)\mathcal D_s(z,z')\,ds,
  \qquad 0\le t\le T.
\]
\end{lemma}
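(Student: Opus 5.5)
The plan is to compare the two outputs of \(\Phi^{\mathrm{abs}}\) through a synchronous coupling of the driven paths, with the killed semigroup of (P4) used to turn path-level differences into a causal Volterra kernel. Fix the common-noise path and work conditionally throughout; at the end, integrate over \(W^0\). Couple \(X^{k,z}\) and \(X^{k,z'}\) with the same initial state, the same idiosyncratic Brownian motion, and the same exponential clock \(E_k\). The baseline Cox time \(\tau_k^{\mathrm{base}}\) is then common to both, and the two default times differ only through the hitting times \(H_k^z\) and \(H_k^{z'}\).

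For a bounded-Lipschitz test function \(f\) on \(\mathsf E\) with \(f(\partial)\) fixed, the difference of the lifted laws at time \(t\) is a difference of killed-semigroup actions. We will use a Duhamel/parametrix representation. Let \(P^{k,z}_{s,t}\) be the semigroup killed at \(x_{b,k}\) and at the baseline Cox rate. Then
\[
P^{k,z}_{0,t}f-P^{k,z'}_{0,t}f=\int_0^t P^{k,z}_{0,s}\Bigl[\bigl(b_k(s,\cdot,z_s,a_s)-b_k(s,\cdot,z'_s,a_s)\bigr)\partial_xP^{k,z'}_{s,t}f\Bigr]ds+(\text{loss-forcing term}).
\]
Killing at the same boundary means no boundary correction term appears. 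By (P2), the drift difference is bounded by \(C\,d(z_s,z'_s)\). By (P4), \(\|\partial_xP^{k,z'}_{s,t}f\|_\infty\le C(t-s)^{-1/2}\). Together these give the kernel contribution \((t-s)^{-1/2}\,\mathcal D_s(z,z')\).

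The finite-variation forcing \(-\sum_\ell\Gamma_{k\ell}\,dL^\ell\) is handled as a random space shift. Write \(X^{k,z}=Y^{k,z}-\sum_\ell\Gamma_{k\ell}L^\ell\) and absorb the shift into the moving boundary \(x_{b,k}+\sum_\ell\Gamma_{k\ell}L^\ell_t\). Perturbing the boundary by \(\sum_\ell\Gamma_{k\ell}(L^\ell_s-L^{\prime\ell}_s)\) costs, through the boundary-gradient bound in (P4), a contribution of the form \(\int_0^t(1+(t-s)^{-1/2})\,|L_s-L'_s|\,ds\). An alternative is to estimate the loss-path difference directly at each \(s\le t\) and insert it into the same Duhamel term. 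The loss component \(L^{k,\mathrm{abs},z}_t=1-(P^{k,z}_{0,t}\mathbf 1)\), integrated against the initial law, is the special case \(f=\mathbf 1\) and obeys the same bound. Taking the supremum over \(r\le t\), using that \(g\) is integrable and the kernel is causal so that \(\sup_{r\le t}\int_0^r g(r-s)\mathcal D_s\,ds\le C\int_0^t g(t-s)\mathcal D_s\,ds\) up to the monotonicity of \(\mathcal D_s\), and then taking expectations over the common noise gives the claimed inequality with \(g(u)=1+u^{-1/2}\).

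The main obstacle is the loss-forcing (moving-boundary) term. A naive pathwise coupling compares hitting times, and that comparison is not Lipschitz in the forcing. So the argument must stay at the semigroup level and rely on the uniform boundary-gradient estimate of (P4) on the small compact neighborhood of \(z^{\mathrm{abs}}\). That neighborhood is where uniformity of the constants, and hence of \(C\), is secured.
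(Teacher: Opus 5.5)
Your handling of the drift perturbation agrees with the paper's proof and is fine. This covers Duhamel between two semigroups killed at the same boundary $x_{b,k}$, the Lipschitz bound from (P2), and the gradient bound from (P4) giving the $(t-s)^{-1/2}$ kernel. The same goes for the loss component via $f\equiv\mathbf 1$, the running-supremum step via monotonicity of $\mathcal D_s$, and the conditioning on $W^0$.

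\textbf{The gap is the contagion forcing.} You rightly call it the main obstacle, but you then settle it by assertion. Passing to $Y^{k,z}=X^{k,z}+\sum_\ell\Gamma_{k\ell}L^\ell$ does turn the forcing into a moving boundary. However, the output alive law is still that of $X^{k,z}_t=Y^{k,z}_t-\sum_\ell\Gamma_{k\ell}L^\ell_t$, so the terminal test function is shifted as well. With $\Delta_t:=\sum_\ell\Gamma_{k\ell}(L^{\prime\ell}_t-L^\ell_t)$, this produces, besides the drift and boundary terms, a contribution
\[
  \mathbb E\bigl[\bigl(f(X^{k,z}_t)-f(X^{k,z}_t-\Delta_t)\bigr)\mathbf 1_{\{\tau_k^{\mathrm{abs},z}>t\}}\bigr].
\]
This term has size $\operatorname{Lip}(f)\,|\Delta_t|$ at the current time; it is not an integral $\int_0^tg(t-s)\mathcal D_s(z,z')\,ds$.

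The term really is there, as a simple example shows:
\begin{itemize}
\item[] Take $K=1$, $b\equiv0$, $\sigma_0=0$, $\Gamma>0$. Let $z'$ coincide with $z$ on $[0,t-h]$, while $L'-L$ increases linearly to $\delta$ on $[t-h,t]$; move mass $\delta$ to $\partial$ in $\widehat\mu'$ accordingly.
\item[] Then $X^{z'}=X^z-\Gamma\delta$ at time $t$, and $\tau^{\mathrm{abs},z'}\le\tau^{\mathrm{abs},z}$.
\item[] Use a nonnegative, nondecreasing test function that vanishes on $(-\infty,x_b+1]\cup\{\partial\}$ and equals $(x-x_b-1)\wedge1$ above. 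Every scenario then contributes nonnegatively to $\int f\,d\widehat\mu^{\mathrm{abs},z}_t-\int f\,d\widehat\mu^{\mathrm{abs},z'}_t$.
\item[] Paths surviving under $z'$ with $X^{z'}_t\in[x_b+1,x_b+\tfrac32]$ contribute exactly $\Gamma\delta$ each.
\item[] Hence $\mathcal D_t(\Phi^{\mathrm{abs}}(z),\Phi^{\mathrm{abs}}(z'))\ge c\,\Gamma\delta$, with $c>0$ independent of $h$ and $\delta$. The right-hand side, however, is at most $C\delta(h+2\sqrt h)$.
\end{itemize}
Both bounds scale linearly in $\delta$, so restricting to a small compact neighbourhood of $z^{\mathrm{abs}}$ does not help. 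As stated, the inequality fails once $\Gamma\ne0$.

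The paper's proof does not close this step either. After mollifying $L^\ell$, the drift difference is $-\sum_\ell\Gamma_{k\ell}(\dot L^{\ell,\varepsilon}_s-\dot L^{\prime\ell,\varepsilon}_s)$, which is not bounded by $C\mathcal D_s(z,z')$. The remainder $\mathcal R_t$ is then dropped without an estimate. Integrating by parts in $s$ yields exactly the endpoint term $\Delta_t\,P^{k,z}_{0,t}\partial_xf$ above. Your ``alternative'' of inserting $|L_s-L'_s|$ into the Duhamel drift term fails for the same reason: the feedback enters through $dL$, not through a function of $L_s$.

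The same integration by parts exposes a second missing input. Since $\partial_xP_{s,t}f$ does not vanish at $x_{b,k}$, differentiating $P_{0,s}\partial_xP_{s,t}f$ in $s$ gives a boundary term $\int_0^t\Delta_s\,\phi_s\,\partial_xP_{s,t}f(x_{b,k})\,ds$, where $\phi$ is the boundary flux (hitting-time density). This is the correct form of your boundary-perturbation contribution. Obtaining a $(t-s)^{-1/2}$ kernel from it requires a bound on $\phi$, which (P4)--(P5) do not provide; (P5) is only a modulus. If you work in $Y$-coordinates, you additionally need (P4) for moving boundaries.

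There are two natural repairs.
\begin{enumerate}
\item[(a)] Prove an estimate with an extra instantaneous term, $\mathcal D_t(\Phi^{\mathrm{abs}}(z),\Phi^{\mathrm{abs}}(z'))\le C\|\Gamma\|\mathcal D_t(z,z')+C\int_0^tg(t-s)\mathcal D_s(z,z')\,ds$. This still drives Propositions~\ref{prop:local-stability-derived} and~\ref{prop:absorbing-fixedpoint-uniqueness}, provided $C\|\Gamma\|<1$.
\item[(b)] Prove the stated Volterra bound, together with a flux bound, for a metric that compares the alive laws of the unshifted processes $Y^{k,z}$. Here your moving-boundary change of variables is exactly the right device, and the $L$-dependence of the drift enters only under the time integral. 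The two metrics differ by at most $C\|\Gamma\|\sup_{r\le t}|L_r-L'_r|$, so the fixed-point conclusions transfer.
\end{enumerate}
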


\begin{proof}
Condition on the common-noise path. The difference between the two frozen equations enters only through bounded Lipschitz changes in the drift, measure argument, and finite-variation loss feedback. By causality, the output at time \(t\) depends on input perturbations only at times \(s\le t\). The killed semigroup representation gives, for bounded test functions \(f\),
\[
  P^{k,z}_{0,t}f-P^{k,z'}_{0,t}f
  =
  \int_0^t
  P^{k,z}_{0,s}
  \Bigl[
  \bigl(B^{k,z}_s-B^{k,z'}_s\bigr)
  \partial_x P^{k,z'}_{s,t}f
  \Bigr]ds
  +\mathcal R_t,
\]
where \(B^{k,z}_s\) denotes the frozen drift coefficient, including the absolutely continuous part of the finite-variation forcing after the standard measure approximation, and \(\mathcal R_t\) is the limit of the corresponding finite-variation forcing term. The approximation is obtained by mollifying each nondecreasing loss path \(L^\ell\) into smooth paths \(L^{\ell,\varepsilon}\), applying the standard Duhamel formula to the smooth finite-variation drift, and then letting \(\varepsilon\downarrow0\). Since the absorbing loss paths are continuous under (P5), the mollified paths converge uniformly; the parametric stability of one-dimensional nondegenerate killed diffusions under uniform bounded-variation drift perturbations identifies the limit of the Duhamel terms with the finite-variation forcing term above. The boundary-gradient estimate in (P4) gives
 \(\|\partial_x P^{k,z'}_{s,t}f\|_\infty \le C(t-s)^{-1/2}\|f\|_\infty .\)
The Lipschitz condition on the drift and the contagion-feedback term gives
 \(|B^{k,z}_s-B^{k,z'}_s| \le C\mathcal D_s(z,z')\)
after summing over the finitely many types. Hence
\[
  \sup_{\|f\|_\infty\le1}
  |P^{k,z}_{0,t}f-P^{k,z'}_{0,t}f|
  \le
  C\int_0^t(1+(t-s)^{-1/2})\mathcal D_s(z,z')\,ds.
\]
The same Duhamel formula applies to the killed survival function by taking \(f\equiv1\) before killing, and therefore controls the loss component. Since the bounded-Lipschitz metric is dominated by the same class of bounded tests plus the Lipschitz part, the estimate also controls the alive-law component. Removing the conditioning on the common noise gives the stated inequality.
\end{proof}

\begin{proposition}[Local stability from the Volterra structure]
\label{prop:local-stability-derived}
Under Assumption~\ref{ass:primitive-package}, the absorbing fixed point is locally stable. More precisely, choose a compact reference class \(\mathcal K_0\) on which the constants in Lemma~\ref{lem:absorbing-volterra} are uniform, and then choose \(r_0>0\) so that
 \(\mathcal U:=\{z\in\mathcal K_0:\mathcal D_T(z,z^{\mathrm{abs}})\le r_0\}\)
remains inside that class. There is a constant \(C_{\mathrm{st}}<\infty\) such that, for every \(z\in\mathcal U\),
\[
  \mathcal D_T(z,z^{\mathrm{abs}})
  \le
  C_{\mathrm{st}}
  \mathcal D_T\bigl(z,\Phi^{\mathrm{abs}}(z)\bigr).
\]
\end{proposition}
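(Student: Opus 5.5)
The plan is to combine the triangle inequality with the Volterra estimate of Lemma~\ref{lem:absorbing-volterra}, using that \(z^{\mathrm{abs}}\) is a fixed point, \(\Phi^{\mathrm{abs}}(z^{\mathrm{abs}})=z^{\mathrm{abs}}\). For \(z\in\mathcal U\) and every \(t\le T\), I will write
\[
  \mathcal D_t(z,z^{\mathrm{abs}})
  \le
  \mathcal D_t\bigl(z,\Phi^{\mathrm{abs}}(z)\bigr)
  +\mathcal D_t\bigl(\Phi^{\mathrm{abs}}(z),\Phi^{\mathrm{abs}}(z^{\mathrm{abs}})\bigr).
\]
The second term is bounded by \(C\int_0^t g(t-s)\mathcal D_s(z,z^{\mathrm{abs}})\,ds\). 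This requires both \(z\) and \(z^{\mathrm{abs}}\) to lie in the neighborhood where Lemma~\ref{lem:absorbing-volterra} holds with uniform constants, which is exactly why \(\mathcal K_0\) and \(r_0\) are chosen first. Since \(t\mapsto\mathcal D_t(z,\Phi^{\mathrm{abs}}(z))\) is nondecreasing, the first term is at most \(\varepsilon_z:=\mathcal D_T(z,\Phi^{\mathrm{abs}}(z))\).

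Setting \(\phi(t):=\mathcal D_t(z,z^{\mathrm{abs}})\), which is bounded (by \(2K\) for bounded metrics) and nondecreasing, gives the linear Volterra inequality
\[
  \phi(t)\le\varepsilon_z+C\int_0^t\bigl(1+(t-s)^{-1/2}\bigr)\phi(s)\,ds .
\]
The key step is a Gronwall lemma for the weakly singular kernel \(g\), of Henry type. I would try the direct route first. Pick \(\delta>0\) with \(C\int_0^\delta g(u)\,du\le\tfrac12\), which is possible since \(g\in L^1\). On \([0,\delta]\), monotonicity of \(\phi\) gives \(\phi(t)\le\varepsilon_z+\tfrac12\phi(t)\), so \(\phi(\delta)\le2\varepsilon_z\).

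On \([j\delta,(j+1)\delta]\), I would split the integral into \([0,t-\delta]\) and \([t-\delta,t]\). On the first piece the kernel is bounded by \(1+\delta^{-1/2}\), so that contribution is at most \(C(1+\delta^{-1/2})T\,\phi(j\delta)\). The second piece is absorbed as before. This yields \(\phi((j+1)\delta)\le 2\varepsilon_z+2CT(1+\delta^{-1/2})\phi(j\delta)\). Iterating over the finitely many \(\lceil T/\delta\rceil\) steps gives \(\phi(T)\le C_{\mathrm{st}}\varepsilon_z\), with \(C_{\mathrm{st}}\) depending only on \(C\), \(T\), and \(\delta\). Alternatively, one can iterate the inequality once so that the kernel becomes \(g*g\), which is bounded, and then apply ordinary Gronwall; I would keep that as a backup.

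The main obstacle I expect is not the Gronwall step but justifying that Lemma~\ref{lem:absorbing-volterra} applies on all of \(\mathcal U\) with a single constant. The lemma is stated only for a "sufficiently small compact neighborhood". I would therefore fix \(\mathcal K_0\) as that neighborhood intersected with the compact class from (P1)--(P5), then shrink \(r_0\) so that \(\mathcal D_T\)-balls around \(z^{\mathrm{abs}}\) relative to \(\mathcal K_0\) stay inside it. I would also check that \(\Phi^{\mathrm{abs}}(z)\) need not lie in \(\mathcal U\): the argument only evaluates \(\Phi^{\mathrm{abs}}\) at \(z\) and \(z^{\mathrm{abs}}\), so no invariance is required. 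Finally, because \(\mathcal D_t\) includes expectation over the common noise, the Volterra inequality is already unconditional, and no pathwise Gronwall is needed.
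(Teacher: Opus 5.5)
Your proposal is correct and follows the paper's proof. Both use the triangle inequality through $\Phi^{\mathrm{abs}}(z^{\mathrm{abs}})=z^{\mathrm{abs}}$, apply Lemma~\ref{lem:absorbing-volterra} to the pair $(z,z^{\mathrm{abs}})$, use that $t\mapsto\mathcal D_t(z,\Phi^{\mathrm{abs}}(z))$ is nondecreasing, and close with a Volterra--Gronwall inequality for the integrable kernel $g(u)=1+u^{-1/2}$. The only difference is that the paper cites the resolvent form of that inequality, giving $C_{\mathrm{st}}=1+\|G\|_{L^1(0,T)}$, whereas your $\delta$-stepping absorption argument proves the Gronwall step directly; it is valid because $\phi$ is bounded and nondecreasing.
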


\begin{proof}
Let
\[
  e(t):=\mathcal D_t(z,z^{\mathrm{abs}}),
  \qquad
  r(t):=\mathcal D_t\bigl(z,\Phi^{\mathrm{abs}}(z)\bigr).
\]
Since \(z^{\mathrm{abs}}=\Phi^{\mathrm{abs}}(z^{\mathrm{abs}})\), Lemma~\ref{lem:absorbing-volterra} gives
\[
\begin{aligned}
  e(t)
  &\le
  r(t)+
  \mathcal D_t\bigl(\Phi^{\mathrm{abs}}(z),\Phi^{\mathrm{abs}}(z^{\mathrm{abs}})\bigr) \\
  &\le
  r(t)+C\int_0^t g(t-s)e(s)\,ds .
\end{aligned}
\]
The kernel \(g(u)=1+u^{-1/2}\) belongs to \(L^1(0,T)\). The standard resolvent form of the Volterra--Gronwall inequality therefore yields a nonnegative resolvent kernel \(G\in L^1(0,T)\), depending only on \(C,g,T\), such that
 \(e(t) \le r(t)+\int_0^tG(t-s)r(s)\,ds .\)
Since \(r(s)\le r(T)\),
 \(e(T) \le \left(1+\|G\|_{L^1(0,T)}\right)r(T).\)
Thus the desired estimate holds with
 \(C_{\mathrm{st}}:=1+\|G\|_{L^1(0,T)}.\)
This proof is the nonlinear Volterra version of the spectral-radius argument: the linearized absorbing map is a causal Volterra operator with an integrable kernel, and hence its spectrum is \(\{0\}\); equivalently, \(I-D\Phi^{\mathrm{abs}}(z^{\mathrm{abs}})\) has a bounded resolvent. The resolvent proof avoids requiring an explicit Fr\'echet-derivative formula for the hitting-time functional.
\end{proof}

\begin{proposition}[Localized uniqueness of the absorbing fixed point]
\label{prop:absorbing-fixedpoint-uniqueness}
Under Assumption~\ref{ass:primitive-package}, the absorbing-boundary McKean--Vlasov map \(\Phi^{\mathrm{abs}}\) has at most one fixed point in the localized compact class. Hence the absorbing fixed point \(z^{\mathrm{abs}}\), whenever it exists in that class, is unique.
\end{proposition}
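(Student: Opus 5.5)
Uniqueness should follow directly from the Volterra stability estimate of Lemma~\ref{lem:absorbing-volterra}, combined with a generalized Gronwall argument. I would not need a contraction in the full metric. Let \(z\) and \(z'\) be two fixed points of \(\Phi^{\mathrm{abs}}\) in the localized compact class. Set \(e(t):=\mathcal D_t(z,z')\), which is nondecreasing, bounded by a constant (BL distances of sub-probability measures and losses in \([0,1]\)), and satisfies \(e(0)=0\) since both flows start from the same initial law. Because \(z=\Phi^{\mathrm{abs}}(z)\) and \(z'=\Phi^{\mathrm{abs}}(z')\), Lemma~\ref{lem:absorbing-volterra} would give
\[
  e(t)\le C\int_0^t g(t-s)\,e(s)\,ds,\qquad g(u)=1+u^{-1/2}.
\]

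\textbf{Removing the local restriction.} The main issue is that Lemma~\ref{lem:absorbing-volterra} is stated only for inputs in a small compact neighborhood of \(z^{\mathrm{abs}}\), whereas uniqueness concerns two arbitrary fixed points. I would first check that the proof of that lemma never uses proximity to \(z^{\mathrm{abs}}\). It uses only the uniform constants in (P2) and (P4) and the continuity of absorbing loss paths from (P5), and all of these are uniform over any compact class generated by (P1)--(P5). So the same estimate holds with a uniform constant \(C\) for every pair \(z,z'\) in the localized class. If this extension cannot be justified, I would instead run a short-time contraction--concatenation argument. On \([0,\delta]\) the bound gives \(e(\delta)\le C(\delta+2\delta^{1/2})e(\delta)\). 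Choosing \(\delta\) with \(C(\delta+2\delta^{1/2})<1\) forces \(e(\delta)=0\). I would then restart at \(\delta\) using the common conditional law as the new initial datum, and repeat finitely many times. The step size \(\delta\) depends only on \(C\), and \(C\) is uniform on the class, so finitely many steps cover \([0,T]\).

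\textbf{Concluding \(e\equiv0\).} For the global-in-time route I would use the resolvent form of the Volterra--Gronwall inequality, as in Proposition~\ref{prop:local-stability-derived}. Since \(g\in L^1(0,T)\), the resolvent kernel \(G\) exists. Applying the resolvent inequality with forcing term \(r\equiv0\) gives \(e(t)\le 0\). Alternatively, iterate the inequality \(m\) times. The \(m\)-fold convolution \(g^{*m}\) is bounded by \((C')^m\,\Gamma(1/2)^m t^{m/2-1}/\Gamma(m/2)\), which tends to \(0\), and since \(e\) is bounded this forces \(e\equiv0\) on \([0,T]\). It follows that \(\mathcal D_T(z,z')=0\). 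By the definition of \(\mathcal D_T\), the lifted conditional laws and the loss paths then agree for all \(t\) almost surely, that is, \(z=z'\). In particular \(z^{\mathrm{abs}}\) is unique whenever it exists in the class.

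The main obstacle is therefore the uniformity claim: the Duhamel and boundary-gradient bound of (P4) and the continuity of loss paths must hold with one constant across the whole localized class, and not only near \(z^{\mathrm{abs}}\). I would verify this by inspecting the proof of Lemma~\ref{lem:absorbing-volterra} and noting that compactness of the class is the only input it requires.
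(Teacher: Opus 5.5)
Your proposal is correct and follows the paper's proof. The paper applies Lemma~\ref{lem:absorbing-volterra} directly to the two fixed points and closes with exactly your fallback route: choose $\delta$ with $C\int_0^\delta(1+u^{-1/2})\,du<1$, deduce equality on $[0,\delta]$, then restart and concatenate. Your primary closure through the resolvent (or iterated-kernel) Volterra--Gronwall bound is an equivalent variant that makes the restart unnecessary.

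You also check that the constant in Lemma~\ref{lem:absorbing-volterra} is uniform over the whole localized class, not only near $z^{\mathrm{abs}}$. The paper's proof relies on this uniformity without mentioning it, and your justification holds because (P4) and (P5) are assumed uniformly on every compact frozen-input class.
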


\begin{proof}
Let \(z\) and \(\tilde z\) be two localized absorbing fixed points. On a time interval \([0,t]\), Lemma~\ref{lem:absorbing-volterra} gives
\[
  \mathcal D_t(z,\tilde z)
  =\mathcal D_t(\Phi^{\mathrm{abs}}(z),\Phi^{\mathrm{abs}}(\tilde z))
  \le C\int_0^t(1+(t-s)^{-1/2})\mathcal D_s(z,\tilde z)\,ds .
\]
Choose \(\delta>0\) such that
 \(C\int_0^\delta(1+u^{-1/2})\,du<1 .\)
Taking the supremum over \(s\le t\le\delta\) gives
\[
  \sup_{s\le\delta}\mathcal D_s(z,\tilde z)
  \le \theta\sup_{s\le\delta}\mathcal D_s(z,\tilde z),
  \qquad \theta<1,
\]
so \(z=\tilde z\) on \([0,\delta]\). Suppose uniqueness has been shown up to a deterministic time \(t_0\). Restart the absorbing driven equations from the common conditional law and common loss value at \(t_0\). The coefficients after \(t_0\) satisfy the same localized bounds, and the finite-variation contagion input up to \(t_0\) is already fixed. Applying the same short-time estimate on \([t_0,t_0+\delta]\) yields equality on that interval. Finitely many concatenations cover \([0,T]\). This proves localized uniqueness.
\end{proof}

\subsection*{Step 4: compactness and eventual entrance of steep fixed points}

\begin{lemma}[Compactness of the steep fixed-point family]
\label{lem:compactness-fixedpoints}
Assume the primitive package and let \(z^{(n)}\) be fixed points of \(\Phi^n\). Then every sequence of steep fixed points has a subsequence, still denoted by \(z^{(n_j)}\), with the following compactness properties. The loss paths converge by Helly selection at all continuity points of nondecreasing limits \(L^{k,*}\), and the lifted laws are tight in the bounded-Lipschitz path topology after this Helly extraction. If the resulting limiting loss paths are continuous, then the Helly convergence of losses is uniform on \([0,T]\), and the corresponding subsequence converges in \(\mathcal D_T\). In particular, once the limit is identified as the continuous absorbing fixed point, the convergence is in the full \(\mathcal D_T\) metric.
\end{lemma}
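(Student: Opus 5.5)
The plan has three parts: compactness of the loss component by monotonicity, tightness of the lifted laws from the uniform coefficient bounds, and an upgrade to uniform convergence when the limiting loss paths are continuous. No uniform-in-\(n\) regularity of the steep default times is used; this matches the remark after Assumption~\ref{ass:primitive-package}.

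\emph{Step 1: the loss paths.} For each steep fixed point \(z^{(n)}\) and each type \(k\), the loss path \(L^{k,(n)}\) is \([0,1]\)-valued and nondecreasing, and it is \(\mathbb F^{W^0}\)-adapted. I would first work conditionally on the common-noise path. On a countable dense set of times, including \(T\), a diagonal extraction gives convergence of the losses. Helly's selection theorem then extends this to a nondecreasing limit \(L^{k,*}\), with convergence at all of its continuity points.

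Since the losses are random through \(W^0\), I would not extract pathwise. Instead I would treat the joint laws of \((W^0,L^{(n)})\) on \(C([0,T])\times(\text{nondecreasing paths})\). Monotone paths with values in \([0,1]\) form a compact set in the Lévy/Skorokhod-\(M_1\) sense, so these laws are tight. A Skorokhod representation preserving the law of \(W^0\) then turns this into almost sure Helly convergence. This gives the first claim.

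\emph{Step 2: tightness of the lifted laws.} The alive component of each lifted law is driven by \(X^{k,z^{(n)}}\). By (P1)–(P2) it lives on a compact interval, with bounded drift and bounded diffusion coefficients. The driving input is the monotone, bounded-variation loss feedback, whose total variation is bounded by \(\|\Gamma\|\). Hence the alive parts of \(\widehat\mu^{k,(n)}_t\) are uniformly tight in space. The cemetery mass equals \(L^{k,(n)}_t\), which Step 1 already controls.

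For tightness in time, the alive path has a modulus of continuity up to the jumps of the loss input, and these jumps are controlled by the Helly extraction. Together these give tightness of \(\widehat\mu^{k,(n)}\) in the bounded-Lipschitz path topology along the same subsequence. The main subtlety is that an individual particle's default indicator has no uniform-in-\(n\) modulus. I would bypass this by noting that the measure flow only sees the conditional expectation \(L^{k,(n)}\), which is monotone. The defect \(d_{\mathrm{BL}}(\widehat\mu^{k,(n)}_t,\widehat\mu^{k,(n)}_s)\) is then bounded by the alive-path increment plus \(2|L^{k,(n)}_t-L^{k,(n)}_s|\).

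\emph{Step 3: upgrade under a continuous limit.} If \(L^{k,*}\) is continuous, I would use the elementary Pólya–Dini fact: pointwise convergence of nondecreasing functions to a continuous limit on a compact interval is uniform. So \(\sup_t|L^{k,(n_j)}_t-L^{k,*}_t|\to0\) almost surely, and bounded convergence gives convergence in expectation. Then the drift and loss-feedback inputs of the driven SDE converge uniformly. Stability of the driven SDE with Lipschitz coefficients, as in Proposition~\ref{prop:controlled-multiclass-wellposedness}, gives uniform convergence of the alive paths. Combining this with the bound from Step 2 yields convergence of \(\sup_t d_{\mathrm{BL}}\), hence of the whole sequence component in \(\mathcal D_T\).

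The final sentence of the lemma is then immediate. The absorbing fixed point has continuous losses, by Proposition~\ref{prop:controlled-multiclass-wellposedness}(iii)-type reasoning together with (P5). So once the limit is identified with \(z^{\mathrm{abs}}\), the hypothesis of Step 3 holds.

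I expect the main obstacle to be Step 1 in the presence of common noise. The Helly extraction must be made compatible with \(\mathbb F^{W^0}\)-adaptedness, and the limit must be expressed in the expectation metric \(\mathcal D_T\). My first attempt would be the joint-law tightness plus Skorokhod-representation route, with dominated convergence to pass from almost sure to \(L^1\) convergence.
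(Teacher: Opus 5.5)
Your Steps 1 and 2 match the paper's proof. You use Helly selection for the monotone losses, impose no uniform-in-$n$ modulus on the steep default times, and use an increment bound in which the cemetery jump contributes only $2|L^{k,(n)}_t-L^{k,(n)}_s|$.

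The gap is in Step 3, where you obtain $\mathcal D_T$-convergence from stability of the driven SDE. There are two problems.
\begin{enumerate}
\item The frozen drift $b_k(t,x,z_t,a)$ depends on the lifted laws $\widehat\mu^{\ell,(n)}_t$, not only on the losses, and at a fixed point the input equals the output. So ``the drift inputs converge uniformly'' presupposes the measure convergence you are trying to prove.
\item More seriously, uniform convergence of the alive paths together with uniform convergence of the losses does not give convergence of the lifted laws. The alive part $\mathbb E[f(X^{k}_t)\mathbf 1_{\{\tau_k>t\}}\mid\mathcal F^{W^0}_t]$ depends on the joint law of path and default indicator, i.e.\ on \emph{which} particles have been killed. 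The loss $L^k_t$ records only the conditional mean of the indicator. For example, removing mass $1/4$ from the left half versus the right half of the same alive law gives equal losses but alive parts at positive $d_{\mathrm{BL}}$-distance. A sequence alternating between the two has convergent paths and losses and no limit.
\end{enumerate}
This is the individual-indicator problem you correctly bypassed for tightness in Step 2; it returns as soon as you ask for convergence. Your Step 2 bound compares $\widehat\mu^{k,(n)}_t$ with $\widehat\mu^{k,(n)}_s$ for the same $n$, so it cannot compare different $n$.

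The lemma only asserts subsequential compactness, and the paper gets it without identifying the limit. Once $L^{\ell,(n_j)}\to L^{\ell,*}$ uniformly with continuous limits, your Step 2 bound makes $t\mapsto\widehat\mu^{k,(n_j)}_t$ asymptotically equicontinuous in $d_{\mathrm{BL}}$; the contagion part of the alive increment is also controlled by loss increments. Compact localization gives relative compactness at each fixed time. Arzel\`a--Ascoli then extracts a further subsequence converging uniformly in time. Control of individual default indicators is only needed later, in Proposition~\ref{prop:eventual-entrance}, where Lemma~\ref{lem:uniform-clock-mismatch} identifies the limit.

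A secondary point on Step 1. A Skorokhod representation yields almost sure convergence on an auxiliary space, and dominated convergence there does not give convergence in $\mathcal D_T$. That metric is an expectation on the original space between $\mathbb F^{W^0}$-adapted objects, and a Skorokhod limit need not be a functional of $W^0$. To come back you need the standard fact that $(W^0,Y_n)\Rightarrow(W^0,F(W^0))$ implies $Y_n\to F(W^0)$ in probability. That fact becomes available once the limit is identified with the strong absorbing fixed point $z^{\mathrm{abs}}$, so it suffices for the lemma's last sentence. The paper's proof applies Helly and compactness pathwise and is silent on this, so your concern is legitimate, but the step as written does not close it.
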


\begin{proof}
Under compact localization the second-moment bound is immediate. In the unlocalized formulation, apply It\^o's formula to \(|X^{k,(n)}_t|^2\) before localization. The drift has at most linear growth with constants independent of \(n\), the diffusion coefficient is constant, and killing only moves mass to the cemetery point. Gronwall's inequality gives
 \(\sup_{n\ge1}\sup_{t\le T} \mathbb E|X^{k,(n)}_t|^2<\infty .\)
The losses satisfy \(0\le L_t^{k,(n)}\le1\) and are nondecreasing. We do not use a uniform-in-\(n\) no-atom modulus for the steep default times. Instead, Helly's selection theorem gives, for every sequence, a subsequence such that each loss path \(L^{k,(n_j)}\) converges pointwise at all continuity points of a nondecreasing limit \(L^{k,*}\). This supplies the loss compactness in the natural Helly topology. If a later identification step shows that \(L^{k,*}\) is continuous, then P\'olya's theorem for monotone functions upgrades the pointwise convergence to uniform convergence on \([0,T]\), and bounded convergence gives
 \(\mathbb E\sup_{t\le T}|L_t^{k,(n_j)}-L_t^{k,*}|\longrightarrow0 .\)

For the alive-state component, use tightness at fixed times and the SDE increment estimate away from the jump into the cemetery state. Brownian increments contribute order \(h^{1/2}\), bounded drift contributes order \(h\), and the finite-variation contagion feedback is controlled along convergent subsequences by the Helly convergence of the loss paths. The only discontinuity of the lifted state is the cemetery jump, whose contribution to the bounded-Lipschitz modulus is exactly controlled by the loss increment. Hence, after passing to a Helly subsequence for the losses, the lifted laws are tight in the bounded-Lipschitz path topology. Prokhorov compactness at fixed times follows from the uniform second-moment bound, and the preceding increment estimate gives the required equicontinuity on every subsequence whose losses have a continuous limit. This proves the stated subsequential compactness, and the final assertion follows from the Helly--P\'olya upgrade when the limit is continuous.
\end{proof}

\begin{proposition}[Eventual entrance into the stability neighborhood]
\label{prop:eventual-entrance}
Assume the primitive package. Let \(\mathcal U\) be the compact stability neighborhood from Proposition~\ref{prop:local-stability-derived}. Then
 \(z^{(n)}\in\mathcal U \qquad\text{for all sufficiently large }n.\)
In fact, \(\mathcal D_T(z^{(n)},z^{\mathrm{abs}})\to0\).
\end{proposition}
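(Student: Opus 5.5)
The plan is a compactness--identification argument followed by an upgrade to full \(\mathcal D_T\) convergence. It suffices to show \(\mathcal D_T(z^{(n)},z^{\mathrm{abs}})\to0\), since membership in \(\mathcal U\) for large \(n\) then follows from the definition of \(\mathcal U\) with radius \(r_0\). I will argue by subsequences: every subsequence of \((z^{(n)})\) will be shown to have a further subsequence converging to \(z^{\mathrm{abs}}\) in \(\mathcal D_T\), which gives convergence of the full sequence.

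\textbf{Step 1 (extraction).} Fix a subsequence and apply Lemma~\ref{lem:compactness-fixedpoints}. This gives a further subsequence \(z^{(n_j)}\) whose loss paths converge at continuity points of nondecreasing limits \(L^{k,*}\), and whose lifted laws are tight. Call the limit object \(z^*\), and let \(\mathcal K\) be the compact frozen-input class consisting of all \(z^{(n_j)}\) together with \(z^*\). The localization (P1) and the uniform second-moment bound keep this class inside the scope of Assumption~\ref{ass:primitive-package}. At this stage the convergence \(z^{(n_j)}\to z^*\) is only in the Helly/weak sense, not yet in \(\mathcal D_T\).

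\textbf{Step 2 (identification, the main obstacle).} I want to show that \(z^*\) is an absorbing fixed point. Write
\[
z^{(n_j)}=\Phi^{n_j}(z^{(n_j)})=\Phi^{\mathrm{abs}}(z^{(n_j)})+\bigl[\Phi^{n_j}(z^{(n_j)})-\Phi^{\mathrm{abs}}(z^{(n_j)})\bigr].
\]
Since the inputs move with \(j\), pointwise convergence of the maps is not enough here. The bracket tends to zero in \(\mathcal D_T\) by Proposition~\ref{prop:uniform-map-convergence}, applied uniformly on \(\mathcal K\).

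Next I need \(\Phi^{\mathrm{abs}}(z^{(n_j)})\to\Phi^{\mathrm{abs}}(z^*)\). I would first use continuity of \(\Phi^{\mathrm{abs}}\) under weak/Helly convergence of the inputs. The driven SDE is stable because the drift is Lipschitz in \(z\) and the forcing \(-\Gamma\,dL\) enters additively, so pointwise a.e. convergence of \(L\) yields convergence of the paths in \(L^1(dt)\) after accounting for the jumps. The hitting-time stability argument from the proof of Lemma~\ref{lem:uniform-posthit-occupation}, combined with the no-atom modulus (P5), then gives convergence of the absorbing outputs. Crucially, (P5) forces every output of \(\Phi^{\mathrm{abs}}\) to have a continuous loss path.

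Putting these together, the limit of \(z^{(n_j)}\) equals \(\Phi^{\mathrm{abs}}(z^*)\), so \(z^*=\Phi^{\mathrm{abs}}(z^*)\), and \(L^{k,*}\) is continuous. The delicate point is that an atom of \(L^{k,*}\) would make the additive forcing discontinuous. I would handle this by noting that the identity \(z^*=\Phi^{\mathrm{abs}}(z^*)\) is established at continuity points first, which suffices to determine the image; the image has continuous losses by (P5), and this regularity is then inherited by \(z^*\) itself.

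\textbf{Step 3 (conclusion).} Proposition~\ref{prop:absorbing-fixedpoint-uniqueness} gives \(z^*=z^{\mathrm{abs}}\). Because the limiting losses are continuous, the final assertion of Lemma~\ref{lem:compactness-fixedpoints} (the Helly--P\'olya upgrade) yields \(\mathcal D_T(z^{(n_j)},z^{\mathrm{abs}})\to0\). Since the limit is the same for every subsequence, the whole sequence converges, and therefore \(z^{(n)}\in\mathcal U\) for all sufficiently large \(n\).

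As a quantitative alternative once the sequence is known to lie near \(z^{\mathrm{abs}}\), Proposition~\ref{prop:local-stability-derived} gives
\[
\mathcal D_T(z^{(n)},z^{\mathrm{abs}})\le C_{\mathrm{st}}\,\mathcal D_T\bigl(\Phi^{n}(z^{(n)}),\Phi^{\mathrm{abs}}(z^{(n)})\bigr)\to0,
\]
which provides a rate via Proposition~\ref{prop:uniform-map-convergence}.
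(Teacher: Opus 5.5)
Your proposal is correct and follows essentially the same route as the paper. Both extract a subsequence via Lemma~\ref{lem:compactness-fixedpoints}, then identify the Helly limit as an absorbing fixed point from the fixed-point identity, the uniform steep-map convergence, and stability of \(\Phi^{\mathrm{abs}}\) along Helly-convergent inputs (first at continuity times of the limiting losses), and finish with Proposition~\ref{prop:absorbing-fixedpoint-uniqueness}, the P\'olya upgrade to \(\mathcal D_T\), and the subsequence principle. One quibble: your closing residual estimate via Proposition~\ref{prop:local-stability-derived} is exactly the paper's final step in the detailed bridge, but it gives convergence rather than a rate, because Proposition~\ref{prop:uniform-map-convergence} is not quantitative.
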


\begin{proof}
By Lemma~\ref{lem:compactness-fixedpoints}, every subsequence of \(\{z^{(n)}\}\) has a further subsequence, still denoted by \(z^{(n_j)}\), converging in the Helly--weak path topology to some candidate limit \(z^*\in\mathcal K_0\). The fixed-point identity \(z^{(n_j)}=\Phi^{n_j}(z^{(n_j)})\), the uniform clock-mismatch estimate on the compact class \(\mathcal K_0\), and the parametric stability of the driven absorbing map along Helly-convergent finite-variation inputs imply
 \(z^*=\Phi^{\mathrm{abs}}(z^*)\)
in the same weak path topology. Equivalently, for every bounded Lipschitz test function and every continuity time of the limiting loss paths, the limiting lifted law and loss satisfy the absorbing driven equations. By Proposition~\ref{prop:absorbing-fixedpoint-uniqueness}, \(z^*=z^{\mathrm{abs}}\).

The limit losses are therefore the absorbing fixed-point loss paths, which are continuous by (P5). P\'olya's theorem upgrades the Helly convergence of \(L^{k,(n_j)}\) to uniform convergence on \([0,T]\); the lifted-law convergence then upgrades to \(\mathcal D_T\) by the SDE stability and the cemetery-jump control in Lemma~\ref{lem:compactness-fixedpoints}. Thus every subsequence has a further subsequence converging to \(z^{\mathrm{abs}}\) in \(\mathcal D_T\), and consequently the whole sequence converges to \(z^{\mathrm{abs}}\) in \(\mathcal D_T\). Any compact neighborhood \(\mathcal U\) of \(z^{\mathrm{abs}}\) inside the localized class contains all sufficiently large \(z^{(n)}\).
\end{proof}

\subsection*{Primitive verification of Proposition~\ref{prop:killing-to-absorbing-boundary}}

\begin{proposition*}[Detailed form of Proposition~\ref{prop:killing-to-absorbing-boundary}: Steep-killing bridge with primitive verification]
Assume the localized primitive regularity package in Assumption~\ref{ass:primitive-package}. Let \(z^{(n)}\) be any sequence of McKean--Vlasov fixed points generated by the steep intensities
 \(\lambda_k^{(n)}(x,a)=\bar\lambda_k(a) \exp\{n(x_{b,k}-x)^+\}.\)
Let \(z^{\mathrm{abs}}\) be the unique localized absorbing-boundary McKean--Vlasov fixed point with default time
\[
  \tau_k^{\mathrm{abs}}=\tau_k^{\mathrm{base}}\wedge H_k,
  \qquad
  H_k=\inf\{t:X_t^k\le x_{b,k}\}.
\]
Then the bridge conditions are verified as follows.

First, there exists a compact stability neighborhood \(\mathcal U\) of \(z^{\mathrm{abs}}\) such that \(z^{(n)}\in\mathcal U\) for all sufficiently large \(n\). Second, the absorbing fixed point is locally stable:
\[
  \mathcal D_T(z,z^{\mathrm{abs}})
  \le
  C_{\mathrm{st}}
  \mathcal D_T\bigl(z,\Phi^{\mathrm{abs}}(z)\bigr),
  \qquad z\in\mathcal U .
\]
Third, the driven maps converge uniformly on \(\mathcal U\):
\[
  \lim_{n\to\infty}\sup_{z\in\mathcal U}
  \mathcal D_T\bigl(\Phi^n(z),\Phi^{\mathrm{abs}}(z)\bigr)=0.
\]
Consequently,
 \(\mathcal D_T(z^{(n)},z^{\mathrm{abs}})\longrightarrow0.\)
Equivalently, for every type \(k\),
\[
  \mathbb E\sup_{t\le T}d_{\mathrm{BL}}
  \bigl(\widehat\mu_t^{k,(n)},\widehat\mu_t^{k,\mathrm{abs}}\bigr)\to0,
  \qquad
  \mathbb E\sup_{t\le T}|L_t^{k,(n)}-L_t^{k,\mathrm{abs}}|\to0.
\]
\end{proposition*}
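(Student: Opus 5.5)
The plan is to assemble the three bridge conditions from the results already established in this appendix, and then close with a short stability inequality.

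\emph{Step 1: eventual entrance.} The first condition is exactly Proposition~\ref{prop:eventual-entrance}. It gives $\mathcal D_T(z^{(n)},z^{\mathrm{abs}})\to0$, and hence $z^{(n)}\in\mathcal U$ for large $n$. Here $\mathcal U$ is the neighborhood built in Proposition~\ref{prop:local-stability-derived} inside the compact reference class $\mathcal K_0$. The uniqueness of $z^{\mathrm{abs}}$ that this uses is Proposition~\ref{prop:absorbing-fixedpoint-uniqueness}.

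\emph{Step 2: local stability.} The second condition is Proposition~\ref{prop:local-stability-derived}. It is obtained from the causal Volterra estimate of Lemma~\ref{lem:absorbing-volterra} with kernel $g(u)=1+u^{-1/2}$ through the resolvent form of Gronwall's inequality.

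\emph{Step 3: uniform map convergence.} The third condition follows from Proposition~\ref{prop:uniform-map-convergence} applied with $\mathcal K=\mathcal U$. This requires $\mathcal U$ to be a compact frozen-input class satisfying Assumption~\ref{ass:primitive-package}. That holds because $\mathcal U$ is a closed $\mathcal D_T$-ball intersected with $\mathcal K_0$, so compactness is inherited from $\mathcal K_0$.

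\emph{Step 4: the quantitative consequence.} For $n$ large we have $z^{(n)}\in\mathcal U$ and $z^{(n)}=\Phi^n(z^{(n)})$, so Step 2 gives
\[
\mathcal D_T(z^{(n)},z^{\mathrm{abs}})\le C_{\mathrm{st}}\,\mathcal D_T\bigl(\Phi^n(z^{(n)}),\Phi^{\mathrm{abs}}(z^{(n)})\bigr)\le C_{\mathrm{st}}\sup_{z\in\mathcal U}\mathcal D_T\bigl(\Phi^n(z),\Phi^{\mathrm{abs}}(z)\bigr).
\]
By Step 3 the right-hand side tends to zero. This bound does not rely on Proposition~\ref{prop:eventual-entrance}'s convergence statement, only on membership in $\mathcal U$, so the three conditions together give convergence in the form of a moving-point argument; this is exactly the point stressed in the remark on why pointwise convergence is not enough. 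The equivalent componentwise statement then follows by reading off the definition of $\mathcal D_T$: it is a sum of nonnegative terms $\mathbb E\sup_t d_{\mathrm{BL}}$ and $\mathbb E\sup_t|L-L'|$, so each term tends to zero.

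\emph{Main obstacle.} The delicate point is checking that $\mathcal U$ is simultaneously (a) inside the class where the Volterra constants of Lemma~\ref{lem:absorbing-volterra} are uniform, (b) compact, so that Lemmas~\ref{lem:uniform-posthit-occupation} and~\ref{lem:uniform-clock-mismatch} apply, and (c) the set into which the steep fixed points eventually enter. I would handle this by fixing $\mathcal K_0$ first as the class generated by (P1)--(P5), and then shrinking $r_0$ so that the ball lies in $\mathcal K_0$. After that, all three cited results refer to the same set and the argument above goes through.
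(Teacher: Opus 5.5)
Your proposal is correct and follows the paper's proof essentially step for step. You read the three bridge conditions off Propositions~\ref{prop:eventual-entrance}, \ref{prop:local-stability-derived}, and \ref{prop:uniform-map-convergence}, and you close with the same fixed-point residual chain through $C_{\mathrm{st}}\sup_{z\in\mathcal U}\mathcal D_T(\Phi^n(z),\Phi^{\mathrm{abs}}(z))$. As in the paper, that final chain is a consistency check rather than an independent route: membership $z^{(n)}\in\mathcal U$ is itself obtained from the convergence $\mathcal D_T(z^{(n)},z^{\mathrm{abs}})\to0$ already proved in Proposition~\ref{prop:eventual-entrance}.
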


\begin{proof}
Uniform map convergence on compact classes is Proposition~\ref{prop:uniform-map-convergence}. Local stability is Proposition~\ref{prop:local-stability-derived}. Eventual entrance is Proposition~\ref{prop:eventual-entrance}. For completeness, once \(z^{(n)}\in\mathcal U\), the fixed-point residual estimate gives
\[
\begin{aligned}
  \mathcal D_T(z^{(n)},z^{\mathrm{abs}})
  &\le
  C_{\mathrm{st}}
  \mathcal D_T\bigl(z^{(n)},\Phi^{\mathrm{abs}}(z^{(n)})\bigr) \\
  &=
  C_{\mathrm{st}}
  \mathcal D_T\bigl(\Phi^n(z^{(n)}),\Phi^{\mathrm{abs}}(z^{(n)})\bigr) \\
  &\le
  C_{\mathrm{st}}
  \sup_{z\in\mathcal U}
  \mathcal D_T\bigl(\Phi^n(z),\Phi^{\mathrm{abs}}(z)\bigr)
  \longrightarrow0 .
\end{aligned}
\]
This proves the steep-killing bridge at the solution level.
\end{proof}

\begin{remark}[Pure absorbing boundary]
The theorem above corresponds to the intensity \(\bar\lambda_k(a)\exp\{n(x_{b,k}-x)^+\}\), whose limit keeps the baseline Cox default mechanism before boundary hitting. A pure absorbing-boundary limit with no above-boundary Cox killing is obtained by replacing the intensity by
 \(\widetilde\lambda_k^{(n)}(x,a) :=\bar\lambda_k(a)\bigl(\exp\{n(x_{b,k}-x)^+\}-1\bigr).\)
The same proof applies, with \(\tau_k^{\mathrm{abs}}=H_k\), provided the lower-bound part of (P2) is imposed only below the boundary layer where the singular killing is active and the uniform no-atom modulus is stated for the hitting time \(H_k\).
\end{remark}

\bibliographystyle{plain}
\bibliography{paper/references}

\end{document}